        \title{The Borel Conjecture for hyperbolic and $\CAT(0)$-groups}
       \author{Arthur Bartels}
       \author{Wolfgang L\"uck}
      \address{Westf\"alische Wilhelms-Universit\"at M\"unster\\
               Mathematisches Institut\\
               Einsteinstr.~62,
               D-48149 M\"unster, Germany}
        \email{bartelsa@math.uni-muenster.de}
      \urladdr{http://www.math.uni-muenster.de/u/bartelsa}
        \email{lueck@math.uni-muenster.de}
      \urladdr{http://www.math.uni-muenster.de/u/lueck}
         \date{March, 2010}
     \keywords{Borel Conjecture, topological rigidity, 
               Farrell-Jones Conjecture,
     $K$- and $L$-theory of group rings, hyperbolic groups, $\CAT(0)$-groups.}
    \subjclass[2000]{18F25, 57N99}
\DeclareMathAlphabet{\matheurm}{U}{eur}{m}{n}
\DeclareMathOperator{\colim}{colim}
\DeclareMathOperator{\flip}{flip}
\DeclareMathOperator{\id}{id}
\DeclareMathOperator{\inv}{inv}
\DeclareMathOperator{\inc}{inc}
\DeclareMathOperator{\Idem}{Idem}
\DeclareMathOperator{\End}{End}
\DeclareMathOperator{\mor}{mor}
\DeclareMathOperator{\pt}{pt}
\DeclareMathOperator{\pr}{pr}
\DeclareMathOperator{\rk}{rk}
\DeclareMathOperator{\sing}{sing}
\DeclareMathOperator{\sign}{sign}
\DeclareMathOperator{\supp}{supp}
\DeclareMathOperator{\tr}{tr}
\DeclareMathOperator{\Wh}{Wh}
\newcommand{\VCyc}{{\mathcal{V}\mathcal{C}\text{yc}}}
  \newcommand{\IN}{\mathbb{N}}
  \newcommand{\IR}{\mathbb{R}}
  \newcommand{\IZ}{\mathbb{Z}}
  \newcommand{\cala}{\mathcal{A}}
  \newcommand{\calb}{\mathcal{B}}
  \newcommand{\calc}{\mathcal{C}}
  \newcommand{\cale}{\mathcal{E}}
  \newcommand{\calf}{\mathcal{F}}
   \newcommand{\calfj}{\mathcal{F}\!\mathcal{J}}
  \newcommand{\call}{\mathcal{L}}
  \newcommand{\calo}{\mathcal{O}}
  \newcommand{\calu}{\mathcal{U}}
  \newcommand{\calv}{\mathcal{V}}
  \newcommand{\bfD}{{\mathbf D}}
  \newcommand{\bfG}{{\mathbf G}}
  \newcommand{\bfK}{{\mathbf K}}
  \newcommand{\bfL}{{\mathbf L}}
  \newcommand{\bfP}{{\mathbf P}}
  \newcommand{\bfQ}{{\mathbf Q}}
  \newcommand{\bfT}{{\mathbf T}}
\newcommand{\EGF}[2]{E_{#2}(#1)}
\newcommand{\intgf}[2]{\mbox{$\int_{#1} #2$}}
\newcounter{commentcounter}
\theoremstyle{plain}
\newtheorem{theorem}{Theorem}[section]
\newtheorem{lemma}[theorem]{Lemma}
\newtheorem{corollary}[theorem]{Corollary}
\newtheorem{proposition}[theorem]{Proposition}
\newtheorem{convention}[theorem]{Convention}
\newtheorem*{theorem*}{Theorem}
\newtheorem*{theoremA*}{Theorem A}
\newtheorem*{theoremB*}{Theorem B}
\theoremstyle{definition}
\newtheorem{definition}[theorem]{Definition}
\newtheorem{example}[theorem]{Example}
\newtheorem{remark}[theorem]{Remark}
\newtheorem*{definition*}{Definition}
\theoremstyle{remark}
\newtheorem*{summary*}{Summary}
\let\c@equation=\c@theorem\makeatother
\newenvironment{numberlist}
  {\begin{list}{}%
   {%
    \setlength{\leftmargin}{\labelwidth+\labelsep}%
   }%
  }%
  {\end{list}}
\newcommand{\x}{{\times}}
\newcommand{\ox}{{\otimes}}
\newcommand{\dd}{{\partial}}
\newcommand{\e}{{\varepsilon}}
\newcommand{\CAT}{\operatorname{CAT}}
\newcommand{\bfLi}{{\bfL^{\langle -\infty \rangle}}}
\newcommand{\Li}{L^{\langle -\infty \rangle}}
\begin{document}

\maketitle

\begin{abstract}
We prove the Borel Conjecture for a 
class of groups containing word-hyperbolic groups and
groups acting properly, isometrically  and cocompactly on
a finite dimensional $\CAT(0)$-space.
\end{abstract}

\newlength{\origlabelwidth}
\setlength\origlabelwidth\labelwidth


\typeout{-------------------  Introduction -----------------}
\section*{Introduction}
\label{sec:introduction}


\subsection*{The Borel Conjecture}

A closed manifold $M$ is said to be \emph{topologically rigid} if
every homotopy equivalence to another closed manifold is homotopic to
a homeomorphism.  In particular, if $M$ is topologically rigid, then
every manifold homotopy equivalent to $M$ is homeomorphic to $M$.
The spheres $S^n$ are topologically rigid as predicted by
the \emph{Poincar\'e Conjecture}. 
We will focus on the
\emph{Borel Conjecture} which asserts:
\begin{center}
  {\em Closed aspherical manifolds are topologically rigid.}
\end{center}  
An important result of Farrell-Jones is that this conjecture holds
for manifolds of dimension $\geq 5$ which support a Riemannian metric
of non-positive sectional curvature~\cite{Farrell-Jones(1993c)}.  In
further work Farrell-Jones extended this result to cover compact
complete affine flat manifolds of dimension $\geq
5$~\cite{Farrell-Jones(1998)}.  This is done by considering complete
non-positively curved manifolds that are not necessary compact.  Note
that the universal cover is in these cases always homeomorphic to
Euclidean space.  We will go considerably beyond the world of
Riemannian manifolds of non-positive curvature.  In particular, we
prove the Borel Conjecture for closed aspherical manifolds of
dimension $\geq 5$, whose fundamental group is hyperbolic in the sense
of Gromov~\cite{Bridson-Haefliger(1999)},\cite{Gromov(1987)} or is
non-positively curved in the sense, that it admits a cocompact
isometric proper action on a finite dimensional $\CAT(0)$-space.

\begin{definition*}[The class of groups $\calb$]
\label{def:calb}
  Let $\calb$ be the smallest class of groups
  satisfying the following conditions:
  \begin{enumerate}
  \item \label{def:calb:hyperbolic}
  Hyperbolic groups belong to $\calb$;
  \item \label{def:calb:CAT(0)}
  If $G$ acts properly cocompactly and isometrically
        on a finite-dimensional $\CAT(0)$-space,
        then $G \in \calb$;
  \item \label{def:calb:subgroups}
  The class $\calb$ is closed under taking subgroups;
  \item \label{def:calb:extensions}
  Let $\pi \colon G \to H$ be a group homomorphism.
        If $H \in \calb$ and $\pi^{-1}(V) \in \calb$ for
        all virtually cyclic subgroups $V$ of $H$,
        then $G \in \calb$; 
  \item \label{def:calb:direct_products}
        $\calb$ is closed under finite direct products;
  \item \label{def:calb:free_products}
        $\calb$ is closed under finite  free products;  
  \item \label{def:calb:colimits}
  The class $\calb$ is closed under directed colimits, i.e.,
        if $\{ G_i \; | \; i \in I \}$ is a directed system of groups
        (with not necessarily injective structure maps)
        such that $G _i\in \calb$ for $i \in I$, then 
        $\colim_{i \in I} G_i$ belongs to $\calb$.
  \end{enumerate}
\end{definition*}

We  refer to groups that admit an action on a $\CAT(0)$-space as 
in (ii) as finite dimensional $\CAT(0)$-group.
Notice that the underlying $\CAT(0)$-space
is automatically complete and proper 
(see~\cite[Exercise~8.4~(1) on page~132]{Bridson-Haefliger(1999)}).
If a group acts properly, cocompactly and isometrically
on a $\CAT(0)$-space, then the boundary of this $\CAT(0)$-space
is finite dimensional~\cite[Theorem~12]{Swenson(1999Cutpoint)}.
It seems to be an open question whether the $\CAT(0)$-space itself 
can be arranged to be finite dimensional.

The following is our main theorem.

\begin{theoremA*}
  Let $M$ be a closed  aspherical manifold of dimension $\geq 5$.
  If $\pi_1(M) \in \calb$, then $M$ is topologically rigid.
\end{theoremA*}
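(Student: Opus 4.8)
The plan is to deduce Theorem~A from the $K$- and $L$-theoretic Farrell--Jones Conjecture for the groups in $\calb$, which is the real content to be established. Recall the standard surgery-theoretic implication: if $\pi=\pi_1(M)$ satisfies the Farrell--Jones Conjecture for algebraic $K$-theory in degrees $\le 1$, then $\Wh(\pi)=0$, $\widetilde{K}_0(\IZ\pi)=0$ and $K_{-n}(\IZ\pi)=0$ for all $n\ge 1$; if moreover $\pi$ satisfies the Farrell--Jones Conjecture for $L$-theory with $\langle-\infty\rangle$-decoration, then the $L$-theoretic assembly map relative to $E_{\VCyc}(\pi)$ is an isomorphism, and since $\pi$ is torsion-free (being $\pi_1$ of an aspherical manifold) and the lower $K$-groups vanish, all decorations coincide and the assembly map $H_*(B\pi;\bfL)\to L_*(\IZ\pi)$ is an isomorphism. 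Plugging this into Ranicki's algebraic surgery exact sequence and using its identification with the geometric surgery exact sequence of the aspherical manifold $M$ (here, together with the $s$-cobordism theorem, $\dim M\ge 5$ is used), one obtains that the topological structure set $\mathcal{S}^{\mathrm{Top}}(M)$ is trivial, i.e.\ $M$ is topologically rigid. So Theorem~A reduces to: every $G\in\calb$ satisfies the Farrell--Jones Conjecture — and one needs the strong form, with coefficients in additive $G$-categories and compatible with finite wreath products, so that the inheritance properties below apply.

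The second step is to observe that the class $\calfj$ of groups satisfying this strong Farrell--Jones Conjecture already satisfies conditions (iii)--(vii) of the definition of $\calb$: it is closed under subgroups, finite direct products, finite free products, and directed colimits, and by the Transitivity Principle it satisfies the extension condition~(iv) (if $\pi\colon G\to H$ has $H\in\calfj$ and $\pi^{-1}(V)\in\calfj$ for all virtually cyclic $V\le H$, then $G\in\calfj$). These are results established in earlier work. Since $\calb$ is by definition the \emph{smallest} class closed under (i)--(vii), it therefore suffices to prove that the two ``generating'' families lie in $\calfj$: hyperbolic groups, and groups acting properly, cocompactly and isometrically on a finite-dimensional $\CAT(0)$-space.

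For these two cases I would use the controlled-topology criterion for the Farrell--Jones Conjecture: it is enough to construct, for the group $G$ in question, a suitable ``flow space'' $FS$ (with a proper isometric $G$-action and a flow) together with, for every $n\in\IN$, a $G$-invariant open cover $\calu_n$ of $G\times FS$ whose members have stabilizers in $\VCyc$, whose covering dimension is bounded by a number $N$ \emph{independent of} $n$, and which is ``long in the flow direction'', meaning the flow up to time $n$ moves any point within a single member of $\calu_n$ up to uniformly controlled error. From such covers the equivariant controlled algebra machinery — via obstruction categories and a contracting transfer built from $FS$ — produces the isomorphism of the assembly map relative to $\VCyc$, for both $K$- and $L$-theory, with the required generality of coefficients. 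For a hyperbolic group $G$ the flow space is the space of (generalized) geodesics in a suitable compactification of the Rips complex of $G$, and Gromov-hyperbolicity supplies the exponential contraction transverse to the flow that keeps the long covers finite-dimensional. For a finite-dimensional $\CAT(0)$-group $G$ acting on $X$, the flow space is assembled from the geodesics of $X$ (or of $X\times\IR$), and here the finite-dimensionality of $X$ together with the finite-dimensionality of the boundary $\partial X$ (Swenson's theorem, quoted above) is what one leverages.

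The main obstacle is precisely this construction of long, thin, $\VCyc$-stabilized covers of the flow space in the $\CAT(0)$ case. In contrast to the hyperbolic setting, the geodesic flow on a $\CAT(0)$-space is not contracting — parallel geodesics stay a bounded distance apart forever — so one cannot simply rescale a cover of the base; instead the cover must be built by an inductive expansion that trades length along the flow against dimension, and one must handle the periodic flow lines (corresponding to closed geodesics, equivalently to infinite cyclic subgroups of $G$) with care so that the resulting members are stabilized by virtually cyclic subgroups rather than anything larger. Securing a dimension bound $N$ that does not grow with $n$, using only properness, cocompactness and the two finite-dimensionality inputs, is the technical heart of the argument; once the covers are in hand, the passage through controlled algebra and back through surgery theory to topological rigidity is, by comparison, formal.
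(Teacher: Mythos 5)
Your outline matches the paper's high-level strategy exactly: (a) reduce topological rigidity to the $K$- and $L$-theoretic Farrell--Jones Conjectures via the surgery exact sequence (this is Proposition~\ref{prop:FJ-implies-Borel}); (b) use the inheritance properties of the class of groups satisfying FJ with additive-category coefficients to reduce to the two generating families (this is Lemma~\ref{lem:calfj_closed_under}); and (c) verify FJ for hyperbolic and finite-dimensional $\CAT(0)$-groups via controlled algebra. One small inaccuracy in (b): wreath-product compatibility is not needed here — subgroups, transitivity, products and colimits come from the coefficients version, and free products are handled by Roushon's device of projecting $G_1 * G_2 \to G_1 \times G_2$ and observing that preimages of virtually cyclic subgroups are virtually free. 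In step (c), the paper also does not take $G\times FS$ (flow space) as the base of the cover: the axiomatic criterion ``transfer reducible over $\calf$'' (Definition~\ref{def:transfer-reducible}) concerns covers of $G\times X$ for a \emph{compact}, contractible, controlled $N$-dominated $X$ carrying only a \emph{homotopy $S$-action}; the flow space is a device in the companion papers for constructing those covers, and is noncompact, so one cannot dominate it directly.

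The genuine gap is in your last clause, that ``once the covers are in hand, the passage through controlled algebra \dots is, by comparison, formal.'' For $K$-theory this is roughly right. For $L$-theory it is not: because the space $X$ is not a manifold, there is no sphere-bundle fiber with Poincar\'e duality to feed into the transfer, as there is in Farrell--Jones's nonpositively curved setting. The paper resolves this with a new construction — the multiplicative hyperbolic Poincar\'e chain complex $D = P^{-*}\otimes P$ with the form induced by the flip — and measures control not over $X$ but over the symmetric square $P_2(X)=(X\times X)/\mathrm{flip}$, which is precisely why the family $\calf_2$ (index-$\le 2$ overgroups) appears in the $L$-theoretic statement of Theorem~\ref{the:axiomatic}. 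Sections~\ref{sec:The_space_P_2(X)} and~\ref{sec:The_transfer_in_L-theory}, together with the appendix on the multiplicative hyperbolic form, are devoted to making this transfer $\e$-controlled. Without this input the $L$-theoretic side of the argument cannot be completed, so your plan omits one of the main technical contributions of the paper. (A small terminological point: the $K$-theoretic output is only bijectivity in degrees $\le 0$ and surjectivity in degree $1$, as you correctly state at the start; your third paragraph's ``isomorphism \dots for both $K$- and $L$-theory'' overstates this.)
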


We prove this result by establishing the Farrell-Jones
Conjectures in algebraic $K$-theory and $L$-theory for
this class of groups.
(For finite dimensional $\CAT(0)$-groups this is not quite correct, the
result does not cover higher $K$-theory, but suffices 
for the Borel Conjecture and the applications below.)

Provided that Thurston's Geometrization Conjecture is true, every
closed $3$-manifold with torsionfree fundamental group is
topologically rigid and in particular the Borel Conjecture holds in
dimension three
(see~\cite[Theorem~0.7]{Kreck-Lueck(2009nonasph)}). Theorem~A above
remains true in dimension four if one additionally assumes that the
fundamental group is good in the sense of
Freedman~\cite{Freedman(1983)}. In dimension $\le 2$ the Borel
Conjecture is known to be true by classical results. More information
about topologically rigid (not necessarily aspherical) manifolds can be
found in~\cite{Kreck-Lueck(2009nonasph)}. 

A number of further important applications of our results on the
Farrell-Jones Conjecture can be summarized as follows.
The \emph{Novikov Conjecture} and the \emph{Bass Conjecture} hold for
all groups $G$ that belong to $\calb$.
If $G$ is torsion-free and belongs to $\calb$, then the
Whitehead group $\Wh(G)$ of $G$ is trivial, 
$\widetilde{K}_0(RG) = 0$ if $R$ is a principal ideal domain,
and  $K_n( R G) = 0$ for $n \leq -1$ if $R$ a regular ring.
Furthermore the \emph{Kaplansky Conjecture} holds for such $G$.
These and further applications of the Farrell-Jones
Conjectures are discussed in detail in~\cite{Bartels-Lueck-Reich(2008appl)}
and~\cite{Lueck-Reich(2005)}.
We remark that Hu~\cite{Hu(1993)} proved that if $G$ is the fundamental
group of a finite polyhedron with non-positive curvature,
then $\Wh(G) = 0$, $\widetilde{K}_0(\IZ G) = 0$ and
$K_n(\IZ G) = 0$ for $n \leq -1$.


\subsection*{The Farrell-Jones Conjectures}

According to the Farrell-Jones Conjectures~\cite{Farrell-Jones(1993a)} 
the algebraic $K$-theory and the $L$-theory of a group ring $\IZ G$ 
can in a certain sense be computed in terms of the algebraic $K$-theory 
and the $L$-theory of $\IZ V$ where $V$ runs over the family $\VCyc$ of
virtually cyclic subgroups of $G$.
These conjectures are the key to the Borel Conjecture.
See~\cite{Lueck-Reich(2005)} for a survey on the Farrell-Jones Conjectures.
Positive results on these conjectures for groups acting on non-positively
curved Riemannian manifolds are contained in~\cite{Farrell-Jones(1993a)}.
The Farrell-Jones Conjectures have been generalized to include group
rings $RG$ over arbitrary rings~\cite{Bartels-Farrell-Jones-Reich(2004)},
and further to twisted group rings which are best treated using the language
of actions of $G$ on additive 
categories~\cite{Bartels-Lueck(2009coeff)}, \cite{Bartels-Reich(2007coeff)}.
For a group $G$ the Farrell-Jones Conjectures with coefficients
in the additive $G$-category $\cala$ (with involution) 
assert that the assembly maps
\begin{eqnarray}
  \label{eq:VC-assembly-map-K}
  H_m^G (\EGF{G}{\VCyc};\bfK_{\cala} ) & \to & K_m (\intgf{G}{\cala}); \\
  \label{eq:VC-assembly-map-L}
  H_m^G (\EGF{G}{\VCyc};\bfL^{-\infty}_{\cala} ) & \to &
  L^{\langle -\infty \rangle}_m (\intgf{G}{\cala}), 
\end{eqnarray}
are isomorphisms.
The $K$-theoretic Farrell-Jones Conjectures (with coefficients in an
arbitrary additive category) for hyperbolic groups has been
proven by Bartels-L\"uck-Reich
in~\cite{Bartels-Lueck-Reich(2008hyper)}. 
Here we extend this result to the $L$-theoretic Farrell-Jones Conjecture
and (apart from higher $K$-theory) to $\CAT(0)$ groups.

\pagebreak

\begin{theoremB*} 
Let $G \in \calb$.
\begin{enumerate}
\item The $K$-theoretic assembly map~\eqref{eq:VC-assembly-map-K}
      is bijective in degree $m \le 0$ and 
      surjective in degree $m = 1$ for any additive $G$-category $\cala$;
\item The $L$-theoretic Farrell-Jones assembly
      map~\eqref{eq:VC-assembly-map-L}  
      with coefficients in any additive $G$-category $\cala$ with involution
      is an isomorphism. 
\end{enumerate}
\end{theoremB*}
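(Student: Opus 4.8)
The plan is to consider the class $\calfj$ of all groups $G$ for which the $K$-theoretic assembly map~\eqref{eq:VC-assembly-map-K} is bijective for $m \le 0$ and surjective for $m = 1$ and the $L$-theoretic assembly map~\eqref{eq:VC-assembly-map-L} is bijective, for every additive $G$-category $\cala$ (with involution in the $L$-theoretic case), and to show that $\calfj$ contains all hyperbolic groups and all finite-dimensional $\CAT(0)$-groups and is closed under the operations~(iii)--(vii) in the definition of $\calb$. Since $\calb$ is by construction the smallest class with these closure properties, this yields $\calb \subseteq \calfj$, which is precisely Theorem~B. (One has to phrase the induction with the ``lower and middle'' $K$-theoretic statement in degrees $\le 1$ throughout, together with the full $L$-theoretic statement.)

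The closure of $\calfj$ under~(iii)--(vii) is formal and will be taken from the general theory of the Farrell-Jones Conjecture with additive category coefficients (see for instance~\cite{Bartels-Lueck(2009coeff)}). Closure under subgroups comes from a restriction argument; closure under directed colimits from a continuity argument, both sides of the assembly maps commuting with directed colimits of groups; closure under the extensions in~(iv) from the transitivity (``fibered'') principle applied to $\pi \colon G \to H$, which reduces the assertion for $G$ to the assertion for $H$ and for the preimages $\pi^{-1}(V)$ of the virtually cyclic subgroups $V$ of $H$; and closure under finite direct products and finite free products is likewise a known consequence of the formalism — the latter via the action of a free product on its Bass--Serre tree with the free factors as vertex stabilizers. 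Hence everything reduces to placing hyperbolic groups and finite-dimensional $\CAT(0)$-groups into $\calfj$.

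The tool for this is a criterion deducing the Farrell-Jones Conjecture from coarse geometry: a group $G$ is \emph{transfer reducible} over $\VCyc$ if there is a number $N$ such that, at arbitrarily large coarse scale, $G$ admits a homotopy action on a finite-dimensional contractible controlled space $\overline{X}$ together with an equivariant cover of $G \times \overline{X}$ by open sets with virtually cyclic stabilizers, of dimension at most $N$, that is ``long'' in the coarse directions of $G$. One then uses the implication: if $G$ is transfer reducible over $\VCyc$, then the $L$-theoretic Farrell-Jones Conjecture with coefficients in an additive $G$-category with involution holds, and the $K$-theoretic one holds in degrees $m \le 1$. Establishing the $L$-theoretic half of this implication — which requires developing controlled $L$-theory of quadratic and symmetric Poincar\'e complexes over the relevant obstruction categories together with an $L$-theoretic squeezing theorem — is the main $L$-theoretic contribution. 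For hyperbolic groups, transfer reducibility over $\VCyc$ (in fact a stronger property that also yields the \emph{full} $K$-theoretic conjecture) was verified in~\cite{Bartels-Lueck-Reich(2008hyper)} by means of a flow space built from the geodesic flow on a thickening of the group; combined with the $L$-theoretic implication this places all hyperbolic groups in $\calfj$.

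For a group $G$ acting properly, cocompactly and isometrically on a finite-dimensional $\CAT(0)$-space $X$ one carries out the same programme with the flow space $\FS(X)$ of generalized geodesics in $X$ under the shift flow; this is again a finite-dimensional metrizable space carrying a proper isometric $G$-action and a continuous flow. The essential difficulty — and the reason why only the degrees $m \le 1$ of the $K$-theoretic conjecture come out — is that, unlike in the hyperbolic case, the $G$-action on $\FS(X)$ need not be cocompact and geodesics in $X$ need not diverge: inside a flat, parallel geodesics stay close, and $G$ may contain subgroups isomorphic to $\IZ^n$ with $n \ge 2$, which are not virtually cyclic. Producing the required long covers of uniformly bounded dimension therefore calls for an inductive argument that treats separately the non-cocompact part of $\FS(X)$ and the flat directions, and that controls the asymptotic behaviour of geodesics at the boundary $\partial X$; the geometric data extracted this way suffices to verify transfer reducibility over $\VCyc$, but not the stronger property available for hyperbolic groups, which is why only the degrees $\le 1$ in $K$-theory together with the full $L$-theory are obtained. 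I expect exactly this analysis — constructing long thin covers of the $\CAT(0)$ flow space while coping with its non-cocompactness, with flats, and with the geometry near $\partial X$, all with a uniform bound on the covering dimension — to be the main obstacle.
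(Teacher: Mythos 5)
Your overall plan coincides with the paper's: define $\calfj = \calfj^K_1 \cap \calfj^L$, show that it is closed under the operations (iii)--(vii), and seed it with hyperbolic groups and finite-dimensional $\CAT(0)$-groups via transfer reducibility and the axiomatic Theorem~\ref{the:axiomatic}; the controlled $L$-theoretic squeezing theorem and the multiplicative hyperbolic transfer on $P_2(X)$ are indeed the new technical core. One incidental correction: the restriction to $K_i$, $i\le 1$, is caused by the use of \emph{homotopy} $G$-actions rather than honest actions, so that the transfer is only defined on chain homotopy self-equivalences (hence $K_1$ and below); it is not a consequence of the $\CAT(0)$ long covers being of lower quality than the hyperbolic ones.

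The genuine gap is your assertion that closure under finite direct products and finite free products ``is likewise a known consequence of the formalism.'' Neither is purely formal; both require geometric seed cases. For direct products, iterating transitivity along the two coordinate projections reduces $G_1\times G_2$ to products $V_1\times V_2$ of virtually cyclic groups, and the paper supplies these by observing that $V_1\times V_2$ acts properly, cocompactly and isometrically on $\IR^2$ and is therefore a finite-dimensional $\CAT(0)$-group --- so this step already depends on Proposition~\ref{prop:CAT(0)-groups-satisfy}. For free products the paper does not use the Bass--Serre tree: it applies transitivity to $p\colon G_1*G_2 \to G_1\times G_2$ together with Roushon's lemma that $p^{-1}(V)$ is virtually free for $V$ virtually cyclic, and then uses that finitely generated virtually free groups are hyperbolic and that $\calfj$ is closed under directed colimits. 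Your Bass--Serre route yields the assembly isomorphism relative to the family $\calf_1$ of subgroups conjugate into a free factor, but descending from $\calf_1$ to $\VCyc$ via transitivity requires the assembly map relative to $\calf_1|_H$ to be an isomorphism for every virtually cyclic $H\le G_1*G_2$; for $H$ infinite dihedral (e.g.\ in $\IZ/2 * \IZ/2$) the family $\calf_1|_H$ consists of finite subgroups, and the corresponding $\Fin$-to-$\VCyc$ relative assembly map in $L$-theory fails to be surjective precisely because of Cappell's nonvanishing UNil groups. So the tree-theoretic route is obstructed in $L$-theory, and the detour through virtually free, hence hyperbolic, groups is essential to the argument.
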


We point out, that the proof of Theorem~B for $\CAT(0)$ groups depends
on Proposition~\ref{prop:CAT(0)-groups-satisfy},
which is proven in~\cite{Bartels-Lueck(2010CAT(0)flow)}.

For virtually abelian groups Quinn~\cite{Quinn(2005)} proved that
\eqref{eq:VC-assembly-map-K} is an isomorphism for all $n$
(more precisely in \cite{Quinn(2005)} only the untwisted case 
is considered: $\cala$ is the category of finitely generated free $R$-modules
for some ring $R$).
In Theorem~\ref{the:axiomatic} below we will give precise conditions under
which our methods establish the assertions of Theorem~B.
In the proof homotopy actions play a prominent role. 
In $K$-theory, these are easier to treat for the groups $K_i$, $i \leq 1$
than for higher $K$-theory and this is the reason for the restrictions in
the $K$-theory statement in Theorem~B.
A proof of the full $K$-theory statement would presumably have to 
take higher order homotopies into account, but we do not pursue this here.

Next we explain the relation between Theorem~B and Theorem~A.

\begin{proposition} 
\label{prop:FJ-implies-Borel}
Let $G$ be a torsion-free group. 
Suppose that the $K$-theoretic assembly map
$$H_m^G(\EGF{G}{\VCyc};\bfK_{\IZ}) \to K_m(\IZ G)$$
is an isomorphism for $m \le 0$ and surjective for $m = 1$
and that the $L$-theoretic assembly map
$$H_m^G(\EGF{G}{\VCyc};\bfL_{\IZ}^{-\infty}) \to L_m^{-\infty}(\IZ G)$$ 
is an isomorphism for all $m \in \IZ$, where we allow a twisting 
by any homomorphism $w \colon G \to \{\pm 1\}$.
Then the following holds:

\begin{enumerate}

\item \label{prop:FJ-implies-Borel:classical-L-assembly}
The assembly map 
\begin{equation}
\label{eq:classical-assembly-L}
H_n(BG; \bfL^s_{\IZ}) \to L^s_n(\IZ G)
\end{equation}
is an isomorphism for all $n$;

\item \label{prop:FJ-implies-Borel:dim_ge_5}
The Borel Conjecture is true in dimension $\ge 5$, i.e., 
if $M$ and $N$ are closed aspherical manifolds of dimensions $\ge 5$
with $\pi_1(M) \cong \pi_1(N) \cong G$, then $M$ and $N$ are homeomorphic and 
any homotopy equivalence $M \to N$  is homotopic to a homeomorphism
(This is also true in dimension $4$ if we assume that
$G$ is good in the sense of Freedman);

\item \label{prop:FJ-implies-Borel:Poincare}
Let $X$ be a finitely dominated Poincar\'e complex of dimension $\ge 6$
with  $\pi_1(X) \cong G$.
Then $X$ is homotopy equivalent to a compact ANR-homology manifold.
\end{enumerate}
\end{proposition}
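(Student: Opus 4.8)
The plan is to derive all three statements from the Farrell--Jones input together with standard machinery: manipulation of families of subgroups, the Rothenberg sequences for decorations, Ranicki's algebraic theory of surgery, and (for \ref{prop:FJ-implies-Borel:Poincare}) the surgery theory of $\mathrm{ANR}$-homology manifolds. I would first record the $K$-theoretic consequences. Since $G$ is torsion-free, every virtually cyclic subgroup of $G$ is trivial or infinite cyclic and $EG = \EGF{G}{\Fin}$. As $\IZ$ is regular, all Bass--Heller--Swan Nil-terms vanish, so the relative assembly map $H^G_m(EG;\bfK_{\IZ}) \to H^G_m(\EGF{G}{\VCyc};\bfK_{\IZ})$ is an isomorphism for every $m$; composed with the hypothesis this shows that the classical assembly map $H_m(BG;\bfK_{\IZ}) \to K_m(\IZ G)$ is bijective for $m \le 0$ and surjective for $m = 1$. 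Feeding this into the Atiyah--Hirzebruch spectral sequence and using $K_m(\IZ) = 0$ for $m \le -1$ gives $\widetilde K_0(\IZ G) = 0$ and $K_m(\IZ G) = 0$ for $m \le -1$; and since the image of the degree-$1$ assembly map lies in the subgroup of trivial units, surjectivity there yields $\Wh(G) = 0$.

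Next, using these vanishing results and the Rothenberg sequences, all decorations coincide: $\bfL^s_{\IZ} = \bfL^h_{\IZ} = \bfL^{-\infty}_{\IZ}$ (the periodic quadratic $L$-spectrum of the regular ring $\IZ$) and $L^s_m(\IZ G^w) = L^h_m(\IZ G^w) = L^{-\infty}_m(\IZ G^w)$ for every orientation homomorphism $w$, compatibly with the assembly maps. Again because $G$ is torsion-free, the only virtually cyclic subgroups are $\{1\}$ and $\IZ$, and for infinite cyclic $V$ the $\langle-\infty\rangle$-theory of $\IZ V$ satisfies Bass--Heller--Swan with vanishing Nil-terms; hence the relative assembly map $H^G_m(EG;\bfL^{-\infty}_{\IZ}) \to H^G_m(\EGF{G}{\VCyc};\bfL^{-\infty}_{\IZ})$ is an isomorphism. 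Composing this with the $L$-theory hypothesis and switching decorations shows that the classical $L$-theory assembly map $H_n(BG;\bfL^s_{\IZ}) \to L^s_n(\IZ G^w)$ is an isomorphism for all $n$ and all $w$; taking $w$ trivial this is assertion~\eqref{eq:classical-assembly-L}, which is \ref{prop:FJ-implies-Borel:classical-L-assembly}.

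For \ref{prop:FJ-implies-Borel:dim_ge_5}, let $M$ and $N$ be closed aspherical manifolds of dimension $n \ge 5$ with $\pi_1(M) \cong \pi_1(N) \cong G$. Being aspherical they are homotopy equivalent to $BG$, hence to one another, so it suffices to show that the topological structure set $\mathcal{S}^{\mathrm{TOP}}(N)$ is a single point. By Ranicki's identification of the geometric surgery exact sequence (valid for $n \ge 5$) with the algebraic surgery sequence, there is an exact sequence
\[
H_{n+1}(N;\bfL^s_{\IZ}\langle 1\rangle) \xrightarrow{A\langle1\rangle} L^s_{n+1}(\IZ G^w) \to \mathcal{S}^{\mathrm{TOP}}(N) \to H_n(N;\bfL^s_{\IZ}\langle 1\rangle) \xrightarrow{A\langle1\rangle} L^s_n(\IZ G^w),
\]
where $w = w_1(N)$ and $\bfL^s_{\IZ}\langle 1\rangle$ is the $1$-connective cover of the periodic spectrum $\bfL^s_{\IZ}$. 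The cofibre $C$ of $\bfL^s_{\IZ}\langle 1\rangle \to \bfL^s_{\IZ}$ has $\pi_q(C) = 0$ for $q \ge 1$, so since $N$ is an $n$-dimensional complex the Atiyah--Hirzebruch spectral sequence gives $H_{n+1}(N;C) = 0$; therefore $H_{n+1}(N;\bfL^s_{\IZ}\langle1\rangle) \to H_{n+1}(N;\bfL^s_{\IZ})$ is surjective and $H_n(N;\bfL^s_{\IZ}\langle1\rangle) \to H_n(N;\bfL^s_{\IZ})$ is injective. As $N \simeq BG$ and the periodic assembly map $H_*(BG;\bfL^s_{\IZ}) \to L^s_*(\IZ G^w)$ is an isomorphism by the previous two paragraphs, it follows that $A\langle1\rangle$ is surjective in degree $n+1$ and injective in degree $n$, whence $\mathcal{S}^{\mathrm{TOP}}(N)$ is trivial; equivalently $M$ and $N$ are homeomorphic and every homotopy equivalence $M \to N$ is homotopic to a homeomorphism. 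In dimension $4$ the same argument applies using the surgery theory of Freedman--Quinn once $G$ is assumed good.

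Finally, for \ref{prop:FJ-implies-Borel:Poincare}, let $X$ be a finitely dominated Poincar\'e complex of dimension $n \ge 6$ with $\pi_1(X) \cong G$; since $\widetilde K_0(\IZ G) = 0$ we may replace $X$ by a finite complex. Ranicki's total surgery obstruction, refined through the surgery theory of $\mathrm{ANR}$-homology manifolds of Bryant--Ferry--Mio--Weinberger, yields an obstruction --- living in a structure group built from the \emph{periodic} quadratic $L$-spectrum $\bfL^s_{\IZ}$ --- whose vanishing is equivalent to $X$ being homotopy equivalent to a compact $\mathrm{ANR}$-homology manifold. Because $X$ is a finite-dimensional complex the periodic spectrum $\bfL^s_{\IZ}$ and its connective cover have the same $X$-homology in the degrees that matter, and the assembly map $H_*(X;\bfL^s_{\IZ}) \to L^s_*(\IZ G^w)$ factors through the isomorphism $H_*(BG;\bfL^s_{\IZ}) \to L^s_*(\IZ G^w)$ established above; as in the previous paragraph this forces the obstruction to vanish. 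The main obstacle is not the bookkeeping with families and decorations, which is routine, but the correct use of Ranicki's algebraic surgery machinery --- in particular keeping track of the passage between the $1$-connective and the periodic quadratic $L$-spectrum, which is precisely what the $w$-twisted hypothesis in all degrees is designed to absorb --- together with, for \ref{prop:FJ-implies-Borel:Poincare}, the input from the surgery theory of $\mathrm{ANR}$-homology manifolds.
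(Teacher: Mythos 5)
Your arguments for parts \ref{prop:FJ-implies-Borel:classical-L-assembly} and \ref{prop:FJ-implies-Borel:dim_ge_5} are correct and track the paper's proof, filling in the details behind its citations: reducing to the classical assembly maps via torsion-freeness and regularity of $\IZ$, extracting $\Wh(G)=0$, $\widetilde K_0(\IZ G)=0$, $K_n(\IZ G)=0$ for $n<0$ from the $K$-theoretic hypothesis, using Rothenberg sequences to remove decorations, and in \ref{prop:FJ-implies-Borel:dim_ge_5} comparing the $1$-connective cover $\bfL^s_{\IZ}\langle 1\rangle$ with the periodic spectrum via the cofiber $C$ (with $\pi_q(C)=0$ for $q\geq 1$, so $H_{n+1}(N;C)=0$ for an $n$-complex $N$) and using $N\simeq BG$ to convert the assembly isomorphism over $BG$ into surjectivity of $A\langle 1\rangle$ in degree $n+1$ and injectivity in degree $n$, hence triviality of the structure set.

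Part \ref{prop:FJ-implies-Borel:Poincare}, however, has a genuine gap. You transplant the degree-count argument from \ref{prop:FJ-implies-Borel:dim_ge_5}, asserting that because the assembly map $H_*(X;\bfL^s_{\IZ})\to L^s_*(\IZ G^w)$ factors through the isomorphism $H_*(BG;\bfL^s_{\IZ})\to L^s_*(\IZ G^w)$, the obstruction vanishes ``as in the previous paragraph.'' But the asphericality that made that paragraph work is now absent: $X$ is merely a finite Poincar\'e complex with $\pi_1(X)\cong G$, so the classifying map $X\to BG$ is not a homology isomorphism, the composite $H_*(X;\bfL^s_{\IZ})\to H_*(BG;\bfL^s_{\IZ})\to L^s_*(\IZ G^w)$ need not be surjective in degree $n+1$ nor injective in degree $n$, and consequently the $4$-periodic structure group $\bar{\mathcal{S}}_{n+1}(X)$ in which the obstruction of Bryant--Ferry--Mio--Weinberger lives does not vanish in general. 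What has to be shown is that the \emph{particular element} $\bar s(X)\in\bar{\mathcal{S}}_{n+1}(X)$ is zero, and that requires the algebraic theory of the total surgery obstruction in Ranicki's Remark~25.13 together with the BFMW Main Theorem --- not merely the observation that the assembly map factors through an isomorphism. The paper's proof of \ref{prop:FJ-implies-Borel:Poincare} is just those two citations, and that is the correct move; the extra argument you add is the one that does not hold.
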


\begin{proof}%
\ref{prop:FJ-implies-Borel:classical-L-assembly}
Because $G$ is torsion-free and $\IZ$ is regular, 
the above assembly maps are equivalent to the maps
\begin{eqnarray}
  \label{eq:classical-assembly-K}
  H_m(BG;\bfK_{\IZ}) & \to & K_m(\IZ G); \\
  \label{eq:classical-assembly-L-infty}
  H_m(BG;\bfL_{\IZ}^{-\infty}) & \to & L_m^{-\infty}(\IZ G),
\end{eqnarray}
compare~\cite[Proposition~2.2 on page~685]{Lueck-Reich(2005)}.
Because~\eqref{eq:classical-assembly-K} is bijective for $m \le 0$ 
and surjective for $m = 1$, we have $\Wh(G) = 0$, $\widetilde{K}_0(\IZ G) = 0$
and $K_i(\IZ G) = 0$ for $i < 0$,
compare~\cite[Conjecture~1.3 on page~653 and Remark~2.5 on page~679]{Lueck-Reich(2005)}.
This implies that~\eqref{eq:classical-assembly-L-infty} is 
equivalent to~\eqref{eq:classical-assembly-L},
compare~\cite[Proposition~1.5 on page~664]{Lueck-Reich(2005)}.
\\[1mm]%
\ref{prop:FJ-implies-Borel:dim_ge_5}
We have to show that the geometric structure set of a closed aspherical
manifold of dimension $\ge 5$ consists of precisely one element.
This follows from~\ref{prop:FJ-implies-Borel:classical-L-assembly}
and the algebraic surgery exact sequence of 
Ranicki~\cite[Definition~15.19 on page~169]{Ranicki(1992)} which agrees
for an $n$-dimensional manifold for $n \ge 5$ with the Sullivan-Wall geometric
exact surgery sequence (see~\cite[Theorem~18.5 on page~198]{Ranicki(1992)}).
\\[1mm]%
\ref{prop:FJ-implies-Borel:Poincare}
See~\cite[Main Theorem on page~439]{Bryant-Ferry-Mio-Weinberger(1996)}
and~\cite[Remark~25.13 on page~297]{Ranicki(1992)}.
\end{proof}

The assembly maps appearing in the proposition above are special 
cases of the assembly maps~\eqref{eq:VC-assembly-map-K}
and~\eqref{eq:VC-assembly-map-L},
compare~\cite[Corollary~6.17]{Bartels-Reich(2007coeff)}.
In particular, Theorem~A follows from Theorem~B and
the above Proposition~\ref{prop:FJ-implies-Borel}.
In work with Shmuel Weinberger~\cite{Bartels-Lueck-Weinberger(2009)} 
we use Theorem~B to
show that if the boundary of a torsion-free hyperbolic group is
a sphere of dimension $\geq 5$, then this hyperbolic group is
the fundamental group of a closed aspherical manifold, not
just of an ANR-homology manifold.


\subsection*{Some groups from $\calb$}

The class $\calb$ contains in particular directed colimits of 
hyperbolic groups.
The $K$-theory version of the Farrell-Jones Conjecture holds
in all degrees for directed colimits of hyperbolic 
groups~\cite[Theorem~0.8~(i)]{Bartels-Echterhoff-Lueck(2008colim)}.
Thus Theorem~B implies that the Farrell-Jones Conjecture
in $K$- and $L$-theory hold for
directed colimits of hyperbolic groups.
This class of groups contains a number of groups with unusual
properties.
Counterexamples to the Baum-Connes Conjecture with coefficients
are groups with expanders~\cite{Higson-Lafforgue-Skandalis(2002)}.
The only known construction of such groups is as directed colimits
of hyperbolic groups (see~\cite{Arzhantseva-Delzant(2008)}).
Thus the Farrell-Jones Conjecture in $K$- and $L$-theory holds
for the only at present known counter-examples to the Baum-Connes Conjecture
with coefficients.
(We remark that the formulation of the Farrell-Jones Conjecture we
are considering allows for twisted group rings, so this includes the
correct analog of the Baum-Connes Conjecture with coefficients.)
The class of directed colimits of hyperbolic groups 
contains for instance a torsion-free non-cyclic group all 
whose proper subgroups are cyclic constructed by 
Ol'shanskii~\cite{Olshanskii(1979)}.
Further examples are mentioned in~\cite[page~5]{Olshanskii-Osin-Sapir(2009)}
and~\cite[Section~4]{Sapir(2007)}. 
These later examples all lie in the class of lacunary groups.
Lacunary groups can be characterized as certain colimits
of hyperbolic groups.

A Coxeter system $(W,S)$ is a group $W$ together with a fundamental
set $S$ of generators, see for
instance~\cite[Definition~3.3.2]{Davis(2008book)}.  
Associated to the Coxeter system $(W,S)$ is a simplicial complex $\Sigma$ with a
metric~\cite[Chapter~7]{Davis(2008book)} and a proper isometric
$W$-action.  Moussong~\cite{Moussong(1987)} showed that $\Sigma$ is a
$\CAT(0)$-space, see also~\cite[Theorem~12.3.3]{Davis(2008book)}.  In
particular, if $\Sigma$ is finite dimensional and the action is
cocompact, then $W$ is a finite dimensional $\CAT(0)$-group and
belongs to $\calb$.  
This is in particular the case if $S$ is finite.
If $S$ is infinite, then any finite subset $S_0 \subset S$ generates
a Coxeter group $W_0$, see~\cite[Theorem~4.1.6]{Davis(2008book)}.
Then $W_0$ belongs to $\calb$ and so does $W$ as it is the colimit
of the $W_0$.
Therefore Coxeter groups belong to $\calb$.
Davis constructed for every $n \geq 4$ closed aspherical 
manifolds whose universal cover is not
homeomorphic to Euclidean space~\cite[Corollary~15.8]{Davis(1983)}.
In particular, these manifolds do not support metrics of non-positive
sectional curvature.  The fundamental groups of these examples are
finite index subgroups of Coxeter groups $W$. Thus these fundamental groups
lie in $\calb$ and Theorem~A implies that Davis' examples are
topological rigid (if the dimension is at least $5$).

Davis and Januszkiewicz used Gromov's hyperbolization technique 
to construct further exotic aspherical manifolds.
They showed that
for every $n \geq 5$ there are closed aspherical $n$-dimensional manifolds
whose universal cover is a $\CAT(0)$-space whose fundamental group
at infinite is non-trivial~\cite[Theorem~5b.1]{Davis-Januszkiewicz(1991)}.
In particular, these universal covers are not homeomorphic to Euclidean space.
Because these examples are in addition non-positively curved polyhedron,
their fundamental groups are finite-dimensional 
$\CAT(0)$-groups and belong to $\calb$.
There is a variation of this construction that uses  
the strict hyperbolization of Charney-Davis~\cite{Charney-Davis(1995)}   
and produces closed aspherical manifolds whose
universal cover is not homeomorphic to Euclidean space and 
whose fundamental group is hyperbolic.
All these examples are topologically rigid by Theorem~A.

Limit groups as they appear for instance in~\cite{Zela(2001)} 
have been in the focus of geometric group theory for the last years. 
Expositions about limit groups are for 
instance~\cite{Champetier+Guirardel(2005)} and~\cite{Paulin(2004)}. 
Alibegovi\'c-Bestvina have shown that limit groups are 
$\CAT(0)$-groups~\cite{Alibegovic+Bestvina(2006)}.
A straight forward analysis of their argument shows, that
limit groups are finite dimensional $\CAT(0)$-groups and belong
therefore to our class $\calb$.

If a locally compact group $L$ acts properly cocompactly and isometrically on
a finite dimensional $\CAT(0)$-space, then the same is true for any
discrete cocompact subgroup of $L$.
Such subgroups belong therefore to $\calb$.
For example, let $\bfG$ be a reductive algebraic group defined over
a global field $k$ whose $k$-rank is $0$.
Let $S$ be a finite set of places of $k$ that contains the infinite 
places of $k$.
The group $\bfG_S := \prod_{v \in S} \bfG(k_v)$ admits an isometric proper
cocompact action on a finite dimensional $\CAT(0)$-space,
see for example~\cite[page~40]{Ji(2007SarithI)}.
Because $S$-arithmetic subgroups of $\bfG(k)$ can be realized 
(by the diagonal embedding) as discrete cocompact subgroups
of $\bfG_S$ (see for example~\cite{Ji(2007SarithI)}), 
these $S$-arithmetic groups belong to $\calb$.

Finitely generated virtually abelian groups are finite dimensional 
$\CAT(0)$-groups and belong to $\calb$.
A simple induction shows that this implies that all
virtually nilpotent groups belong to $\calb$,
compare the proof of~\cite[Lemma~1.13]{Bartels-Lueck-Reich(2008appl)}.


\subsection*{Outline of the proof}

In Section~\ref{sec:Axiomatic_formulation} we formulate geometric
conditions under which we can prove the Farrell-Jones Conjectures.
These conditions are satisfied for hyperbolic groups and
finite dimensional $\CAT(0)$-groups (see Section~\ref{sec:proof-of-thm-B}) and
are similar to the conditions under which the $K$-theoretic Farrell-Jones 
Conjecture has been proven in~\cite{Bartels-Lueck-Reich(2008hyper)}.
Very roughly, these conditions assert the existence of a compact space $X$
with a homotopy $G$-action and the existence of a ``long thin'' 
$G$-equivariant cover of $G \x X$.
New is the use of homotopy actions here; this is crucial for the application
to finite dimensional $\CAT(0)$-groups. It suffices to have homotopy
actions at hand since the transfer maps require only homotopy chain actions.

The general strategy of the proof is similar to the one employed 
in~\cite{Bartels-Lueck-Reich(2008hyper)}.
Controlled algebra is used to set up an obstruction category whose
$K$- respectively $L$-theory gives the homotopy fiber of the
assembly map in question, see Theorem~\ref{thm:obstruction-category}.
We will mostly study $K_1$ and $L_0$ of these categories.
In $K$-theory we represent elements by automorphisms 
or more generally by self-chain homotopy equivalences.
In $L$-theory we represent elements by quadratic forms
or more generally by $0$-dimensional ultra-quadratic Poincar\'e complexes,
compare Subsection~\ref{subsec:controlled-algebraic-Poincare-cx}.
For this outline it will be convenient to call these representatives cycles.
In all cases these cycles come with a notion of size.
More precisely, the obstruction category depends on a free $G$-space $Z$
(in the simplest case this space is $G$, 
but it is important to keep this space variable)  
and associated to any cycle is a subset (its support) of $Z \x Z$. 
If $Z$ is a metric space, then a cycle is said to be $\alpha$-controlled
over $Z$ for some number $\alpha > 0$ if $d_Z(x,y) \leq \alpha$ for
all $(x,y)$ in the support of the cycle.
The Stability Theorem~\ref{thm:stability-element} 
for the obstruction category asserts (for a class of metric space),
that there is $\e > 0$ such that the $K$-theory respectively $L$-theory
class of every $\e$-controlled cycle is trivial.
The strategy of the proof is then to prove that
the $K$-theory respectively $L$-theory of the obstruction category is trivial 
by showing that every cycle is equivalent to an $\e$-controlled cycle.

This is achieved in two steps.
Firstly, a transfer replacing $G$ by $G \x X$ for a suitable compact space
$X$ is used.
Secondly, the ``long thin'' cover of $G \x X$ is used to construct
a contracting map from $G \x X$ to a $\VCyc$-$CW$-complex,
see Proposition~\ref{prop:contracting-maps}.
More precisely, this map is contracting with respect to the $G$-coordinate,
but expanding with respect to the $X$-coordinate. 
Thus it is crucial that the output of the transfer is a cycle that is
$\e$-controlled over $X$ for very small $\e$.    
To a significant extend, the argument in the $L$-theory case and the
$K$-theory case are very similar.
For example, the formalism of controlled algebra works for
$L$-theory in the same way as for $K$-theory.
This is because both functors have very similar properties,
compare Theorem~\ref{thm:swindle_plus_karoubi-for-K_plus_L}.
However, the $L$-theory transfer is quite different from
the $K$-theory transfer and requires new ideas.


\subsection*{$L$-theory transfer}

The transfer is used to replace a cycle $a$ in the $K$- respectively $L$-theory 
of the obstruction category over $G$ by a cycle $\tr(a)$ over 
$G \x X$, such that
$\tr(a)$ is $\e$-controlled (for very small $\e$) if control is measured
over $X$ (using the canonical projection $G \x X \to X$).
In $K$-theory the transfer is essentially obtained by taking a tensor product
with the singular chain complex of $X$.
More precisely, we use a chain complex $P$ chain homotopy equivalent to
the singular complex, such that $P$ is in addition $\e$-controlled over $X$,
compare Proposition~\ref{prop:projective-chain-complex}.
(Roughly, this is the simplicial chain complex of a sufficiently fine 
triangulation of $X$.)
The homotopy action on $X$ induces a corresponding action on $P$.
This action is important as it is used to twist the tensor product. 
The homology of $P$ agrees with the homology of a point (because
$X$ is contractible).
This is important as it controls the effect of the transfer in $K$-theory,
i.e., $\tr(a)$ projects to $a$ under the map induced by $G \x X \to G$.
The datum needed for transfers in $L$-theory is a chain complex together with
a symmetric form, i.e., a symmetric Poincar\'e complex.
It is not hard, because $P$ has the homology of a point, to equip
$P$ with a symmetric form.
However, such a symmetric form on $P$ will not be $\e$-controlled over
$X$ and is therefore not sufficient for the purpose of producing a
cycle $\tr(a)$ which is $\e$-controlled over $X$.

In the case treated by Farrell-Jones, where $G$ is the fundamental group of a 
non-positively curved manifold $M$, this problem is solved roughly as follows.
In this situation the sphere bundle $SM \to M$ is considered.
The fiber of this bundle is a manifold and Poincar\'e duality yields
an $\e$-controlled symmetric form on the simplicial chain complex of
a sufficiently fine triangulation of the fiber.
However, the signature of the fiber governs the effect of the transfer in 
$L$-theory and since the signature of the sphere is trivial the transfer
is the zero map in $L$-theory in this case.
This problem is overcome by considering the quotient of the fiber-wise
product $SM \x_M SM$ by the involution that flips the two factors.
The fiber of this bundle is a $\IZ[1/2]$-homology manifold whose
signature is $1$ (if the dimension of $M$ is odd).
In order to get a transfer over $\IZ$ rather than $\IZ[1/2]$ the singularities
of this fiber have to be studied and this leads to very technical arguments but
can be done, see~\cite[Section~4]{Farrell-Jones(1989)}.
The main problem here is that the normal bundle of the fixed point set 
of the flip (i.e., the diagonal sphere in the product) is in general not 
trivial.

For the groups considered here the space $X$ will in general not be a manifold 
and we are forced to use a different approach to the $L$-theory transfer.
Given the chain complex $P$, we use what we call the 
\emph{multiplicative hyperbolic Poincar\'e chain complex} on $P$. 
As a chain complex this is $D := P^{-*} \ox P$ and this chain complex
carries a natural symmetric form given by the canonical isomorphism
$(P^{-*} \ox P)^{-*} \cong P \ox P^{-*}$ followed by the flip
$P \ox P^{-*} \cong P^{-*} \ox P$.
The multiplicative hyperbolic Poincar\'e chain complex can naturally be 
considered as a complex over $X \x X$.
Because of the appearance of the flip in the construction
it is not $\e$-controlled over $X \x X$.
But this flip is the only problem and the multiplicative
hyperbolic Poincar\'e chain complex becomes $\e$-controlled over the 
quotient $P_2(X)$ of $X \x X$ by the flip $(x,y) \mapsto (y,x)$.
This construction appears in the proof of
Proposition~\ref{prop:controlled-symmetric-chain-cx}. 
In the Appendix~\ref{sec:classical-transfer_and_mult-hyperbolic} 
to this paper, we review  classical
(i.e., uncontrolled) transfers in $K$-theory (for the Whitehead group)
and $L$-theory. 
The reader is encouraged to refer to the appendix for motivation
while reading the Sections~\ref{sec:transfer-up-to-homotopy},~%
\ref{sec:The_transfer_in_K-theory} and~\ref{sec:The_transfer_in_L-theory}.
The appendix also contains a discussion of the multiplicative
hyperbolic Poincar\'e chain complex in a purely algebraic context.


\subsection*{Acknowledgments}

We thank Andrew Ranicki and Shmuel Weinberger for fruitful discussions
and comments. We are indebted to Tom Farrell and Lowell Jones for their
wonderful conjecture and work surrounding it.
The work was financially supported by the
Sonderforschungsbereich 478 \--- Geometrische Strukturen in der
Mathematik \--- and the Max-Planck-Forschungspreis and the Leibniz-Preis 
of the second author. 
The second author wishes also to thank the Max-Planck Institute for 
Mathematics in Bonn for its hospitality during his stay in from 
October until December 2007 when parts of this papers were written.\\

\noindent
The paper is organized as follows:
\\[2mm]
\begin{tabular}{ll}%
\ref{sec:Axiomatic_formulation}. & Axiomatic formulation
\\%
\ref{sec:proof-of-thm-B}. & Proof of Theorem~B modulo 
                                 Theorem~\ref{the:axiomatic}
\\%
\ref{sec:S-long_covers_yield_contracting_maps}. & $S$-long covers 
                                    yield contracting maps
\\%
\ref{sec:controlled-algebra_plus_L}. & Controlled algebra 
                                 with a view towards $L$-theory
\\%
\ref{sec:stability_plus_assembly}. & Stability and the assembly map
\\%
\ref{sec:transfer-up-to-homotopy}. & Transfer up to homotopy
\\%
\ref{sec:The_transfer_in_K-theory}. & The transfer in $K$-theory
\\%
\ref{sec:Proof_of_Prop}. & Proof of 
                      Proposition~\ref{prop:projective-chain-complex}
\\%
\ref{sec:The_space_P_2(X)}. & The space $P_2({X})$
\\%
\ref{sec:The_transfer_in_L-theory}. & The transfer in $L$-theory
\\%
\ref{sec:Proof_of_axiomatic_Theorem}. & Proof of Theorem~\ref{the:axiomatic}
\\%
\ref{sec:classical-transfer_and_mult-hyperbolic}. 
& Classical transfers and the multiplicative hyperbolic form
\\ & References
\end{tabular}


\typeout{------------------ Axiomatic formulation  ---------------------}

\section{Axiomatic formulation}
\label{sec:Axiomatic_formulation}

\begin{summary*}
  In this section we describe conditions under which
  our arguments prove the Farrell-Jones Conjectures.
  If these conditions are satisfied for a group $G$ with respect to
  a family $\calf$ of subgroups, then $G$ 
  is said to be \emph{transfer reducible} over $\calf$,
  see Definition~\ref{def:transfer-reducible}.
  Very roughly this means that
  there is a space $X$ satisfying suitable finiteness conditions
  such that $G \x X$ admits $G$-equivariant covers of uniformly
  bounded dimension that are very long in the $G$-direction, up to a
  twist described by a homotopy action on $G$, see in particular
  Definition~\ref{def:S-action_plus_long-covers}~\ref{def:S-action_plus_long-covers:long}.   
  Theorem~\ref{the:axiomatic} is the most general statement about the
  Farrell-Jones Conjectures in this paper.
  It is conceivable that it applies to further interesting groups 
  that do not belong to $\calb$.
\end{summary*}  

A \emph{family  $\calf$ of subgroups of the group $G$} is a set of subgroups
of $G$ closed under conjugation and taking subgroups.

\begin{theorem}[Axiomatic Formulation]
  \label{the:axiomatic}
  Let $\calf$ be a family of subgroups of the group $G$.

  If $G$ is transfer reducible over $\calf$
  (see Definition~\ref{def:transfer-reducible})
  then the following holds:
  \begin{enumerate}
  \item \label{thm:axiomatic:K}
        Let $\cala$ be an additive $G$-category, i.e., an additive
        category with right $G$-action by functors of additive categories.
        Then the assembly map
        \begin{equation}
        \label{eq:assembly-map-calf-K}
        H_m^G (\EGF{G}{\calf};\bfK_{\cala} ) \to K_m (\intgf{G}{\cala})
        \end{equation}
        is an isomorphism for $m < 1$ and surjective for
        $m = 1$;
  \item \label{thm:axiomatic:L}
        Let $\cala$ be an additive $G$-category with involution (in the
        sense of~\cite[Definition~4.22]{Bartels-Lueck(2009coeff)}).
        Then the  assembly map
        \begin{equation}
        \label{eq:assembly-map-calf-2-L}
        H_m^G (\EGF{G}{\calf_2};\bfL^{-\infty}_{\cala} ) \to
        L^{\langle -\infty \rangle}_m (\intgf{G}{\cala})
        \end{equation}
        is an isomorphism for all $m \in \IZ$.
        Here $\calf_2$ is the family of all subgroups
        $V \subseteq G$ for which there is $F \subseteq V$ such that
        $F \in \calf$ and $[V:F] \leq 2$.
  \end{enumerate}
\end{theorem}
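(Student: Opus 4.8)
The strategy is to deduce Theorem~\ref{the:axiomatic} from an obstruction-category picture together with a stability theorem, following the template of~\cite{Bartels-Lueck-Reich(2008hyper)} but carrying the argument through simultaneously for $K$- and $L$-theory. First I would invoke the obstruction-category formalism (Theorem~\ref{thm:obstruction-category}): for a free $G$-space $Z$ equipped with a suitable $G$-invariant metric-type structure, there is a category $\calo^G(Z;\cala)$ of controlled modules whose $K$- respectively $L$-theory is the homotopy fiber of the relevant assembly map. Concretely, it suffices to show that the $K$-theory of the obstruction category vanishes in degrees $\le 1$ (with surjectivity in degree $1$) and the $L$-theory vanishes in all degrees, where the relevant space $Z$ is built from $EG$ relative to $\EGF{G}{\calf}$ (respectively $\EGF{G}{\calf_2}$ in the $L$-theory case — the passage to the family $\calf_2$ is exactly what is needed to absorb the orientation/involution bookkeeping, since quadratic forms see subgroups of index $\le 2$).

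Next I would reduce this vanishing statement to producing small representatives: by the Stability Theorem~\ref{thm:stability-element}, there is $\e>0$ such that every $\e$-controlled cycle (an automorphism / self-chain-homotopy-equivalence in $K$-theory, a $0$-dimensional ultra-quadratic Poincaré complex in $L$-theory) represents the trivial class. So it suffices to show every cycle over $Z=G$ can be pushed to an $\e$-controlled one. This is where the hypothesis that $G$ is transfer reducible over $\calf$ enters, and I would use it in two steps. Step one: apply the transfer to replace a cycle $a$ over $G$ by a cycle $\tr(a)$ over $G\times X$ that is $\e$-controlled when measured over $X$ (via $G\times X\to X$); in $K$-theory this is the twisted tensor product with the finite controlled chain complex $P$ of Proposition~\ref{prop:projective-chain-complex} carrying the induced homotopy chain action, and in $L$-theory it is the multiplicative hyperbolic Poincaré complex $D=P^{-*}\otimes P$, controlled over the quotient $P_2(X)$ of $X\times X$ by the flip (Proposition~\ref{prop:controlled-symmetric-chain-cx}), whose signature is $1$ so that the transfer does not annihilate the class. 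One must check $\tr(a)$ still projects to $a$ under $G\times X\to G$ — this uses that $P$ has the homology of a point. Step two: use the long thin $G$-equivariant cover of $G\times X$ supplied by transfer reducibility to build a map $G\times X\to Y$ to a $\calf$-$CW$-complex (respectively $\calf_2$-$CW$-complex) that is contracting in the $G$-direction while only boundedly expanding in the $X$-direction (Proposition~\ref{prop:contracting-maps}); pushing $\tr(a)$ forward along this map and using that it started out $\e$-controlled over $X$ yields a cycle that is genuinely $\e$-controlled over the target, hence trivial by stability.

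Finally I would assemble these pieces: triviality of all such cycles forces the obstruction $K$- and $L$-groups to vanish in the claimed range, and then Theorem~\ref{thm:obstruction-category} gives that the assembly maps~\eqref{eq:assembly-map-calf-K} and~\eqref{eq:assembly-map-calf-2-L} are isomorphisms in the stated degrees. The main obstacle, and the place where genuinely new input is required relative to~\cite{Bartels-Lueck-Reich(2008hyper)}, is the $L$-theory transfer: one must produce an $\e$-controlled \emph{symmetric} (indeed ultra-quadratic Poincaré) structure over $X$, and the naive symmetric form on $P$ is not $\e$-controlled. Resolving this via the multiplicative hyperbolic complex on $P^{-*}\otimes P$ — controlled only after descending to $P_2(X)$ because of the unavoidable flip — and verifying that this transfer has the correct (nonzero) effect in $L$-theory, is the technical heart of the argument; a secondary difficulty is that since $X$ carries only a homotopy $G$-action (not a genuine one), everything must be phrased in terms of homotopy chain actions, which is also why the $K$-theory conclusion is limited to degrees $\le 1$.
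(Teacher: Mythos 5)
Your plan matches the paper's proof in all essentials: reduce to finitely generated $G$ via the colimit inheritance property, pass to the obstruction category $\calo^G(\EGF{G}{\calf}\;;\cala)$ (Theorem~\ref{thm:obstruction-category}), represent elements by automorphisms resp.\ $0$-dimensional ultra-quadratic Poincar\'e complexes, transfer to $G\times X$ resp.\ $G\times P_2(X)$ using the homotopy chain action of Proposition~\ref{prop:projective-chain-complex} and the multiplicative hyperbolic Poincar\'e complex of Proposition~\ref{prop:controlled-symmetric-chain-cx}, push forward along the contracting map from Proposition~\ref{prop:contracting-maps} (composed with $P_2$ in the $L$-case), and kill the result via the Stability Theorem~\ref{thm:stability-element}. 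The one step you leave implicit is the decoration shift in $L$-theory: the cycle/stability machinery naturally lives in $L^{\langle 1 \rangle}_0$ (quadratic-form and ultra-quadratic representatives), so one must first record the $K$-theoretic vanishing $K_i(\calo^G(\EGF{G}{\calf_2};\cala))=0$ for $i\le 1$ (Corollary~\ref{cor:K-of-calo-vanishes}) to identify $L^{\langle 1 \rangle}_0$ with $\Li_0$ of the obstruction category before applying Theorem~\ref{thm:obstruction-category}~\ref{thm:obstruction-category:L}.
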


The assembly maps appearing above have been introduced
in~\cite{Bartels-Lueck(2009coeff)} and~\cite{Bartels-Reich(2007coeff)}
and the two slightly different approaches are identified
in~\cite[Remark~10.8]{Bartels-Lueck(2009coeff)}. 
If $\calf$ is the family $\VCyc$ of virtually cyclic groups,
then these maps are the assembly maps~\eqref{eq:VC-assembly-map-K}
and~\eqref{eq:VC-assembly-map-L} from the introduction.
(Of course $\VCyc_2 = \VCyc$.)

In the following definition we weaken the notion of an action of a 
group $G$ on a space $X$ to a homotopy action that is only defined for a 
finite subset $S$ of $G$. 
Restriction of a $G$-action to such a finite subset $S$ yields 
a homotopy $S$-action. 
Other examples arise, if we conjugate an honest action by a homotopy
equivalence and restrict then to a finite subset $S$.

\begin{definition}[Homotopy $S$-action]
\label{def:S-action_plus_long-covers}
Let $S$ be a finite subset of a group $G$.
Assume that $S$ contains the trivial element $e \in G$.
Let $X$ be a space.
\begin{enumerate}
\item \label{def:S-action_plus_long-covers:action}
      A homotopy $S$-action on $X$ consists of
      continuous maps $\varphi_g \colon X \to X$
      for $g \in S$ and homotopies
      $H_{g,h} \colon X \times [0,1] \to X$
      for $g,h \in S$ with $gh \in S$ such that
      $H_{g,h}(-,0) = \varphi_g \circ \varphi_h$
      and $H_{g,h}(-,1) = \varphi_{gh}$ holds for $g,h \in S$ with $gh \in S$.
      Moreover, we require that $H_{e,e}(-,t) = \varphi_e = \id_X$
      for all $t \in [0,1]$;
\item \label{def:S-action_plus_long-covers:F}
      Let $(\varphi,H)$ be a homotopy $S$-action on $X$.
      For $g \in S$ let $F_g(\varphi,H)$ be the set of all
      maps $X \to X$ of the form $x \mapsto H_{r,s}(x,t)$
      where $t \in [0,1]$ and $r,s \in S$ with $rs = g$;
\item \label{def:S-action_plus_long-covers:S(g,x)}
      Let $(\varphi,H)$ be a homotopy $S$-action on $X$.
      For $(g,x) \in G \times X$ and $n \in \IN$,
      let $S^n_{\varphi,H}(g,x)$ be the subset
      of $G \times X$ consisting of all $(h,y)$
      with the following property:
      There are
      $x_0,\dots,x_n \in X$,
      $a_1,b_1,\dots,a_n,b_n \in S$,
      $f_1,\widetilde{f}_1,\dots,f_n,\widetilde{f_n} \colon X \to X$,
      such that
      $x_0 = x$, $x_n = y$, $f_i \in F_{a_i}(\varphi,H)$,
      $\widetilde{f}_i \in F_{b_i}(\varphi,H)$,
      $f_i(x_{i-1}) = \widetilde{f}_i(x_i)$ and
      $h = g a_1^{-1} b_1 \dots a_n^{-1} b_n$;
\item \label{def:S-action_plus_long-covers:long}
      Let $(\varphi,H)$ be a homotopy $S$-action on $X$
      and $\calu$ be an open cover of $G \times X$.
      We say that $\calu$ is $S$-long with respect to $(\varphi,H)$
      if for every $(g,x) \in G \times X$ there is $U \in \calu$ containing
      $S^{|S|}_{\varphi,H}(g,x)$ where $|S|$ is the cardinality of $S$.
\end{enumerate}
\end{definition}

If the homotopy action is the restriction of a $G$-action to $S$
and $S$ is symmetric with respect to $s \mapsto s^{-1}$,
then $\varphi_g(x) = x$, $H_{g,h}(x,t) = {gh}x$ for all $t$ and
$S^n_{\varphi,H}(g,x) = 
 \{ (ga^{-1},ax) \; | \; a = s_1 \dots s_{2|S|}, s_i \in S \}$.
We will be able to restrict to a finite subset $S$ of $G$,
because our cycles for elements in the algebraic $K$-theory 
or $L$-theory of the obstruction category will involve only
a finite number of group elements.
For example, if
we are looking at an element in the $K$-theory of $RG$ given 
by an invertible matrix $A$ over $RG$,  
then the set $S$ consist of those group
elements $g$ which can be written as a product $g_1g_2$ for which the
coefficient of some entry in $A$ or $A^{-1}$ for $g_1$ and the
coefficient of some entry in $A$ or $A^{-1}$ for $g_2 $ are
non-trivial.

\begin{definition}[$N$-dominated space]
\label{def:N-dominated_space}
Let $X$ be a metric space and $N \in \IN$.
We say that $X$ is controlled $N$-dominated
if for every $\e > 0$ there is a finite $CW$-complex $K$ of dimension
at most $N$, maps $i \colon X \to K$, $p \colon K \to X$
and a homotopy $H \colon X \times [0,1] \to X$ between $p \circ i$ and
$\id_X$ such that for every $x \in X$
the diameter of $\{ H(x,t) \mid t \in [0,1] \}$  is
at most $\e$.
\end{definition}

\begin{remark}
For a hyperbolic group we will use the compactification 
of the Rips complex for $X$.
This space is controlled $N$-dominated by finite subcomplexes
of the Rips complex.
The homotopy $S$-action on $X$ arises as the restriction to $S$ of the
action of the hyperbolic group on $X$.

For a group $G$ that acts properly cocompactly and isometrically on
a finite dimensional $\CAT(0)$-space $Z$, we will use a ball in $Z$ 
of sufficiently large radius for $X$.
Projection along geodesics provides a homotopy inverse to the inclusion
$X \to Z$.
The homotopy $S$-action on $X$ is obtained by first restricting the $G$-action 
on $Z$ to $S$ and then conjugate it to $X$ using this homotopy equivalence.
The controlled $N$-domination arises in this situation because 
$X$ is a Euclidean neighborhood retract. 
\end{remark}

We recall the following definition
from~\cite[Definition~1.3]{Bartels-Lueck-Reich(2008hyper)}.

\begin{definition}[Open $\calf$-cover]
\label{def:F-cover}
Let $Y$ be a $G$-space. Let $\calf$ be a family of  subgroups of $G$.
A subset $U \subseteq Y$ is called
an \emph{$\calf$-subset} if 
\begin{enumerate}
\item For $g  \in G$ and $U \in  \calu$ we have
      $g(U) = U$ or $U \cap g(U) = \emptyset$, where
      $g(U):=  \{gx \mid x \in U \}$;
\item The subgroup
      $G_U := \{ g \in G \; | \; g(U) = U \}$ lies in $\calf$.
\end{enumerate}
An \emph{open $\calf$-cover} of $Y$ is a collection $\calu$ of open
$\calf$-subsets of $Y$  such that the following conditions are satisfied:
\begin{enumerate}
\item $Y = \bigcup_{U \in \calu} U$;
\item For $g \in  G$, $U \in \calu$  the set
      $g(U) $ belongs to $\calu$.
\end{enumerate}
\end{definition}

\begin{definition}[Transfer reducible]
  \label{def:transfer-reducible}
  Let $G$ be a group and $\calf$ be a family of subgroups.
  We will say that $G$ is \emph{transfer reducible} over 
  $\calf$ if there is a number $N$ with the following property:

  For every finite subset $S$ of $G$ there are
  \begin{itemize}
  \item a contractible compact controlled $N$-dominated
        metric space $X$;
  \item a homotopy $S$-action $(\varphi,H)$ on $X$;
  \item a cover $\calu$ of $G \times X$ by open sets,
  \end{itemize}
  such that the following holds for the $G$-action on $G \times X$ given by
  $g\cdot (h,x) = (gh,x)$:
  \begin{enumerate}
  \item $\dim \calu \leq N$;
  \item $\calu$ is $S$-long with respect to $(\varphi,H)$;
  \item $\calu$ is an open $\calf$-covering.
  \end{enumerate}
\end{definition}

\begin{remark} The role of the space $X$ appearing in
  Definition~\ref{def:transfer-reducible} is to yield enough
  space to be able to find the desired covering $\calu$. 
  On the first glance one might take $X = \{\pt\}$. 
  But this is not a good choice
  by the following observation.

  Suppose that the homotopy action actually comes from an honest $G$-action on 
  $X$. Then for every $x \in X$ and every finitely generated subgroup 
  $H \subseteq G_x$ we have $H \in \calf$ by the following argument. 
  Given a finite subset $S$ of $G_x$ with $e \in S$,  
  we can find $U \in \calu$
  with $\{(s,x) \mid s \in S\} \subseteq U$ since $\calu$ is $S$-long. 
  Then $\{(e,x)\} \in s \cdot U \cap U$ for $s \in S$. 
  This implies $S \subseteq G_U$. 
  Hence the subgroup of $G$ generated by $S$ belongs to $\calf$ 
  since $G_U$ belongs to $\calf$ by assumption and a family is by 
  definition closed under taking subgroups. 

  Of course we would like to arrange that we can  
  choose $\calf$ to be the family $\VCyc$. 
  But this is only possible if all isotropy groups of $X$ 
  are virtually cyclic.

  The main difficulty in finding the desired covering appearing in
  Definition~\ref{def:transfer-reducible} is that the
  cardinality of~$S$ can be arbitrarily large in comparison to the
  fixed number~$N$.
\end{remark}


\typeout{---------  Proof of Theorem B  ----------------}

\section{Proof of Theorem~B modulo Theorem~\ref{the:axiomatic}}
\label{sec:proof-of-thm-B}

\begin{summary*}
  In this section we show that Theorem~\ref{the:axiomatic}
  implies Theorem~B.
  To this end we describe in Lemma~\ref{lem:calfj_closed_under}
  inheritance properties
  of the Farrell-Jones Conjectures and show
  that hyperbolic groups are transfer reducible
  over the family of virtually cyclic subgroups.
  The latter depends ultimately on work of
  Mineyev and Yu~\cite{Mineyev(2005), Mineyev-Yu(2002)}.
  We show in~\cite{Bartels-Lueck(2010CAT(0)flow)} that finite dimensional $\CAT(0)$-groups
  are also transfer reducible
  over the family of virtually cyclic subgroups.
  \end{summary*}

\begin{proposition}
  \label{prop:hyperbolic-satisfy}
  Every hyperbolic is transfer reducible 
  over the family $\VCyc$ of virtually cyclic subgroups.
\end{proposition}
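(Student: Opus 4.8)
The plan is to verify the three conditions of Definition~\ref{def:transfer-reducible} for a hyperbolic group $G$ with $\calf = \VCyc$. Fix a word metric on $G$ coming from a finite generating set, and let $P_d(G)$ denote the Rips complex for a parameter $d$ larger than the hyperbolicity constant (so that $P_d(G)$ is contractible and $G$ acts on it properly cocompactly). The space $X$ will be a suitable compactification of $P_d(G)$, obtained by adjoining the Gromov boundary $\partial G$; this $\overline{X} = P_d(G) \cup \partial G$ is a compact, contractible metrizable space, and it is controlled $N$-dominated by the finite subcomplexes of $P_d(G)$ (one exhausts $P_d(G)$ by balls and uses a geodesic-contraction homotopy that moves points a controlled amount, as indicated in the Remark after Definition~\ref{def:N-dominated_space}). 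The bound $N$ on the dimension depends only on $G$ (through the dimension of the Rips complex and the dimension of $\partial G$), not on the finite set $S$, which is exactly what Definition~\ref{def:transfer-reducible} demands. The homotopy $S$-action is simply the restriction to $S$ of the genuine $G$-action on $\overline{X}$; so conditions on $(\varphi,H)$ are automatic and $S^n_{\varphi,H}(g,x)$ reduces to the explicit form recorded after Definition~\ref{def:S-action_plus_long-covers}.

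The heart of the matter is producing, for each finite $S$, an open cover $\calu$ of $G \times \overline{X}$ that is simultaneously (i) of dimension $\le N$, (ii) $S$-long, and (iii) an open $\VCyc$-cover. First I would invoke the long-and-thin covering technology for hyperbolic groups that rests on the work of Mineyev (the flow space, or equivalently Mineyev's geodesic flow / hyperbolic cone construction) and Mineyev--Yu. Concretely, one has for hyperbolic groups a flow space $FS$ with a proper cocompact $G$-action admitting, for every $\beta > 0$, a $G$-equivariant open cover of dimension bounded by a constant $N_0 = N_0(G)$ all of whose members have $FS$-diameter (transverse to the flow) at most some fixed value while being very long in the flow direction, with isotropy groups virtually cyclic — this is the $\VCyc$-covering of the flow space. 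Pulling this cover back along a $G$-equivariant map $G \times \overline{X} \to FS$ (which exists and is continuous because $\overline{X}$ maps equivariantly to the boundary and $G$ maps to $FS$ via an orbit map) and then refining/enlarging it to absorb the $\overline{X}$-directions, one obtains the desired $\calu$ on $G \times \overline{X}$. The $S$-longness in the sense of Definition~\ref{def:S-action_plus_long-covers}~\ref{def:S-action_plus_long-covers:long} is arranged by choosing the flow-length parameter large relative to $|S|$: since the $G$-action on $\overline{X}$ is honest, $S^{|S|}_{\varphi,H}(g,x)$ has the explicit form $\{(ga^{-1},ax) \mid a \text{ a product of } \le 2|S| \text{ elements of } S\}$, whose $G$-coordinates lie in a ball of radius $\le 2|S|\cdot\max_{s\in S}|s|$ around $g$, so a cover that is long enough in the $G$-direction will contain it.

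The main obstacle — and the step requiring the most care — is not the existence of a long-and-thin $\VCyc$-cover of the flow space per se (that is the cited input from Mineyev and Mineyev--Yu), but rather transporting it correctly to $G \times \overline{X}$ while keeping all three properties. One must ensure the dimension does not blow up when one thickens the pulled-back cover to cover the extra $\overline{X}$-coordinate: this is handled by a product/Lebesgue-number argument, covering $\overline{X}$ by finitely many sets of small diameter and taking suitable products, at the cost of multiplying the dimension by a factor depending only on $\dim \overline{X}$ — still a constant independent of $S$. One must also check that the $G_U \in \VCyc$ property survives the pullback and thickening; this is where it is important that the thickening in the $\overline{X}$-direction is done $G$-equivariantly and in a way compatible with the partition-into-orbits structure of an $\calf$-cover, so that stabilizers of cover elements upstairs inject into stabilizers downstairs. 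Finally, the compactification $\overline{X}$ must genuinely be contractible and metrizable and the geodesic contraction must be controlled; for hyperbolic groups this is standard (Bestvina--Mess type arguments, or the Rips-complex compactification of Bartels--Lück--Reich), and I would simply cite it. Assembling these pieces verifies Definition~\ref{def:transfer-reducible} and proves the proposition.
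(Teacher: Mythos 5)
Your choice of $X$ (the Rips complex $P_d(G)$ compactified by the Gromov boundary), your homotopy $S$-action (the honest $G$-action restricted to $S$), and your source for the controlled $N$-domination (Bestvina--Mess $Z$-set condition, retraction onto finite subcomplexes) all agree with the paper. The gap is in the covering step. The paper does \emph{not} re-derive a cover of $G\times X$ from a cover of a flow space: it quotes the main theorem of \cite{Bartels-Lueck-Reich(2008cover)}, which already delivers, for every $\alpha>0$, a $G$-equivariant open $\VCyc$-cover $\calu_\alpha$ of $G\times X$ (with the \emph{diagonal} $G$-action $g(h,x)=(gh,gx)$) of dimension $\leq N$ such that each tube $g^\alpha\times\{x\}$ lies in a single member. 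It then pushes $\calu_\alpha$ forward along the $G$-equivariant homeomorphism $(g,x)\mapsto(g,g^{-1}x)$ to the action $g(h,x)=(gh,x)$ used in Definition~\ref{def:transfer-reducible}, and finishes by choosing $\alpha$ large enough against the finite set $S$ so that the $S$-longness condition holds.

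Your proposed route --- pull a $\VCyc$-cover of the flow space $FS$ back along some $G$-map $G\times\overline{X}\to FS$ and then thicken in the $\overline{X}$-direction by a product/Lebesgue-number argument --- both duplicates the hardest part of \cite{Bartels-Lueck-Reich(2008cover)} and, as sketched, would fail. After converting to the diagonal action, the tube that must be absorbed is $\{(h,h^{-1}x): d_G(g,h)<\alpha\}$; the $X$-coordinate sweeps out a large region when $x$ lies in the locally compact interior $P_d(G)$ but stays close to $x$ when $x$ is near $\partial G$. A product with a fixed small cover of $\overline{X}$ cannot contain such tubes for $x$ in the interior, and letting the $\overline{X}$-cover vary in scale while keeping the dimension bounded \emph{independently of} $\alpha$ (hence of $S$) and keeping all stabilizers in $\VCyc$ is exactly the delicate flow-space geometry that \cite{Bartels-Lueck-Reich(2008cover)} was written to handle --- it is not a Lebesgue-number byproduct. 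Two smaller points: the long-and-thin $\VCyc$-cover itself is a theorem of Bartels--L\"uck--Reich built on Mineyev's flow space, not something in Mineyev or Mineyev--Yu; and your writeup never addresses the diagonal-versus-left-only action discrepancy, which the paper flags explicitly and resolves with the homeomorphism $(g,x)\mapsto(g,g^{-1}x)$ before checking $S$-longness.
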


This will essentially follow
from~\cite{Bartels-Lueck-Reich(2008cover)}
and~\cite{Bestvina-Mess(1991)}~, see
also~\cite[Lemma~2.1]{Bartels-Lueck-Reich(2008hyper)}. 
However, the set-up
in~\cite{Bartels-Lueck-Reich(2008cover)} is a little different, there $X$
is a $G$-space and the diagonal action $g \cdot (h,x) = (gh,gx)$ is
considered on $G \times X$, where in this paper the $G$-action $g\cdot
(h,x) =(gh,x)$ is used. The reason for this change is that we do not
have a $G$-action on $X$ available in the more general setup of this
paper, there is only a homotopy $G$-action.

\begin{proof}[Proof of Proposition~\ref{prop:hyperbolic-satisfy}]
Let $d_G$ be a $\delta$-hyperbolic left-invariant word-metric on
the hyperbolic group $G$.
Let $P_d(G)$ be the associated \emph{Rips complex}
for $d > 4\delta + 6$.
It is a finite-dimensional contractible locally finite simplicial complex.
This space can be compactified to $X := P_d(G) \cup \dd G$,
where $\dd G$ is the Gromov boundary of $G$
(see~\cite[III.H.3]{Bridson-Haefliger(1999)}, \cite{Gromov(1987)}).
Then $X$ is metrizable 
(see~\cite[III.H.3.18 (4) on page 433]{Bridson-Haefliger(1999)}).
There is a simplicial action of $G$ on $P_d(G)$ which is proper and
cocompact, and this action extends to $X$.
According to~\cite[Theorem~1.2]{Bestvina-Mess(1991)} the subspace
$\partial P_d(G) \subseteq X$ satisfies the Z-set condition.
This implies the (weaker)~\cite[Assumption~1.2]{Bartels-Lueck-Reich(2008hyper)}
which is a consequence of part (2) of the
characterization of $Z$-sets before Theorem~1.2 in~\cite{Bestvina-Mess(1991)}.
Thus there is a homotopy $H \colon X \x [0,1] \to X$, such that
$H_0 = \id_X$ and $H_t(X) \subset P_d(G)$ for all $t > 0$.
The compactness of $X$ implies that for $t > 0$, $H_t(X)$ is contained
in a finite subcomplex of $P_d(G)$.
Therefore $X$ is
controlled $N'$-dominated, where $N'$ is the dimension of $P_d(G)$.

The main result of~\cite{Bartels-Lueck-Reich(2008cover)}
asserts that there is a number $N$ such that for every $\alpha > 0$
there exists an open cover $\calu_\alpha$ of $G \times X$ equipped
 with the diagonal $G$-action
such that
\begin{itemize}
\item $\dim \calu_\alpha \leq N$;
\item For every $(g,x) \in G \times X$ there is $U \in \calu_\alpha$
      such that
      \begin{equation*}
        g^\alpha \times \{ x \} \subseteq U.
      \end{equation*}
      (Here $g^\alpha$ denotes the open $\alpha$-ball in $G$ around $g$.)
\item $\calu_\alpha$ is a $\VCyc$-cover with respect to the diagonal
      $G$-action $g \cdot (h,x) = (gh,gx)$.
\end{itemize}
The map $(g,x) \mapsto (g,g^{-1}x)$ is a $G$-equivariant
homeomorphism from $G \times X$ equipped with diagonal action to $G
\times X$ equipped with the action $g \cdot (h,x) = (gh,x)$.
Pushing the cover $\calu_\alpha$ forward with this homeomorphism
we obtain a new cover $\calv_\alpha$ of $G \times X$ such that
\begin{itemize}
\item $\dim \calv_\alpha \leq N$;
\item For every $(g,y) \in G \times X$ there is $V \in \calv_\alpha$
      such that
      \begin{equation*}
        \{ (gh,h^{-1}y) \mid h \in e^{\alpha} \} \subseteq V.
      \end{equation*}
      (We denote by $e$ the unit element of $G$.)
\item $\calv_\alpha$ is a $\VCyc$-cover with respect to the left
      $G$-action $g \cdot (h,x) = (gh,x)$.
\end{itemize}

Consider  a finite subset $S$ of $G$ containing $e$. 
Put $n = |S|$.
Pick $\alpha >0$ such that
$$\{l \in G\mid
l = a_1^{-1}b_1\dots a_n^{-1}b_n \text 
  { for } a_i,b_i \in S\} \subseteq e^\alpha.$$
The $G$-action on $X$ induces a homotopy $S$-action $(\varphi,H)$ on
$X$ where  $\varphi_g$ is given by $l_g \colon X \to X, \; x \mapsto gx$ 
for $g \in S$,
and $H_{g,h}(-,t) = l_{gh}$ for  $g,h \in S$ with $gh \in S$ and $t \in [0,1]$.
Notice that in this case
\begin{eqnarray*}
F_g(\varphi,H)
& = &
\{l_g \colon X \to X\};
\\
S^n_{\varphi,H}(g,x)
 & = &
\{gl,l^{-1}x) \mid l = a_1^{-1}b_1\dots a_n^{-1}b_n \text { for } a_i,b_i \in S\}.
\end{eqnarray*}
Hence $\calv_\alpha$ is $S$-long with respect to $(\varphi,H)$.
\end{proof}

\begin{proposition}
  \label{prop:CAT(0)-groups-satisfy}
  Every finite dimensional $\CAT(0)$-group is
  transfer reducible to
  the family $\VCyc$ of virtually cyclic subgroups.
\end{proposition}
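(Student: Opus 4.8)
The plan is to verify directly the three requirements of Definition~\ref{def:transfer-reducible}, reducing the construction of the cover to a statement about the geodesic flow space of the underlying $\CAT(0)$-space, which is the content of~\cite{Bartels-Lueck(2010CAT(0)flow)}.

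Fix a finite-dimensional $\CAT(0)$-space $Z$ on which $G$ acts properly, cocompactly and isometrically; recall that $Z$ is then automatically complete and proper and that $\dd Z$ is finite-dimensional by~\cite[Theorem~12]{Swenson(1999Cutpoint)}. Fix a base point $z_0\in Z$. Given a finite subset $S\subseteq G$ with $e\in S$, put $n=|S|$, choose a radius $R$ large (depending only on $n$ and $\max_{s\in S}d_Z(z_0,sz_0)$), and set $X:=B_R(z_0)$, the closed ball of radius $R$. Then $X$ is contractible (balls in $\CAT(0)$-spaces are convex), compact (since $Z$ is proper), and, being a finite-dimensional locally contractible locally compact metric space, a Euclidean neighbourhood retract; approximating a retraction by nerve maps of sufficiently fine open covers shows that $X$ is controlled $N_0$-dominated with $N_0=\dim Z$. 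Define a homotopy $S$-action $(\varphi,H)$ on $X$ by $\varphi_g:=p_X\circ l_g|_X$ for $g\in S$, where $p_X\colon Z\to X$ is the $1$-Lipschitz nearest-point projection onto the convex set $X$ and $l_g$ is the isometry $z\mapsto gz$, and by letting $H_{g,h}(-,t)$ run along the $\CAT(0)$-geodesic inside $X$ from $p_X(l_g(p_X(l_h(-))))$ to $p_X(l_{gh}(-))$; this meets the requirements of Definition~\ref{def:S-action_plus_long-covers}~\ref{def:S-action_plus_long-covers:action}, and on a large sub-ball it agrees with the restriction of the honest $G$-action.

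It remains to produce the cover $\calu$ of $G\times X$ (with $G$ acting by $g\cdot(h,x)=(gh,x)$) that has $\dim\calu\le N$, is $S$-long, and is an open $\VCyc$-cover. The key input, supplied by~\cite{Bartels-Lueck(2010CAT(0)flow)}, concerns the flow space $\FS=\FS(Z)$ of generalized geodesics in $Z$ --- maps $\IR\to Z$ that are isometric on some (possibly empty, half-infinite or bi-infinite) interval and locally constant outside it --- equipped with its shift flow $\Phi_\tau$ and the induced isometric $G$-action: the $G$-action on $\FS$ is proper, $\FS$ is finite-dimensional (using $\dim Z<\infty$ and $\dim\dd Z<\infty$), and there is a number $N_1$, independent of everything else, such that for every $\gamma>0$ there is a $G$-equivariant open cover $\calw_\gamma$ of $\FS$ by $\VCyc$-subsets with $\dim\calw_\gamma\le N_1$ which is long along the flow, i.e.\ for every $c\in\FS$ some member contains $\Phi_{[-\gamma,\gamma]}(c)$. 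Granting this, one constructs a $G$-map $f\colon G\times X\to\FS$ and, for a suitable $\gamma=\gamma(S)$, shows that $f$ sends $S^{|S|}_{\varphi,H}(g,x)$ into $\Phi_{[-\gamma,\gamma]}(f(g,x))$ for all $(g,x)$; then $\calu:=\{f^{-1}(W)\mid W\in\calw_\gamma\}$ works: it is open and $G$-equivariant, $\dim\calu\le\dim\calw_\gamma\le N_1$, the stabiliser of $f^{-1}(W)$ is contained in that of $W$ and hence virtually cyclic, and $S$-longness is immediate. Taking $N:=\max\{N_0,N_1\}$, a constant independent of $S$, finishes the verification.

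The main obstacle is the flow-space statement just quoted --- the construction of uniformly finite-dimensional, flow-long $\VCyc$-covers of $\FS(Z)$, together with the companion map $f$ --- which is the technical heart of~\cite{Bartels-Lueck(2010CAT(0)flow)}. It splits into two parts. First, a dynamical estimate: by convexity of the distance function on a $\CAT(0)$-space, two generalized geodesics that are close at one time stay comparably close over a controlled time interval, so over the non-degenerate part of $\FS$ --- geodesics that are neither constant nor periodic of small period --- one can build long thin covers of bounded dimension much as in the hyperbolic case of~\cite{Bartels-Lueck-Reich(2008cover)}. Second, and genuinely new in the $\CAT(0)$ setting, one must cover a $G$-invariant neighbourhood of the degenerate locus (constant geodesics and short closed orbits) where the flow does not expand; there the relevant stabilisers are, or are contained in, virtually cyclic subgroups --- this is exactly why the family must be $\VCyc$ rather than $\Fin$ --- and the locus is handled by an induction exploiting the structure of virtually cyclic groups together with cocompactness and dimension bookkeeping. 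A more routine obstacle is making the numerous fineness choices ($R$ large, covers fine, $\gamma$ large) mutually consistent, and checking the controlled $N$-domination of $X$ and the compatibility of the homotopy $S$-action with $f$.
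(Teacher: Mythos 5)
Your proposal takes essentially the same route as the paper's: the paper does not prove Proposition~\ref{prop:CAT(0)-groups-satisfy} here but explicitly postpones it to~\cite{Bartels-Lueck(2010CAT(0)flow)}, and your argument likewise reduces the two genuinely hard ingredients --- the uniformly finite-dimensional, flow-long $\VCyc$-covers of the flow space $\FS(Z)$ and the companion map $f\colon G\times X\to\FS(Z)$ --- to that same reference. Your setup (taking $X$ to be a large closed ball, nearest-point projection to build the homotopy $S$-action, the ENR argument for controlled $N$-domination, and pulling back a flow-long $\VCyc$-cover along a $G$-map) matches the remark the authors give in Section~\ref{sec:Axiomatic_formulation} and correctly summarizes the strategy of the cited companion paper, so this is a faithful outline rather than an independent proof.
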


The proof of this result is postponed 
to~\cite{Bartels-Lueck(2010CAT(0)flow)}.

Let $\calfj^K$ be the class of groups satisfying
the $K$-theoretic Farrell-Jones Conjecture with coefficients
in arbitrary additive $G$-categories $\cala$, i.e., the class
of groups for which the assembly map~\eqref{eq:VC-assembly-map-K} 
is an isomorphism for all $\cala$.
By $\calfj^K_1$ we denote the class of groups for which this assembly 
map is bijective in degree $m \le 0$ and surjective in degree $m = 1$ for 
any $\cala$.
Let $\calfj^L$ be the class of groups satisfying
the $L$-theoretic Farrell-Jones Conjecture with coefficients
in arbitrary additive $G$-categories $\cala$ with involutions, i.e., the class
of groups for which the assembly map~\eqref{eq:VC-assembly-map-K} 
is an isomorphism for all $\cala$.
(We could define $\calfj_1^L$, but because of the $4$-periodicity of
$L$-theory this is the same as $\calfj^L$.)

\begin{lemma}
\label{lem:calfj_closed_under}
Let $\calc$ be one of the classes $\calfj^K_1$, $\calfj^L$.
\begin{enumerate}
\item \label{lem:calfj_closed_under:subgroups} 
      If $H$ is a subgroup of $G$ and $G \in \calc$, then $H \in \calc$;
\item \label{lem:calfj_closed_under:transitivity}
      Let $\pi \colon G \to H$ be a group homomorphism. 
      If $H \in \calc$ and $\pi^{-1}(V) \in \calc$ for
      all virtually cyclic subgroups $V$ of $H$, then $G \in \calc$;
\item \label{lem:calfj_closed_under:direct_products} 
      If $G_1$ and $G_2$ belong to
      $\calc$, then $G_1 \times G_2$ belongs to $\calc$;
\item \label{lem:calfj_closed_under:free-products} 
      If $G_1$ and $G_2$ belong to $\calc$, then
      $G_1 * G_2$ belongs to $\calc$;
\item \label{lem:calfj_closed_under:colim} 
      Let $\{G_i \mid i\in I\}$ be a
      directed system of groups (with not necessarily injective structure maps)
      such that $G_i \in \calc$ for $i \in I$.
      Then $\colim_{i \in I} G_i$ belongs to $\calc$.
\end{enumerate}
\end{lemma}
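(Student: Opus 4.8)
The plan is to treat the two classes $\calfj^K_1$ and $\calfj^L$ uniformly wherever possible, invoking in each case the general transitivity and inheritance machinery for assembly maps with coefficients in additive categories developed in~\cite{Bartels-Lueck(2009coeff)} and~\cite{Bartels-Reich(2007coeff)}. The key point is that the relative assembly map for a change of families $\calf \subseteq \calg$ can be analyzed fibrewise, and that restriction of an additive $G$-category to a subgroup, or induction along a homomorphism, is well-behaved. Throughout I would phrase statements in terms of the \emph{Isomorphism Conjecture} relative to a family, and use that both $\bfK$-theory and $\bfL^{\langle-\infty\rangle}$-theory are equivariant homology theories in the sense needed.

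First, for part~\ref{lem:calfj_closed_under:subgroups}: if $H \le G$ and $\cala$ is an additive $H$-category, one forms the induced additive $G$-category $\ind_H^G \cala$, and there is a natural identification $H^H_m(\EGF{H}{\calf \cap H}; \bfK_{\cala}) \cong H^G_m(\EGF{G}{\calf}; \bfK_{\ind_H^G \cala})$ compatible with assembly, where $\calf$ is $\VCyc$ (note $\VCyc \cap H = \VCyc(H)$); the analogous statement holds in $L$-theory. Hence bijectivity/surjectivity for $G$ in the relevant degrees passes to $H$. Part~\ref{lem:calfj_closed_under:transitivity} is the transitivity principle: given $\pi\colon G \to H$ with $H \in \calc$ and $\pi^{-1}(V) \in \calc$ for every virtually cyclic $V \le H$, one observes that $\pi^{-1}(\VCyc(H))$ is a family of subgroups of $G$, that the assembly map for $G$ relative to $\pi^{-1}(\VCyc(H))$ is identified via $\pi$ with the assembly map for $H$ relative to $\VCyc(H)$ (which is an iso resp.\ iso/epi by hypothesis on $H$), and that the relative assembly map from $\VCyc(G)$ to $\pi^{-1}(\VCyc(H))$ is an iso resp.\ iso/epi by the hypothesis on the groups $\pi^{-1}(V)$ together with part~\ref{lem:calfj_closed_under:subgroups} and a colimit/cofinality argument over the cells of $\EGF{G}{\pi^{-1}(\VCyc H)}$. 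Composing gives $G \in \calc$. This is the analogue of~\cite[Lemma~1.13 and its proof]{Bartels-Lueck-Reich(2008appl)} in the coefficient setting.

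Part~\ref{lem:calfj_closed_under:direct_products} is a special case of~\ref{lem:calfj_closed_under:transitivity} applied to the projection $G_1 \times G_2 \to G_1$: a virtually cyclic subgroup $V \le G_1$ has preimage $V \times G_2$, and since $V$ is virtually cyclic one checks directly (or cites the known product inheritance for $\calfj$) that $V \times G_2 \in \calc$ from $G_2 \in \calc$ — here one uses that virtually cyclic-by-$\calc$ groups lie in $\calc$, which itself reduces to~\ref{lem:calfj_closed_under:transitivity} for the projection $V \times G_2 \to V$ (whose target is virtually cyclic, hence in $\calc$, and whose point-preimages are copies of $G_2$). Part~\ref{lem:calfj_closed_under:free-products}: write $G_1 * G_2$ as the fundamental group of a graph of groups and use the associated pushout/Mayer–Vietoris comparison of assembly maps, or more cleanly cite the fact that a free product acts on a tree with vertex stabilizers $G_1, G_2$ and trivial edge stabilizers, so transfer-reducibility-style or the known inheritance under amalgamated products (\cite{Bartels-Lueck(2009coeff)}) applies; in either case the five-lemma plus parts~\ref{lem:calfj_closed_under:subgroups}–\ref{lem:calfj_closed_under:transitivity} finish it. Part~\ref{lem:calfj_closed_under:colim} follows from continuity of equivariant homology theories and of $K$- and $L$-theory of additive categories under directed colimits of groups, as established in~\cite{Bartels-Echterhoff-Lueck(2008colim)}: both source and target of the assembly map commute with $\colim_{i \in I}$, with the coefficient categories pulled back along the structure maps, and a colimit of isomorphisms (resp.\ isomorphisms in degrees $\le 0$ and epimorphisms in degree $1$) is again one.

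The main obstacle is part~\ref{lem:calfj_closed_under:transitivity}, and within it the identification of the relative assembly map over the cells of the classifying space for the family $\pi^{-1}(\VCyc H)$: one must know that for each such cell with isotropy group $Q$, the group $Q$ lies in $\calc$, which is exactly where the hypothesis $\pi^{-1}(V) \in \calc$ enters, but combined with the subtlety that $Q$ need not equal $\pi^{-1}(V)$ on the nose — it is a subgroup of some $\pi^{-1}(V)$, so one genuinely needs~\ref{lem:calfj_closed_under:subgroups} to have been established first and must be careful about the logical order. For $\calfj^K_1$ there is the additional bookkeeping that one only has an isomorphism in degrees $m \le 0$ and surjectivity in degree $m=1$, so the relevant portions of the long exact sequences and spectral sequences comparing families must be tracked in this restricted range; this is routine but must be stated carefully. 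I would handle~\ref{lem:calfj_closed_under:subgroups} and~\ref{lem:calfj_closed_under:transitivity} first and in that order, then derive the remaining parts as above.
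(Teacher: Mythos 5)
Your treatment of parts \ref{lem:calfj_closed_under:subgroups}, \ref{lem:calfj_closed_under:transitivity}, and \ref{lem:calfj_closed_under:colim} matches the paper in substance: the paper simply cites \cite[Corollary~0.8, Corollary~0.9, Remark~0.11]{Bartels-Lueck(2009coeff)} (together with a one-line check that passing to colimits preserves ``bijective for $m\le 0$, surjective for $m=1$'' because colimits over directed systems are exact), and your sketches are the standard proofs of exactly these inheritance statements.

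For part \ref{lem:calfj_closed_under:direct_products} you are a bit muddled. You reduce to showing $V\times G_2\in\calc$ for $V$ virtually cyclic, then try to close this by applying \ref{lem:calfj_closed_under:transitivity} to the projection $V\times G_2\to V$; but the preimage of a virtually cyclic $W\le V$ under that projection is $W\times G_2$, so the reduction is circular. You must instead project onto $G_2$ (preimages are then $V\times W$ with both factors virtually cyclic), and at that point you genuinely need a \emph{new} input, namely that a product of two virtually cyclic groups lies in $\calc$. The paper provides exactly this: such a product acts properly, cocompactly and isometrically on $\IR^2$, hence is a finite-dimensional $\CAT(0)$-group and lies in $\calfj^K_1\cap\calfj^L$ by Theorem~\ref{the:axiomatic} and Proposition~\ref{prop:CAT(0)-groups-satisfy}. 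Your proposal does not supply this ingredient, and ``one checks directly'' is not a substitute.

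Part \ref{lem:calfj_closed_under:free-products} is where you diverge most seriously. Your suggested route (graph of groups, Mayer--Vietoris for assembly, or ``known inheritance under amalgamated products'') is not what the paper does, and it is not clearly available: in $L$-theory an amalgam introduces UNil-terms, in $K$-theory Waldhausen Nil-terms, and a ready-made inheritance statement for free products with coefficients in additive $G$-categories is not among the corollaries of \cite{Bartels-Lueck(2009coeff)} that the paper uses elsewhere. The paper instead applies a trick of Roushon: consider $p\colon G_1*G_2\to G_1\times G_2$. By \ref{lem:calfj_closed_under:direct_products} the target lies in $\calc$, so by \ref{lem:calfj_closed_under:transitivity} it suffices to show $p^{-1}(V)\in\calc$ for every virtually cyclic $V\le G_1\times G_2$; by \cite[Lemma~5.2]{Roushon(2008FJJ3)} every such $p^{-1}(V)$ is virtually free, and a virtually free group is the directed colimit of its finitely generated subgroups, each of which is finitely generated virtually free, hence hyperbolic, hence in $\calc$ by Theorem~\ref{the:axiomatic}; now apply \ref{lem:calfj_closed_under:colim}. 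This chain uses only \ref{lem:calfj_closed_under:transitivity}, \ref{lem:calfj_closed_under:direct_products}, \ref{lem:calfj_closed_under:colim}, and the hyperbolic case of the main theorem, and completely sidesteps the Nil/UNil bookkeeping that your Mayer--Vietoris route would have to confront. As written, your proposal for \ref{lem:calfj_closed_under:free-products} leaves a genuine gap.
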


\begin{proof}
Note first that the product of two virtually cyclic groups  acts 
properly, isometrically and cocompactly on a proper complete 
$\CAT(0)$-space with finite covering dimension, namely $\IR^2$.
Thus such a product is a $\CAT(0)$ group.
It follows from Theorem~\ref{the:axiomatic}
and Proposition~\ref{prop:CAT(0)-groups-satisfy}
that such products belong to $\calfj_1^K \cap \calfj^L$.
Note also that finitely generated virtually free groups
are hyperbolic and belong $\calfj_1^K \cap \calfj^L$
by Theorem~\ref{the:axiomatic}
and Proposition~\ref{prop:CAT(0)-groups-satisfy}.

For $\calfj^L$ properties~\ref{lem:calfj_closed_under:subgroups},~%
\ref{lem:calfj_closed_under:transitivity},~%
\ref{lem:calfj_closed_under:direct_products},
and~\ref{lem:calfj_closed_under:colim} follow
from~\cite[Corollary~0.8, Corollary~0.9, Corollary~0.10, Remark~0.11]
            {Bartels-Lueck(2009coeff)}.
For~\ref{lem:calfj_closed_under:free-products} we will use a trick
from~\cite{Roushon(2008FJJ3)}. 
For $G_1$, $G_2 \in \calfj_1$ consider the canonical map
$p \colon G_1 * G_2 \to G_1 \x G_2$.
We have already shown that \eqref{eq:VC-assembly-map-L}
is an isomorphism for $G_1 \x G_2$.
By~\ref{lem:calfj_closed_under:transitivity} 
it suffices to show the same for $p^{-1}(V)$
for all virtually cyclic subgroups $V$ of $G_1 \x G_2$.
By~\cite[Lemma~5.2]{Roushon(2008FJJ3)}
all such $p^{-1}(V)$ are virtually free.
Such a virtually free group is the colimit of its finitely generated
subgroups which are again virtually free.
Thus~\ref{lem:calfj_closed_under:colim} implies that
virtually free groups belong to $\calfj^L$.
The $K$-theoretic case can be proved completely
analogously. One has to check that the argument works also for the 
statement that $K$-theoretic assembly map is bijective in degree $m \le 0$ 
and surjective in degree $m = 1$. This follows  from the fact
that taking the colimit over a directed system is an exact functor. 
\end{proof}

The above arguments also show that $\calfj^K$ 
satisfies assertions~\ref{lem:calfj_closed_under:subgroups},%
\ref{lem:calfj_closed_under:transitivity}  
and~\ref{lem:calfj_closed_under:colim} of 
Lemma~\ref{lem:calfj_closed_under}.
Assertions~\ref{lem:calfj_closed_under:direct_products} 
and~\ref{lem:calfj_closed_under:free-products} follow
once the $K$-theoretic Farrell-Jones Conjecture is established 
for groups of the form $V \x V'$, where $V$ and $V'$ are virtually cyclic.
For arbitrary additive $G$-categories $\cala$ this has not been carried out.
See~\cite{Quinn(2005)} for positive results in this direction.

\begin{proof}[Proof of Theorem~B]
In the language of this section Theorem~B can be rephrased to the
statement that $\calb \subseteq \calfj^K_1 \cap \calf^L$.
Propositions~\ref{prop:hyperbolic-satisfy} 
and~\ref{prop:CAT(0)-groups-satisfy}
show that Theorem~\ref{the:axiomatic} applies to hyperbolic
groups and finite dimensional $\CAT(0)$-groups.
Thus all such groups are contained in $\calfj^K_1 \cap \calf^L$.
Lemma~\ref{lem:calfj_closed_under}
implies now that $\calb \subseteq \calfj^K_1 \cap \calf^L$.
\end{proof}


\typeout{----------- S-long covers yield contracting maps -------------}

\section{$S$-long covers yield contracting maps}
\label{sec:S-long_covers_yield_contracting_maps}

\begin{summary*}
  The main result of this section is Proposition~\ref{prop:contracting-maps}
  in which we convert long covers of $G \x X$ in the sense of
  Definition~\ref{def:S-action_plus_long-covers}~\ref{def:S-action_plus_long-covers:long}
  to $G$-equivariant maps $G \x X \to \Sigma$,
  where $\Sigma$ is simplicial complex whose dimension
  is uniformly bounded and whose isotropy groups are not
  to large.
  Moreover, these  maps have  strong contracting property with respect to
  the metric $d_{S,\Lambda}$ from Definition~\ref{def:d_S_lambda}.
  This metric scales (small) distances in the $X$-direction
  by $\Lambda$ (Lemma~\ref{lem:properties-of-d-S-K}~\ref{lem:properties-of-d-S-K:open}),
  while distances in the 
  $G$-direction along the homotopy action are not scaled   
  (Lemma~\ref{lem:properties-of-d-S-K}~\ref{lem:properties-of-d-S-K:blows-up}).
\end{summary*}

Throughout this section we fix the following convention.

\begin{convention}
Let
\begin{itemize}
\item $G$ be a group;
\item $(X,d_X)$ be a compact metric space.
      We equip $G \times X$ with the $G$-action
      $g(h,x) = (gh,x)$;
\item $S$ be a finite subset of $G$ (containing $e$);
\item $(\varphi,H)$ be a homotopy $S$-action on $X$.
\end{itemize}
\end{convention}


\subsection{Homotopy $S$-actions and metrics}
\label{subsec:homotopy-S-actions_plus_metrics}

For every number $\Lambda > 0$ we define a $G$-invariant
(quasi-)metric $d_{S,\Lambda}$ on $G \times X$ as follows.
For $(g,x),(h,y) \in G \times X$ consider $n \in \IZ$, $n \ge 0$,
elements $x_0,\dots,x_n \in X$, $z_0,\dots,z_n$ in $X$,
elements $a_1,b_1,\dots,a_n,b_n$ in $S$ and
maps $f_1,\widetilde{f}_1,\dots,f_n,\widetilde{f}_n \colon X \to X$
such that
\begin{equation}
\label{eq:x-z-f-a-b-satisfy}
\begin{array}{l}
   x = x_0, z_n = y,
   \\f_i \in F_{a_i}(\varphi,H),   \widetilde{f}_i \in F_{b_i}(\varphi,H),
   f_i(z_{i-1}) = \widetilde{f}_i(x_i) \;\text{for } i = 1,2, \dots n;\\
   h = g a_1^{-1} b_1 \dots a_n^{-1} b_n.
\end{array}
\end{equation}
(See Definition~\ref{def:S-action_plus_long-covers}~%
\ref{def:S-action_plus_long-covers:F}
for the definition of $F_s(\varphi,H)$ for $s \in S$.)
If $n = 0$, we just demand $x_0 = x$, $z_0 = y$, $g = h$ and no elements
$a_i$, $b_i$, $f_i$ and $\widetilde{f}_i$ occur.
To this data we associate the number
\begin{equation}
  \label{eq:d_S-for-fixed-sequence}
n + \sum_{i= 0}^n\Lambda \cdot d_X(x_i,z_i).
\end{equation}
\begin{definition} \label{def:d_S_lambda}
For $(g,x), (h,y) \in G \times X$ define
$$d_{S,\Lambda}((g,x),(h,y)) \in [0,\infty]$$
as the infimum of~\eqref{eq:d_S-for-fixed-sequence} over all possible
choices of $n$, $x_i$,  $z_i$,$a_i$, $b_i$, $f_i$ and $\widetilde{f}_i$.
If the set of possible choices is empty, then we put
$d_{S,\Lambda}((g,x),(h,y)) := \infty$.
\end{definition}
Of course, $d_{S,\Lambda}$ depends not only on $S$ and $\Lambda$,
but also on $(X,d)$ and $(\varphi,H)$.
That this is not reflected in the notation will hopefully not
be a source of confusion. 
Recall that a quasi-metric is the same as a metric
except that it may take also the value $\infty$.

\begin{lemma}
  \label{lem:properties-of-d-S-K}
\begin{enumerate}
\item \label{lem:properties-of-d-S-K:metric}
      For every $\Lambda > 0$, $d_{S,\Lambda}$ is a well-defined
      $G$-invariant quasi-metric on $G \times X$.
      The set $S$ generates $G$ if and only if $d_{S,\Lambda}$ is a metric;
\item \label{lem:properties-of-d-S-K:blows-up}
      Let $(g,x),(h,y) \in G \times X$ and let $m \in \IZ$, $m \ge 1$.
      If  $d_{S,\Lambda}( (g,x), (h,y) ) \leq m$
      for all $\Lambda$, then $(h,y) \in S^{m}_{\varphi,H}(g,x)$;
      (The set $S^{m}_{\varphi,H}(g,x)$ is defined in
      Definition~\ref{def:S-action_plus_long-covers}~\ref{def:S-action_plus_long-covers:S(g,x)}.)
\item \label{lem:properties-of-d-S-K:open}
For $x,y \in X$ and $g \in G$ we have
$d_{S,\Lambda}((g,x),(h,y)) < 1$ if and only if $g = h$ and $\Lambda \cdot  d_X(x,y) < 1$ hold.
In this case we get
$$d_{S,\Lambda}((g,x),(h,y)) = \Lambda \cdot d_X(x,y).$$
The topology on $G \times X$ induced by $d_{S,\Lambda}$ is the product
topology on $G \times X$ for the discrete topology on $G$ and the given one on $X$.

\end{enumerate}
\end{lemma}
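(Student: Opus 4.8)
The plan is to verify the three assertions essentially by unwinding Definition~\ref{def:d_S_lambda}, treating the quantity in~\eqref{eq:d_S-for-fixed-sequence} as the "length" of an admissible chain joining $(g,x)$ to $(h,y)$, and checking the metric axioms by concatenating such chains. For~\ref{lem:properties-of-d-S-K:metric}, symmetry follows because an admissible chain from $(g,x)$ to $(h,y)$ can be reversed: reading the data $x_0,\dots,x_n$, $z_0,\dots,z_n$, $a_i,b_i$, $f_i,\widetilde f_i$ backwards and swapping the roles of the $a_i$'s with the $b_i$'s (and of $f_i$ with $\widetilde f_i$) produces an admissible chain from $(h,y)$ to $(g,x)$ with the same associated number, using that $h = g a_1^{-1}b_1\cdots a_n^{-1}b_n$ is equivalent to $g = h b_n^{-1}a_n\cdots b_1^{-1}a_1$. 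The triangle inequality is proved by juxtaposing a chain from $(g,x)$ to $(h,y)$ with one from $(h,y)$ to $(k,w)$; the $n$-counts add, the sum of $\Lambda d_X$-terms adds, and one checks the two middle points match up (the last $z$ of the first chain equals $y$ equals the first $x$ of the second), so the concatenation is admissible of the expected length. That $d_{S,\Lambda}$ is finite precisely when $S$ generates $G$: if $S$ generates $G$ then for any $g,h$ one writes $h^{-1}g$ as a word in $S\cup S^{-1}$, and since $e\in S$ one can pair up letters as $a_i^{-1}b_i$; taking $x_i = z_i = x$ (using $f_i = \widetilde f_i = \id \in F_e$ in the trivial steps and the honest maps otherwise—one must be slightly careful here, see below) gives a finite-length chain. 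Conversely, the element realized by any admissible chain lies in $\langle S\rangle g$, so if $d_{S,\Lambda}((e,x),(h,y))<\infty$ then $h\in\langle S\rangle$.

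For~\ref{lem:properties-of-d-S-K:blows-up}: suppose $d_{S,\Lambda}((g,x),(h,y))\le m$ for all $\Lambda$. Since $n\le d_{S,\Lambda}\le m$, the number of steps $n$ is bounded by $m$; and since $\Lambda\sum d_X(x_i,z_i)\le m$ for all $\Lambda$, we must have $x_i = z_i$ for all $i$ in an optimizing (or near-optimizing) sequence. One cannot quite extract an exact minimizer, so the argument is: for each $\Lambda$ pick a chain of length $\le m$; the $n$ takes finitely many values $\le m$, so along a subsequence $n$ is constant, and the $a_i,b_i\in S$ also take finitely many values, so along a further subsequence they stabilize; then $\sum d_X(x_i^{(\Lambda)},z_i^{(\Lambda)})\le m/\Lambda\to 0$. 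Using compactness of $X$ one extracts convergent subsequences $x_i^{(\Lambda)}\to x_i$, $z_i^{(\Lambda)}\to z_i$ with $x_i = z_i$; the maps $f_i^{(\Lambda)}\in F_{a_i}(\varphi,H)$ are of the form $H_{r,s}(-,t)$ and, passing to a further subsequence in the compact parameter $t\in[0,1]$, converge (continuity of $H$) to some $f_i\in F_{a_i}(\varphi,H)$, similarly for $\widetilde f_i$; the equations $f_i(z_{i-1})=\widetilde f_i(x_i)$ pass to the limit. Setting $x_i = z_i$ we get exactly the data witnessing $(h,y)\in S^m_{\varphi,H}(g,x)$ as in Definition~\ref{def:S-action_plus_long-covers}~\ref{def:S-action_plus_long-covers:S(g,x)} (note $m\ge n$ is allowed there since we may pad with trivial steps $a_i=b_i=e$, $f_i=\widetilde f_i=\id\in F_e$).

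For~\ref{lem:properties-of-d-S-K:open}: if $d_{S,\Lambda}((g,x),(h,y))<1$ then in particular $n<1$, so $n=0$, which by the $n=0$ clause of the definition forces $g=h$ and leaves only the term $\Lambda d_X(x,y)<1$; conversely if $g=h$ and $\Lambda d_X(x,y)<1$ the length-$0$ chain gives $d_{S,\Lambda}((g,x),(h,y))\le\Lambda d_X(x,y)<1$, and no chain with $n\ge 1$ can do better since its length is $\ge 1$, so in this regime $d_{S,\Lambda}((g,x),(h,y))=\Lambda d_X(x,y)$ exactly. The statement about the topology follows: the $d_{S,\Lambda}$-ball of radius $\min(1,\Lambda\e)$ about $(g,x)$ is exactly $\{g\}\times B_X(x,\e)$ for small $\e$, which is a neighborhood basis for the product topology with $G$ discrete.

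The main obstacle I expect is the limiting argument in~\ref{lem:properties-of-d-S-K:blows-up}: one must simultaneously control the combinatorial data (number of steps, the elements $a_i,b_i\in S$), the points $x_i,z_i\in X$, and the continuous maps $f_i,\widetilde f_i$ (which live in the parameter space $[0,1]$ via $H$), extract a single subsequence along which everything converges, and verify that all the defining equalities are preserved in the limit. Compactness of $X$, finiteness of $S$, compactness of $[0,1]$, and continuity of the $H_{r,s}$ are exactly what make this go through; the bookkeeping of passing to nested subsequences is the only real work.
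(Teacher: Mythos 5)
Your proposal follows the same route as the paper: the triangle inequality by concatenating chains, symmetry by reversing them, the finiteness/generation equivalence via $e \in S$, assertion~\ref{lem:properties-of-d-S-K:blows-up} by a diagonal/compactness argument (bound $n \le m$, pad to length $m$, pass to subsequences using compactness of $X$, finiteness of $S$, and compactness of $[0,1]$ for the homotopy parameters of the $f_i,\widetilde f_i$, then pass the defining identities to the limit), and assertion~\ref{lem:properties-of-d-S-K:open} by observing that length $< 1$ forces $n=0$. The only point worth tightening is the "slightly careful" remark on finiteness: the clean way is to first refine the decomposition $g^{-1}h = a_1^{-1}b_1\cdots a_n^{-1}b_n$ (using $e\in S$) so that in each step either $a_i=e$ or $b_i=e$; then, taking $\id_X\in F_e$, the constraint $f_i(z_{i-1})=\widetilde f_i(x_i)$ lets you determine one member of each disjoint pair $(z_{i-1},x_i)$ from the other, so an admissible chain always exists.
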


\begin{proof}\ref{lem:properties-of-d-S-K:metric}
One easily checks that $d_{S,\Lambda}$ is symmetric and
satisfies the triangle inequality.
Obviously $d_{S,\Lambda}( (g,x), (g,x) ) = 0$.
Suppose $d_{S,\Lambda}( (g,x), (h,y) ) = 0$. 
Given any real number $\epsilon$ with
$0 < \epsilon < 1$, we can find
$n$, $x_i$,  $z_i$,$f_i$, $\widetilde{f}_i$,$a_i$ and $b_i$ 
as in \eqref{eq:x-z-f-a-b-satisfy} satisfying
$$n + \sum_{i=0}^{n} \Lambda \cdot d_X(x_i,z_i) \le \epsilon.$$
We conclude $n = 0$ and hence $\Lambda \cdot d_X(x,y) \le \epsilon$.
Since $\Lambda > 0$ and this holds for all $0 < \epsilon < 1$, we conclude
$d_X(x,y) = 0$ and hence $x = y$.

Obviously $d_{S,\Lambda}$ is $G$-invariant since for $k \in G$ we have
 $h = g a_1^{-1} b_1 \dots a_n^{-1} b_n$ if and only if
$kh = kg a_1^{-1} b_1 \dots a_n^{-1} b_n$ and $G$ acts on $G \times X$
by $k \cdot (h,x) = (kh,x)$.

The sets $F_g(\varphi,H)$ for $g \in S$ are never empty and
$F_e(\varphi,H)$ contains always $\id_X$.
Hence the infimum in the definition of $d_{S,\Lambda}((g,x),(h,y))$ is finite, if
and only if  we can find $n \in \IZ, n \ge 0$ and elements $a_i,b_i \in S$ with
$g^{-1}h = a_1^{-1} b_1 \dots a_n^{-1} b_n$.
\\[1ex]\ref{lem:properties-of-d-S-K:blows-up}
Let $(\Lambda^{\nu})_{\nu \ge 1}$ be sequence of numbers such that
$\lim_{\nu \to \infty}\Lambda^{\nu} = \infty$.
The assumptions imply that there are
$n^{\nu}$, $x_0^{\nu},\dots,x^v_{n^{\nu}}$, $z_0^{\nu},\dots,z^{\nu}_{n^{\nu}}$,
$a_1^{\nu},b_1^{\nu},\dots,a_{n^{\nu}}^{\nu},b_{n^{\nu}}^{\nu} \in S$ and
$f_1^{\nu},\widetilde{f}_1^{\nu},\dots,f_{n^{\nu}}^{\nu},
                            \widetilde{f}_{n^{\nu}}^{\nu}$
such that~\eqref{eq:x-z-f-a-b-satisfy}  and
\begin{equation}
  \label{eq:d-less-than-m-for-nu}
n^{\nu} + \sum_{i=0}^{n^{\nu}}  \Lambda^{\nu} \cdot d_X(x^{\nu}_i,z^{\nu}_i)
                                                    < m + 1 / \nu
\end{equation}
hold.
In particular, $n^{\nu} \leq m$ for all $\nu$. For each $\nu$
we define $a_j^{\nu} = b_j^{\nu} = e$, $x_j^{\nu} = z_j^{\nu} = y$,
$f_j^{\nu} = \widetilde{f}_j^{\nu} = \id_X$
for $j \in \{n^{\nu}+1, \dots ,m\}$.
Hence we have now for each $\nu$ and each $i \in \{1,2, \dots ,m\}$
elements $a_i^{\nu}$, $b_i^{\nu}$, $x_i^{\nu}$,$z_i^{\nu}$, $f_i^{\nu}$
and $\widetilde{f_i}^{\nu}$ and $x_0^{\nu} = x$ and $z_m^{\nu} = y$.

Because $X$ is compact, we can arrange by passing to a 
subsequence of $(\Lambda^{\nu})_{\nu \ge 1}$
that for each $i \in \{0,1,2, \dots ,m\}$ there are
$x_i\in X$ with $\lim_{\nu \to \infty} x_i^{\nu} = x_i$ and $z_i \in X$ with 
$\lim_{\nu \to \infty} z_i^{\nu} \to z_i$.
{}From~\eqref{eq:d-less-than-m-for-nu} we deduce for $i \in \{0,1,2,\dots ,m\}$.
\begin{equation*}
d_X(x_i^{\nu},z_i^{\nu}) < \frac{m + 1/ \nu}{\Lambda^{\nu}}.
\end{equation*}
Since $\lim_{\nu \to \infty} \frac{m + 1/ \nu}{\Lambda^{\nu}} = 0$,
we conclude $d_X(x_i,z_i) = 0$ and therefore
$$x_i = z_i \quad \;\text{for } i \in \{0,1,2, \dots ,m\}.$$

Choose for $i \in \{0,1,2, \dots ,m\}$  elements
$t_i^{\nu}, \widetilde{t}_i^{\nu} \in [0,1]$,
$r_i^{\nu},s_i^{\nu}, \widetilde{r}_i^{\nu}, \widetilde{s}_i^{\nu} \in S$
with $r_i^{\nu} s_i^{\nu} = a_i^{\nu}$ and
$\widetilde{r}_i^{\nu}\widetilde{s}_i^{\nu} = b_i^{\nu}$ such that
$f_i^{\nu}= H_{r_i^{\nu},s_i^{\nu}}(-,t_i^{\nu})$ and
$\widetilde{f}_i^{\nu} = 
  H_{\widetilde{r}_i^{\nu},\widetilde{s}_i^{\nu}}(-,\widetilde{t}_i^{\nu})$
holds. 
Since $S$ is finite and $[0,1]$ is compact, we can arrange by passing to a
subsequence of  $\{\Lambda^{\nu}\}$ that there exist elements
$r_i,s_i,\widetilde{r}_i,\widetilde{s}_i \in S_i$ and
$t_i,\widetilde{t}_i \in [0,1]$ such that
$r_i^{\nu} = r_i$, $s_i^{\nu} = s_i$, $\widetilde{r}_i^{\nu} = \widetilde{r}_i$
and $\widetilde{s}_i^{\nu} = \widetilde{s}_i$ holds for all $\nu$ and
$\lim_{\nu \to \infty} t_i^{\nu} = t_i$ and
$\lim_{\nu \to \infty} \widetilde{t}_i^{\nu} = \widetilde{t}_i$ is valid. 
Put $f_i= H_{r_i,s_i}(-,t_i)$ and
$\widetilde{f}_i= H_{\widetilde{r}_i,\widetilde{s}_i}(-,\widetilde{t}_i)$.
Then for $i \in \{0,1,2, \dots m\}$
\begin{eqnarray*}
f_i & \in & F_{a_i}(\varphi,H);
\\
\widetilde{f}_i & \in & F_{b_i}(\varphi,H);
\\
\lim_{\nu \to \infty} f_i^{\nu}(x_{i-1}^{\nu}) & = & f_i(x_{i-1});
\\
\lim_{\nu \to \infty} \widetilde{f}_i^{\nu}(z_i^{\nu})  & = & \widetilde{f}_i(z_i).
\end{eqnarray*}

To summarize, we have constructed $x_0,\dots,x_{m} \in X$,
$a_1,b_1,\dots,a_{m},b_{m} \in S$,
$f_1,\widetilde{f}_1,\dots f_{m},\widetilde{f}_{m} \colon X \to X$
such that
$x_0 = x$, $x_{m} = y$, $f_i \in F_{a_i}(\varphi,H)$,
$\widetilde{f}_i \in F_{b_i}(\varphi,H)$,
$f_i(x_{i-1}) = \widetilde{f}_i(x_i)$
for $i \in \{1,2, \dots ,m\}$ and
$h = g a_1^{-1} b_1 \dots a_{m}^{-1} b_{m}$ holds.
Thus $(h,y) \in S^{m}_{\varphi,H}(g,x)$.
\\[1ex]\ref{lem:properties-of-d-S-K:open}
Suppose $d_{S,\Lambda}((g,x),(h,y)) < 1$. 
For every $\epsilon > 0$ with
$\epsilon < 1 - d_{S,\Lambda}((g,x),(h,y))$ we can find appropriate
$n$, $x_i$,  $z_i$,$f_i$, $\widetilde{f}_i$,$a_i$ and $b_i$ with
$$n + \sum_{i=0}^{n}  \Lambda \cdot d_X(x_i,z_i) < 
                d_{S,\Lambda}((g,x),(h,y)) + \epsilon.$$
Since $d_{S,\Lambda}((g,x),(h,y)) + \epsilon < 1$, we conclude $n = 0$
and hence  $g = h$ and 
$\Lambda \cdot d_X(x,y) < d_{S,\Lambda}((g,x),(h,y)) + \epsilon$.
Since this holds for all such $\epsilon$, we get
$\Lambda \cdot d_X(x,y) \le d_{S,\Lambda}((g,x),(h,y))$.  
Obviously
$\Lambda \cdot d_X(x,y) \ge d_{S,\Lambda}((g,x),(h,y))$ because of $g = h$.
This proves $d_{S,\Lambda}((g,x),(h,y)) = \Lambda \cdot d_X(x,y)$ and $g = h$ 
provided that $d_{S,\Lambda}((g,x),(h,y)) < 1$.

One easily checks that $g = h$ and $d_X(x,y) < 1$ implies
$\Lambda \cdot d_{S,\Lambda}((g,x),(h,y)) < 1$.

The claim about the topology is now obvious.
\end{proof}


\subsection{Contracting maps}
\label{subsec:contracting-maps}

\begin{proposition}
  \label{prop:large-Lebesgue}
Let $\calu$ be an $S$-long finite-dimensional $G$-equivariant
cover of $G \times X$. Let $m$ be any number with $m \le |S|$.  Then there
is $\Lambda > 0$ such that the Lebesgue number of $\calu$ with respect
to $d_{S,\Lambda}$ is at least $m/2$, i.e., for every $(g,x)$ there is $U
\in \calu$ containing the open $m/2$-ball $B_{m/2,\Lambda}(g,x)$
around $(g,x)$ with respect to the metric $d_{S,\Lambda}$.
\end{proposition}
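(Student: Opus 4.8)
I would prove Proposition~\ref{prop:large-Lebesgue} by a compactness-contradiction argument, exploiting precisely the interplay of the two parts of Lemma~\ref{lem:properties-of-d-S-K} and the $S$-longness of $\calu$. The target claim is: there exists $\Lambda>0$ so that for every $(g,x)\in G\times X$ some $U\in\calu$ contains the open $d_{S,\Lambda}$-ball of radius $m/2$ about $(g,x)$. By $G$-invariance of both the cover $\calu$ and the metric $d_{S,\Lambda}$ (Lemma~\ref{lem:properties-of-d-S-K}~\ref{lem:properties-of-d-S-K:metric}), it suffices to find, for each $x\in X$, a $\Lambda_x>0$ and a neighborhood $V_x$ of $x$ in $X$ such that for all $x'\in V_x$ the ball $B_{m/2,\Lambda_x}(e,x')$ lies in a single member of $\calu$; then compactness of $X$ lets me pass to a finite subcover and take $\Lambda$ to be the minimum of the corresponding $\Lambda_x$ (using that shrinking $\Lambda$ only shrinks $d_{S,\Lambda}$-balls, hence only helps — one should check this monotonicity, which is immediate from formula~\eqref{eq:d_S-for-fixed-sequence} since each term $n+\sum\Lambda d_X(x_i,z_i)$ is nondecreasing in $\Lambda$).

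So the heart of the matter is the local statement at a fixed $x$. Suppose it fails: then for every $\nu\ge 1$ there is $x^\nu\in X$ with $x^\nu\to x$ (after restricting to a neighborhood base) such that the ball $B_{m/2,\nu}(e,x^\nu)$ — using $\Lambda=\nu\to\infty$ — is not contained in any $U\in\calu$. Hence for each $\nu$ there is a point $(h^\nu,y^\nu)\in G\times X$ with $d_{S,\nu}((e,x^\nu),(h^\nu,y^\nu))<m/2$ but with $(h^\nu,y^\nu)$ and $(e,x^\nu)$ not both in any common $U$. Since $m/2<1$ would be too weak, I instead use the following: because $d_{S,\nu}((e,x^\nu),(h^\nu,y^\nu))<m/2<m\le |S|$ and this fails for every $\Lambda=\nu$, Lemma~\ref{lem:properties-of-d-S-K}~\ref{lem:properties-of-d-S-K:blows-up} is not directly applicable to a single moving point — so the real argument must first pin down a limiting ``bad'' configuration. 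Using compactness of $X$ (for the $y^\nu$) and finiteness of the data in the definition of $d_{S,\Lambda}$ (the integers $n^\nu\le m$ are bounded, the elements $a^\nu_i,b^\nu_i\in S$ range in a finite set, and the homotopy-times lie in the compact $[0,1]$), I pass to a subsequence along which $y^\nu\to y_\infty$, $h^\nu=h_\infty$ is constant, and the witnessing sequences $(x^\nu_i,z^\nu_i,a^\nu_i,b^\nu_i,f^\nu_i,\widetilde f^\nu_i)$ converge — exactly as in the proof of Lemma~\ref{lem:properties-of-d-S-K}~\ref{lem:properties-of-d-S-K:blows-up}. Since $d_X(x^\nu_i,z^\nu_i)<(m/2)/\nu\to 0$, in the limit $x_i=z_i$, and the limiting data witness $(h_\infty,y_\infty)\in S^{m}_{\varphi,H}(e,x)$ — in fact, being more careful with the bookkeeping, $(h_\infty,y_\infty)\in S^{\lceil m\rceil}_{\varphi,H}(e,x)\subseteq S^{|S|}_{\varphi,H}(e,x)$.

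Now I invoke $S$-longness of $\calu$ (Definition~\ref{def:S-action_plus_long-covers}~\ref{def:S-action_plus_long-covers:long}): there is $U_0\in\calu$ with $S^{|S|}_{\varphi,H}(e,x)\subseteq U_0$, so in particular both $(e,x)\in U_0$ and $(h_\infty,y_\infty)\in U_0$. Since $U_0$ is open and $(e,x^\nu)\to(e,x)$, $(h^\nu,y^\nu)\to(h_\infty,y_\infty)$ in the product topology — which by Lemma~\ref{lem:properties-of-d-S-K}~\ref{lem:properties-of-d-S-K:open} is the topology induced by $d_{S,\Lambda}$ — we get $(e,x^\nu)\in U_0$ and $(h^\nu,y^\nu)\in U_0$ for all large $\nu$, contradicting the choice of $(h^\nu,y^\nu)$. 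This contradiction proves the local statement and hence the proposition.

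**Expected main obstacle.** The delicate point is the bookkeeping that converts a bound $d_{S,\nu}<m/2$ on a moving base point $(e,x^\nu)$ into membership of the limit in $S^{|S|}_{\varphi,H}(e,x)$: one must carefully pad the witnessing sequences to a common length $m$ (as in the proof of \ref{lem:properties-of-d-S-K:blows-up}, setting the extra entries to $e$ and $\id_X$), extract a diagonal subsequence that simultaneously stabilizes all the finitely-many $S$-valued and integer data while letting the $X$-valued points and $[0,1]$-valued times converge, and then verify that the limiting data genuinely satisfy the defining relations $f_i(x_{i-1})=\widetilde f_i(x_i)$ and $h_\infty=e\cdot a_1^{-1}b_1\cdots a_m^{-1}b_m$ — using continuity of the finitely many maps $\varphi_g$ and the homotopies $H_{r,s}$. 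The rest is routine once one notices that shrinking $\Lambda$ shrinks balls and that $G$-invariance plus compactness of $X$ reduces everything to the single point $(e,x)$.
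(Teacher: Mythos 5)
Your overall plan — $G$-invariance, reduction to a local statement near each $x$, compactness of $X$, and a compactness-contradiction argument mirroring the proof of Lemma~\ref{lem:properties-of-d-S-K}~\ref{lem:properties-of-d-S-K:blows-up} — is the same as the paper's, but two steps contain genuine errors. First, the minimum must be a maximum. From~\eqref{eq:d_S-for-fixed-sequence} each distance $d_{S,\Lambda}$ is \emph{nondecreasing} in $\Lambda$, exactly as you note; but this means decreasing $\Lambda$ decreases distances and hence \emph{enlarges} the $d_{S,\Lambda}$-balls rather than shrinking them. Taking $\Lambda := \min_i \Lambda_{x_i}$ therefore produces balls that need not lie in the previously found members of $\calu$. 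The paper correctly puts $\Lambda := \max_i \Lambda_{x_i}$, so that $\Lambda \geq \Lambda_{x_i}$ gives $B_{m/2,\Lambda}(e,x') \subseteq B_{m/2,\Lambda_{x_i}}(e,x')$.

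Second, the step ``Hence for each $\nu$ there is a point $(h^\nu,y^\nu)$ \dots\ not both in any common $U$'' is a non sequitur. The failure of $B_{m/2,\nu}(e,x^\nu)$ to lie in any member of $\calu$ only yields, for each $U$ containing $(e,x^\nu)$, \emph{some} point of the ball outside $U$; it does not produce a single point simultaneously bad for all such $U$. The paper avoids this by keeping the base point fixed at $(e,x)$, using finite dimensionality of $\calu$ to make $\calu_x := \{U \in \calu \mid (e,x) \in U\}$ finite, and tracking one bad point $(h^{U,n},y^{U,n})$ per $U \in \calu_x$ through the subsequence extraction; $S$-longness is then invoked only at the end to single out the $U_0$ for which the contradiction arises. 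Your moving-base-point version can in principle be repaired — first identify $U_0 \supseteq S^{|S|}_{\varphi,H}(e,x)$ via $S$-longness, note $(e,x^\nu) \in U_0$ for large $\nu$, and choose $(h^\nu,y^\nu) \in B_{m/2,\nu}(e,x^\nu) \setminus U_0$ — but then you must also redo the compactness extraction with a moving base point, since Lemma~\ref{lem:properties-of-d-S-K}~\ref{lem:properties-of-d-S-K:blows-up} is stated only for a fixed $(g,x)$. The paper's architecture (a pointwise claim with radius $m$, not $m/2$, followed by the triangle inequality $B_{m/2,\Lambda_{x_i}}(e,x') \subseteq B_{m,\Lambda_{x_i}}(e,x_i)$ to recover the neighborhood uniformity) is designed precisely to keep the base point fixed and sidestep this.
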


\begin{proof}
Fix $x \in X$. First we show the existence of $\Lambda_x > 0$
and $U_x \in \calu$ such that the  open $m$-ball $B_{m,\Lambda_x}(e,x)$
around $(e,x)$ with respect to $d_{S,\Lambda_x}$ lies in $U_x$.

Let $\calu_x := \{ U \in \calu \mid (e,x) \in U \}$.
Then $\calu_x$ is finite, because $\calu$ is finite dimensional.
We proceed by contradiction. So assume that for every $\Lambda > 0$
no $U \in \calu_x$ contains $B_{m,\Lambda}(e,x)$.
Thus we can find a monotone increasing sequence $(\Lambda_n)_{n \ge 1}$ 
of positive real numbers
with $\lim_{n \to \infty} \Lambda_n = \infty$
such that for every $U \in \calu_x$ and $n \ge 1$ there is
$(h^{U,n},y^{U,n}) \in (G \times X) \setminus U$ satisfying
\begin{equation}
\label{eq:distance-from-g-x-to-h-y-U-n}
d_{\Lambda_n,S}((e,x),(h^{U,n},y^{U,n})) < m.
\end{equation}
Because $X$ is compact, we can arrange by passing to a 
subsequence of $(\Lambda_n)_{n \ge 1}$
that for each $U \in \calu_x$ there is $y^U \in X$ satisfying 
$\lim_{n \to \infty} y^{U,n} = y^U$.
The definition of $d_{\Lambda_n,S}$ and
\eqref{eq:distance-from-g-x-to-h-y-U-n} imply
that each $h^{U,n}$ can be written as a product
of at most $2m$ elements from $S \cup S^{-1}$.
Therefore the $h^{U,n}$-s range over a finite subset of $G$.
Thus we can arrange by passing to a subsequence of $(\Lambda_n)_{n \ge 1}$
that for each $U \in \calu_x$ there is $h^U \in G$
such that $h^{U,n} = h^U$ holds for all $n$.
We get for $k \ge n$ from~\eqref{eq:distance-from-g-x-to-h-y-U-n}
since $\Lambda_n \le \Lambda_k$
\begin{eqnarray*}
d_{S,\Lambda_n}\left((e,x),(h^U,y^U)\right)
& \le &
d_{S,\Lambda_n}\left((e,x),(h^U,y^{U,k})\right) + 
d_{S,\Lambda_n}\left((h^U,y^{U,k}),(h^U,y^{U})\right)
\\
& \le &
d_{S,\Lambda_k}\left((e,x),(h^U,y^{U,k})\right) + 
d_{S,\Lambda_n}\left((h^U,y^{U,k}),(h^U,y^{U})\right)
\\
& < &
m + d_{S,\Lambda_n}\left((h^U,y^{U,k}),(h^U,y^{U})\right).
\end{eqnarray*}
Lemma~\ref{lem:properties-of-d-S-K}~\ref{lem:properties-of-d-S-K:open}
implies 
$\lim_{k \to \infty} d_{S,\Lambda_n}\left((h^U,y^{U,k}),(h^U,y^{U})\right) = 0$.
We conclude $d_{S,\Lambda_n}\left((e,x),(h^U,y^U)\right) \le  m$ for all
$U \in \calu_x$. 
By Lemma~\ref{lem:properties-of-d-S-K}~\ref{lem:properties-of-d-S-K:blows-up}
this implies $(h^U,y^U) \in S^m_{\varphi,H}(g,x)$ for all $U \in \calu_x$. 
Because $\calu$ is assumed to be $S$-long there is $U_0 \in \calu_x$ such that
$S^m_{\varphi,H}(g,x) \subseteq U_0$. 
Thus $(h^{U_0},y^{U_0}) \in U_0$. 
But this yields the desired contradiction: 
$$\lim_{n \to \infty} (h^{U_0},y^{U_0,n}) =
\lim_{n \to \infty} (h^{U_0,n},y^{U_0,n}) = (h^{U_0},y^{U_0})$$
together with the fact that $(h^{U_0,n},y^{U_0,n})$ lies in the closed 
subset $(G \times X) \setminus U_0$, implies 
$(h^{U_0},y^{U_0}) \in (G \times X) \setminus U_0$.

Now we can finish the proof of Proposition~\ref{prop:large-Lebesgue}.
For $x \in X$ the subset
$$B_{m/2,\Lambda_x}(e,x)\cap \{e\} \times X \subseteq \{e\} \times X = X$$
is open in $X$ because of
Lemma~\ref{lem:properties-of-d-S-K}~\ref{lem:properties-of-d-S-K:open}.
Since $X$ is compact, we can find finitely many elements 
$x_1$, $x_2$, $\dots$, $x_l$
such that
$$X = \{e\} \times X
= \bigcup_{i=1}^l \left(B_{m/2,\Lambda_{x_i}}(e,x_i)\cap \{e\} \times X\right).$$
Put $\Lambda := \max\{\Lambda_{x_1}, \dots ,\Lambda_{x_l}\}$. 
Consider $(g,x) \in G \times X$.
Then we can find $i \in \{1,2, \dots ,l\}$ such that 
$(e,x) \in B_{m/2,\Lambda_{x_i}}(e,x_i)$.
Hence
$$B_{m/2,\Lambda}(e,x) \subseteq  B_{m/2,\Lambda_{x_i}}(e,x) 
\subseteq B_{m,\Lambda_{x_i}}(e,x_i).$$
We have already shown that there exists
$U \in \calu$ with $B_{m,\Lambda_{x_i}}(e,x_i) \subseteq U$. This implies
$$B_{m/2,\Lambda}(g,x) = g\left(B_{m/2,\Lambda}(e,x)\right) \subseteq g(U).$$
Since $\calu$ is $G$-invariant, this finishes the proof of
Proposition~\ref{prop:large-Lebesgue}.
\end{proof}

In the following proposition $d^1$ denotes the $l^1$-metric on simplicial
complexes, compare~\cite[Subsection~4.2]{Bartels-Lueck-Reich(2008hyper)}.

\begin{proposition}
  \label{prop:contracting-maps}
  Let $G$ be a finitely generated group that is transfer
  reducible over the family $\calf$.
  Let $N$ be the number appearing in 
  Definition~\ref{def:transfer-reducible}.
  Let $S$ be a finite subset of $G$ (containing $e$) that generates $G$.
  Let $\e > 0$, $\beta > 0$.
  Then there are
  \begin{itemize}
  \item a compact contractible controlled $N$-dominated metric
        space $(X,d)$;
  \item a homotopy $S$-action $(\varphi,H)$ on $X$;
  \item a positive real number $\Lambda$;
  \item a simplicial complex $\Sigma$ of dimension
        $\le N$ with a simplicial cell
        preserving $G$-action;
  \item a $G$-equivariant map $f \colon G \times X \to \Sigma$,
  \end{itemize}
  satisfying:
  \begin{enumerate}
  \item The isotropy groups of $\Sigma$ are members of $\calf$;
  \item \label{prop:contractin-maps:estimate}
        If $(g,x),(h,y) \in G \times X$ and 
        $d_{S,\Lambda}((g,x),(h,y)) \leq \beta$,
        then $$d^1(f(g,x),f(h,y)) \leq \epsilon.$$
  \end{enumerate}
\end{proposition}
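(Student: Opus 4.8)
The plan is to combine Proposition~\ref{prop:large-Lebesgue} with a standard construction of a map to a nerve. Since $G$ is transfer reducible over $\calf$, applying Definition~\ref{def:transfer-reducible} to the given finite set $S$ produces a compact contractible controlled $N$-dominated metric space $X$, a homotopy $S$-action $(\varphi,H)$ on $X$, and an open cover $\calu$ of $G \times X$ with $\dim \calu \le N$, with $\calu$ being $S$-long with respect to $(\varphi,H)$, and with $\calu$ an open $\calf$-cover. These are the $X$, $(\varphi,H)$ we will output. First I would fix a number $m$ with $m \le |S|$ to be chosen depending on $\e$ and $\beta$ at the end; by Proposition~\ref{prop:large-Lebesgue} there is a $\Lambda > 0$ so that the Lebesgue number of $\calu$ with respect to $d_{S,\Lambda}$ is at least $m/2$. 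This $\Lambda$ will be the output $\Lambda$ (possibly enlarged later).

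Next I would build $\Sigma$ and $f$. Let $\Sigma$ be the nerve of the cover $\calu$, with the obvious simplicial $G$-action coming from the $G$-action on $\calu$ (i.e. $g$ permutes the vertices indexed by $U \in \calu$ via $U \mapsto g(U)$); since $\dim \calu \le N$ the nerve has dimension $\le N$, and since $\calu$ is an open $\calf$-cover the stabilizer of each simplex is $\bigcap G_{U_i}$, which lies in $\calf$ because $\calf$ is closed under subgroups and each $G_{U_i} \in \calf$ — this gives assertion~(i). To define $f \colon G \times X \to \Sigma$, I would use a $G$-equivariant partition of unity $\{\phi_U\}_{U \in \calu}$ subordinate to $\calu$; such a partition exists because $G \times X$ is metrizable (by Lemma~\ref{lem:properties-of-d-S-K}~\ref{lem:properties-of-d-S-K:open} the topology is the product topology) and paracompact, and equivariance can be arranged using that $G$ acts freely on $G \times X$ in the first coordinate. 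Then $f(g,x)$ is the point of $\Sigma$ with barycentric coordinates $(\phi_U(g,x))_U$; equivariance of the $\phi_U$ gives equivariance of $f$.

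It remains to verify the contraction estimate~(ii). This is the standard ``small motion stays in the star of a simplex'' argument: if $d_{S,\Lambda}((g,x),(h,y)) \le \beta$ and $\beta \le m/2$, then by the Lebesgue-number property both points lie in a common $U_0 \in \calu$ (take $U_0$ to contain the $m/2$-ball around $(g,x)$; then $(h,y)$ is in that ball since $\beta \le m/2$). Hence every $U$ with $\phi_U(g,x) \ne 0$ or $\phi_U(h,y) \ne 0$ meets $U_0$, so $f(g,x)$ and $f(h,y)$ lie in a common simplex whose vertex set is the (finite, of size $\le N+1$) set of such $U$. Within a single simplex one estimates $d^1(f(g,x),f(h,y)) = \sum_U |\phi_U(g,x) - \phi_U(h,y)|$. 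To make this small I would refine: choose $\Lambda$ large enough (using uniform continuity of the finitely many relevant $\phi_U$ restricted to a fixed compact piece, or more cleanly, replace $\calu$ by iterated barycentric subdivisions / use that we may shrink further in the $X$-direction via the $\Lambda$-scaling in $d_{S,\Lambda}$) so that $\beta$-balls in $d_{S,\Lambda}$ are carried by $f$ into sets of $d^1$-diameter $\le \e$. The main obstacle I anticipate is exactly this last quantitative point — turning ``lies in a common simplex'' into the metric bound $\le \e$ — since a priori two points of one $N$-simplex can be at $d^1$-distance up to $2$; the resolution is that $d_{S,\Lambda}$ scales the $X$-directions by $\Lambda$ while the $G$-directions are already discrete, so a $\beta$-ball in $d_{S,\Lambda}$ with $\Lambda$ huge is genuinely tiny in $X$ and sits inside a single translate $\{g\} \times (\text{small ball})$, on which the finitely many partition functions vary by at most $\e$ by equicontinuity; I would carry this out by first fixing a partition of unity and then choosing $m$, and then $\Lambda$, accordingly.
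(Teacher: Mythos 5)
Your overall scheme — invoke Definition~\ref{def:transfer-reducible} to obtain $(X,d)$, $(\varphi,H)$, $\calu$; use Proposition~\ref{prop:large-Lebesgue} to get $\Lambda$; take $\Sigma$ to be the nerve of $\calu$ with the $G$-action and map $f$ given by a subordinate partition of unity — is the same as the paper's. But the argument for the contraction estimate~(ii) has a genuine gap, and I do not see how to close it along the lines you propose.

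The paper does not hold $S$ fixed; it first \emph{enlarges} $S$, using the monotonicity $d_{T,\Lambda} \le d_{S,\Lambda}$ for $S \subseteq T$, so that $D := |S|/2$ satisfies $\beta \le D/(4N)$ and $16N^2\beta/D \le \epsilon$. This enlargement is what produces the contraction, and you omit it entirely. In the paper, $\Lambda$ only serves (via Proposition~\ref{prop:large-Lebesgue}) to make the Lebesgue number of the \emph{given} cover $\calu$ at least $D$; increasing $\Lambda$ beyond that does not make $f$ any more contracting. What does make $f$ contracting is that the specific distance-based partition of unity
\[
  f(x) = \sum_{U \in \calu}\frac{d_{S,\Lambda}(x,\,G\times X - U)}{\sum_{V\in\calu}d_{S,\Lambda}(x,\,G\times X - V)}\,U
\]
is, by~\cite[Proposition~5.3]{Bartels-Lueck-Reich(2008hyper)}, Lipschitz with constant $16N^2/D$ on the scale $D/(4N)$, and this constant goes to $0$ as $D \to \infty$. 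A generic $G$-equivariant partition of unity $\{\phi_U\}$, fixed in advance, carries no such quantitative Lipschitz control, so there is nothing to tune afterwards.

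Your proposed fix via equicontinuity also breaks precisely in the case that matters. You argue that a $\beta$-ball in $d_{S,\Lambda}$ with $\Lambda$ huge sits inside a single slice $\{g\}\times(\text{small ball in }X)$; by Lemma~\ref{lem:properties-of-d-S-K}~\ref{lem:properties-of-d-S-K:open} this is only true when the distance is strictly less than~$1$. For $\beta \ge 1$ — and the proofs of Theorem~\ref{the:axiomatic} apply the proposition with $\beta = 2$ — the $\beta$-ball around $(g,x)$ contains all of $S^{\lfloor\beta\rfloor}_{\varphi,H}(g,x)$ for \emph{every} $\Lambda$ (take $z_i = x_i$ in Definition~\ref{def:d_S_lambda}, so that $n+\sum\Lambda\,d_X(x_i,z_i)=n$ regardless of $\Lambda$), and this set in general meets several slices $\{h\}\times X$ with $h\neq g$. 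Increasing $\Lambda$ does not shrink this part of the ball at all, and your equicontinuity argument gives no way to compare $\phi_U(g,x)$ with $\phi_U(h,y)$ for $g \ne h$. The missing ingredient is precisely the enlargement of $S$ together with the Lipschitz bound for the distance-based $f$.
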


\begin{proof}
Set $D := \frac{|S|}{2}$. Since for $S \subseteq T$ we have
$d_{T,\Lambda} \le d_{S,\Lambda}$, we can arrange by possibly
enlarging $S$ 
\begin{equation*}
\beta  \le  \frac{D}{4N} \quad \mbox{and} \quad
\frac{16N^2\beta}{D}  \le  \epsilon.
\end{equation*}
Because $G$ is transfer reducible over $\calf$ there exists 
a contractible compact controlled $N$-dominated space $X$,
a homotopy $S$-action $(\varphi,H)$ on $X$ and an
$S$-long cover $\calu$ of $G \times X$
such that $\calu$ is an $N$-dimensional open $\calf$-covering.
Using Proposition~\ref{prop:large-Lebesgue}
we find $\Lambda > 0$ such that the Lebesgue number of $\calu$
with respect to $d_{S,\Lambda}$ is at least $D$.
Let $\Sigma := |\calu|$ be the realization of the nerve
of $\calu$.
Since $\calu$ is an open $\calf$-cover, $\Sigma$ inherits
a simplicial cell preserving $G$-action whose isotropy groups
are members of $\calf$.
Let now $f \colon G \times X \to \Sigma$ be the map induced by
$\calu$, i.e.,
\begin{equation*}
  f(x) := \sum_{U \in \calu}
          \frac{d_{S,\Lambda}(x,G \times X - U)}
               {\sum_{V \in \calu} d_{S,\Lambda}(x,G \times X - V) } U.
\end{equation*}
This is a $G$-equivariant map since $d_{S,\Lambda}$ is $G$-invariant.
From~\cite[Proposition~5.3]{Bartels-Lueck-Reich(2008hyper)}
we get
\begin{equation*}
d_{S,\Lambda} ((g,x),(h,y)) \leq \frac{D}{4N} \quad \implies \quad
d^1(f(g,x),f(h,y)) \leq \frac{16 N^2}{D} d_{S,\Lambda}((g,x),(h,y)).
\end{equation*}
We conclude
\begin{equation*}
d_{S,\Lambda} ((g,x),(h,y)) \leq \beta \quad \implies \quad
d^1(f(g,x),f(h,y)) \leq \frac{16 N^2\beta}{D} \le \epsilon.
\end{equation*}
This finishes the proof of Proposition~\ref{prop:contracting-maps}.
\end{proof}


\typeout{-------- Controlled algebra with a view towards L-theory --------}

\section{Controlled algebra with a view towards $L$-theory}
\label{sec:controlled-algebra_plus_L}

\begin{summary*}
  A crucial tool in the proof of Theorem~\ref{the:axiomatic}
  is controlled algebra.
  In this section we give a  brief review of this theory 
  where we emphasize the $L$-theory aspects.
  In Subsection~\ref{subsec:obstruction-category} we define
  the obstruction categories whose $K$- respectively 
  $L$-theory will appear as the homotopy groups of homotopy fibers
  of assembly maps.
  Elements in these $K$- and $L$-groups will be represented 
  by chain homotopy equivalences in $K$-theory and by
  ultra-quadratic Poincar\'e complexes over
  these categories.
  (These are the cycles referred to in the introduction.)
\end{summary*}


\subsection{Additive $G$-categories with involution}
\label{subsec:additive-G-categories}

By an \emph{additive category $\cala$} we will mean from now on a 
small additive 
category with a functorial strictly associative direct sum.
For a group $G$ an \emph{additive $G$-category} is by 
definition such an additive 
category together with a strict (right) $G$-action 
that is compatible with the direct sum. 
By an \emph{additive $G$-category with involution} we will mean an additive
$G$-category that carries in addition a strict involution $\inv$
(i.e., $\inv \circ \inv = \id_\cala$ that is
strictly compatible with the $G$-action (i.e., $\inv \circ g = g \circ \inv$)
and the sum (i.e., $\inv(A \oplus B) = \inv(A) \oplus \inv(B)$), 
see~\cite[Definition~10.6]{Bartels-Lueck(2009coeff)}.
The assembly maps~\eqref{eq:assembly-map-calf-K}
and~\eqref{eq:assembly-map-calf-2-L} are defined for more general $\cala$, 
but the assembly maps are isomorphisms for all such more general
$\cala$ if and only if they are isomorphism for all 
additive $G$-categories (with involution) as above, 
see~\cite[Theorem~0.12]{Bartels-Lueck(2009coeff)}.  


\subsection{The category $\calc^G(Y,\cale,\calf;\cala)$.}
\label{subsec:category-calc-Y-cale-calf}

Let $G$ be a group, $Y$ a space and $\cala$ be
a additive category.
Let $\cale \subseteq \{E \mid  E \subseteq Y \times Y \}$ and
$\calf \subseteq \{F \mid F \subseteq Y \}$ be collections
satisfying the conditions from~\cite[page~167]{Bartels-Farrell-Jones-Reich(2004)}.
(These conditions are designed to ensure that we indeed obtain an
additive category (with involution) 
and are satisfied in all cases of interest.)
The category $\calc(Y;\cale,\calf;\cala)$ is defined as follows.
Objects are given by sequences $(M_y)_{y \in Y}$ of objects
in $\cala$ such that
\begin{numberlist}
\item [\label{nl:calf-controlled}]
      $M$ is $\calf$-controlled: there is $F$ in $\calf$ such
      that the \emph{support} $\supp M := \{ y \mid M_y \neq 0 \}$
      is contained in $F$;
\item [\label{nl:locally-finite}]
      $M$ has locally finite support: for every $y \in Y$
      there is an open neighborhood $U$ of $y$ such that
      $U \cap \supp M$ is finite.
\end{numberlist}
A morphism $\psi$ from $M = (M_y)_{y \in Y}$ to
$M' = (M'_y)_{y \in Y}$
is given by a collection
$(\psi_{y',y} \colon M_y \to M_{y'}')_{(y',y) \in Y \times Y}$
of morphisms in $\cala$ such that
\begin{numberlist}
\item [\label{nl:psi-cale-controlled}]
      $\psi$ is $\cale$-controlled: there is $E \in \cale$
      such that the \emph{support}
      $\supp(\psi) := \{ (y',y) \mid \psi_{y',y} \neq 0 \}$
      is contained in $E$;
\item [\label{nl:psi-finite-cr}]
      $\psi$ is row and column finite:
      for every $y \in Y$ the sets $\{ y' \in Y \mid (y,y') \in \supp \psi \}$
      and $\{ y' \in Y \mid (y',y) \in \supp \psi \}$ are finite.
\end{numberlist}
Composition of morphisms is given by matrix multiplication, i.e.,
$(\psi' \circ \psi)_{y'',y} = \sum_{y' \in Y} \psi_{y'',y'} \circ \psi_{y',y}$.
If $\inv \colon \cala \to \cala$ is a strict involution,
then $\calc(Y;\calf,\cale;\cala)$ inherits a strict involution.
For objects it is defined  by $(\inv (M))_y = \inv ( M_y )$,
for morphisms it is defined by $(\inv(\psi))_{y',y} = \inv ( \psi_{y,y'})$.
Let now $Y$ be a (left) $G$-space and assume that $\cala$
is equipped with a (strict) right $G$-action, i.e., $\cala$ is an additive
$G$-category.
Assume that the $G$-action on $Y$ preserves both $\calf$ and $\cale$.
Then $\calc(Y,\cale,\calf;\cala)$ inherits a (right) $G$-action
making it an additive $G$-category.
For an object $M$ and $g \in G$ the action is given by
$(Mg)_y = (M_{gy})g$.
If the action on $\cala$ is compatible with a (strict) 
involution $\inv$ on $\cala$,
i.e., if $\cala$ is an additive $G$-category with involution,
then $\calc(Y;\cale,\calf;\cala)$ is also an additive $G$-category 
with involution under the induced action and involution.
We will denote by $\calc^G(Y;\cale,\calf;\cala)$ the subcategory of
$\calc(Y;\cale,\calf;\cala)$ that is (strictly) fixed by $G$.


\subsection{Metric control - the category $\calc(Z,d;\cala)$}
\label{subsec:metric-control}

Let $(Z,d)$ be a metric space.
Let $\cale(Z,d) := \{ E_\alpha \mid \alpha > 0 \}$
where $E_\alpha := \{ (z,z') \mid d(z,z') \leq \alpha \}$.
For an additive category $\cala$ 
(with or without involution)
we define $\calc(Z,d;\cala) := \calc(Z;\cale(Z,d),\{ Z \};\cala)$.
Let $\e > 0$.
A morphism $\psi$ in $\calc(Z,d;\cala)$ is said to be
\emph{$\e$-controlled} if $\supp (\psi) \subseteq E_\e$.

The \emph{idempotent completion} $\Idem(\cala)$ of an additive category
$\cala$ is the following additive category.
Objects are morphisms $p \colon M \to M$ in $\cala$ satisfying $p^2 = p$.
A morphism $f \colon (M,p) \to (N,q)$ in $\Idem(\cala)$ is a morphism
$f \colon M \to N$ satisfying $q \circ f \circ p = f$. Composition
and the additive structure are inherited from $\cala$ in the obvious way.
Recall that for us an additive category is always understood to be small, i.e.,
the objects form a set.
If $\cala$ is an additive category which is equivalent to the category of 
finitely generated free $R$-modules, then $\Idem(\cala)$ is equivalent to the 
category of finitely generated projective $R$-modules.

An object $A = (M,p) \in \Idem(\calc(Z,d;\cala))$
(where $p \colon M \to M$ is an idempotent in
$\calc(Z,d;\cala)$) is called \emph{$\e$-controlled}
if $p$ is $\e$-controlled.
A morphism $\psi \colon (M,p) \to (M',p')$ in $\Idem(\calc(Z,d;\cala))$ is 
called \emph{$\e$-controlled} if $\psi \colon M \to M'$ is $\e$-controlled
as a morphism in $\calc(Z,d;\cala)$.
A chain complex $P$ over $\Idem(\calc(Z,d;\cala))$ is called 
\emph{$\e$-controlled} if $P_n$ is $\e$-controlled for all $n$, and the 
differential $\dd_n \colon P_n \to P_{n-1}$ is $\e$-controlled for all $n$.
A graded map $P \to Q$ of chain complex over $\Idem(\calc(Z,d;\cala))$ 
is said to be \emph{$\e$-controlled} if it consists of morphisms in 
$\Idem(\calc(Z,d;\cala))$ that are $\e$-controlled.
A chain homotopy equivalence $\psi \colon P \to Q$ of chain complexes over
$\Idem(\calc(Z,d;\cala))$ is said to be an 
\emph{$\e$-chain homotopy equivalence over $\Idem(\calc(Z,d;\cala))$}
if there is a chain homotopy inverse $\varphi$ for $\psi$
and chain homotopies $H$ from $\varphi \circ \psi$ to $\id_{P}$
and $K$ from $\psi \circ \varphi$ to $\id_{Q}$ such that $P$, $Q$, $\varphi$, $\psi$,
$H$ and $K$ are $\e$-controlled.

By $\calf(\IZ)$ we denote the following small model for the category
of finitely generated free $\IZ$-modules.
Objects are $\IZ^n$ with $n \in \IN \cup \{ 0 \}$.
Morphisms are given by matrices over $\IZ$. 
Composition is given by matrix multiplication.
The category $\calf(\IZ)$ is an additive category by taking sums of matrices
and has a (strictly associative functorial) direct sum which is given on objects
by $\IZ^m \oplus \IZ^n = \IZ^{m+n}$.
We will use the (strict) involution of additive categories on $\calf(\IZ)$
which acts as the identity
on objects and by transposition of matrices on morphisms.
We write $\calc(Z,d;\IZ) := \calc(Z,d;\calf(\IZ))$.


\subsection{The obstruction category $\calo^G(Y,Z,d;\cala)$}
\label{subsec:obstruction-category}

Let $Y$ be a $G$-space and let $(Z,d)$ be a metric space with isometric
$G$-action. 
Let $\cala$ be an additive $G$-category 
(with or without involution).
In~\cite[Definition~2.7]{Bartels-Farrell-Jones-Reich(2004)}
(see also~\cite[Section~3.2]{Bartels-Lueck-Reich(2008hyper)})
the \emph{equivariant continuous control condition}
$\cale^Y_{Gcc} \subseteq \{ E \subseteq (Y \times [1,\infty))^{\times 2} \}$
has been introduced.
Define $\cale(Y,Z,d)$ as the collection of all
$E \subseteq (G \times Z \times Y \times [1,\infty))^{\times 2}$
that satisfy the following conditions:
\begin{numberlist}
\item [\label{nl:Gcc-for-calo}]
      $E$ is $\cale^Y_{Gcc}$-controlled: there exists an element $E' \in \cale^Y_{Gcc}$
      with the property that $((g,z,y,t),(g',z',y',t')) \in~E$ implies
      $((y,t),(y',t')) \in E'$;
\item [\label{nl:G-bounded-for-calo}]
      $E$ is bounded over $G$: there is a finite subset $S$ of $G$
      with the property that $((g,z,y,t),(g',z',y',t')) \in E$ implies
      $g^{-1}g' \in S$;
\item [\label{nl:Z-bounded-for-calo}]
      $E$ is bounded over $Z$: there is $\alpha > 0$
      such that $((g,z,y,t),(g',z',y',t')) \in E$ implies
      $d(z,z') \leq \alpha$.
\end{numberlist}
We define $\calf(Y,Z,d)$ to be  the collection of all
$F \subseteq G \times Z \times Y \times [1,\infty)$ for which there is
a compact subset $K$ of $G \times Z \times Y$ such that for $(g,z,y,t) \in F$
there is $h \in G$ satisfying $(hg,hz,hy) \in K$.
Then we define
\begin{equation}
\label{eq:define-calo}
\calo^G(Y,Z,d;\cala) := \calc^G(G \times Z \times Y \times [1,\infty);
                                \cale(Y,Z,d),\calf(Y,Z,d);\cala),
\end{equation}
where we use the $G$-action on $G \times Z \times Y \times [0,\infty)$ 
given by $g(h,z,y,t) := (gh,gz,gy,t)$.
We will also use the case where $Z$ is trivial, i.e., a point,
in this case we write $\calo^G(Y;\cala)$ and drop the
point from the notation.

We remark that all our constructions on this category will happen
in the $G \x Z$ factor of $G \x Z \x Y \x [1,\infty)$;
in particular, it will not be important for the reader to know the precise
definition of the equivariant continuous control condition $\cale^Y_{Gcc}$.
(We will on the other hand use 
results from~\cite{Bartels-Lueck-Reich(2008hyper)} that depend very much on
the precise definition of $\cale^Y_{Gcc}$.)

Let $S \subseteq G$ and $\e > 0$.
A morphism $\psi$ in $\calo^G(Y,Z,d;\cala)$ is said to
be \emph{$(\e, S)$-controlled} if
$((g,z,y,t),(g',z',y',t')) \in \supp ( \psi)$
implies $d(z,z') \leq \e$ and $g^{-1}g' \in S$.
If $\psi$ is an isomorphism such that both $\psi$ and $\psi^{-1}$
are $(\e,S)$-controlled,
then $\psi$ is said to be an \emph{$(\e,S)$-isomorphism}.
An object $A = (M,p) \in \Idem(\calo^G(Y,Z,d;\cala))$
(where $p \colon M \to M$ is an idempotent in
$\calc^G(Y,Z,d;\cala)$) is called \emph{$(\e,S)$-controlled}
if $p$ is $(\e,S)$-controlled.
A morphism $\psi \colon (M,p) \to (M',p')$ in $\Idem(\calo^G(Y,Z,d;\cala))$
is called \emph{$(\e,S)$-controlled} if
$\psi \colon M \to M'$ is $(\e,S)$-controlled
as a morphism in $\calo^G(Y,Z,d;\cala)$.
A chain complex $P$ over $\Idem(\calo^G(Y,Z,d;\cala))$
is called \emph{$(\e,S)$-controlled} if $P_n$ is $(\e,S)$-controlled
for all $n$, and the differential $\dd_n \colon P_n \to P_{n-1}$
is $(\e,S)$-controlled for all $n$.
A graded map $P \to Q$ of chain complexes over
$\Idem(\calo^G(Y,Z,d;\cala))$ is called \emph{$(\e,S)$-controlled}
if it consists of $(\e,S)$-controlled morphisms in $\Idem(\calo^G(Y,Z,d;\cala))$.
A chain homotopy equivalence $\psi \colon P \to Q$ of chain complex over
$\Idem(\calo^G(Y,Z,d;\cala))$ is said to be an \emph{$(\e,S)$-chain homotopy
equivalence over $\Idem(\calo^G(Y,Z,d;\cala))$}
if there is a chain homotopy inverse $\varphi$ for $\psi$
and chain homotopies $H$ from $\varphi \circ \psi$ to $\id_{P}$
and $K$ from $\psi \circ \varphi$ to $\id_{Q}$  such that 
$P$, $Q$, $\varphi$, $\psi$, $H$ and $K$ are $(\e,S)$-controlled.
We write \emph{$\e$-controlled} for $(\e,G)$-controlled and
\emph{$S$-controlled} for $(\infty,S)$-controlled.

Note that every $S$-controlled morphism has a unique decomposition
\begin{equation}
\label{eq:decomposition-of-S-mor}
\psi = \sum_{a \in S} \psi_a
\end{equation}
where $\psi_a$ is  $\{ a \}$-controlled. Namely, put
$(\psi_a)_{(g,z,y,t),(g',z',y',t')} = \psi_{(g,z,y,t),(g',z',y',t')}$ if 
$g^{-1}g' = a$, and $(\psi_a)_{(g,z,y,t),(g',z',y',t')} = 0$  otherwise.

\begin{remark}
\label{rem:calo-is-calo}
If $G$ is finitely generated, then~\eqref{nl:G-bounded-for-calo}
can be expressed using a word-metric $d_G$ as it is done
in~\cite[Section~3.4]{Bartels-Lueck-Reich(2008hyper)}.
However, the notation there is slightly different:
the category~\eqref{eq:define-calo} is denoted
in~\cite{Bartels-Lueck-Reich(2008hyper)}
by $\calo^G(Y,G \times Z,d_G \times d;\cala)$.
\end{remark}


\subsection{Controlled algebraic Poincar{\'e} complexes}
\label{subsec:controlled-algebraic-Poincare-cx}

We give a very brief review of the part of Ranicki's
algebraic $L$-theory that we will need. 
We will follow~\cite[Section~17]{Ranicki(1992a)}.
Let $\cala$ be an additive category with involution $\inv$.
Ranicki defines for such a category $L$-groups
$L_n^{\langle j \rangle}(\cala)$ where $n \in \IZ$
and $j \in \{ 1,0,-1,\dots,-\infty \}$
(see~\cite[Definition~17.1 on page~145 and 
            Definition~17.7 on page~148]{Ranicki(1992a)}).
If $R$ is a ring with involution
and we take $\cala$ to be the additive category of finitely generated free
$R$-modules, then $L_n^{\langle 1 \rangle}(\cala)$ agrees with $L^h_n(R)$
and $L_n^{\langle 1 \rangle}(\cala)$ agrees with $L_n^p(R)$
(see~\cite[Example~17.4 on page~147]{Ranicki(1992a)}).

For a chain complex $C$ over $\cala$ we write $C^{-*}$ for
the chain complex over $\cala$ with $(C^{-*})_n := \inv(C_{-n})$
and differential $\dd_n := (-1)^n \inv(d_{-n+1})$,
where $d_n \colon C_n \to C_{n-1}$ is the $n$-th differential of $C$.
For a map $f \colon C \to D$ of degree $k$
the map $f^{-*}$ of degree $k$ is defined by
$(f^{-*})_n := (-1)^{n k} \inv(f_{-n})
\colon (D^{-*})_n \to (C^{-*})_{n+k}$.
Note that if $f = \sum_{a \in S} f_a$
and $f^{-*} = \sum_{a \in S} (f^{-*})_a$
where $f_a$ and $(f^{-*})_a$ are $\{ a \}$-controlled,
then $(f^{-*})_a = (f_{a^{-1}})^{-*}$.
A \emph{$0$-dimensional ultra-quadratic Poincar{\'e} complex}
$(C,\psi)$ over $\cala$ is a finite-dimensional
chain complex $C$ over $\cala$ together with
a chain map $\psi \colon C^{-*} \to C$ (of degree $0$),
such that $\psi + \psi^{-*}$ is a chain homotopy equivalence.
If $(C,\psi)$ is concentrated in degree $0$, then it is a quadratic form
over $\cala$.

For us the following facts will be important.
\begin{numberlist}
\item Every $0$-dimensional ultra-quadratic Poincar{\'e} complex
      $(C,\psi)$ over $\cala$ yields an element
      $[(C,\psi)] \in L_0^{\langle 1 \rangle}(\cala)$;
\item [\label{nl:things-are-equal-in-L}]
      If $(C,\psi)$ and $(D,\varphi)$ are both $0$-dimensional
      ultra-quadratic Poincar{\'e} complexes over $\cala$ and
      $f \colon C \to D$ is chain homotopy equivalence such that
      $f \circ \psi \circ f^{-*}$ is chain homotopic to $\varphi$,
      then $[(C,\psi)] = [(D,\varphi)] \in  L_0^{\langle 1 \rangle}(\cala)$;
\item Every element in $L_0^{\langle 1 \rangle}(\cala)$ can be realized
      by a quadratic form;
\item [\label{nl:K-vanishes-decorations}]
      If $K_n(\cala) = 0$  for $n \leq 1$,
      then the natural map
      $L^{\langle 1 \rangle}_0 (\cala) \to \Li_0 (\cala) = \Li_0 (\Idem \cala)$
      is an isomorphism (see~\cite[Theorem~17.2 on page~146]{Ranicki(1992a)}).
\end{numberlist}

\begin{definition}[$0$-dimensional ultra-quadratic 
                      $(\e,S)$-Poincar{\'e} complex]
\label{def:poincare}
Let $Y$ be a $G$-space. 
Let $(Z,d)$ be a metric space equipped with an isometric $G$-action.
Let $\cala$ be an additive $G$-category with involution.
Consider  $S \subseteq G$ and $\e > 0$.
A \emph{$0$-dimensional ultra-quadratic $(\e,S)$-Poincar{\'e} complex}
over $\Idem( \calo^G(Y,Z,d;\cala ) )$ is a
$0$-dimensional ultra-quadratic Poincar{\'e} complex
over $\Idem( \calo^G(Y,Z,d;\cala ) )$
such that $\psi$ is $(\e,S)$-controlled
and $\psi + \psi^{-*}$ is an $(\e,S)$-chain homotopy equivalence.
\end{definition}


\typeout{-----------  Stability and the assembly map  ---------------}

\section{Stability and the assembly map}
\label{sec:stability_plus_assembly}

\begin{summary*}
  Theorem~\ref{thm:obstruction-category} asserts that
  the vanishing of the algebraic $K$- and $L$-theory
  of the obstruction categories yields isomorphism statements for 
  the corresponding assembly maps.
  Theorem~\ref{thm:stability-element} shows that 
  sufficiently controlled chain homotopy equivalences 
  represent the trivial element the algebraic $K$-theory 
  of the obstruction  category.
  Similarly this result shows that sufficiently controlled  
  ultra-quadratic Poincar\'e complexes represent the trivial
  element in the $L$-theory of the obstruction 
  category.  
\end{summary*}

Let $\cala$ be an additive category with involution.  
Its $L$-groups $\Li_n(\cala)$, $n \in \IZ$ can be constructed 
as the homotopy groups
of a (non-connective) spectrum $\bfLi(\cala)$ which is constructed
in~\cite[Definition~4.16]{Carlsson-Pedersen(1995a)} following ideas of
Ranicki.  
Similarly, the $K$-groups $K_n(\cala)$, $n \in \IZ$ of an
additive category $\cala$ are defined as the homotopy groups of a
(non-connective) spectrum $\bfK(\cala)$ which has been constructed
in~\cite{Pedersen-Weibel(1985)}.  (See~\cite[Section~2.1
and~2.5]{Bartels-Farrell-Jones-Reich(2004)} for a brief review.) If
$R$ is a ring (with involution) and one takes $\cala$ to be the
category of finitely generated free $R$-modules, then $\Li_n(\cala)$
and $K_n(\cala)$ reduce to the classical groups $\Li_n(R)$ and
$K_n(R)$ for all $n \in \IZ$

It is of course well-known that the functors $\bfLi$ and
$\bfK$ have very similar properties.
To emphasize this, we recall the following
important properties of these functors.
Recall that an additive category (with involution) is called
\emph{flasque} if there is a functor of such categories
$\Sigma^{\infty} \colon \cala \to \cala$ together with a
natural equivalence of functors of such categories
$\id_{\cala} \oplus \Sigma^{\infty} \xrightarrow{\cong} \Sigma^{\infty}$.
A functor $F \colon \cala \to \calb$ of additive categories 
(with or without involutions) is called
an \emph{equivalence} if for any object $B \in \calb$ there is an 
object $A \in \cala$
such that $F(A)$ and $B$ are isomorphic in $\calb$ and for any two objects
$A_0,A_1 \in \cala$ the map 
$\mor_{\cala}(A_0,A_1) \to \mor_{\calb}(F(A_0),F(A_1))$
sending $f$ to $F(f)$ is bijective. 
For the notion of a \emph{Karoubi filtration}
we refer for instance to~\cite{Cardenas-Pedersen(1997)}.

\begin{theorem}
\label{thm:swindle_plus_karoubi-for-K_plus_L}
$ $
\begin{enumerate}
\item \label{thm:swindle_plus_karoubi:swindle}
      If $\cala$ is a flasque additive category,
      then $\bfK(\cala)$ is weakly contractible.
      If $\cala$ is a flasque additive category with involution,
      then $\bfLi(\cala)$ is weakly contractible;
\item \label{thm:swindle_plus_karoubi:karoubi}
      If $\cala \subseteq \calu$ is a Karoubi filtration
      of additive categories, then
      \begin{equation*}
        \bfK(\cala) \to \bfK(\calu) \to \bfK(\calu / \cala)
      \end{equation*}
      is a homotopy fibration sequence of spectra.
      If $\cala \subseteq \calu$ is a Karoubi filtration
      of additive categories with involutions, then
      \begin{equation*}
        \bfLi(\cala) \to \bfLi(\calu) \to \bfLi(\calu / \cala)
      \end{equation*}
      is a homotopy fibration sequence of spectra;
\item \label{thm:swindle_plus_karoubi:equivalence}
      If $\varphi \colon \cala \to \calb$ is an
      equivalence of additive categories,
      then $\bfK(\varphi)$ is a weak equivalence of spectra.
      If $\varphi \colon \cala \to \calb$ is an
      equivalence of additive categories with involution,
      then $\bfLi(\varphi)$ is a weak equivalence of spectra;
\item \label{thm:swindle_plus_karoubi:colim}
      If $\cala = \colim_i \cala_i$ is a colimit of additive
      categories over a directed system, then the natural map
      $\colim_i \bfK(\cala_i) \to \bfK(\cala)$ is a weak
      equivalence.
      If $\cala = \colim_i \cala_i$ is a colimit of additive
      categories with involution over a directed system,
      then the natural map
      $\colim_i \bfLi(\cala_i) \to \bfLi(\cala)$ is a weak
      equivalence.
\end{enumerate}
\end{theorem}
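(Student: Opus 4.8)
The plan is to deduce each of the four assertions from a result already in the literature; there is no new argument here, and the reason for assembling these properties is precisely that the $K$- and $L$-theoretic versions follow from formally identical considerations. By the reductions recalled in Subsection~\ref{subsec:additive-G-categories} it suffices throughout to treat small additive categories carrying a strict involution, so that the cited references apply at exactly the needed level of generality. For assertion~\ref{thm:swindle_plus_karoubi:swindle} I would run the Eilenberg swindle, using that the functors $\bfK$ and $\bfLi$ are additive: the structure maps identify $\bfK(\cala \oplus \calb)$ with a model for $\bfK(\cala) \times \bfK(\calb)$, so that $\bfK(F \oplus F')$ agrees with $\bfK(F) + \bfK(F')$ on homotopy groups, and similarly for $\bfLi$ (this is part of the constructions in~\cite{Pedersen-Weibel(1985)} and~\cite{Carlsson-Pedersen(1995a)}). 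Applying this to $\id_{\cala} \oplus \Sigma^{\infty} \xrightarrow{\cong} \Sigma^{\infty}$ gives the relation $\bfK(\id_{\cala} \oplus \Sigma^{\infty})_* = \id + \bfK(\Sigma^{\infty})_*$ on $\pi_* \bfK(\cala)$, forcing $\id = 0$ there, i.e. $\pi_* \bfK(\cala) = 0$; the identical computation with $\bfLi$ yields $\pi_* \bfLi(\cala) = 0$.

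For assertion~\ref{thm:swindle_plus_karoubi:karoubi} I would simply cite the fibration theorem attached to a Karoubi filtration: for $\bfK$ this is C\'ardenas--Pedersen~\cite{Cardenas-Pedersen(1997)}, and the analogue for $\bfLi$ is in~\cite{Carlsson-Pedersen(1995a)} (following ideas of Ranicki). The only point requiring care is that these statements are formulated for the non-connective spectra $\bfK$ and $\bfLi$ and the $\langle -\infty\rangle$-decoration used in this paper, rather than for connective spectra or for group rings only --- which they are.

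Assertion~\ref{thm:swindle_plus_karoubi:equivalence} is essentially formal: an equivalence of additive categories (with involution) induces bijections on isomorphism classes of objects and on morphism sets, compatibly with the direct sum and with the involution, hence an isomorphism of all the combinatorial data entering the constructions of $\bfK$ and $\bfLi$, so it is a levelwise weak equivalence; this is recorded in the same sources. For assertion~\ref{thm:swindle_plus_karoubi:colim} I would use that a filtered colimit of additive categories is formed objectwise on objects and on morphism sets, that every simplex in the simplicial constructions defining $\bfK$ and every form, formation, or symmetric/quadratic Poincar\'e complex entering $\bfLi$ involves only finitely many objects and morphisms, and that homotopy groups of spectra commute with filtered colimits; the $K$-theoretic statement is, for instance, the colimit result used in~\cite{Bartels-Echterhoff-Lueck(2008colim)}, and the $L$-theoretic argument is verbatim the same.

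The hard part is bookkeeping rather than mathematics: one must verify that each of the four properties is available in exactly the generality used here --- small additive categories with a strict involution, the non-connective spectra $\bfK$ and $\bfLi$, and the $\langle -\infty\rangle$ decoration --- and not merely for group rings of groups or for connective $K$- and $L$-theory. Once the reduction of Subsection~\ref{subsec:additive-G-categories} is in force, the frameworks of~\cite{Pedersen-Weibel(1985)}, \cite{Cardenas-Pedersen(1997)}, \cite{Carlsson-Pedersen(1995a)} and~\cite{Ranicki(1992a)} cover precisely this generality, so the proof reduces to assembling the correct citations.
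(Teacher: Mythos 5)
Your proposal is correct and follows essentially the same route as the paper: all four assertions are deduced by citing the appropriate literature (the Eilenberg swindle and Karoubi--filtration fibration sequences from Carlsson--Pedersen and C\'ardenas--Pedersen, the equivalence-invariance from the same sources, and the compatibility with filtered colimits), with only minor differences in which reference is named for the $K$-theoretic colimit statement (the paper cites Quillen directly, while you route through Bartels--Echterhoff--L\"uck).
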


\begin{proof}\ref{thm:swindle_plus_karoubi:swindle} 
This is the well-known \emph{Eilenberg-swindle}. 
See for instance~\cite[Lemma~4.12]{Carlsson-Pedersen(1995a)}.
\\[1mm]\ref{thm:swindle_plus_karoubi:karoubi}
See~\cite{Cardenas-Pedersen(1997)} 
and~\cite[Theorem~4.2]{Carlsson-Pedersen(1995a)}.
\\[1mm]\ref{thm:swindle_plus_karoubi:equivalence} 
See for instance~\cite[Lemma~4.17]{Carlsson-Pedersen(1995a)}.
\\[1mm]\ref{thm:swindle_plus_karoubi:colim} For $K$-theory this
follows from~\cite[(12) on page~20]{Quillen(1973)}. 
The proof  for $L$-theory
in~\cite[Lemma~5.2]{Bartels-Echterhoff-Lueck(2008colim)} for rings
carries over to additive categories.
\end{proof}

Many $K$-theory results in controlled algebra
depend only on the properties of $K$-theory listed
in Theorem~\ref{thm:swindle_plus_karoubi-for-K_plus_L}
and there are therefore corresponding results in
$L$-theory.
This applies in particular to
Proposition~3.8 and Theorem~7.2 in~\cite{Bartels-Lueck-Reich(2008hyper)}.
In the following we give minor variations of these results.

\begin{theorem}
\label{thm:obstruction-category}
Let $G$ be a group.
Let $\calf$ be a family of subgroups of $G$.
\begin{enumerate}
\item \label{thm:obstruction-category:K}
      Suppose that there is $m_0 \in \IZ$ such that
      \begin{equation*}
           K_{m_0}(\calo^G(E_\calf G;\cala)) = 0
      \end{equation*}
      holds for all additive $G$-categories $\cala$.

      Then the assembly map~\eqref{eq:assembly-map-calf-K}
      is an isomorphism for $m < m_0$ and surjective  for $m=m_0$
      for all such $\cala$.
\item \label{thm:obstruction-category:L}
      Suppose that there is $m_0 \in \IZ$ such that
      \begin{equation*}
           \Li_{m_0}(\calo^G(\EGF{G}{\calf_2};\cala)) = 0
      \end{equation*}
      holds for all additive $G$-categories $\cala$ with involution.

      Then the assembly map~\eqref{eq:assembly-map-calf-2-L}
      is an isomorphism for all $m$ and such $\cala$.
\end{enumerate}
\end{theorem}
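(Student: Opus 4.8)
The plan is to run the proofs of Proposition~3.8 and Theorem~7.2 of~\cite{Bartels-Lueck-Reich(2008hyper)}, replacing each invocation of a $K$-theoretic property of controlled algebra by the corresponding $L$-theoretic one supplied by Theorem~\ref{thm:swindle_plus_karoubi-for-K_plus_L}; since that theorem furnishes a $K$- and an $L$-version of every property used, the same argument proves both~\ref{thm:obstruction-category:K} and~\ref{thm:obstruction-category:L}. The first step is to realise the obstruction category as a single delooping of the homotopy fibre of the assembly map. Following~\cite[Section~3]{Bartels-Lueck-Reich(2008hyper)} (which rests on~\cite{Bartels-Farrell-Jones-Reich(2004)}), one exhibits $\calo^G(E_\calf G;\cala)$ as a quotient, in a Karoubi filtration, of a controlled category over $G\times E_\calf G\times[1,\infty)$ that is flasque by an Eilenberg swindle pushing supports towards $t=\infty$; Theorem~\ref{thm:swindle_plus_karoubi-for-K_plus_L}~\ref{thm:swindle_plus_karoubi:swindle} and~\ref{thm:swindle_plus_karoubi:karoubi} then give a weak equivalence $\bfK(\calo^G(E_\calf G;\cala))\simeq\Sigma\,\bfK(\calt)$, where the spectrum $\bfK(\calt)$ is identified, using the homology-theory formalism for controlled categories (homotopy invariance and Mayer--Vietoris from parts~\ref{thm:swindle_plus_karoubi:swindle} and~\ref{thm:swindle_plus_karoubi:karoubi}, compatibility with colimits from part~\ref{thm:swindle_plus_karoubi:colim}, the value at a point from the point case), with the homotopy fibre of the assembly map~\eqref{eq:assembly-map-calf-K}. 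Concretely, this produces a long exact sequence
\begin{equation*}
\cdots \to K_{m+1}\bigl(\calo^G(E_\calf G;\cala)\bigr) \to H_m^G(E_\calf G;\bfK_{\cala}) \to K_m\bigl(\intgf{G}{\cala}\bigr) \to K_m\bigl(\calo^G(E_\calf G;\cala)\bigr) \to \cdots,
\end{equation*}
and, verbatim with $\calf$ replaced by $\calf_2$, $K_*$ by $\Li_*$ and $\bfK$ by $\bfL^{-\infty}$, its $L$-theoretic analogue for~\eqref{eq:assembly-map-calf-2-L}.

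Given this exact sequence, the assembly map is an isomorphism in degree $m$ whenever $K_m(\calo^G(E_\calf G;\cala))=K_{m+1}(\calo^G(E_\calf G;\cala))=0$, and is surjective in degree $m$ whenever $K_m(\calo^G(E_\calf G;\cala))=0$. It therefore suffices to upgrade the hypothesis ``$K_{m_0}(\calo^G(E_\calf G;\cala))=0$ for every additive $G$-category $\cala$'' to ``$K_m(\calo^G(E_\calf G;\cala))=0$ for every $m\le m_0$ and every additive $G$-category $\cala$''. This is done by a delooping argument: there is an operation $\cala\mapsto\cala'$ on additive $G$-categories (with involution in the $L$-theory case), such as a suspension or cone-quotient construction, which commutes with the formation of $\calo^G(E_\calf G;-)$ up to a shift by one in both $K$- and $L$-theory, so that $K_{m-1}(\calo^G(E_\calf G;\cala))\cong K_m(\calo^G(E_\calf G;\cala'))$ and similarly for $\Li$. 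Applying the hypothesis to $\cala$, then to $\cala'$, then to $\cala''$, and proceeding inductively, one obtains the required vanishing for all $m\le m_0$; substituting this into the exact sequence yields~\ref{thm:obstruction-category:K}. Part~\ref{thm:obstruction-category:L} follows in the same way from the $L$-theoretic exact sequence, together with the $4$-periodicity of $\bfLi$: once $\Li_m(\calo^G(\EGF{G}{\calf_2};\cala))$ vanishes for all $m\le m_0$ and all $\cala$ it vanishes in every degree, so~\eqref{eq:assembly-map-calf-2-L} is an isomorphism for all $m\in\IZ$.

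The one point requiring care, beyond transcribing the $K$-theory argument, is to verify that the $L$-theory side of the controlled-algebra machinery behaves identically to the $K$-theory side. One must check that the flasque ambient category and its Karoubi-filtration subcategory can be equipped with strict involutions that are compatible with the inclusion and with the swindle, so that Theorem~\ref{thm:swindle_plus_karoubi-for-K_plus_L}~\ref{thm:swindle_plus_karoubi:swindle} applies to $\bfLi$; that the identification with equivariant $L$-homology uses only the four formal properties in Theorem~\ref{thm:swindle_plus_karoubi-for-K_plus_L}; and that the delooping operation is available for additive categories with involution. The $\langle -\infty \rangle$-decoration on $L$-theory is precisely what arranges that these properties hold without Rothenberg-type correction terms, so this step should be routine; the real difficulty of the paper lies instead in verifying transfer reducibility (Propositions~\ref{prop:hyperbolic-satisfy} and~\ref{prop:CAT(0)-groups-satisfy}) and in the stability theorem that forces the obstruction groups to vanish.
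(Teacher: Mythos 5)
Your proof is correct and takes essentially the same route the paper does: the paper simply cites~\cite[Proposition~3.8]{Bartels-Lueck-Reich(2008hyper)} together with Theorem~\ref{thm:swindle_plus_karoubi-for-K_plus_L} and invokes a ``straightforward modification,'' and you have reconstructed that modification accurately --- the long exact sequence arising from the Karoubi filtration with flasque ambient category, the delooping on the coefficient category (\`a la Pedersen--Weibel) to propagate the vanishing from the single degree $m_0$ downward using the ``for all $\cala$'' clause in the hypothesis, and $4$-periodicity of $\bfLi$ to upgrade to an isomorphism in all degrees in the $L$-theoretic case.
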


\begin{proof}
For $K$-theory the statement is 
almost~\cite[Proposition~3.8]{Bartels-Lueck-Reich(2008hyper)}.
The only difference to the present statement is that
in the above reference the vanishing of the $K$-group is assumed
for all $m \geq m_0$ and the conclusion is an isomorphism for
all $m$.
A straightforward modification of the proof 
from~\cite{Bartels-Lueck-Reich(2008hyper)} yields the proof
of our present $K$-theory statement.
This proof uses in fact only the properties of
$K$-theory listed in Theorem~\ref{thm:swindle_plus_karoubi-for-K_plus_L}
and carries therefore over to $L$-theory.
Because $L$-theory is periodic we get in this case the stronger
statement stated above.
\end{proof}

In order to formulate the next result, we quickly recall that for an 
additive category $\calb$, elements of $K_1(\calb)$ can be thought of 
as self-chain homotopy equivalences over $\calb$.
If $f\colon C \to C$ is a  self-chain homotopy
equivalence of a finite chain complex over $\cala$, 
then the \emph{self-torsion} is an element
$$[(C,f)] \in K_1(\calb).$$
It depends only on the chain homotopy class of $f$.
If $f \colon C \to D$ and $g \colon D \to C$ are chain homotopy equivalences
of finite $\calb$-chain complexes, then we obtain $[(C,g \circ f)]~=[(D,f \circ g)]$ 
In particular we get $[(C,f)] = [(D,g)]$ for  self-chain
homotopy equivalences of finite $\calb$-chain
complexes $f \colon C \to C$ and $g \colon D \to D$ provided
that there is a chain homotopy equivalence $u \colon C \to D$ with
$u \circ f \simeq g \circ u$. 
If $v \colon  B\to B$ is an automorphism in $\calb$
and $0(v) \colon 0(B) \to 0(B)$ is the obvious automorphism of the 
$\calb$-chain complex $0(B)$ which is concentrated in dimension $0$ and 
given there by $B$, then the class $[v]$ in $K_1(\cala)$ coming from the 
definition of $K_1(\cala)$ agrees with the self-torsion $[0(v)]$ 
(see~\cite{Gersten(1967)}, \cite[Example~12.17 on page~246]{Lueck(1989)}, 
\cite{Ranicki(1985_torsI)}).

In the following theorem $d^1$ denotes the $l^1$-metric on simplicial
complexes, compare~\cite[Subsection~4.2]{Bartels-Lueck-Reich(2008hyper)}.

\begin{theorem}
\label{thm:stability-element}
Let $N \in \IN$.
Let $\calf$ be a family of subgroups of a group $G$.
Let $S$ be a finite subset of $G$.
\begin{enumerate}
\item \label{thm:stability-element:K}
      Let $\cala$ be an additive $G$-category.
      Then there exists a positive real number $\e~=~\e(N,\cala,G,\calf,S)$ 
      with the following property: if
      $\Sigma$ is a simplicial complex of dimension $\leq N$
      equipped with a simplicial action of $G$
      all whose isotropy groups are members of $\calf$
      and $\alpha \colon C \to C$ is an
      $(\e,S)$-chain homotopy equivalence over
      $\calo^G(E_\calf G,\Sigma,d^1;\cala)$
      where $C$ is concentrated in degrees $0,\dots,N$, then
      \begin{equation*}
        [(C,\alpha)] = 0 \in K_1(\calo^G(E_\calf G,\Sigma,d^1;\cala)).
      \end{equation*}
\item \label{thm:stability-element:L}
      Let $\cala$ be an additive $G$-category with involution.
      Then there exists a positive real number 
      $\e = \e(N,\cala,G,\calf,S)$ with the following property: if
      $\Sigma$ is a simplicial complex of dimension $\leq N$
      equipped with a simplicial action of $G$
      all whose isotropy groups are members of $\calf$
      and $(C,\psi)$ is a $0$-dimensional
      ultra-quadratic $(\e,S)$-Poincar{\'e} complex
      over $\Idem(\calo^G(E_\calf G,\Sigma,d^1;\cala))$
      concentrated in degrees $-N,\dots,N$, then
      \begin{equation*}
        [(C,\psi)] = 0 \in \Li_0(\calo^G(E_\calf G,\Sigma,d^1;\cala)).
      \end{equation*}
\end{enumerate}
\end{theorem}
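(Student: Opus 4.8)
The plan is to obtain part~\ref{thm:stability-element:K} as a mild reformulation of the $K$-theoretic stability theorem for the obstruction category proved in~\cite{Bartels-Lueck-Reich(2008hyper)}, and then to deduce part~\ref{thm:stability-element:L} by running the same argument with $\bfLi$ in place of $\bfK$. For part~\ref{thm:stability-element:K}: by Remark~\ref{rem:calo-is-calo} the category $\calo^G(E_\calf G,\Sigma,d^1;\cala)$ is, up to a change of notation, the category $\calo^G(E_\calf G,G\times\Sigma,d_G\times d^1;\cala)$ of~\cite{Bartels-Lueck-Reich(2008hyper)} (when $G$ is finitely generated directly, and otherwise after passing to the subgroup generated by $S$, which is all an $(\e,S)$-controlled cycle involves), and the hypotheses imposed on $\Sigma$ --- a simplicial complex of dimension $\le N$ with a simplicial $G$-action whose isotropy groups lie in $\calf$ --- are exactly those of that paper. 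The first step is then simply to quote~\cite[Theorem~7.2]{Bartels-Lueck-Reich(2008hyper)}; the only differences with our statement are cosmetic, namely the restriction to chain complexes concentrated in degrees $0,\dots,N$ and the explicit list of parameters $N,\cala,G,\calf,S$ on which $\e$ is allowed to depend.

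I would then recall the shape of the argument behind~\cite[Theorem~7.2]{Bartels-Lueck-Reich(2008hyper)}, since the $L$-theoretic proof copies it step for step. It is a \emph{squeezing} argument: for $\e$ small enough in terms of $N$, an $(\e,S)$-controlled cycle can, without changing its class in $K_1$, be replaced by one whose control over $\Sigma$ is arbitrarily small --- the bound $\dim\Sigma\le N$ being what keeps the number of squeezing steps bounded; once the control is sufficiently small one uses the continuous control in the $[1,\infty)$-coordinate to shift infinitely many copies of the cycle off towards $\infty$, forms their direct sum inside $\Idem(\calo^G(E_\calf G,\Sigma,d^1;\cala))$, and kills the class by an Eilenberg swindle absorbing this infinite sum. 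The only properties of $\bfK$ used are those listed in Theorem~\ref{thm:swindle_plus_karoubi-for-K_plus_L}, together with explicit constructions on chain complexes over the obstruction category. For part~\ref{thm:stability-element:L} I would run the identical argument for $\Li_0$: the matching properties of $\bfLi$ come from Theorem~\ref{thm:swindle_plus_karoubi-for-K_plus_L}, and one checks that every construction used in the squeezing and in the swindle can be carried out compatibly with the involution $\inv$ of $\calo^G(E_\calf G,\Sigma,d^1;\cala)$, so that a $0$-dimensional ultra-quadratic $(\e,S)$-Poincar\'e complex $(C,\psi)$ concentrated in degrees $-N,\dots,N$ is sent to another such complex and each replacement alters $(C,\psi)$ only by an equivalence of the type recorded in fact~\ref{nl:things-are-equal-in-L}, hence leaves $[(C,\psi)]\in\Li_0(\calo^G(E_\calf G,\Sigma,d^1;\cala))$ unchanged (recall $\Li_0$ does not change under idempotent completion, cf.\ fact~\ref{nl:K-vanishes-decorations}); the Eilenberg swindle then forces $[(C,\psi)]$ to vanish.

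The step I expect to be the main obstacle is the involution bookkeeping in the $L$-theoretic run of the argument: one has to arrange the auxiliary flasque category used for the Eilenberg swindle, the ``shift towards $\infty$'' that separates the copies, and the passage to $\Idem(-)$ all to be defined $\inv$-equivariantly, so that the quadratic Poincar\'e structure $\psi$ really is carried along at each stage and fact~\ref{nl:things-are-equal-in-L} applies throughout --- it is here that the symmetric degree range $-N,\dots,N$ (as opposed to $0,\dots,N$ in the $K$-theory case) is needed, so that $C^{-*}$ occupies the same degrees as $C$ and the relevant constructions are self-dual. Beyond this the remaining care is quantitative: keeping the shrinking control radii and the number of squeezing steps consistent with the single fixed bound $N$, regardless of how large the cardinality of $S$ is.
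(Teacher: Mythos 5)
Your high-level plan is on the right track: the $K$-theoretic case does rest on quoting \cite[Theorem~7.2]{Bartels-Lueck-Reich(2008hyper)}, and the $L$-theoretic case does follow by observing that the argument only uses the formal properties of $\bfK$ recorded in Theorem~\ref{thm:swindle_plus_karoubi-for-K_plus_L}, which $\bfLi$ shares. But you have misidentified what Theorem~7.2 of that paper actually says, and in doing so you have omitted the step that carries the real content. Theorem~7.2 is \emph{not} a stability statement that coincides with part~\ref{thm:stability-element:K} up to cosmetics; it is the statement that for a sequence $(\Sigma_n)_{n\in\IN}$ of simplicial complexes as in our hypotheses, the inclusion
$\iota\colon \call_\oplus=\bigoplus_n\calo^G(E_\calf G,\Sigma_n,d^1;\cala)\hookrightarrow\call$
of the direct-sum category into the subcategory $\call$ of the full product $\prod_n\calo^G(E_\calf G,\Sigma_n,d^1;\cala)$ consisting of sequences of morphisms that are uniformly $(A/n,T)$-controlled (for a single $A>0$ and finite $T\subseteq G$) induces an isomorphism on $K$-theory. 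What the paper then does, and what your proposal skips, is a compactness-by-contradiction argument to extract the existence of a uniform $\e=\e(N,\cala,G,\calf,S)$ from this: assume no such $\e$ exists, pick for each $n$ a complex $\Sigma_n$ and a $(1/n,S)$-chain homotopy equivalence $\alpha^n\colon C^n\to C^n$ with $[(C^n,\alpha^n)]\neq 0$, bundle them into a single class $[(C^n,\alpha^n)_n]\in K_1(\call)$, lift it via Theorem~7.2 to a class in $K_1(\call_\oplus)$, and observe that the direct-sum structure forces the projection $(p_m)_*$ of this class to vanish for $m$ large — contradicting $[(C^m,\alpha^m)]\neq 0$. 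This bundling argument is precisely how one obtains a single $\e$ working uniformly over all $\Sigma$ of dimension $\le N$; a per-$\Sigma$ squeezing argument would not give that.

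Your alternative mechanism — squeeze one cycle until its control is tiny, push copies towards $\infty$ in the $[1,\infty)$-coordinate, and kill by an Eilenberg swindle inside a single $\calo^G(E_\calf G,\Sigma,d^1;\cala)$ — is a fair caricature of what happens \emph{inside} the proof of Theorem~7.2 in the reference, but it is not a deduction of Theorem~\ref{thm:stability-element} from Theorem~7.2, and as written it does not account for the uniformity in $\Sigma$. You also should address the small mismatch that Theorem~7.2 of the reference assumes the $G$-action on $\Sigma$ is cell preserving, which the present statement does not; the paper handles this by passing to the first barycentric subdivision, which changes the $l^1$-metric only in a uniformly controlled way. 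On the $L$-theoretic side your instinct is sound and matches the paper: once the $K$-theoretic deduction is set up so that each step invokes only Theorem~\ref{thm:swindle_plus_karoubi-for-K_plus_L} together with constructions on chain complexes compatible with the strict involution, the same argument with $\bfLi$ in place of $\bfK$ and ultra-quadratic Poincar\'e complexes in place of self-equivalences goes through verbatim; the paper in fact dispatches this in a single sentence and does not dwell on the involution bookkeeping you flag as the main obstacle.
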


\begin{proof}
The $K$-theory statement can be deduced 
from~\cite[Theorem~7.2]{Bartels-Lueck-Reich(2008hyper)}
in roughly the same way as~\cite[Corollary~4.6]{Bartels(2003a)}
is deduced from~\cite[Proposition~4.1]{Bartels(2003a)}.

For the convenience of the reader we give more details.
We will proceed by contradiction and assume that there is no such 
$\e = \e(N,\cala,G,\calf,S)$.
Then for every $n \in \IN$ there are 
\begin{itemize}
\item a simplicial complex $\Sigma_n$ of dimension $\leq N$
      equipped with a simplicial action of $G$
      all whose isotropy groups are members of $\calf$; 
\item an $(1/n,S)$-chain homotopy equivalence 
      $\alpha^n \colon C^n \to C^n$
      over the additive category $\calo^G(E_\calf G,\Sigma_n,d^1;\cala)$
      where $C^n$ is concentrated in degrees $0,\dots,N$,
\end{itemize}
such that 
\begin{equation*}
   [(C^n,\alpha^n)] \neq 0 \in K_1(\calo^G(E_\calf G, \Sigma_n,d^1;\cala)).
\end{equation*}
Now consider the product category 
\begin{equation*}
  \prod_{n \in \IN} \calo^G(E_\calf G, \Sigma_n,d^1;\cala).
\end{equation*}
Objects of this category are given by sequences $(M_n)_{n \in \IN}$ where each
$M_n$ is an object in $\calo^G(E_\calf G, \Sigma_n,d^1;\cala)$;
morphisms $(M_n)_{n \in \IN} \to (N_n)_{n \in \IN}$ are given by sequences
$(\psi_n \colon M_n \to N_n)_{n \in \IN}$ where each $\psi_n$ is a
morphism in $\calo^G(E_\calf G, \Sigma_n,d^1;\cala)$.
We will use the subcategory $\call$ of this product category that has the 
same objects as the product category but has fewer morphisms:
a morphism $(\psi_n)_{n \in \IN}$ is a morphism in $\call$ is and only if
there are a finite subset $T \subset G$ and a number $A > 0$,
such that $\psi^n$ is $(A/n,T)$-controlled for each $n \in \IN$.
Observe that 
$(\alpha^n)_{n \in \IN} \colon (C^n)_{n \in \IN} \to (C^n)_{n \in \IN}$ 
is a chain homotopy equivalence
in this category $\call$.
We denote by $[(C^n,\alpha^n)_{n \in \IN}] \in K_1(\call)$ its 
$K$-theory class.   
Let 
\begin{equation*}
  \call_{\oplus} := \bigoplus_{n \in \IN} \calo^G(E_\calf G, \Sigma_n,d^1;\cala).
\end{equation*}
This is in a canonical way a subcategory of $\call$ and it is proven 
in~\cite[Theorem~7.2]{Bartels-Lueck-Reich(2008hyper)} that
this inclusion $\iota \colon \call_{\oplus} \to \call$ 
induces an isomorphism in $K$-theory.
Consider an element $a \in K_1(\call_\oplus)$ satisfying 
$\iota_*(a) = [(C^n,\alpha^n)_{n \in \IN}] \in K_1(\call)$.
Denote by $p_n \colon \call \to \calo^G(E_\calf G, \Sigma_n,d^1;\cala)$
the canonical projection.
The definition of $\call_{\oplus}$ as a direct sum implies that there is
$m_0 \in \IN$ such that $(p_m \circ \iota)_* (a) = 0$ for all $m \geq m_0$.
Thus we obtain the desired contradiction
\begin{equation*}
  [(C^m,\alpha^m)] = (p_m)_*([(C^n,\alpha^n)_{n \in \IN}]) =
     (p_m \circ \iota)_* (a) = 0
\end{equation*}
for $m \geq m_0$.

In~\cite[Theorem~7.2]{Bartels-Lueck-Reich(2008hyper)} it is assumed
that the action of $G$ on $\Sigma_n$ is in addition cell preserving.
This assumption makes no real difference to our result here: we can always
replace $\Sigma$ by it first barycentric subdivision and obtain a cell
preserving action.
The subdivision changes the metric only in a uniformly controlled way.
(On the other hand, the proof 
of~\cite[Theorem~7.2]{Bartels-Lueck-Reich(2008hyper)}
does not really use the assumption cell preserving.)

This proof carries over to $L$-theory in a straightforward fashion,
because it only depends on the properties of $K$-theory that are listed
in Theorem~\ref{thm:swindle_plus_karoubi-for-K_plus_L} and also hold 
in $L$-theory.
\end{proof}


\typeout{------------------  Transfer up to homotopy  -----------------------}

\section{Transfer up to homotopy}
\label{sec:transfer-up-to-homotopy}

\begin{summary*}
  In this section we transfer morphisms $\psi$ in $\calc^G(Y;\cala)$
  to chain maps $\tr^{\bfP} \psi$ over $\calc^G(Y,Z;\cala)$.
  This transfer depends on the choice of a chain complex $\bfP$ over
  $\calc(Z;\IZ)$ equipped with a homotopy action, 
  see Definition~\ref{def:homotopy-chain-cx}.
  It is functorial up to homotopy, 
  see Lemma~\ref{lem:homotopy-functorial}.  
\end{summary*}

Throughout this section we fix the following convention.

\begin{convention}
Let
\begin{itemize}
\item $G$ be a group;
\item $Y$ be a $G$-space;
\item $(Z,d)$ be a compact metric space; 
\item $\cala$ be an additive $G$-category.
\end{itemize}
We will use the following $G$-actions:
$g \in G$ acts trivially on $Z$, on $G \times Y \times
[1,\infty)$ by $g \cdot (h,y,t) = (gh,gy,t)$ and on $G \times Z \times
Y \times [1,\infty)$ by $g \cdot (h,z,y,t) = (gh,z,gy,t)$.
\end{convention}

We will use the following chain complex analogue of homotopy $S$-actions.

\begin{definition}[Chain homotopy $S$-action]
\label{def:homotopy-chain-cx}
Let $S$ be a finite subset of $G$  (containing $e$).
\begin{enumerate}
\item \label{def:homotopy-chain-cx:objects}
      Let $P$ be a chain complex over $\Idem (\calc(Z,d;\IZ))$.
      A \emph{homotopy $S$-action} on $P$ consists of
      chain maps $\varphi_g \colon P \to P$
      for $g \in S$ and chain homotopies $H_{g,h}$
      for $g,h,\in S$ with $gh \in S$ from
      $\varphi_g \circ \varphi_h$ to $\varphi_{gh}$.
      Moreover, we require $\varphi_e = \id$ and $H_{e,e} = 0$.
      In this situation we will also say that
      $(P,\varphi,H)$ is a \emph{homotopy $S$-chain complex over
      $\Idem (\calc(Z,d;\IZ))$};
\item \label{def:homotopy-chain-cx:maps}
      Let $\bfP = (P,\varphi^P,H^P)$ and
      $\bfQ = (Q,\varphi^Q,H^Q)$
      be homotopy $S$-chain complexes over $\Idem (\calc(Z,d;\IZ))$.
      A homotopy $S$-chain map $\bfP \to \bfQ$
      is a chain map $f \colon P \to Q$
      such that $f \circ \varphi^P_g$ and
      $\varphi^Q_g \circ f$ are chain homotopic for all $g \in S$.
      It is called a homotopy $S$-chain equivalence, if
      $f$ is in addition a chain homotopy equivalence;
\item \label{def:homotopy-chain-cx:trivial}
      Let $z_0 \in Z$.
      The {\em trivial homotopy $S$-chain complex
      $\calc(Z,d;\IZ))$ at $z_0$},
      which we will denote by
      $\bfT = (T,\varphi^T,H^T)$, is defined by
      $(T_0)_{z_0} = \IZ$, $(T_n)_{z} = 0$ unless $n=0$, $z=z_0$,
      $\varphi^T_a = \id_{T}$ and $H^T_{a,b} = 0$.
\end{enumerate}
\end{definition}

Let $\calf(\IZ)$ denote our choice of a small model for the category
of finitely generated free $\IZ$-modules (see
Subsection~\ref{subsec:metric-control}).  
Recall from Subsection~\ref{subsec:additive-G-categories}
that $\cala$ comes with a strictly associative
functorial sum $\oplus$. 
We define a functor of additive
categories called the \emph{tensor product functor}
$$\otimes \colon \cala \times \calf(\IZ) \to \cala$$
as follows. On objects put $A \otimes \IZ^n = \bigoplus_{i=1}^n A$.  Given
a morphism $f \colon A \to B$ in $\cala$ and a morphism $U\colon \IZ^m
\to \IZ^n$ defined by a matrix $U = (u_{i,j})$, let
$$f \otimes U \colon A \otimes \IZ^m \to B \otimes \IZ^n$$
be the morphism $\bigoplus_{i=1}^m A \to \bigoplus_{j=1}^n B$ which is
given by the matrix $(u_{i,j} \cdot f)$ of morphisms in $\cala$.  
This construction is functorial in $\cala$.
For objects $M \in \calo^G(Y;\cala) = \calc^G(G \times Y \times [1,\infty);
\cale(Y),\calf(Y);\cala)$ and $F \in \calc(Z,d;\IZ)$ we 
define
$$M \otimes F \in \calo^G(Y, Z,d;\cala) =
\calc^G(G \times Z \times Y \times [1,\infty);
                   \cale(Y,Z,d),\calf(Y,Z,d);\cala)$$
by putting
\begin{equation*}
  (M \otimes F)_{g,z,y,t} := M_{g,y,t} \otimes F_{z}.
\end{equation*}
This construction is clearly
functorial in $F$ and $M$.  It is easy to check that also the control
conditions $\cale(Y,Z,d)$ are satisfied because they are implemented
by projections to one of the spaces $Z$, $Y\times [1,\infty) $ or $G$.  
Thus we obtain a tensor product functor
\begin{equation}
  \label{eq:calc-ox-calo-to-calo}
  \otimes \colon \calo^G(Y;\cala) \otimes \calc(Z,d;\IZ) \to 
                                           \calo^G(Y,Z,d;\cala).
\end{equation}
This functor can in particular be applied to an object $M \in
\calo^G(Y;\cala)$ and a chain complex $P$ over $\Idem( \calc(Z,d;\IZ)
)$ to produce a chain complex $M \otimes P$ over $\Idem( \calo^G(Y,
Z,d;\cala) )$.

Next we will consider homotopy
$S$-actions on $P$ to twist the functoriality in $M$. Let $S$ be a
finite subset of $G$ and $\bfP = (P,\varphi^P,H^P)$ be a homotopy
$S$-chain complex over $\Idem (\calc(Z,d;\IZ))$.
For an $S$-morphism $\psi \colon M \to N$ in $\calo^G(Y;\cala)$ 
we define a chain map 
$\tr^\bfP \psi \colon M \otimes P \to N \otimes P$ by putting
\begin{equation*}
  (\tr^\bfP \psi)_{(g,z,y,t),(g',z',y',t')} :=
        \psi_{(g,y,t),(g',y',t')} \otimes \left(\varphi^P_{g^{-1}g'}\right)_{z,z'}.
\end{equation*}
If we write $\psi = \sum_{a \in S} \psi_a$ as in
\eqref{eq:decomposition-of-S-mor}
then $\tr^\bfP \psi = \sum_{a \in S} \psi_a \otimes \varphi^P_a$.
This is not strictly functorial in $M$,
see Lemma~\ref{lem:homotopy-functorial} below.
(The definition of $\tr^\bfP \psi$ is very much in the spirit of the classical 
$K$-theory transfer, compare Section~\ref{subsec:Whitehead-transfer}
and in particular~\eqref{eq:a_otimes_tC}.)

Let $f \colon \bfP \to \bfQ$ be a map of homotopy $S$-chain
complexes over $\calc(Z,d;\IZ)$, where $\bfQ = (Q,\varphi^Q,H^Q)$. It induces a
chain map $\id_M \otimes f \colon  M \otimes P \to M \otimes Q$ over
$\calo^G(Y, Z, d;\cala)$. If $f \colon \bfP \to \bfQ$ is a
homotopy $S$-chain equivalence over $\calc(Z,d;\IZ)$, then $\id_M
\otimes f$ is a chain homotopy equivalence over  $\calo^G(Y, Z, d;\cala)$.
If $\psi \colon M \to N$ is an $S$-morphism, 
then $(\id_N \otimes f) \circ \tr^\bfP \psi $ and 
$\tr^\bfQ \psi \circ (\id_M \otimes f)$ are
homotopic as chain maps over  $\calo^G(Y, Z,d;\cala)$.

\begin{lemma}
\label{lem:homotopy-functorial}
Let $S$ be finite subset of $G$ (containing $e$) and
$\bfP = (P,\varphi,H)$ be a homotopy $S$-chain complex over $\calc(Z,d;\IZ)$.
Let $T$ be a subset of $S$ (also containing $e$) such that
$a,b \in T$ implies $ab \in S$.
Let $\psi = \sum_{a \in T} \psi_a \colon M \to M'$,
$\psi' = \sum_{a \in T} \psi'_a \colon M' \to M''$
be $T$-morphisms in $\calc^G(Y,G;\cala)$,
where $\psi_a$ and $\psi'_a$ are $\{ a \}$-morphisms.
Then
\begin{equation*}
  \sum_{a,b \in T} (\psi'_a \circ \psi_b) \otimes H_{a,b}
\end{equation*}
is a chain homotopy over $\calo^G(Y,Z,d;\cala)$
from $\tr^\bfP \psi' \circ \tr^\bfP \psi$ to
$\tr^\bfP (\psi' \circ \psi)$.
\end{lemma}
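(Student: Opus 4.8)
The plan is to prove the lemma by a direct computation which, once everything is expanded via the decomposition \eqref{eq:decomposition-of-S-mor}, reduces the claim to the defining property of the chain homotopies $H_{a,b}$, combined with the functoriality and bi-additivity of the tensor product $\otimes$ of \eqref{eq:calc-ox-calo-to-calo}.

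First I would rewrite the three relevant chain maps. By \eqref{eq:decomposition-of-S-mor} and the definition of the transfer we have $\tr^\bfP \psi = \sum_{b \in T} \psi_b \otimes \varphi_b$ and $\tr^\bfP \psi' = \sum_{a \in T} \psi'_a \otimes \varphi_a$; all $\varphi_a,\varphi_b$ occurring here are defined since $T \subseteq S$. Because the $\cala$-side factors $\psi_b,\psi'_a$ have degree $0$, the Koszul sign rule contributes nothing, and functoriality plus bi-additivity of $\otimes$ give
\[
\tr^\bfP \psi' \circ \tr^\bfP \psi \;=\; \sum_{a,b \in T} (\psi'_a \circ \psi_b) \otimes (\varphi_a \circ \varphi_b).
\]
On the other hand each $\psi'_a \circ \psi_b$ is $\{ab\}$-controlled, and the hypothesis $a,b \in T \Rightarrow ab \in S$ guarantees that $\psi' \circ \psi$ is an $S$-morphism with $(\psi' \circ \psi)_c = \sum_{a,b \in T,\, ab = c} \psi'_a \circ \psi_b$; hence
\[
\tr^\bfP(\psi' \circ \psi) \;=\; \sum_{c \in S} (\psi' \circ \psi)_c \otimes \varphi_c \;=\; \sum_{a,b \in T} (\psi'_a \circ \psi_b) \otimes \varphi_{ab}.
\]
Subtracting, $\tr^\bfP \psi' \circ \tr^\bfP \psi - \tr^\bfP(\psi' \circ \psi) = \sum_{a,b \in T} (\psi'_a \circ \psi_b) \otimes (\varphi_a \circ \varphi_b - \varphi_{ab})$.

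Next I would verify that $\Phi := \sum_{a,b \in T} (\psi'_a \circ \psi_b) \otimes H_{a,b}$ is the desired chain homotopy; it is well defined precisely because $a,b \in S$ and $ab \in S$, so each $H_{a,b}$ exists, and $\Phi$ has degree $+1$. Computing $d\Phi + \Phi d$ on the complexes $M \otimes P$ and $M'' \otimes P$ (whose differentials are induced from $d_P$, again without extra signs since the $\cala$-factors have degree $0$) yields $\sum_{a,b \in T} (\psi'_a \circ \psi_b) \otimes (d_P H_{a,b} + H_{a,b} d_P)$. By the defining property of $H_{a,b}$ as a chain homotopy from $\varphi_a \circ \varphi_b$ to $\varphi_{ab}$ one has $d_P H_{a,b} + H_{a,b} d_P = \varphi_a \circ \varphi_b - \varphi_{ab}$, so $d\Phi + \Phi d$ agrees with the difference computed above, as required. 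It remains to check that $\Phi$ is a genuine morphism over $\calo^G(Y,Z,d;\cala)$, i.e.\ satisfies the control conditions $\cale(Y,Z,d)$; this is the same bookkeeping as for $\tr^\bfP$ itself — the $G$-direction is controlled by the finite set $\{ ab \mid a,b \in T \} \subseteq S$, the $Z$-direction by the (finitely many, hence uniform) bounds attached to the $H_{a,b}$, and the continuous control over $Y \times [1,\infty)$ and the $G$-equivariance are inherited from $\psi$ and $\psi'$.

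I do not anticipate a real obstacle; the content is entirely in the identity $d_P H_{a,b} + H_{a,b} d_P = \varphi_a \circ \varphi_b - \varphi_{ab}$. The points that need care are purely formal: checking that the Koszul sign rule produces no signs — which holds exactly because the $\cala$-side maps $\psi_a,\psi'_a$ all have degree $0$ — and verifying that $\psi'_a \circ \psi_b$ is $\{ab\}$-controlled (rather than $\{ba\}$-controlled), which is what makes the two expansions of $\tr^\bfP \psi' \circ \tr^\bfP \psi$ and of $\tr^\bfP(\psi' \circ \psi)$ match up term by term; one should also make sure the sign convention for ``chain homotopy from $f$ to $g$'' used in Definition~\ref{def:homotopy-chain-cx} is applied consistently, so that $\Phi$, and not $-\Phi$, is the homotopy in the stated direction.
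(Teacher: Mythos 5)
Your proof is correct and is exactly the "straightforward calculation" the paper alludes to: expand $\tr^\bfP$ via the decomposition~\eqref{eq:decomposition-of-S-mor}, use that $\psi'_a \circ \psi_b$ is $\{ab\}$-controlled together with the bi-additivity and functoriality of $\otimes$ to match terms, and reduce everything to $d_P H_{a,b} + H_{a,b} d_P = \varphi_a\circ\varphi_b - \varphi_{ab}$, with no Koszul signs because the $\cala$-factors are concentrated in degree $0$. Your closing caveats about signs and control are the right things to worry about, and all of them do check out.
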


\begin{proof}
This is a straightforward calculation.
\end{proof}


\typeout{--------  The transfer in K-theory  ---------}

\section{The transfer in $K$-theory}
\label{sec:The_transfer_in_K-theory}

\begin{summary*}
  In this section we construct a controlled $K$-theory transfer.
  The construction from the previous section is applied
  to lift a given automorphism $\alpha$ in the obstruction category
  $\calo^G(Y;\cala)$ to chain homotopy selfequivalences $\hat \alpha$ over
  the idempotent completion of $\calo^G(Y,G \x X,d_{S,\Lambda};\cala)$.
  The contractibility of $X$ is used to show that
  this transfer lifts $K$-theory elements.
  Important are in addition the control properties of $\hat \alpha$.
  This construction is a  
  variation of~\cite[Section~12]{Bartels-Farrell-Jones-Reich(2004)}. 
  A review of the classical (uncontrolled) $K$-theory transfer can be
  found in Subsection~\ref{subsec:Whitehead-transfer} of the Appendix.
\end{summary*}

Throughout this section we fix the following convention.

\begin{convention}
\label{conv:transfer-in-K}
Let
\begin{itemize}
\item $G$ be a group;
\item $N \in \IN$;
\item $(X,d) = (X,d_X)$ be a compact contractible controlled 
      $N$-dominated metric space;
\item $Y$ be a $G$-space;
\item $\cala$ be an additive $G$-category.
\end{itemize}
\end{convention}

\begin{proposition}
\label{prop:projective-chain-complex}
Let $S$ be a finite subset of $G$ (containing $e$)
and $(\varphi,H)$ be a homotopy $S$-action on $X$.
For every $\e > 0$ there exists
a homotopy $S$-chain complex $\bfP = (P,\varphi^P,H^P)$
over $\Idem( \calc(X,d;\IZ))$ satisfying:
\begin{enumerate}
\item \label{prop:projective-chain-complex:length}
      $P$ is concentrated in degrees $0,\dots,N$;
\item \label{prop:projective-chain-complex:e-controlled}
      $P$ is $\e$-controlled;
\item \label{prop:projective-chain-complex:contractible}
      there is a homotopy $S$-chain equivalence
      $f \colon \bfP \to \bfT_{x_0}$
      to the trivial homotopy $S$-chain complex at $x_0 \in X$ for some
      (and hence all) $x_0 \in X$;
\item \label{prop:projective-chain-complex:control-varphi}
      if $g \in S$ and $(x,y) \in \supp \varphi^P_g$, then
      $d(x,\varphi_g(y)) \leq \e$;
\item \label{prop:projective-chain-complex:control-H}
      if $g,h\in S$ with $gh \in S$ and $(x,y) \in \supp H^P_{g,h}$,
      then there is $t \in [0,1]$   such that
      $d(x,H_{g,h}(y,t)) \leq \e$.
\end{enumerate}
\end{proposition}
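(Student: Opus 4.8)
The plan is to manufacture $\bfP$ from a finite simplicial complex $K$ that dominates $X$ with small control, by passing from the chain complex $C_*(K)$ to the summand inside the idempotent completion that models the contractible space $X$. First I would fix $\e$ and choose an auxiliary scale $\delta>0$, small in comparison with $\e$ and with the moduli of continuity of the finitely many maps $\varphi_g$ ($g\in S$) and $H_{g,h}$ ($g,h,gh\in S$). We may assume the $CW$-complex of Definition~\ref{def:N-dominated_space} is a finite simplicial complex of dimension $\le N$, and, refining its triangulation and using uniform continuity of $p$ on the compact $K$, that in addition $\diam p(\bar\sigma)\le\delta$ for every simplex $\sigma$. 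Thus Definition~\ref{def:N-dominated_space} at scale $\delta$ gives $K$, continuous maps $i\colon X\to K$ and $p\colon K\to X$, and a homotopy $D$ from $p\circ i$ to $\id_X$ whose tracks all have diameter $\le\delta$. Put $x_\sigma:=p(\bary\sigma)$ and regard the simplicial chain complex $C:=C_*(K)$ as a complex over $\calc(X,d;\IZ)$ by placing the generator of the $\sigma$-summand at $x_\sigma$; since $\tau<\sigma$ forces $d(x_\tau,x_\sigma)\le\diam p(\bar\sigma)\le\delta$, this $C$ is concentrated in degrees $0,\dots,N$ and is $\delta$-controlled.

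Since $K$ dominates the contractible space $X$, the finite complex $C$ finitely dominates $C_*^{\mathrm{sing}}(X)\simeq\IZ[0]$: the chain maps $\underline i,\underline p$ induced by $i,p$ (via simplicial approximation and the comparison between simplicial and singular chains) are $O(\delta)$-controlled, $\underline p\circ\underline i\simeq\id$, and the homotopy idempotent $\underline q:=\underline i\circ\underline p$ — a model of $i\circ p\colon K\to K$, hence $O(\delta)$-controlled because $p\circ i$ is $\delta$-close to $\id_X$ — has homotopy image chain equivalent to $C_*^{\mathrm{sing}}(X)$. The crux of the proof is now to split $\underline q$ within bounded complexes: a controlled variant of the classical finiteness obstruction construction yields a chain complex $P$ over $\Idem(\calc(X,d;\IZ))$, concentrated in degrees $0,\dots,N$, which is $O(\delta)$-controlled, carries $O(\delta)$-controlled chain maps $\iota\colon P\to C$ and $\rho\colon C\to P$ with $\rho\circ\iota\simeq\id_P$ and $\iota\circ\rho\simeq\underline q$, and is chain homotopy equivalent, after forgetting control, to $C_*^{\mathrm{sing}}(X)$ and hence to $\bfT_{x_0}$. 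Choosing $\delta$ small enough that every $O(\delta)$ term is $\le\e$ gives (i) and (ii); and since for the compact $X$ any two $\bfT_{x_0}$, $\bfT_{x_0'}$ are isomorphic over $\Idem(\calc(X,d;\IZ))$, this also produces a chain homotopy equivalence $f\colon P\to\bfT_{x_0}$ for every $x_0\in X$.

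It remains to put a homotopy $S$-action (Definition~\ref{def:homotopy-chain-cx}) on $P$. For $g\in S\setminus\{e\}$ let $\tilde\varphi_g\colon C\to C$ be the $O(\delta)$-controlled chain self-map modelling $i\circ\varphi_g\circ p\colon K\to K$ (simplicial approximation after subdivision, followed by the subdivision chain equivalence), and set $\varphi^P_g:=\rho\circ\tilde\varphi_g\circ\iota$; set $\varphi^P_e:=\id_P$. Set $H^P_{g,e}:=0$ and $H^P_{e,h}:=0$ for all $g,h\in S$ (consistent since $\varphi^P_e=\id_P$, so this covers $H^P_{e,e}$), and for $g,h\ne e$ with $gh\in S$ take any chain homotopy realizing $\varphi^P_g\circ\varphi^P_h\simeq\varphi^P_{gh}$; such a homotopy exists because $\iota\circ\rho\simeq\underline q$ identifies $\varphi^P_g\circ\varphi^P_h$ (up to conjugation by $\iota,\rho$) with the model of $i\varphi_g(pi)(pi)\varphi_h p$, which is homotopic to the model of $i\varphi_{gh}p$ by two applications of $D$ and one of $H_{g,h}$ — and when $gh=e$ one also uses $\rho\circ\iota\simeq\id_P$ together with $H_{g,g^{-1}}(x_\sigma,1)=\varphi_e(x_\sigma)=x_\sigma$. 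A routine estimate, built from $\diam p(\bar\sigma)\le\delta$, the $\delta$-closeness of $p\circ i$ to $\id_X$, the fineness of the subdivision, and uniform continuity of the finitely many $\varphi_g$ and $H_{g,h}$, bounds $d(x,\varphi_g(y))$ for $(x,y)$ in the support of $\varphi^P_g$ and $\min_t d(x,H_{g,h}(y,t))$ for $(x,y)$ in the support of $H^P_{g,h}$ by a quantity $c(\delta)$ with $c(\delta)\to 0$ as $\delta\to 0$; choosing $\delta$ at the outset with $c(\delta)\le\e$ delivers (iv) and (v). Finally, $X$ being contractible, every self-map of $X$ is homotopic to $\id_X$, so $\varphi_g\simeq\id_X$, hence $\tilde\varphi_g\simeq\underline q$ and $\varphi^P_g\simeq\rho\circ\underline q\circ\iota\simeq\id_P$; thus $f\circ\varphi^P_g\simeq f$, and since $\bfT_{x_0}$ carries the trivial homotopy $S$-action this says exactly that $f$ is a homotopy $S$-chain equivalence, so (iii) holds.

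The step I expect to be the genuine obstacle is the controlled finiteness construction: one must split the homotopy idempotent $\underline q$ within \emph{bounded} complexes over $\Idem(\calc(X,d;\IZ))$ while keeping the result concentrated in degrees $0,\dots,N$ and only $O(\delta)$-controlled. Beyond that, the main labour is the routine but fiddly bookkeeping of the various moduli of continuity, needed to force each of the finitely many error terms below the single prescribed $\e$.
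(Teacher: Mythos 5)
Your proposal takes the same overall route as the paper: get a controlled simplicial domination $(K,i,p,D)$ of $X$, use the finiteness-obstruction telescope (Ranicki/Wall) to split a controlled homotopy idempotent inside $\Idem(\calc(X,d;\IZ))$-chain complexes of length $\le N$, define the homotopy $S$-action by conjugating with the domination maps, and estimate the supports using the moduli of continuity of the finitely many $\varphi_g$ and $H_{g,h}$. The main organizational difference is what you split. You split $\underline q=\underline i\circ\underline p$ directly on $C_*(K)$, which forces you to build $\tilde\varphi_g$ by simplicial approximation after subdivision, plus subdivision chain equivalences, at every occurrence. The paper instead routes through the controlled singular chain complex $C^{\sing,\e}(X)$, living in the enlarged category $\overline{\calc}(X,d;\IZ)$ of Remark~\ref{rem:iverlinecalc} where the local finiteness condition is dropped; there continuous maps induce chain maps functorially and homotopies induce controlled chain homotopies via Lemma~\ref{lem:controlled-chain-homotopy-from-map-homotopy}, so no subdivisions are needed. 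Ranicki's formulas (Remark~\ref{rem:Ranickis-formulas}) are applied with $C^{\sing,\e}(X)$ as the big complex and $C_*(K)$ as the dominating finite complex (the data $D,i,r,h$ are supplied by Lemma~\ref{lem:chain-cxs-D-over-X}, with Lemma~\ref{lem:controlled-simplicial-for-CW} justifying the passage from CW to simplicial), and then $\varphi^P_g$ is $f\circ(\varphi_g)_*\circ r\circ g$ with a three-scale trick $\e,\alpha(\e),\alpha^2(\e)$ from Lemma~\ref{lem:functions-alpha-beta} to keep the composite controlled. The step you flag as the genuine obstacle is precisely where the paper spends its effort; your sketch of what the controlled splitting has to deliver is accurate, and your control estimates, reduction of (iii) to contractibility of $X$, and construction of $H^P_{g,h}$ out of $D$ and $H_{g,h}$ all match what the paper does.
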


The idea of the proof of Proposition~\ref{prop:projective-chain-complex}
is not complicated:
Consider the subcomplex $C^{\sing,\e}(X)$ of the singular chain complex of 
$X$ spanned by singular simplices of diameter bounded by an
appropriate small constant.
This chain complex is in an $\e$-controlled way finitely dominated, because 
$X$ is controlled $N$-dominated, and can therefore up to controlled 
homotopy be replaced by finite projective chain complex $P$.
The homotopy $S$-action  on $X$ induces through this homotopy equivalence
the chain homotopy $S$-action on $P$.
The details of this proof are somewhat involved and postponed to the next 
section.

\begin{proposition}
\label{prop:K-theory-transfer-lift}
Let $T \subseteq S$ be finite subsets of $G$ (both containing $e$)
such that for $g,h \in T$, we have $gh \in S$.
Let $\alpha \colon M \to M$ be a $T$-automorphism in $\calo^G(Y;\cala)$.
Let $\Lambda > 0$.
Then there is an $(S,2)$-chain homotopy equivalence
$\hat \alpha \colon C \to C$ over 
$\Idem(\calo^G(Y, G \times X,d_{S,\Lambda};\cala))$
where $C$ is concentrated in degrees $0,\dots,N$, such that
\begin{equation*}
      [p(C,\hat \alpha)] = [(M,\alpha)] \in
                K_1(\Idem(\calo^G(Y;\cala)))
\end{equation*}
where  $p \colon \Idem(\calo^G(Y, G \times X,d_{S,\Lambda};\cala))
             \to \Idem(\calo^G(Y;\cala))$
is the functor induced by the projection $ G \times X \to \pt$.
\end{proposition}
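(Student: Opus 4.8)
The plan is to push $\alpha$ through the controlled homotopy transfer $\tr^{\bfP}$ of Section~\ref{sec:transfer-up-to-homotopy}, for a homotopy $S$-chain complex $\bfP$ on $X$ produced by Proposition~\ref{prop:projective-chain-complex}, and then to transport the result from $\calo^G(Y,X,d;\cala)$ (with $G$ acting trivially on $X$) to $\calo^G(Y,G\times X,d_{S,\Lambda};\cala)$ along the $G$-equivariant diagonal embedding
\[
\delta\colon G\times X\times Y\times[1,\infty)\longrightarrow G\times(G\times X)\times Y\times[1,\infty),\qquad(g,x,y,t)\mapsto(g,(g,x),y,t).
\]
First I would pick $\e>0$ with $\Lambda\e\le 1$ and apply Proposition~\ref{prop:projective-chain-complex} (for the homotopy $S$-action $(\varphi,H)$ underlying $d_{S,\Lambda}$) to get $\bfP=(P,\varphi^P,H^P)$ over $\Idem(\calc(X,d;\IZ))$ that is $\e$-controlled, concentrated in degrees $0,\dots,N$, carries a homotopy $S$-chain equivalence $f\colon\bfP\to\bfT_{x_0}$ to the trivial complex at some $x_0\in X$, and satisfies the control estimates~\ref{prop:projective-chain-complex:control-varphi},~\ref{prop:projective-chain-complex:control-H}. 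Then set $C:=\delta_*(M\otimes P)$ and $\hat\alpha:=\delta_*(\tr^{\bfP}\alpha)$, where $M\otimes P$ and $\tr^{\bfP}\alpha$ are formed over $\calo^G(Y,X,d;\cala)$ as in Section~\ref{sec:transfer-up-to-homotopy}; since $P$ lies in degrees $0,\dots,N$, so does $C$.

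Next I would check that $\hat\alpha$ is an $(S,2)$-chain homotopy equivalence. That it is a chain homotopy equivalence follows from Lemma~\ref{lem:homotopy-functorial}: because $\alpha$ is a $T$-automorphism and $a,b\in T$ implies $ab\in S$, applying that lemma to $\psi'=\alpha$, $\psi=\alpha^{-1}$ (and symmetrically) shows $\tr^{\bfP}\alpha\circ\tr^{\bfP}\alpha^{-1}$ is chain homotopic, via $\sum_{a,b\in T}(\alpha_a\circ(\alpha^{-1})_b)\otimes H^P_{a,b}$, to $\tr^{\bfP}(\id_M)=\id_{M\otimes P}$ (using $\varphi^P_e=\id$), so that $\tr^{\bfP}\alpha$ is a chain homotopy equivalence with chain homotopy inverse $\tr^{\bfP}\alpha^{-1}$. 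The control estimate is the point at which $\Lambda\e\le 1$ is used: $\tr^{\bfP}\alpha=\sum_{a\in T}\alpha_a\otimes\varphi^P_a$ has support connecting points that differ by some $a\in T\subseteq S$ in the $G$-coordinate and, by~\ref{prop:projective-chain-complex:control-varphi}, by at most $\e$ from the map $\varphi_a$ in the $X$-coordinate; the homotopies above connect points differing by $ab\in S$ and, by~\ref{prop:projective-chain-complex:control-H}, by at most $\e$ from some $H_{a,b}(-,t)$. A one-step estimate in $d_{S,\Lambda}$ — take $n=1$, $a_1=e$, $b_1=a$ (resp.\ $b_1=ab$), $f_1=\id_X$, $\widetilde{f}_1=\varphi_a$ (resp.\ $H_{a,b}(-,t)$), and $x_1$ the endpoint — then bounds the $d_{S,\Lambda}$-distance in these supports by $1+\Lambda\e\le 2$, while the differential and idempotents of $C$ have trivial $G$-part and $d_{S,\Lambda}$-support within $\Lambda\e$. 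Hence $C$, $\tr^{\bfP}\alpha$, $\tr^{\bfP}\alpha^{-1}$ and the two homotopies are $(S,2)$-controlled after applying $\delta_*$, so $\hat\alpha$ is an $(S,2)$-chain homotopy equivalence over $\Idem(\calo^G(Y,G\times X,d_{S,\Lambda};\cala))$. I would also check that $\delta_*$ indeed lands in this category: the $\calf(Y,G\times X,d_{S,\Lambda})$-finiteness and $d_{S,\Lambda}$-boundedness conditions hold because $X$ is compact, $G$ is discrete, and the group elements occurring lie in $T\cup T\cdot T\subseteq\langle S\rangle$.

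Finally, for the $K$-theory class I would observe that $p\circ\delta$ is the projection $q$ collapsing the $X$-factor, so that $p(C,\hat\alpha)=\bigl(M\otimes P_*,\ \sum_{a\in T}\alpha_a\otimes(\varphi^P_a)_*\bigr)$, where $P_*$ is the finite projective $\IZ$-chain complex obtained from $P$ by collapsing $X$ to a point and $\alpha=\sum_{a\in T}\alpha_a$ is the decomposition into $\{a\}$-controlled parts. The collapse $f_*\colon P_*\to\IZ$ of $f$ is a chain homotopy equivalence satisfying $f_*\circ(\varphi^P_a)_*\simeq(\varphi^T_a)_*\circ f_*=f_*$, hence $(\varphi^P_a)_*\simeq\id_{P_*}$ for each $a\in T$ — here the contractibility of $X$ enters through~\ref{prop:projective-chain-complex:contractible} — and tensoring these finitely many homotopies with the $\alpha_a$ gives $\sum_{a\in T}\alpha_a\otimes(\varphi^P_a)_*\simeq\alpha\otimes\id_{P_*}$. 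Since self-torsion depends only on the chain homotopy class and is invariant under conjugation by a chain homotopy equivalence (recalled before Theorem~\ref{thm:stability-element}), conjugating by $\id_M\otimes f_*\colon M\otimes P_*\to 0(M)$, which carries $\alpha\otimes\id_{P_*}$ to $0(\alpha)$, yields $[p(C,\hat\alpha)]=[0(\alpha)]=[(M,\alpha)]$ in $K_1(\Idem(\calo^G(Y;\cala)))$. The step I expect to be the main obstacle is the simultaneous bookkeeping across the three categories $\calo^G(Y;\cala)$, $\calo^G(Y,X,d;\cala)$, $\calo^G(Y,G\times X,d_{S,\Lambda};\cala)$, together with the verification that taking $\e\le 1/\Lambda$ in Proposition~\ref{prop:projective-chain-complex} keeps the twist by the homotopy $S$-action — which only displaces points $\e$-near the $\varphi_a$ and the $H_{a,b}(-,t)$ — within the allowed size $2$ once distances are measured by $d_{S,\Lambda}$; the other ingredients are essentially formal given Proposition~\ref{prop:projective-chain-complex} and Lemma~\ref{lem:homotopy-functorial}.
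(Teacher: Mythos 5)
Your proposal is correct and follows the same strategy as the paper's proof: take $\bfP$ from Proposition~\ref{prop:projective-chain-complex} with $\e\le 1/\Lambda$, form $\tr^{\bfP}\alpha$ over $\calo^G(Y,X,d;\cala)$, push forward along the diagonal embedding $(g,x,y,t)\mapsto(g,(g,x),y,t)$ to $\calo^G(Y,G\times X,d_{S,\Lambda};\cala)$, invoke Lemma~\ref{lem:homotopy-functorial} for the chain homotopy equivalence, estimate $d_{S,\Lambda}((e,x),(a,y))\le 1+\Lambda\e\le 2$ on supports, and use the homotopy $S$-chain equivalence $f\colon\bfP\to\bfT_{x_0}$ to identify $[p(C,\hat\alpha)]$ with $[(M,\alpha)]$. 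Your last step, passing to the collapse $P_*$ and the homotopies $(\varphi^P_a)_*\simeq\id_{P_*}$, is just a more explicit rendering of the paper's conjugation argument via $\id_M\otimes f$.
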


\begin{proof}
Let $\e := 1 / \Lambda$.
Let $\bfP = (P,\varphi^P,H^P)$ be a homotopy $S$-chain complex over
$\Idem (\calc(X,d;\IZ))$
that satisfies the assertion of Proposition~\ref{prop:projective-chain-complex}.
It follows from Lemma~\ref{lem:homotopy-functorial} that
$\tr^\bfP(\alpha) \colon M \otimes P \to M \otimes P$
is an $S$-chain homotopy equivalence over
$\calo^G(Y, X,d;\cala)$.

Let $f \colon \bfP \to \bfT_{x_0}$ be the weak equivalence from
assertion~\ref{prop:projective-chain-complex:contractible} in
Lemma~\ref{prop:projective-chain-complex}. 
Let $q \colon \calo^G(Y,X,d;\cala) \to \calo^G(Y;\cala)$ 
be the functor induced by $X \to \pt$. 
Then $\id_M \otimes f$ is a chain homotopy equivalence, 
$\tr^P \alpha \circ (\id_M \otimes f)$ is chain homotopic to 
$(\id_M \otimes f) \circ \tr^{\bfT_{x_0}} \alpha$ and 
$q(\tr^{\bfT_{x_0}} \alpha) = \alpha$ 
(up to a canonical isomorphism $q( M \otimes T)  \cong M$).
Therefore $q[(M \otimes P, \tr^\bfP \alpha)]
                          = q[(M \otimes T, \tr^{\bfT_{x_0}} \alpha)]
                          = [(M, \alpha) ] \in K_1(\Idem(\calo^G(Y;\cala)))$.
Let 
$F \colon \calo^G(Y,X,d;\cala) \to \calo^G(Y, G \times X, d_{S,\Lambda};\cala)$
be the functor induced by the map
$(g,x,y,t) \mapsto (g,g,x,y,t)$ and set
$(C,\hat \alpha) := F (M \otimes P,\tr^\bfP \alpha)$.
Since $p \circ F = q$ we have $[p(C,\hat \alpha)] = [\alpha]$.
That $\hat \alpha$ is a $(S,2)$-chain homotopy equivalence follows from our 
choice of $\e$, Definition~\ref{def:d_S_lambda} of the metric $d_{S,\Lambda}$
and the concrete formula for $\tr^\bfP(\alpha)$. The key observation
is that for $t \in T$ and $(x,y) \in \supp(\varphi^P_t)$ we have
$$d_{S,\Lambda}((e,x),(t,y)) 
\le  1 + \Lambda \cdot d_X(x,\phi_g(x)) 
\le 1 + \Lambda \cdot \epsilon = 2.$$
\end{proof}


\typeout{----- Proof of prop:projective-chain-complex -----}

\section{Proof of Proposition~\ref{prop:projective-chain-complex}}
\label{sec:Proof_of_Prop}

Throughout this section we use Convention~\ref{conv:transfer-in-K}.

Let $Z$ be a metric space. If $q \colon B \to Z$ is a
map, then a homotopy $H \colon A \x [0,1] \to B$ is called
$\e$-controlled over $q$ if for every $a \in A$,
the set $\{ q(H(a,t)) \; | \; t \in [0,1] \}$ has diameter 
at most $\e$ in $Z$. 
The following lemma shows that we can replace the $CW$-complexes
appearing in the definition of controlled $N$-dominated metric spaces 
by simplicial complexes.

\begin{lemma}
  \label{lem:controlled-simplicial-for-CW}
  Let $q \colon K \to Z$ be a map from an $N$-dimensional 
  finite $CW$-complex to a metric space. 
  Let $\e > 0$ be given.
  Then there is an $N$-dimensional finite simplicial complex $L$,
  maps $i \colon K \to L$, $p \colon L \to K$
  and a homotopy $h \colon p \circ i \simeq \id_K$
  that is $\e$-controlled over $q$.     
\end{lemma}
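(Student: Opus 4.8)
The plan is to prove Lemma~\ref{lem:controlled-simplicial-for-CW} by a finite induction over skeleta of $K$, at each stage replacing the attaching maps of cells by maps into a sufficiently fine simplicial approximation, while keeping track of the diameter of the relevant tracks in $Z$. First I would fix a finite $CW$-structure on $K$ with skeleta $K^{(0)} \subseteq K^{(1)} \subseteq \dots \subseteq K^{(N)} = K$. Since $q \colon K \to Z$ is continuous and $K$ is compact, for any prescribed $\delta > 0$ there is an open cover of $K$ on which $q$ varies by less than $\delta$; by the Lebesgue number lemma one can choose, on each cell (which is compact), a modulus of uniform continuity for $q$. The idea is then to subdivide or re-triangulate ``on the scale of $q$'' so that the homotopies produced in the replacement have $q$-image of diameter $< \e$.

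The key steps, carried out in order, would be as follows. First, handle the $0$-skeleton trivially: a finite set of points is already a simplicial complex, and one takes $i$, $p$ to be identities and $h$ constant. Second, for the inductive step assume $L^{(k-1)}$ together with $i_{k-1}$, $p_{k-1}$, $h_{k-1}$ has been constructed over $K^{(k-1)}$ with $h_{k-1}$ being $\e$-controlled over $q|_{K^{(k-1)}}$. Each $k$-cell of $K$ is attached along a map $\partial D^k \to K^{(k-1)}$; compose with $i_{k-1}$ to get a map $\partial D^k \to L^{(k-1)}$, and use the simplicial approximation theorem to replace it, after subdividing the domain $\partial D^k$ finely enough, by a simplicial map $g \colon \partial(\Delta^k)' \to L^{(k-1)}$ that is homotopic to it through a homotopy whose tracks are small — here ``small'' must be interpreted so that, after pushing forward via $p_{k-1}$ and then $q$, the track diameters stay below $\e$; this is possible because $p_{k-1}$ and $q$ are uniformly continuous on the relevant compact sets, so it suffices to make the simplicial-approximation homotopy have tracks contained in stars of sufficiently fine subdivisions of $L^{(k-1)}$. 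Third, attach a simplicial $k$-disk along $g$ to build $L^{(k)}$; define $i_k$ on the new cell using the small homotopy just constructed, define $p_k$ by collapsing the new simplicial disk back onto the original $k$-cell via the cone on the small homotopy, and assemble $h_k$ from $h_{k-1}$ and the cone homotopy on each new cell, using a mapping-cylinder/HEP argument to glue the pieces into a genuine homotopy $p_k \circ i_k \simeq \id$. Fourth, at the top stage $k = N$ one obtains $L := L^{(N)}$, which is a finite $N$-dimensional simplicial complex, together with the required $i$, $p$, and $h$; finiteness is clear since $K$ has finitely many cells and each subdivision is finite, and the dimension is exactly $N$ since simplicial approximation of maps out of $\le k$-dimensional complexes does not raise dimension.

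The main obstacle, and the point requiring genuine care, is the bookkeeping of the $\e$-control over $q$ through the inductive gluing. The subtlety is that the homotopy $h_k$ on a new cell involves both the cone on the small simplicial-approximation homotopy \emph{and} a contribution pulled back from $h_{k-1}$ on the boundary (to make the two match along $\partial D^k$), and one must ensure that the \emph{concatenation} of these still has $q$-track diameter $\le \e$ rather than something like $2\e$ or $k\e$. The clean way to handle this is to fix $\e$ at the outset and, before starting the induction, choose a single uniform bound $\delta > 0$ small enough that \emph{any} homotopy in $K$ whose tracks have diameter $< \delta$ (measured in some fixed metric on the compact space $K$) has $q$-track diameter $< \e/(N+1)$, then arrange that at each of the $N$ inductive stages the newly introduced track contribution has $q$-diameter $< \e/(N+1)$; reparametrizing so the $k$-th stage's homotopy happens in the time interval $[(k-1)/N, k/N]$, the total $q$-track diameter of $h$ over any point is at most $N \cdot \e/(N+1) < \e$. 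One must also verify that on the overlap region (the boundary of a new cell) the two descriptions of $h$ agree or can be made to agree up to reparametrization — this is where the homotopy extension property of the CW-pair $(L^{(k)}, L^{(k-1)})$ is invoked — and that the simplicial approximations chosen on different $k$-cells are mutually compatible on shared faces of their boundary spheres, which is arranged by first choosing one sufficiently fine common subdivision of $L^{(k-1)}$ and then approximating all attaching maps into that single subdivision.
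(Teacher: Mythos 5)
Your proposal takes essentially the same route as the paper's proof: finite induction over the skeleta of $K$, simplicial approximation of the attaching maps composed with the already-constructed $i_{k-1}$, attaching simplicial disks along the approximating maps to form $L^{(k)}$, and concatenating the resulting small contracting homotopy with (an extension of) the inductively given one while tracking the $q$-diameter of the tracks. The paper is considerably more explicit, writing $i'$, $p'$ and the intermediate homotopies in polar coordinates on the attached disks so that the control estimates become transparent; your version is schematic where the paper computes. Two small cautions on the bookkeeping, neither fatal: first, different $k$-cells have disjoint boundary spheres, so the ``mutual compatibility on shared faces'' you worry about does not actually arise (what is needed, and what you also say, is a single common fine subdivision of $L^{(k-1)}$ before approximating all attaching maps simultaneously); second, the picture of $h$ as a concatenation of $N$ pieces living on disjoint subintervals $[(k-1)/N,k/N]$ is not literally how the inductive construction parametrizes things — on a new $k$-cell the homotopy $h_k$ already mixes the inductive $h_{k-1}$ (restricted to the boundary) with a new contracting piece — but the additive estimate is nonetheless correct, since at each stage the new contribution to the track diameter can be made as small as desired by subdividing, and the control bound degrades by a controlled increment. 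The paper handles this by fixing at the outset an increasing chain $\delta < \delta' < \e$ and choosing auxiliary constants $\alpha,\alpha'$ with $\delta+\alpha+\alpha'<\delta'$, which is morally identical to your $\e/(N+1)$-splitting.
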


\begin{proof}
  We proceed by induction over the skeleta of $K$.
  For $K^{(0)}$ the claim is obviously true.
  Assume for the induction step that there is $0 < \delta < \e$,
  a finite \mbox{$n$-dimensional} simplicial complex $L$,
  maps $i \colon K^{(n)} \to L$, $p \colon L \to K^{(n)}$
  and a homotopy $h \colon p \circ i \simeq \id_{K^{(n)}}$
  that is $\delta$-controlled over $q$.
  For a given $\delta'$ with $\delta < \delta' < \e$,
  we will construct a finite $(n+1)$-dimensional simplicial
  complex $L'$, maps $i' \colon K^{(n+1)} \to L'$, $p' \colon L \to K^{(n+1)}$
  and a homotopy $h' \colon p' \circ i' \simeq \id_{K^{(n+1)}}$
  that is $\delta'$-controlled over $q$.
  
  Let $\varphi \colon \amalg_I S^n \to K^{(n)}$ for some finite index set $I$
  be the attaching map of the $(n+1)$-skeleton, i.e.,
  $K^{(n+1)} = D^{n+1} \cup_\varphi \amalg_I K^{(n)}$.
  Pick $\alpha > 0$ such that $\delta + \alpha < \delta'$.
  By subdividing $L$ we can assume that the image
  under $q \circ p$ of each
  simplex in $L$ has diameter at most $\alpha$ in $Z$.
  Let $\psi$ be a simplicial approximation of
  $i \circ \varphi$, i.e., $\amalg_I S^n$ is  a simplicial complex and
  $\psi$ is a simplicial map such that for any $x \in \amalg_I S^n$ the point 
  $\psi(x)$ is contained in the
  smallest simplex of $L$ that contains $i(\varphi(x))$.
  In particular, there is a homotopy 
  $k \colon \psi \simeq i \circ \varphi$
  that is $\alpha$-controlled over $q \circ p$.
  Define $L' := \amalg_I D^{n+1} \cup_\psi L$. Since $\psi$ is a simplicial
  map and $L'$ is the mapping cone of $\psi$, 
  we can extend the simplicial structure of $L$ to a simplicial structure
  on $L'$. Pick $\alpha' > 0$ such that $\delta + \alpha + \alpha' < \delta'$
  and choose $\beta > 0$ such that for any $x \in \amalg_I S^n$ 
  the diameter of $\{ q( rx ) \; | \; r \in [1-2\beta,1] \}$
  is at most $\alpha'$.
  In order to extend $i$ to a map $i' \colon K^{(n+1)} \to L'$ it suffices
  to specify a map $\amalg_I D^{n+1} \to L'$ such that its restriction to the boundary
  is $i \circ \varphi$.  We use polar coordinates on $\amalg_I D^{(n+1)}$.
  Define the desired extension $i'$ by setting
  \[
  i'(rx) := 
    \left\{ \begin{array}{lll}
            rx                      & \mbox{if} & r \in [0,1-2\beta]; \\  
            (2r -1 + 2\beta)x       & \mbox{if} & r \in [1-2\beta,1-\beta]; \\
            k(x,(r - 1 + \beta)/\beta) & \mbox{if} & r \in [1-\beta,1],
           \end{array} \right.
  \]
  for $r \in [0,1]$ and $x \in \amalg_I S^n$, 
  where $rx$ and $(2r -1 + 2\beta)x$ 
  are understood to be the images of these points in
  $\amalg_I D^{n+1}$ under the canonical map 
  $\amalg_I D^{n+1} \to L'   :=  \amalg_I D^{n+1} \cup_\psi L$.  
  Notice that the map $\amalg_I D^{n+1} \to L'$
  above is the identity on $\amalg_I D^{n+1}$ except for a neighborhood of
  the boundary, where we use the homotopy $k$, and this neighborhood is 
  smaller the smaller $\beta$ is.

  By a similar formula we extend $p$ to a map 
  $p' \colon L' \to K^{(n+1)}$, where we use the homotopy 
  $(h^- \circ \varphi \x \id_{[0,1]}) * (p \circ k^-)  
   \colon \varphi \simeq p \circ \psi$
  in place of $k$. 
  (Here $*$ denotes concatenation of homotopies and 
  $h^-$ and $k^-$ are $h$ and $k$ run backwards.)
  Then $p' \circ i'$ is an extension of $p \circ i$ such that
  \[
  (p' \circ i') (rx) =
      \left\{ \begin{array}{lll}
            rx        & \mbox{if} & r \in [0,1-2\beta]; \\
            (4r - 3 + 6\beta)x
                      & \mbox{if} & r \in [1-2\beta,1-3\beta/2]; \\
            h^-(\varphi(x),(4r - 4 + 6\beta)/\beta)
                      & \mbox{if} & r \in [1-3\beta/2,1-5\beta/4]; \\
            (p \circ k^-) (x, (4r -4 +5\beta)/\beta)          
                      & \mbox{if} & r \in [1-5\beta/4,1-\beta)]; \\
            (p \circ k) (x,(r - 1 + \beta)/\beta) 
                      & \mbox{if} & r \in [1-\beta,1],
           \end{array} \right.
  \]
  for $r \in [0,1]$ and $x \in \amalg_I S^n$.
  
  In the next step we cancel $p \circ k$ and $p \circ \bar{k}$ 
  appearing above:
  the constant homotopy $(y,t) \mapsto (p \circ i)(y)$ on $K$
  has a canonical extension to a homotopy $h'_0$ from
  $p' \circ i'$ to an extension $f$ of $p \circ i$
  such that
  \[
  f (rx) =
      \left\{ \begin{array}{lll}
            rx        & \mbox{if} & r \in [0,1-2\beta]; \\  
            (4r - 3 + 6\beta)x
                      & \mbox{if} & r \in [1-2\beta,1-3\beta/2]; \\
            h^-(x,(4r - 4 + 6\beta)/\beta)
                      & \mbox{if} & r \in [1-3\beta/2,1-5\beta/4]; \\
            (p \circ i \circ \varphi)(x) 
                      & \mbox{if} & r \in [1-5\beta/4,1],
           \end{array} \right.
  \]
  for $r \in [0,1]$ and $x \in \amalg_I S^n$.
  This homotopy $h'_0$ is $\alpha$-controlled over $q$,
  because $p \circ k$ is $\alpha$-controlled over $q$.
  
  In the final step we use the appearance of $h$
  in the above formula for $f$.
  This (and reparametrization in $r \in [1-2\beta,1]$)
  yields an extension of
  the homotopy $h$ to a homotopy
  $h'_1$ from $f$ to $\id_{K^{(n+1)}}$, and this homotopy
  is $\e + \alpha'$-controlled over $q$.
  (This comes from the control of $h$ and the control
  of reparametrizations in $r \in [1-2\beta,1]$ by our choice of $\beta$.)
  Then $h' := h'_0 * h'_1 \colon p' \circ i' \simeq \id_{K^{(n+1)}}$ is 
  $\delta'$-controlled over $q$, because 
  $\delta + \alpha + \alpha' < \delta'$.
  This finishes the construction of $L'$, $i'$, $p'$ and $h'$
  and concludes the induction step.    
\end{proof}

\begin{remark} \label{rem:iverlinecalc}
If $q \colon K \to X$ is a map from a finite simplicial complex to $X$,
then the simplicial complex $C(K)$ of $K$ is in a natural way 
(using the images of barycenters under $q$) a chain complex over
$\calc(X,d;\IZ)$.
We will also need to use the subcomplex  $C^{\sing,\e}(X)$ 
of singular chain complex of $X$ spanned
by singular simplices of diameter $\leq \e$ in $X$.
This is not naturally a chain complex over $\calc(X,d;\IZ)$,
because it fails the locally finiteness condition.
However, if we drop this conditions 
(and allow a large class of $\IZ$-modules at every point) then we
get an additive category $\overline{\calc}(X,d;\IZ)$. There is an obvious
inclusion $\calc(X,d;\IZ) \subset \overline{\calc}(X,d;\IZ)$ of
additive categories that is full and faithful on morphism sets,
Moreover, $C^{\sing,\e}(X)$ is naturally
a chain complex over $\overline{\calc}(X,d;\IZ)$. Namely, a singular simplex
$\sigma \colon\Delta \to X$ defines a  point in $X$, the image of the barycenter of
$\Delta$ under $\sigma$.
In particular, the notion of $\e$-control is defined for
maps involving $C^{\sing,\e}(X)$. It will be important to carry out certain
construction in the larger category $\overline{\calc}(X,d;\IZ)$ and then
go back to $\calc(X,d;\IZ)$. The latter step is described in the next remark.
\end{remark}

\begin{remark}
  \label{rem:Ranickis-formulas}
  Let $\calc \subset \overline{\calc}$ be an inclusion of additive categories
  that is full and faithful on morphism sets.
  Let $C$ be a chain complex  over $\overline{\calc}$ and
  $D$ be a chain complex over $\calc$, where
  $C$ is concentrated in non-negative degree and 
  $D$ is concentrated in degree $0,\dots,N$.
  Let $i \colon C \to D$, $r \colon D \to C$ be
  chain maps (over $\overline{\calc}$) and 
  let $h \colon r \circ i \simeq \id_C$ be a chain homotopy.
  In the following we recall explicit formulas from~\cite{Ranicki(1985)},
  that allow to construct from this data a chain
  complex $P$ over $\Idem(\calc)$, chain maps $f \colon C \to P$,
  $g \colon P \to C$ over $\Idem(\overline{\calc})$, and 
  chain homotopies $k \colon f \circ g \simeq \id_P$,
  $l \colon g \circ f \simeq \id_C$.
  
  Define the chain complex $C'$ over $\calc$ by 
  by defining its $m$-th chain object to be
  $$
  C_m' = \bigoplus_{j = 0}^m D_j
  $$
  and its $m$-th differential to be
  $$c'_m \colon C_m' = \bigoplus_{j= 0}^m D_j \to 
                C_{m-1}' = \bigoplus_{k = 0}^{m-1} D_k$$
  where the $(j,k)$-entry $d_{j,k} \colon D_j \to D_k$ for
  $j \in \{0,1,2 \ldots , m\}$ and $k \in \{0,1,2 \ldots ,m-1\}$ is given by
  $$d_{j,k} :=
    \begin{cases}
    0 & \text{if } j \ge k+2;
    \\
    (-1)^{m+k} \cdot d_j & \text{if } j = k+1;
    \\
    \id - r_j \circ i_j & \text{if } j = k, j \equiv m \mod 2;
    \\
    r_j \circ i_j & \text{if } j = k, j \equiv m+1 \mod 2;
    \\
    (-1)^{m+k+1} \cdot i_k \circ h_{k-1} \circ \ldots 
          \circ h_{j} \circ r_j & \text{if } j \le k-1.
    \end{cases}
  $$
  Define chain maps $f' \colon C \to C'$ and $g' \colon C'\to C$ by
  $$f'_m \colon C_m \to C_m' = D_0 \oplus D_1 \oplus \cdots \oplus D_m, 
      \quad x \mapsto (0,0, \ldots, i_m(x))$$ 
  and
  $$g'_m \colon C'_m = D_0 \oplus D_1 \oplus \cdots \oplus D_m \to C_m, 
     \quad (x_0,x_1, \cdots x_m) \mapsto 
   \sum_{j = 0}^m h_{m-1} \circ \cdots \circ h_j \circ r_j(x_j).$$ 
  We have $g'\circ f' = r \circ i$ and hence $h$ is a
  chain homotopy $g'\circ f' \simeq \id_C$. 
  We obtain a chain homotopy $k' \colon f'\circ g' \simeq \id_{C'}$ if
  $$k_m' \colon C'_m = D_0 \oplus D_1 \oplus \cdots \oplus D_m \to
      C'_{m+1} = D_0 \oplus D_1 \oplus \cdots \oplus D_m \oplus D_{m+1}$$ 
  is the obvious inclusion.

  Recall that $D$ is $N$-dimensional. 
  Thus we get $C_m' = C _N'$ for $m \ge N$ and 
  $c_{m+1}' = \id - c_m'$ for $m \ge N+1$. 
  Since $c_{m+1}' \circ c_m' = 0$ for all $m$,
  we conclude $c_m' \circ c_m' = c_m'$ for $m \ge N+1$. 
  Hence $C'$ has the form
  $$
  \dots \to C_N' \xrightarrow{c_{N+1}'} C_N' \xrightarrow{\id - c_{N+1}'}  C_N'
          \xrightarrow{c_{N+1}'}  C_N' \xrightarrow{c_N'} C'_{N-1}   
          \xrightarrow{c_{N-1}'}
    \dots \xrightarrow{c_1'} C_0'\to 0 \to  \dots.
  $$
  Define an $N$-dimensional chain complex
  $D'$ over $\Idem (\calc)$  by
  $$0 \to 0 \to (C_N',\id -c_{N+1}) \xrightarrow{c_N' \circ i} C'_{N-1}
               \xrightarrow{c_{N-1}'} \ldots \xrightarrow{c_1'} C_0' 
             \to 0 \to  \dots,$$ 
  where $i \colon (C_N',\id -c_{N+1})  \to C_N'$ 
  is the obvious morphism in $\Idem (\calc)$ which is given by 
  $\id -c_{N+1} \colon C_N' \to C_N'$. 
  Let
  $$u \colon D' \to C'$$
  be the chain map for which $u_m$ is the identity for $m \le N-1$,  $u_N$ is 
  $i \colon  (C_N',\id -c_{N+1})  \to C_N'$, and $u_m \colon 0 \to C_m$ 
  is the canonical map for $m \ge N+1$. 
  Let
  $$v \colon C' \to D'$$
  be the chain map which is given by the identity  for $m \le N-1$, by the 
  canonical projection $C'_m \to 0$ for $m \ge N+1$ and for $m = N$ by the
  morphism $C_N \to  (C_ N',\id -c_{N+1})$ defined by 
  $\id -c_{N+1} \colon C_N' \to C_N'$. 
  Obviously $v \circ u = \id_{D'}$. 
  We obtain a chain homotopy $l' \colon \id_{C'} \sim u \circ v$ if we take
  $l_m = 0$ for $m \le N$, $l_m = c_{N+1}'$ for 
  $m \ge N, m - N \equiv 0 \mod 2$ and $l_m = 1 - c_{N+1}'$ for 
  $m \ge N, m - N \equiv 1 \mod 2$.

  Define the desired chain complex $P$ by $P := D'$. 
  Define
  $$f \colon  C \to P$$
  to be the composite $v \circ f'$. Define
  $$g \colon P \to C$$
  to be the composite $g' \circ u$.
  We obtain chain homotopies
  $$k = v \circ h \circ u \colon f \circ g \simeq \id_P$$
  and
  $$l = h - g' \circ l' \circ f' \colon g \circ f \simeq \id_C.$$
\end{remark}

\begin{lemma} \label{lem:chain-cxs-D-over-X}
  Let $\e > 0$ be given. 
  Then there is an $N$-dimensional 
  $\e$-controlled chain complex $D$ over $\calc(X,d;\IZ)$,
  $\e$-controlled chain maps $i \colon C^{\sing \e}(X) \to D$,
  $r \colon D \to C^{\sing,\e}(X)$ and an $\e$-controlled 
  chain homotopy $h \colon r \circ i \simeq \id_C$. 
\end{lemma}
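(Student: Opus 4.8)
The plan is to realize $X$, up to a small controlled homotopy, as a retract of a finite $N$\nobreakdash-dimensional simplicial complex $L$, and to take $D$ to be the simplicial chain complex of $L$, viewed over $\calc(X,d;\IZ)$ via the images of barycenters under the retraction. The maps $i$, $r$ and the chain homotopy $h$ will be produced from the retraction data at the chain level, using the chain-level comparison of simplicial and singular chains of $L$ and the prism operator of the relevant space-level homotopy. The only real work is the control bookkeeping.

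\textbf{Construction.} First apply Definition~\ref{def:N-dominated_space} with a small parameter $\e_1>0$ to obtain a finite $CW$\nobreakdash-complex $K$ of dimension $\le N$, maps $i_K\colon X\to K$, $p_K\colon K\to X$, and a homotopy $H_K\colon p_K\circ i_K\simeq\id_X$ all of whose tracks $\{H_K(x,t)\mid t\in[0,1]\}$ have diameter $\le\e_1$. Apply Lemma~\ref{lem:controlled-simplicial-for-CW} to $q:=p_K\colon K\to X$ (with a small parameter $\e_2>0$) to replace $K$ by a finite $N$\nobreakdash-dimensional simplicial complex $L$, with maps $i_L\colon K\to L$, $p_L\colon L\to K$ and a homotopy $h_L\colon p_L\circ i_L\simeq\id_K$ that is $\e_2$\nobreakdash-controlled over $p_K$. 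Put $a:=i_L\circ i_K\colon X\to L$ and $b:=p_K\circ p_L\colon L\to X$; splicing $p_K\circ(h_L\circ(i_K\times\id))$ with $H_K$ gives a homotopy $H\colon b\circ a\simeq\id_X$ with tracks of diameter $\le\e_1+\e_2$. By subdividing $L$ (which changes neither its dimension nor these maps, and only restricts $b$) we may in addition assume that $b$ maps each closed simplex of $L$ into a subset of $X$ of diameter $\le\e_3$ for a small $\e_3>0$. Now take $D:=C(L)$ over $\calc(X,d;\IZ)$ as in Remark~\ref{rem:iverlinecalc}; it is concentrated in degrees $0,\dots,N$, and it is $\e_3$\nobreakdash-controlled because the barycenters of a simplex and of any of its faces lie in a common closed simplex of $L$. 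Define $r\colon D\to C^{\sing,\e}(X)$ by $\sigma\mapsto b\circ\sigma$; this lands in $C^{\sing,\e}(X)$ since $b(\sigma)$ has diameter $\le\e_3\le\e$, it is a chain map by naturality of faces, and it is $0$\nobreakdash-controlled because the barycenter of $b\circ\sigma$ is the $b$\nobreakdash-image of the barycenter of $\sigma$. For $i$, choose a \emph{carrier-preserving} chain homotopy inverse $\rho\colon C^{\sing}(L)\to C(L)$ of the inclusion $\iota\colon C(L)\hookrightarrow C^{\sing}(L)$, together with a carrier-preserving chain homotopy $P_1\colon\iota\circ\rho\simeq\id_{C^{\sing}(L)}$ (chain-level simplicial approximation; compare~\cite{Ranicki(1985)}), and set $i:=\rho\circ a_\#$. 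Finally let $P_2\colon (b\circ a)_\#\simeq\id_{C^{\sing}(X)}$ be the prism operator of $H$ and let $h$ be obtained by splicing $\widetilde b\circ P_1\circ a_\#$ (a homotopy $r\circ i\simeq (b\circ a)_\#$) with $P_2$ (a homotopy $(b\circ a)_\#\simeq\id$), where $\widetilde b$ is the singular chain map of $b$.

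\textbf{Control estimates — the main obstacle.} What remains is to check that $i$, $r$ and $h$ are controlled at the required scale; this, and not any single hard argument, is the crux. The key observation is that although $a$ may distort distances inside $L$ arbitrarily, the $b$\nobreakdash-image of the minimal subcomplex of $L$ carrying $a(\im\tau)$ stays close to $\im\tau$: each of its closed simplices has $b$\nobreakdash-image of diameter $\le\e_3$ and meets $a(\im\tau)$, while $b(a(\im\tau))$ lies within $\e_1+\e_2$ of $\im\tau$ because $b\circ a\simeq\id_X$ through $H$. Using this together with the carrier-support of $\rho$ and $P_1$, the $0$\nobreakdash-control of $r$, and the estimate that the prism pieces of a singular simplex of diameter $\le\e$ under $H$ have diameter $\le\e+4(\e_1+\e_2)$, one verifies that $i$ is $(\e+\e_1+\e_2+\e_3)$\nobreakdash-controlled and that $h$ is $(\e+4(\e_1+\e_2)+\e_3)$\nobreakdash-controlled, while $D$ and $r$ are $\e_3$\nobreakdash- and $0$\nobreakdash-controlled respectively. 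Choosing $\e_1,\e_2,\e_3$ sufficiently small relative to $\e$ makes all of these controlled at scale $\e$ (up to an error that can be taken negligibly small, which is all that the applications of the lemma require), completing the argument. The two points that need genuine care are thus the carrier estimate just described and the existence of a carrier-preserving chain homotopy inverse $\rho$ with bounds independent of the singular simplices involved; everything else is a routine, if lengthy, verification.
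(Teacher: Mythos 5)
Your proposal follows the same overall strategy as the paper's proof (use the domination and Lemma~\ref{lem:controlled-simplicial-for-CW} to get a finite $N$\nobreakdash-dimensional simplicial complex, take $D$ to be its simplicial chain complex viewed over $\calc(X,d;\IZ)$, and compare singular and simplicial chains), and the carrier-to-metric estimate — that the $b$\nobreakdash-image of the carrier of $a\circ\tau$ stays within $\e_1+\e_2+\e_3$ of $\im\tau$ — is the right observation for neutralizing the uncontrolled distortion of $a$. The gap is precisely the one you flag at the end: the whole construction rests on the existence of a \emph{carrier-preserving} chain homotopy inverse $\rho\colon C^{\sing}(L)\to C(L)$ together with a carrier-preserving homotopy $P_1\colon \iota\circ\rho\simeq\id$, and this is asserted, not proved. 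It is not an off-the-shelf fact. The carrier of a singular simplex need not be acyclic (take $L$ a triangulated circle and $\sigma\colon[0,1]\to|L|$ a loop traversing it; then $K_\sigma=L$), so the acyclic carriers theorem does not apply; and since $C^{\sing}$ is a functor on spaces whereas $C(-)$ is only a functor on simplicial complexes, $C^{\sing}_n(L)$ is not free on a basis indexed by simplicial maps $\Delta^n\to L$, so the acyclic models theorem does not apply either. One can in fact build such a $\rho$ and $P_1$ by an induction that fills cycles using that $\iota\colon C(K_\sigma)\to C^{\sing}(|K_\sigma|)$ is a quasi-isomorphism even when $K_\sigma$ is not acyclic, but that argument has to be carried out; the reference to~\cite{Ranicki(1985)} does not clearly supply it.

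The paper sidesteps this problem entirely by never using the full singular complex of the simplicial replacement. It works instead with the restricted complex $C^{\sing,2\e}(K)$ of singular simplices of small $q$\nobreakdash-diameter, where a $2\e$\nobreakdash-controlled chain homotopy inverse of the simplicial-into-singular inclusion is supplied by a modification of~\cite[Lemma~6.7~(iii)]{Bartels-Lueck-Reich(2008hyper)}; the domination map $j_*$ lands in this restricted complex because $q\circ j\simeq\id_X$ is $\e$\nobreakdash-controlled, so only metric control on the inverse is needed, not carrier control. If you either proved the carrier-preserving claim or switched to the restricted singular complex and cited the controlled comparison, the remainder of your argument — the direct definition of $r$ as pushforward by $b$, the splicing of $\widetilde b\circ P_1\circ a_\#$ with the prism of $H$, and the $\e_1,\e_2,\e_3$ bookkeeping, which has the same benign ``choose the internal tolerances small and rescale $\e$'' imprecision as the paper's own proof — would give the lemma.
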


\begin{proof}
  Since $(X,d)$ is a compact contractible $N$-dominated metric space,
  we can find a finite $CW$-complex $K$ of dimension $\le N$ and maps
  $j \colon X \to K$ and $q \colon K \to X$ and an $\e$-controlled homotopy 
  $H \colon q \circ j \simeq \id_X$. 
  Because of Lemma~\ref{lem:controlled-simplicial-for-CW} 
  we can assume without loss
  of generality that $K$ is a finite simplicial complex of dimension $\le N$.
  
  Subdividing $K$, if necessary, we can assume that the diameter
  of the images of simplices of $K$ under $q$ are at most $\e$.
  Using $q$ we consider the simplicial chain complex $C(K)$ of $K$ as
  a chain complex over $\calc(X,d;\IZ)$.
  Similarly, the subcomplex $C^{\sing,\e}(K)$ of the singular chain complex
  spanned by singular simplices in $K$ whose image under $q$ has diameter
  $\leq \e$ is a chain complex over $\overline{\calc}(X,d;\IZ)$.
  Analogously to the proof
  of~\cite[Lemma~6.9]{Bartels-Lueck-Reich(2008hyper)} one shows that
  $$
  C^{\sing , \epsilon}(X) \xrightarrow{j_*}
  C_{\ast}^{\sing, 2\epsilon} (K) \xrightarrow{q_*} C_{\ast}^{\sing,
    2\epsilon} (X)
  $$
  is well defined and that the composition is homotopic to the inclusion
  $$
  \inc_* \colon C_{\ast}^{\sing,\epsilon}(X) \to C_{\ast}^{\sing,
    2\epsilon}(X)
  $$
  by a chain homotopy that is $2 \epsilon$-controlled. 
  A slight modification of the proof
  of~\cite[Lemma~6.7~(iii)]{Bartels-Lueck-Reich(2008hyper)} shows
  that the canonical chain map
  $$a \colon C(K) \to C^{\sing,2\epsilon}(K)$$
  is a $2\e$-chain homotopy equivalence over $\calc(X,d;\IZ)$.
  Let $b \colon C^{\sing,2\epsilon}(K) \to  C(K)$ be a
  $2\epsilon$-controlled chain homotopy inverse of $a$. 
  Put$D := C(K)$.   Define
  $$i \colon C^{\sing,\epsilon}(X) \to D$$
  to be $b \circ j_*$. 
  Define
  $$r \colon D \to C^{\sing,\epsilon}(X)$$
  to be the composite of an $2\epsilon$-controlled inverse of $\inc_*$,
  $q_*$ and $a$.  
  Then $i$ resp.\ $r$ are $3 \e$ resp.\ $4 \e$-controlled
  over $(X,d)$ and there exists a chain homotopy 
  $h \colon r \circ i \simeq \id_{C}$ which is  
  $5\e$-controlled over $(X,d)$.
  This finishes the proof since $\e$ is arbitrary.
\end{proof}

\begin{lemma} \label{lem:controlled-chain-homotopy-from-map-homotopy}
  Let $\e, \delta >0$ let 
  $\varphi, \varphi' \colon X \to X$ be maps such that satisfying
  the following growth condition: if $d(x,y) \leq \e$,
  then $d(\varphi(x),\varphi(y)),d(\varphi'(x),\varphi'(y)) \leq \delta$.
  If $H \colon \varphi \simeq \varphi'$ is a homotopy,
  then there is a chain homotopy $H_* \colon \varphi_* \simeq \varphi'_*$
  over $\overline{\calc}(X,d;\IZ)$, such that
  \[
  \supp H_* \subseteq \{  (H(x,t),y) \; | \; t \in [0,1], d(x,y) \leq \e \}.
  \]
  (Here $\varphi_*,\varphi'_* \colon C^{\sing,\e}(X) \to C^{\sing,\delta}(X)$ 
  denote the induced chain maps.) 
\end{lemma}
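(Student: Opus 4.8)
The plan is to use the classical prism operator, which produces a chain homotopy between the maps on singular chains induced by two homotopic maps, and then to compute barycenters in order to read off the support estimate. Write $e_0,\dots,e_n$ for the vertices of the standard simplex $\Delta^n$ and put $v_j=(e_j,0)$, $w_j=(e_j,1)\in\Delta^n\times[0,1]$. For a singular $n$-simplex $\sigma\colon\Delta^n\to X$ set
\[
(H_*)_n(\sigma):=\sum_{i=0}^n(-1)^i\,\bigl(H\circ(\sigma\times\id_{[0,1]})\bigr)\big|_{[v_0,\dots,v_i,w_i,\dots,w_n]},
\]
where $[v_0,\dots,v_i,w_i,\dots,w_n]$ denotes the affine $(n+1)$-simplex in $\Delta^n\times[0,1]$ on these vertices, precomposed with the order-preserving affine homeomorphism from $\Delta^{n+1}$. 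This is the usual prism operator attached to $H$; it is natural in $\sigma$ and satisfies $\partial\circ H_*+H_*\circ\partial=\varphi'_*-\varphi_*$. A face of a singular simplex of diameter $\le\e$ again has diameter $\le\e$, so $C^{\sing,\e}(X)$ is a subcomplex of the full singular complex and $H_*$ restricts to a degree-one map out of it; the growth hypothesis makes $\varphi_*$ and $\varphi'_*$ carry this subcomplex into $C^{\sing,\delta}(X)$, so $H_*$ is a chain homotopy between the chain maps $\varphi_*,\varphi'_*\colon C^{\sing,\e}(X)\to C^{\sing,\delta}(X)$. Here all three of $C^{\sing,\e}(X)$, $C^{\sing,\delta}(X)$ and the full singular complex of $X$ are regarded as chain complexes over $\overline{\calc}(X,d;\IZ)$ as in Remark~\ref{rem:iverlinecalc}, and compactness of $X$ provides the metric control needed for $H_*$ to be a morphism of this category.

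Next I would verify the support estimate. Recall from Remark~\ref{rem:iverlinecalc} that the point of $X$ attached to a singular simplex $\tau$ is its value at the barycenter of its domain. Fix $\sigma\colon\Delta^n\to X$ in $C^{\sing,\e}(X)$ with attached point $\widehat\sigma=\sigma(b_n)$, where $b_n$ is the barycenter of $\Delta^n$, and look at the $i$-th summand $\tau_i$ of $(H_*)_n(\sigma)$. The barycenter of $\Delta^{n+1}$ is carried by the affine identification to $\tfrac1{n+2}(v_0+\dots+v_i+w_i+\dots+w_n)\in\Delta^n\times[0,1]$, whose $\Delta^n$-coordinate is a point $p_i\in\Delta^n$ (it is a convex combination of $b_n$ and the vertex $e_i$) and whose $[0,1]$-coordinate is $t_i:=\tfrac{n+1-i}{n+2}\in[0,1]$. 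Hence the point attached to $\tau_i$ is $H(\sigma(p_i),t_i)$. Since $\sigma(p_i)$ and $\widehat\sigma=\sigma(b_n)$ both lie in the image of $\sigma$, which has diameter $\le\e$, we have $d(\sigma(p_i),\widehat\sigma)\le\e$, and therefore $\bigl(H(\sigma(p_i),t_i),\widehat\sigma\bigr)$ lies in $\{(H(x,t),y)\mid t\in[0,1],\ d(x,y)\le\e\}$. As this holds for every $i$ and every $\sigma$, the asserted bound on $\supp H_*$ follows, and the standard identity above gives that $H_*$ is a chain homotopy $\varphi_*\simeq\varphi'_*$.

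I expect the only point requiring care to be this barycenter computation feeding the support estimate — in particular the observation that the $X$-coordinate of the barycenter of each prism sub-simplex lies in the image of $\sigma$, hence within $\e$ of $\widehat\sigma$, while its time coordinate lies in $[0,1]$. I do not expect to need a subdivision of the prism in order to control the diameters of the simplices occurring in $H_*(\sigma)$: only the locations of their barycenters, not their diameters, enter the assertion, and we are free to take the full singular complex as the ambient chain complex over $\overline{\calc}(X,d;\IZ)$.
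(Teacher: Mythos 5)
Your barycenter computation is correct and would establish the stated support bound, but the proof has a genuine gap exactly where you dismiss the possibility of subdivision: you claim "we are free to take the full singular complex as the ambient chain complex" and that "only the locations of their barycenters, not their diameters, enter the assertion." The lemma asks for a chain homotopy between $\varphi_*,\varphi'_*\colon C^{\sing,\e}(X)\to C^{\sing,\delta}(X)$, so $H_*$ must be a degree-one map \emph{with values in} $C^{\sing,\delta}(X)$. The classical prism operator does not give this: each summand of $(H_*)_n(\sigma)$ is the restriction of $H\circ(\sigma\times\id_{[0,1]})$ to a large affine piece of $\Delta^n\times[0,1]$, and the growth hypothesis only controls the endpoint maps $\varphi=H(-,0)$ and $\varphi'=H(-,1)$, not $H(-,t)$ for intermediate $t$; the images of those prism pieces can have diameter far exceeding $\delta$. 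So your $H_*$ only lands in $C^{\sing}(X)$. This is not a cosmetic defect: in the proof of Proposition~\ref{prop:projective-chain-complex}, the output $(H_{h,k})_*$ of this lemma is a triangle-filling homotopy $C^{\gamma}\to C^{\epsilon}$ that must then be post-composed with the chain map $f\colon C^{\sing,\epsilon}(X)\to P$ (and with the retractions $r$), which are defined only on the controlled subcomplexes. A homotopy with values in all of $C^{\sing}(X)$ cannot be glued into the homotopy $S$-action on $P$.

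The paper's own proof acknowledges and repairs precisely this point. It keeps the prism-operator idea but replaces the fixed decomposition of $\Delta^n\times[0,1]$ by a $\sigma$-dependent (possibly degenerate) simplicial structure $\tau_\sigma$, constructed inductively on $n$ using barycentric subdivision, so that every simplex of $\tau_\sigma$ has image of diameter $\le\delta$ under $H\circ(\sigma\times\id)$, $\tau_\sigma$ is compatible with restriction to faces of $\sigma$ (so that the homotopy identity still holds), and it restricts to the standard structure on $\Delta^n\times\{0,1\}$ (so that the ends are $\varphi_*,\varphi'_*$). Degenerate simplices are allowed in order to interpolate between a boundary simplex and its barycentric subdivision when extending from $\partial(\Delta^n\times[0,1])$ to the interior. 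Your barycenter computation then applies verbatim to the simplices of $\tau_\sigma$ and yields the support bound; the subdivision step is what additionally ensures $H_*$ lands in $C^{\sing,\delta}(X)$. In short: the barycenters govern the support set, but the simplex diameters govern which chain complex $H_*$ maps into, and your argument controls only the former.
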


\begin{proof} 
  The usual construction of a chain homotopy associated to a homotopy $H$ 
  uses suitable simplicial structures on $\Delta \x [0,1]$, but in general
  this yields only a chain homotopy between chain maps 
  $C^{\sing,\e}(X) \to C^{\sing,\delta}(X)$, because we do control not the 
  diameter of images of simplices in $\Delta \x [0,1]$, under 
  $H \circ (\sigma \x \id_{[0,1]})$, where $\sigma \colon \Delta^n \to X$ 
  is a singular simplex in $X$ (whose image has diameter $\leq \e$).
  This can be fixed using subdivisions.
  It is not hard to construct (by induction on $n$) for every such $\sigma$  
  a (possibly degenerate) simplicial structure $\tau_\sigma$ on 
  $\Delta^n \x [0,1]$ with the following properties:
  the image of every simplex of $\tau_\sigma$ under
  $H \circ (\sigma \x \id_{[0,1]})$ has diameter $\leq \delta$,
  $\tau_\sigma$ is natural with respect to restriction to
  faces of $\sigma$, $\tau_\sigma$ yields the standard simplicial
  structure on $\Delta^n \x \{0,1\}$.
  Degenerated simplices may appear for the following reason:
  in the induction step we need to extend a given simplicial structure
  on the boundary of $\Delta^n \x [0,1]$ to all of $\Delta^n \x [0,1]$.
  In order to arrange for the diameters of images of simplices to be small
  we may need to use barycentric subdivision and 
  this changes the given simplicial structure on the boundary.
  However, using  degenerated simplices we can interpolate 
  between a simplex and its barycentric subdivision.
\end{proof}

\begin{lemma}
  \label{lem:functions-alpha-beta}
  Let $S$ be finite subset of $G$ (containing $e$),
  and $(\varphi,H)$ be a homotopy $S$-action on $X$. 
  Then there are maps $\alpha, \beta \colon (0,\infty) \to (0,\infty)$
  such that the following holds:
  \begin{enumerate}
  \item if $d(x,y) \leq \e$, $g \in S$ 
        then $d(\varphi_g(x),\varphi_g(y)) \leq \beta(\e)$;
        if $d(x,y) \leq \e$, $g, h \in S$ with $gh\in S$ and $t \in [0,1]$,
        then $d(H_{g,h}(x,t),H_{g,h}(y,t)) \leq \beta(\e)$;
  \item \label{lem:functions-alpha-beta:lim} 
        $\lim_{\e \to 0} \beta(\e) = 0$;
  \item if $d(x,y) \leq \alpha(\e)$, then $d(\varphi_g(x),\varphi_g(y)) \leq \e$
        for all $g \in S$.
  \end{enumerate}
\end{lemma}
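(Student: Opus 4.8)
The plan is to extract the maps $\alpha,\beta$ from uniform continuity of the finitely many maps $\varphi_g$ ($g\in S$) and homotopies $H_{g,h}$ ($g,h\in S$ with $gh\in S$), using that $X$ is compact. First I would set up the right definition of $\beta$. For fixed $\e>0$, consider the finite family of maps consisting of all $\varphi_g\colon X\to X$ for $g\in S$ together with all the ``slices'' $H_{g,h}(-,t)\colon X\to X$ for $g,h\in S$ with $gh\in S$ and $t\in[0,1]$. The latter is not a finite family, but it is parametrized by the compact set $S\times S\times[0,1]$ (with $S$ finite), and the map $(g,h,t,x)\mapsto H_{g,h}(x,t)$ is continuous on the compact space (indexed over the finitely many admissible pairs $(g,h)$) $\{(g,h)\}\times[0,1]\times X$. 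Hence the collection of all these maps $X\to X$ is \emph{uniformly equicontinuous}: there is a modulus $\omega\colon(0,\infty)\to(0,\infty)$ with $\omega(\e)\to 0$ as $\e\to 0$ such that $d(x,y)\le\e$ implies $d(\psi(x),\psi(y))\le\omega(\e)$ for every map $\psi$ in this collection. Concretely one gets $\omega$ by taking, for each $\eta>0$, the supremum over these finitely-many-plus-compact-parameter maps of the modulus of uniform continuity and checking it tends to $0$; compactness of $X$ and of $S\times S\times[0,1]$ is what makes this work. Set $\beta:=\omega$. Then conditions (i) and (ii) hold immediately: (i) because $\varphi_g$ and each $H_{g,h}(-,t)$ lie in the collection, and (ii) is the statement $\omega(\e)\to 0$.

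Next I would define $\alpha$. For fixed $\e>0$, each $\varphi_g$ ($g\in S$) is uniformly continuous on the compact metric space $X$, so there is $\delta_g(\e)>0$ with $d(x,y)\le\delta_g(\e)\Rightarrow d(\varphi_g(x),\varphi_g(y))\le\e$. Since $S$ is finite, put $\alpha(\e):=\min_{g\in S}\delta_g(\e)>0$. This is well defined and positive, and gives (iii) directly. Note $\alpha$ need not tend to anything as $\e\to0$; the lemma only asks that it be a positive real for each $\e$, which is all we use later (e.g.\ in the proof of Proposition~\ref{prop:projective-chain-complex}, where $\alpha$ controls how fine a triangulation must be chosen so that the induced action is $\e$-controlled).

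The only mildly delicate point is verifying the uniform equicontinuity statement that produces $\beta$ with $\beta(\e)\to0$; everything else is a direct compactness argument. The cleanest way to see it: the function $K\times X\to[0,\infty)$, $\big((\text{index }(g,h,t)),(x,y)\big)\mapsto d\big(\Psi_{g,h,t}(x),\Psi_{g,h,t}(y)\big)$ is continuous, where $K$ is the compact parameter space and $\Psi$ ranges over the maps above; restricting to the compact set where $d(x,y)\le\e$ and taking the supremum gives a value $\beta(\e)$, and as $\e\downarrow0$ this supremum is taken over a decreasing family of compact sets whose intersection is contained in $\{x=y\}$, on which the integrand vanishes, so $\beta(\e)\to0$ by a standard compactness argument (or: otherwise one extracts, by compactness of $K$ and $X$, a limit point $(g,h,t,x,x)$ with $d(\Psi_{g,h,t}(x),\Psi_{g,h,t}(x))>0$, a contradiction). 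Once $\beta$ is in hand, (i), (ii), (iii) are all immediate, completing the proof.
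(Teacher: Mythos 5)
Your proof is correct and takes essentially the same route as the paper, which disposes of the lemma in one line as ``an easy consequence of the compactness of $X$ and $X\times[0,1]$''; you have simply spelled out the uniform-equicontinuity/compactness argument in detail, which is exactly what the authors had in mind.
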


\begin{proof}
  This is an easy consequence of the compactness of $X$ and $X \x [0,1]$. 
\end{proof}

\begin{proof}[Proof of Proposition~\ref{prop:projective-chain-complex}]
  Consider any $\epsilon  > 0$. 
  Applying the construction from Remark~\ref{rem:Ranickis-formulas}
  to $C := C^{\sing,\e}(X)$ and $D$, $i$, $r$ and $h$ as in the
  assertion of Lemma~\ref{lem:chain-cxs-D-over-X} we
  obtain a chain complex $P$ over $\Idem(\calc(X,d;\IZ))$,
  chain maps $f \colon C \to P$, $g \colon P \to C$
  and chain homotopies $k \colon f \circ g \simeq \id_P$   
  $l \colon g \circ f \simeq \id_C$.
  By inspecting the formulas from Remark~\ref{rem:Ranickis-formulas}
  we see that $P$, $f$, $g$, $k$ and $l$
  are $(N+2)\epsilon$-controlled.  
  (Here we use that control is additive under composition
  and that the sum of $\e$-controlled maps is again $\e$-controlled.) 
  In particular 
  this takes care of assertions~\ref{prop:projective-chain-complex:length}
  and~\ref{prop:projective-chain-complex:e-controlled} since $\epsilon > 0$ 
  is arbitrary.

  Next we define the desired homotopy $S$-action on $P$.
  Let $\alpha$ be the function from Lemma~\ref{lem:functions-alpha-beta}.
  Put 
  $$\delta := \alpha(\e) \quad \text{ and} \quad 
  \gamma := \alpha(\delta) = \alpha \circ \alpha(\e).$$
  In the sequel we abbreviate $C^{\epsilon} := C^{\sing,\epsilon}(X)$,
  $C^{\delta} := C^{\sing,\delta}(X)$, and $C^{\gamma} := C^{\sing,\gamma}(X)$.
  Let $(\varphi_h)_*$ be the chain map  $C^{\gamma} \to C^{\delta}$,
  $C^{\delta} \to C^{\epsilon}$ or $C^{\gamma} \to C^{\epsilon}$ respectively
  induced by $\varphi_h \colon X \to X$.
  Let $r \colon C^{\epsilon} \to C^{\gamma}$, $r \colon C^{\epsilon} \to C^{\delta}$,
  and $r \colon C^{\delta} \to C^{\gamma}$ 
  respectively be an $\epsilon$-controlled
  chain homotopy inverse of the inclusion
  $C^{\gamma} \to C^{\epsilon}$, $C^{\delta} \to C^{\epsilon}$, and
  $C^{\gamma} \to C^{\delta}$ respectively. 
  For their existence  
  see~\cite[Lemma~6.7~(i)]{Bartels-Lueck-Reich(2008hyper)}).
  For $h \in S$ define
  $$\varphi^P_h \colon P \to P$$
  to be the composite
  $P \xrightarrow{g} C^{\epsilon} \xrightarrow{r} C^{\gamma}
  \xrightarrow{(\varphi_h)_*} C^{\epsilon} \xrightarrow{f} P$,
  if $h \not= e$ and $\varphi^P_h = \id_P$ if $h = e$.
  
  Recall that $r$, $g$ and $f$ are $\epsilon$-controlled. 
  We have $(x,y) \in \supp((\varphi_h)_*)$ if and only if
  $y = \varphi_h(x)$. Consider $(x,y) \in \supp(\varphi^P_h)$. 
  Then there exists $x_1$, $x_2$, $x_3$, $x_4$ and $x_5$ with $x = x_1$, 
  $y = x_5$,
  $(x_1,x_2) \in \supp(g)$, $(x_2,x_3) \in \supp(r)$, 
  $(x_3,x_4) \in \supp((\varphi_h)_*)$ and $(x_4,x_5) \in \supp(f)$.
  This implies $d(x,x_2) \le \epsilon$, $d(x_2,x_3) \le \epsilon$,
  $x_4 = \varphi_h(x_3)$ and $d(x_4,x_5) \le \epsilon$.  
  Using the function $\beta$ appearing in 
  Lemma~\ref{lem:functions-alpha-beta} we conclude 
  $d(\varphi_h(x),\varphi(x_4)) \le \beta(2\epsilon)$ and hence 
  $d(\varphi_h(x),y) \leq \beta(2\epsilon) + \e$. Thus we have shown
  $d(\varphi_h(x),y) \leq \beta(2\epsilon) + \e$ for 
  $(x,y) \in \supp((\varphi_h)_*)$.
  if $h \in S$ and $(x,y) \in \supp(\varphi^P_h)$. 
  Since $\epsilon > 0$ is arbitrary, and because of 
  Lemma~\ref{lem:functions-alpha-beta}~\ref{lem:functions-alpha-beta:lim}
  this takes care of 
  assertion~\ref{prop:projective-chain-complex:control-varphi}.

  Consider $h,k \in S$ with $hk \in S$. 
  Consider the following diagram of chain maps
  of chain complexes over $\Idem( \calc(X,d;\IZ))$.
  $$
  \xymatrix{P \ar[r]^-g
  & C^{\epsilon} \ar[r]^-r
  & C^{\gamma} \ar[rr]^-{(\varphi_k)_*} \ar[rrd]_-{(\varphi_k)_*}
  \ar@/_3em/[rrrrddd]_-{(\varphi_{hk})_*}
  && C^{\epsilon} \ar[rr]^-f \ar[rrd]^-{\id} \ar[d]^-r
  && P \ar[d]^-g
  \\
  & & &
  & C^{\delta} \ar[drr]^-r \ar[rrdd]_-{(\varphi_h)_*}
  && C^{\epsilon} \ar[d]^-r \ar[ll]_-r
  \\
  & & & &
  && C^{\gamma} \ar[d]^-{(\varphi_h)_*}
  \\
  & & & &
  && C^{\epsilon} \ar[d]^-f
  \\
  & & & &
  && P
  }
  $$
  The chain maps $f$, $r$ and $g$ are $\epsilon$-controlled.
  For all triangles appearing in the above diagram 
  we have explicit chain homotopies
  which make them commute up to homotopy: 
  The homotopy for the triangle involving $(\varphi_k)_*$, $(\varphi_h)_*$, 
  and $(\varphi_{hk})_*$ is induced by $H_{h,k}$, 
  see Lemma~\ref{lem:controlled-chain-homotopy-from-map-homotopy}. 
  In particular,   
  $\supp (H_{h,k})_* \subseteq \{  (H(x,t),y) \; | \; 
                           t \in [0,1], d(x,y) \leq \e \}$.
  The chain homotopy for the triangle involving
  $f$,$g$ and $\id$ is the chain homotopy $l$ which is $\epsilon$-controlled.
  The chain homotopy for the triangle involving $r$, $r$ and 
  $\id$ is the trivial one.
  The chain homotopy $K$ for the triangle involving $(\varphi_k)_*$, 
  $(\varphi_k)_*$ and $r$ comes from a $\epsilon$-controlled
  chain homotopy from the composite 
  $C^{\delta} \xrightarrow{i} C^{\epsilon} \xrightarrow{r} C^{\delta}$ 
  for $i$ the inclusion to $\id \colon  C^{\delta} \to C^{\delta}$.
  We have $\supp K \subseteq \{ (x,y) \; | \; d(\varphi(y),x) \leq \e \}$.
  We obtain $\epsilon$-controlled chain homotopies for the remaining triangles
  analogously. 
  The composite obtained by going first horizontally 
  from the left upper corner to the right
  upper corner and then vertically to the  right
  lower corner is by definition $\varphi^P_h \circ \varphi^P_k$. 
  The composite obtained by going diagonally from the left upper corner to 
  the right lower corner is by definition $\varphi^P_{hk}$. 
  Putting all these
  chain homotopies together yields a chain homotopy
  $$H^P_{h,k} \colon \varphi^P_h \circ \varphi^G_k \simeq \varphi^P_{hk}$$
  such that
  \[
  \supp H^P_{h,k} \subseteq 
      \{ (x,y) \; | \; \exists t \in [0,1] : d(x,H(t,y)) \leq \e' \}
  \]
  where $ \e' := (2 \beta(\beta(\e)) + 3\beta(\e) + \e)$.
  (We leave the verification of this precise formula to the interested 
  reader; note however that the precise formula is not
  important for us.)
  Since $\epsilon > 0$ is arbitrary and because of
  Lemma~\ref{lem:functions-alpha-beta}~\ref{lem:functions-alpha-beta:lim}, 
  this takes care of assertion~\ref{prop:projective-chain-complex:control-H}.

  It remains to deal with 
  assertion~\ref{prop:projective-chain-complex:contractible}.
  The inclusion $i \colon C^{\sing,\epsilon}(X) \to C^{\sing}(X)$ 
  is a chain homotopy equivalence  
  (see~\cite[Lemma~6.7~(i)]{Bartels-Lueck-Reich(2008hyper)}).
  Hence the composition
  $a \colon P \xrightarrow{g}  C^{\sing,\epsilon}(X) \xrightarrow{i} C^{\sing}(X)$
  is a chain homotopy equivalence.
  One easily checks that
  $C^{\sing}(\varphi_g) \circ a \simeq a \circ \varphi^P_g$ holds for all 
  $g \in G$. 
  The inclusion $\{ x_0 \} \to X$ and augmentation induce chain maps
  $j \colon T_{x_0} \to  C^{\sing}(X)$ and $q \colon C^{\sing}(X) \to T_{x_0}$. 
  Obviously $q \circ j = \id_{T_{x_0}}$. 
  Since $X$ is contractible, there is also a chain homotopy 
  from $j \circ q$ to  $\id_{C^{\sing}(X)}$. 
  Obviously $q \circ C^{\sing}(\phi_g) = \phi^T_g \circ q$ for all $g \in S$.
  Hence the composite $q \circ a \colon P \to T$ is a chain homotopy
  equivalence of homotopy $S$-chain complexes over $\Idem( \calc(X,d;\IZ))$.
  This finishes the proof of Proposition~\ref{prop:projective-chain-complex}.
\end{proof}


\typeout{--------------------  The space P_2(X)  -----------------------}

\section{The space $P_2({X})$}
\label{sec:The_space_P_2(X)}

\begin{summary*}
  In this section we introduce the space $P_2(X)$.
  As explained in the introduction, this space will be the fiber 
  for the $L$-theory transfer. 
  We also prove a number of estimates for specific metrics on 
  $P_2(X)$, $G \x P_2(X)$ and $P_2(G \x X)$.
  These will be used later to produce a contracting map defined 
  on $G \x P_2(X)$ using the contracting map defined on $G \x X$
  from Proposition~\ref{prop:contracting-maps}. 
\end{summary*}

\begin{definition}[The space $P_2({X})$]
\label{def:P_2}
Let $X$ be a space.
\begin{enumerate}
\item \label{def:P_2:P_2}
      Let $P_2(X)$ denote the space of unordered pairs of points in $X$,
      i.e., $P_2(X) = X \times X / \sim$ where $(x,y) \sim (y,x)$ for
      all $x,y \in X$.
      We will use the notation $(x:y)$ for unordered pairs.
      Note that $X \mapsto P_2(X)$ is a functor;
\item \label{def:P_2:d}
      If $d$ is a metric on $X$, then
      \begin{equation*}
         d_{P_2(X)}((x:y),(x':y')) :=
              \min \{ d(x,x') + d (y,y'), d(x,y') + d(y,x') \}
      \end{equation*}
      defines a metric on $P_2(X)$.
\end{enumerate}
\end{definition}

\begin{lemma}
\label{lem:calf_2-isotropie-for-P_2}
Let $\calf$ be a family of subgroups of a group $G$.
Denote by $\calf_2$ the family of subgroups of $G$ which are contained in
$\calf$ or contain a member of $\calf$ as subgroup of index two.
Let $G$ act on a space $X$ such that all isotropy groups belong to $\calf$.
Then the isotropy groups for the induced action on $P_2(X)$ are all members of
$\calf_2$.
\end{lemma}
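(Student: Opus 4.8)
The plan is to understand the isotropy group of an unordered pair by means of a sign homomorphism to $\{\pm 1\}$. Fix $(x:y) \in P_2(X)$ and let $V := G_{(x:y)}$ be its isotropy group for the action $g \cdot (x:y) = (gx : gy)$ induced by the diagonal action of $G$ on $X \times X$. An element $g$ lies in $V$ exactly when $\{gx, gy\} = \{x, y\}$ as subsets of $X$; if $x \neq y$ this forces precisely one of the alternatives ``$gx = x$ and $gy = y$'' or ``$gx = y$ and $gy = x$'' to hold. I would define $w \colon V \to \{\pm 1\}$ by sending $g$ to $1$ in the first case and to $-1$ in the second (and $w \equiv 1$ if $x = y$). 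The one small verification --- a short bookkeeping of how the factors are permuted, using that the action on $X \times X$ is diagonal --- is that $w$ is a group homomorphism.

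Next I would compute the kernel: $\ker w = \{ g \in G \mid gx = x \text{ and } gy = y \} = G_x \cap G_y$. By hypothesis all isotropy groups of the $G$-action on $X$ belong to $\calf$, so $G_x \in \calf$; since a family of subgroups is closed under taking subgroups, $\ker w = G_x \cap G_y \in \calf$.

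It then remains to split into two cases. If $w$ is trivial, then $V = \ker w \in \calf \subseteq \calf_2$. If $w$ is surjective, then $\ker w$ is a subgroup of $V$ of index two lying in $\calf$, so $V \in \calf_2$ by the definition of $\calf_2$. In either case $V \in \calf_2$, as claimed. There is no genuine difficulty here; the only point needing any care is checking multiplicativity of $w$ when the two factors each swap $x$ and $y$, together with the degenerate observation that a pair $(x:x)$ has isotropy group $G_x \in \calf$.
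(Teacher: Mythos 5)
Your proof is correct and is essentially the paper's argument in different clothing: the paper directly verifies that the complement of $G_x\cap G_y$ in $G_{(x:y)}$, when nonempty, is a single coset, which is precisely the statement that your sign homomorphism $w$ is well-defined with kernel $G_x\cap G_y$. Both proofs identify $G_x\cap G_y \in \calf$ as the index-$\leq 2$ subgroup, so there is no substantive difference.
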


\begin{proof}
Let $(x:y) \in P_2(X)$ and $g \in G_{(x:y)}$.
Then either ($gx = x$ and $gy = y$), or ($gx = y$ and $gy = x$).
Obviously $G_x \cap G_y \subseteq G_{(x:y)}$ and
$G_x \cap G_y \in \calf$. Hence it remains to show that
the index of $G_x \cap G_y$ in $G_{(x:y)}$ is two
if $G_x \cap G_y \not= G_{(x:y)}$. 
Choose $g_0 \in G_{(x:y)} \setminus G_x \cap G_y$.
Then for every $g \in G_{(x:y)} \setminus G_x \cap G_y$ we 
have $gg_0 \in G_x \cap G_y$.
\end{proof}

\begin{remark}[The role of $\calf_2$] 
In $K$-theory one can replace the family $\VCyc$ by
the family $\VCyc_I$ of subgroups which are either 
finite or virtually cyclic of type I, see~\cite{Davis-Khan-Ranicki(2008)},~\cite{Davis-Quinn-Reich(2010)}.
The corresponding result does not hold for $L$-theory:
in the calculation of the $L$-theory of the infinite dihedral
group non-trivial UNil-terms appear (see~\cite {Cappell(1974c)}). 
Hence in the
proof of the $L$-theory case there must be an argument in the proof,
which does not appear in the $K$-theory case and where
one in contrast to the $K$-Theory case needs to consider virtually
cyclic groups of type II as well. 
This happens actually in
the previous Lemma~\ref{lem:calf_2-isotropie-for-P_2} which forces us
to replace $\calf$ by $\calf_2$.

In the $L$-theory case the situation is just the other way around,
it turns out that one can ignore the virtually
cyclic groups of type I (see~\cite[Lemma~4.2]{Lueck(2005heis)}), 
but not the ones of type II.
\end{remark}

\begin{lemma}
\label{lem:P_2-simplicial}
Let $\Sigma$ be a finite dimensional simplicial complex.
Then $P_2(\Sigma)$ can be equipped with the structure of a simplicial
complex such that
\begin{enumerate}
\item \label{lem:P_2-simplicial:action}
      for every simplicial automorphism $f$ of $\Sigma$, the induced
      automorphism $P_2(f)$ of $P_2(\Sigma)$ is simplicial;
\item \label{lem:P_2-simplicial:d_P_2-vs-d-1}
      for $\e > 0$ there is $\delta > 0$, depending only on $\e$ and
      the dimension of $\Sigma$ such that for $z,z' \in P_2(\Sigma)$
      \begin{equation*}
        d_{P_2(\Sigma,d^1)}( z,z' ) \leq \delta \quad \implies \quad
        d^1_{P_2(\Sigma)}( z,z' ) \leq \e,
      \end{equation*}
      where $d^1_{P_2(\Sigma)}$ is the $l^1$-metric for the simplicial complex
      $P_2(\Sigma)$ and $d_{P_2(\Sigma,d^1)}$ is the metric induced
      from the $l^1$-metric on $\Sigma$, see
      Definition~\ref{def:P_2}~\ref{def:P_2:d}.
\end{enumerate}
\end{lemma}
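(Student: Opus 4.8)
The plan is to realize $P_2(\Sigma)$ as the quotient of a simplicial complex by an admissible $\IZ/2$-action. First I would give $\Sigma \times \Sigma$ its product structure as a regular CW-complex, whose closed cells are the products $\bar\sigma \times \bar\tau$ of simplices of $\Sigma$, and order this set of cells by the face relation. Let $L$ be the order complex of this poset of cells, i.e.\ the barycentric subdivision of $\Sigma \times \Sigma$ regarded as a polyhedral complex; then $L$ is a finite simplicial complex with $|L| = |\Sigma| \times |\Sigma|$ and $\dim L = 2\dim\Sigma$. The flip $\tau \colon \Sigma \times \Sigma \to \Sigma \times \Sigma$ is cellular (it sends $\bar\sigma \times \bar\tau$ to $\bar\tau \times \bar\sigma$) and hence induces a simplicial $\IZ/2$-action on $L$. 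This action is admissible in the sense that $\tau$ fixes pointwise any simplex it preserves: a simplex of $L$ is a chain $c_0 < c_1 < \dots < c_k$ of cells with strictly increasing dimensions, so if $\tau$ preserves the chain it must fix each $c_i$ (it preserves dimension), and $\tau(c_i) = c_i$ forces $c_i$ to be a diagonal cell $\bar\sigma\times\bar\sigma$, so $\tau$ fixes the corresponding vertex of $L$. Consequently $L/\langle\tau\rangle$ is a finite simplicial complex of dimension $\le 2\dim\Sigma$, the quotient map $q\colon L \to L/\langle\tau\rangle$ is simplicial and restricts to a simplicial isomorphism on every simplex of $L$, and $|L/\langle\tau\rangle| = (|\Sigma|\times|\Sigma|)/\tau = P_2(|\Sigma|)$; this is the desired simplicial structure on $P_2(\Sigma)$.

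For part~(i), if $f$ is a simplicial automorphism of $\Sigma$ then $f\times f$ is a cellular automorphism of the polyhedral complex $\Sigma\times\Sigma$, hence induces an automorphism of its cell poset and thus a simplicial automorphism of $L$. Since $f\times f$ commutes with $\tau$, so does this automorphism of $L$, and therefore it descends to a simplicial automorphism of $L/\langle\tau\rangle = P_2(\Sigma)$; on underlying spaces the descended map is exactly $P_2(f)$.

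For part~(ii), I would use two ingredients. A simplicial map between simplicial complexes is $1$-Lipschitz for the $l^1$-metrics (in barycentric coordinates it only merges coordinates, which cannot increase $l^1$-distance; compare~\cite[Subsection~4.2]{Bartels-Lueck-Reich(2008hyper)}), so $d^1_{P_2(\Sigma)}(q(w),q(w')) \le d^1_L(w,w')$ for all $w,w' \in |L|$. Moreover the metric $d_{P_2(\Sigma,d^1)}$ of Definition~\ref{def:P_2}~\ref{def:P_2:d} is precisely the quotient metric of the $l^1$-sum metric $d^1 \oplus d^1$ on $|\Sigma|\times|\Sigma| = |L|$ under $\tau$ (for a finite group acting by isometries the quotient metric is the minimum of the distances to the orbit, which is exactly the $\min$ in that definition), and this minimum is attained. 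Hence, given $z,z' \in P_2(\Sigma)$, I may choose lifts $w,w' \in |L|$ with $(d^1 \oplus d^1)(w,w') = d_{P_2(\Sigma,d^1)}(z,z')$, and it suffices to produce for each $\e > 0$ a $\delta = \delta(\e,N) > 0$, with $N = \dim\Sigma$, such that on $|L|$
\[
(d^1 \oplus d^1)(w,w') \le \delta \quad \implies \quad d^1_L(w,w') \le \e .
\]
This is the comparison between the $l^1$-product metric on $\Sigma\times\Sigma$ and the $l^1$-metric of its barycentric subdivision as a polyhedral complex, and it is uniform in $\dim(\Sigma\times\Sigma) \le 2N$. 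This is the step I expect to be the main obstacle. I would reduce it to a cell-by-cell statement — on a single product $\Delta^p \times \Delta^q$ the $l^1$-sum metric and the $l^1$-metric of any triangulation are bi-Lipschitz with constants depending only on $p+q$ — combined with a bookkeeping argument, in the spirit of the subdivision estimates in~\cite[Subsection~4.2]{Bartels-Lueck-Reich(2008hyper)}, showing that two points at small $d^1\oplus d^1$-distance lie in a uniformly bounded neighbourhood of a common cell of $\Sigma\times\Sigma$, where the local comparison applies. Composing the resulting implication with the $1$-Lipschitz estimate for $q$ gives $d^1_{P_2(\Sigma)}(z,z') \le \e$, which proves~(ii).
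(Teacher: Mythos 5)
Your construction for part~\ref{lem:P_2-simplicial:action} is sound, and it produces what is in effect the same simplicial structure as the paper's, just described more conceptually: the barycentric subdivision of the regular CW-complex $\Sigma\times\Sigma$ (the order complex of its cell poset, with vertices the barycenters $(b_\sigma,b_\rho)$) coincides with the paper's ``staircase'' triangulation of $\Sigma^1\times\Sigma^1$, since in both cases the simplices are exactly the strict chains of pairs of simplices of $\Sigma$ in the product order. Your admissibility argument is clean and correct: dimensions along a chain are strictly increasing, so a dimension-preserving automorphism permuting a chain must fix it elementwise, and a flip-invariant cell is necessarily diagonal, hence its barycenter is a fixed vertex.

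For part~\ref{lem:P_2-simplicial:d_P_2-vs-d-1} your two preliminary reductions — simplicial maps are $1$-Lipschitz for $d^1$, and $d_{P_2(\Sigma,d^1)}$ is the (attained) quotient metric of $d^1\oplus d^1$ under the flip — are correct. But the remaining comparison of $d^1\oplus d^1$ with $d^1_L$ on $|\Sigma|\times|\Sigma|$, \emph{uniformly in $\Sigma$}, you leave as a sketch and yourself flag as the main obstacle. That is a genuine gap: $\Sigma$ may have infinitely many simplices, so compactness of $|\Sigma|\times|\Sigma|$ is unavailable, and your proposed ``bookkeeping'' step — that two $d^1\oplus d^1$-close points lie near a common cell — is itself nontrivial, because $d^1$ is a global barycentric $l^1$-metric and close points can have non-nested carriers. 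The paper closes exactly this gap with an observation your plan is missing: given $z,z'\in P_2(\Sigma)$, the four underlying points of $\Sigma$ lie in a subcomplex $\Sigma_0$ with at most $4(\dim\Sigma+1)$ vertices; since $d^1$ and the product triangulation are both compatible with inclusions of subcomplexes, the desired implication for $\Sigma$ reduces to the same implication for $\Sigma_0\subseteq\Delta_{4(\dim\Sigma+1)-1}$, and that case follows by compactness of $P_2(\Delta_{4(\dim\Sigma+1)-1})$, on which all the relevant topologies agree. The dependence of $\delta$ only on $\e$ and $\dim\Sigma$ comes from this bounded-subcomplex reduction rather than from a cell-by-cell bi-Lipschitz estimate; without it, your argument does not yet establish the required uniformity.
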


\begin{proof}
Let $\Sigma^1$ denote the first barycentric subdivision of $\Sigma$.
The vertices of each simplex in $\Sigma^1$ are canonically ordered.
Then $\Sigma \times \Sigma$ can be given a simplicial structure as follows:
the set of vertices is $\Sigma^1(0) \times \Sigma^1(0)$, where $\Sigma^1(0)$
denotes the set of vertices of $\Sigma^1$.
The simplices are of the form
$\{ (e_0,f_0),\dots,(e_n,f_n) \}$ where
\begin{itemize}
\item $e_i, f_i \in \Sigma^1(0)$;
\item $\Delta := \{e_0,\dots,e_n\}$ and $\Delta' := \{f_0,\dots,f_n\}$
      are simplices  of $\Sigma^1$;
\item for $i = 1,\dots,n$ we have $e_{i-1} \leq e_i$ and
      $f_{i-1} \leq f_i$ with respect to the order of the
      simplices of $\Delta$ and $\Delta'$.
\end{itemize}
The flip map $\Sigma \times \Sigma \to \Sigma \times \Sigma, \quad
(x,y) \mapsto (y,x)$ is a simplicial map. If for a simplex $\tau$ the
interior of $\tau$ and the image of the interior of $\tau$ under the
flip map have a non-empty intersection, then the flip map is already
the identity on $\tau$. Thus we obtain an induced simplicial structure
on $P_2(\Sigma)$. It is now easy to see that this simplicial structure
has the required properties mentioned
in~\ref{lem:P_2-simplicial:action}.

It remains to prove assertion~\ref{lem:P_2-simplicial:d_P_2-vs-d-1}.
Fix $\epsilon > 0$. Let $\Delta_{4(\dim(\Sigma)+1)-1}$ be the
simplicial complex given by the standard
$(4(\dim(\Sigma)+1)-1)$-simplex.  A priori we have four topologies on
$P_2(\Delta_{4(\dim(\Sigma)+1)-1})$.  The first one comes from the
topology on $\Delta_{4(\dim(\Sigma)+1)-1}$, the second one from the
simplicial structure on $P_2(\Delta_{4(\dim(\Sigma)+1)-1})$
constructed above, and the third and fourth come from the metrics
$d^1_{P_2(\Delta_{4(\dim(\Sigma)+1)-1})}$ and
$d_{P_2(\Delta_{4(\dim(\Sigma)+1)-1},d^1)}$. Since
$P_2(\Delta_{4(\dim(\Sigma)+1)-1})$ is compact and hence locally finite,
one easily checks that all these topologies agree.
Since $P_2(\Delta_{4(\dim(\Sigma)+1)-1})$
is compact, we can find $\delta > 0$ such that for all $z,z' \in
P_2(\Delta_{4(\dim(\Sigma)+1)-1})$
\begin{equation}
  \label{eq:for-epsilon-there-is-delta}
  d_{P_2(\Delta_{4(\dim(\Sigma)+1)-1},d^1)}(z,z') \leq \delta \; \implies \;
  d^1_{P_2(\Delta_{4(\dim(\Sigma)+1)-1})}(z,z') \leq \epsilon.
\end{equation}
For the general case we make the following three observations.
Firstly, the $l^1$-metric is preserved under inclusions of
subcomplexes. Secondly, the construction of the simplicial structure
on the product is natural with respect to inclusions of
subcomplexes. Thirdly, for every choice of four points in $\Sigma$,
there is a subcomplex with at most $4(\dim \Sigma + 1)$ vertices
containing these four points.
Since~\eqref{eq:for-epsilon-there-is-delta} holds for
$\Delta_{4(\dim(\Sigma)+1)-1}$, it holds for $\Sigma$.
\end{proof}

\begin{lemma} \label{lem:f_versus_p_2(f)_metrically}
Let $(X,d_X)$ and $(Y,d_Y)$ be metric spaces. Let $f \colon X \to Y$
be a map. Suppose for $\delta, \epsilon > 0$ that
$d_Y(f(x),f(x')) \le \epsilon/2$ holds for all $x,x' \in X$ which satisfy
$d_X(x,x') \le \delta$.

Then $d_{P_2(Y)}(P_2(f)(z),P_2(f)(z')) \le \epsilon$ holds 
for all $z,z' \in P_2(X)$ which satisfy
$d_{P_2(X)}(z,z') \le \delta$.
\end{lemma}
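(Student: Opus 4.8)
The plan is to unwind the definition of the metric on $P_2(X)$ and use the hypothesis on $f$ coordinatewise. Write $z = (x:y)$ and $z' = (x':y')$ with $d_{P_2(X)}(z,z') \le \delta$. By Definition~\ref{def:P_2}~\ref{def:P_2:d}, $d_{P_2(X)}(z,z') = \min\{d_X(x,x') + d_X(y,y'),\, d_X(x,y') + d_X(y,x')\}$, so at least one of the two sums is $\le \delta$. Since $(x':y') = (y':x')$ in $P_2(X)$, after possibly interchanging the two chosen representatives of $z'$ we may assume without loss of generality that $d_X(x,x') + d_X(y,y') \le \delta$. In particular $d_X(x,x') \le \delta$ and $d_X(y,y') \le \delta$.

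Next I would apply the hypothesis on $f$ to each of these two inequalities, obtaining $d_Y(f(x),f(x')) \le \epsilon/2$ and $d_Y(f(y),f(y')) \le \epsilon/2$. Since $P_2$ is a functor, $P_2(f)(z) = (f(x):f(y))$ and $P_2(f)(z') = (f(x'):f(y'))$. Using again Definition~\ref{def:P_2}~\ref{def:P_2:d}, now for the metric $d_{P_2(Y)}$, we estimate
\[
d_{P_2(Y)}(P_2(f)(z),P_2(f)(z')) \le d_Y(f(x),f(x')) + d_Y(f(y),f(y')) \le \frac{\epsilon}{2} + \frac{\epsilon}{2} = \epsilon,
\]
where the first inequality holds because the right-hand side is one of the two terms over which the minimum defining $d_{P_2(Y)}$ is taken. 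This completes the argument.

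There is essentially no hard part here; the only point requiring a word of care is the bookkeeping with unordered pairs, namely the observation that when the minimum in $d_{P_2(X)}(z,z')$ is realized by the ``crossed'' sum $d_X(x,y') + d_X(y,x')$, one simply replaces the ordered representative $(x',y')$ of $z'$ by $(y',x')$ so as to reduce to the ``aligned'' case, and that this is legitimate precisely because $P_2(f)$ is defined on unordered pairs. No compactness or further structure on $X$ or $Y$ is needed.
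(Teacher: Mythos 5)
Your proof is correct and takes essentially the same approach as the paper's: unwind the definition of the $P_2$-metric, observe that one of the two sums is $\le\delta$, apply the hypothesis to each summand, and bound $d_{P_2(Y)}$ by the corresponding sum. The only cosmetic difference is that you reduce to the aligned case by relabeling the representative of $z'$, whereas the paper carries both cases through explicitly.
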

\begin{proof}
Suppose for $(x:x'), (y:y') \in P_2(X)$ that 
$d_{P_2(X)}((x:x'),(y:y')) \le \delta$
holds. We get from definition that
$d_X(x,y) + d_X(x',y') \le \delta$ or
$d_X(x,y') + d_X(x',y) \le \delta$ holds. 
This implies that $d_X(x,y), d_X(x',y') \le \delta$ or
$d_X(x,y'),d_X(x',y) \le \delta$ is valid. 
We conclude from the assumptions
that $d_Y(f(x),f(y))\le \epsilon/2$ and  $d_Y(f(x'),f(y')) \le \epsilon/2$ hold
or that $d_Y(f(x),f(y'))\le \epsilon/2$ and  
$d_Y(f(x'),f(y)) \le \epsilon/2$ hold.
This implies that $d_Y(f(x),f(y)) + d_Y(f(x'),f(y')) \le \epsilon$ or
$d_Y(f(x),f(y')) + d_Y(f(x'),f(y)) \le \epsilon$ is true. Hence
\begin{eqnarray*}
\lefteqn{d_{P_2(Y)}(P_2(f)(x:x'),P_2(f)(y:y'))} & &
\\
& = &
d_{P_2(Y)}(f(x):f(x')),(f(y):f(y')))
\\
& = &
\min \{d_Y(f(x),f(y)) + d_Y(f(x'),f(y')), d_Y(f(x),f(y')) + d_Y(f(y),f(x')) \}
\\
& \le &
\epsilon.
\end{eqnarray*}
\end{proof}

Let $S \subseteq G$ be a finite subset and $\Lambda > 0$.
Let $(X,d)$ be a metric space
with a homotopy $S$-action $(\phi,H)$.  Since $P_2(X)$ is functorial
in $X$ and there is a natural map 
$P_2(X) \times [0,1] \to P_2(X \times [0,1])$, 
we obtain an induced homotopy $S$-action $(P_2(\phi),P_2(H))$
on $P_2(X)$. Let $d_{S,\Lambda,G \times P_2(X)}$ be the metric on $G \times
P_2(X)$ associated in Definition~\ref{def:d_S_lambda}  to
$(P_2(X),d_{P_2(X)})$ and the homotopy $S$-action
$(P_2(\phi),P_2(H))$, where $d_{P_2(X)}$ has been introduced in
Definition~\ref{def:P_2}~\ref{def:P_2:d} with respect to the given
metric $d$ on $X$.  Let $d_{S,\Lambda,G \times X}$ be the metric on $G \times
X$ associated in Definition~\ref{def:d_S_lambda} to the given
metric $d$ and homotopy $S$-action $(\phi,H)$ on $X$.  Let
$d_{S,\Lambda,P_2(G \times X)}$ be the metric on $P_2(G\times X)$ introduced in
Definition~\ref{def:P_2}~\ref{def:P_2:d} with respect to the metric
$d_{S,\Lambda,G \times X}$.

\begin{lemma}\label{lem:omega_G_timesP_2(X)_to_P_2(G_timesX)}
The map
$$\omega \colon G \times P_2(X) \to P_2(G \times X), \quad
(g,(x:y)) \mapsto ((g,x):(g,y))$$ is well-defined. We have for
$(g,(x:x'))$ and $(h,(y:y'))$ in $G \times P_2(X)$
\begin{multline*}
d_{S,\Lambda,P_2(G \times X)}\left(\omega((g,(x:x'))),\omega((h,(y:y')))\right)
\\
\le  2 \cdot d_{S,\Lambda,G\times P_2(X)}((g,(x:x')),(h,(y:y'))).
\end{multline*}
\end{lemma}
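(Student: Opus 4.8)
The plan is to unravel both metrics from Definition~\ref{def:d_S_lambda} and to show that any path realizing (up to $\e$) the distance $d_{S,\Lambda,G\times P_2(X)}((g,(x:x')),(h,(y:y')))$ can be turned, by applying $\omega$ coordinatewise, into a legal path in $P_2(G\times X)$ whose cost is at most twice as large. First I would record the well-definedness of $\omega$: if $(x:y)=(y:x)$ in $P_2(X)$ then $((g,x):(g,y))=((g,y):(g,x))$ in $P_2(G\times X)$, so $\omega$ does not depend on the representative, and it is clearly $G$-equivariant. Next I would make explicit how the homotopy $S$-action $(P_2(\phi),P_2(H))$ on $P_2(X)$ is built: $P_2(\phi)_a = P_2(\phi_a)$ and $P_2(H)_{a,b}$ is $P_2(H_{a,b})$ composed with the natural map $P_2(X)\times[0,1]\to P_2(X\times[0,1])$, so that a map in $F_a(P_2(\phi),P_2(H))$ is exactly $P_2(f)$ for some $f\in F_a(\phi,H)$ (up to the diagonal identification $(x:y)\mapsto$ values of $f$ at the two points). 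The key point is that $F_a$ of the induced action on $P_2(X)$ is the image under $P_2(-)$ of $F_a(\phi,H)$.

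Now take a data set $n$, $(x_i:x'_i)$, $(z_i:z'_i)$ in $P_2(X)$, elements $a_i,b_i\in S$, and maps $f_i\in F_{a_i}(P_2(\phi),P_2(H))$, $\widetilde f_i\in F_{b_i}(P_2(\phi),P_2(H))$ realizing $d_{S,\Lambda,G\times P_2(X)}((g,(x:x')),(h,(y:y')))$ within any prescribed $\e>0$, i.e.\ with
$$
n + \sum_{i=0}^n \Lambda\cdot d_{P_2(X)}\bigl((x_i:x'_i),(z_i:z'_i)\bigr)
$$
less than the distance plus $\e$. Writing $f_i = P_2(F_i)$ and $\widetilde f_i = P_2(\widetilde F_i)$ for genuine maps $F_i\in F_{a_i}(\phi,H)$, $\widetilde F_i\in F_{b_i}(\phi,H)$ on $X$, I would lift this to a path in $P_2(G\times X)$. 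Apply $\omega$ to the endpoints; the point pairs $(x_i:x'_i)$ become $((g,x_i):(g,x'_i))$ up to carrying along the group-letter bookkeeping $h = g a_1^{-1}b_1\cdots a_n^{-1}b_n$, and the maps $F_i,\widetilde F_i$ applied in each of the two coordinates of the pair give legal members of $F_{a_i}(\phi,H)$, $F_{b_i}(\phi,H)$ for the homotopy $S$-action on $G\times X$ underlying $d_{S,\Lambda,G\times X}$. The constraint $f_i(x_{i-1}:x'_{i-1}) = \widetilde f_i(x_i:x'_i)$ in $P_2(X)$ says that, as \emph{unordered} pairs, $(F_i(x_{i-1}):F_i(x'_{i-1})) = (\widetilde F_i(x_i):\widetilde F_i(x'_i))$; this is where a factor of $2$ can enter, because matching may require swapping the two coordinates, so the path we build in $P_2(G\times X)$ must occasionally take a single step to interchange the two components of a pair, and such a swap has $d_{S,\Lambda,P_2(G\times X)}$-cost bounded by the diameter contributions already present, not by a new additive constant.

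The cost comparison then comes from the inequality $d_{P_2(X)}((x_i:x'_i),(z_i:z'_i)) = \min\{d_X(x_i,z_i)+d_X(x'_i,z'_i),\, d_X(x_i,z'_i)+d_X(x'_i,z_i)\}$ together with the definition of $d_{S,\Lambda,P_2(G\times X)}$ as the $P_2$-metric built from $d_{S,\Lambda,G\times X}$: a step in $P_2(X)$ of cost $c$ is covered by two coordinatewise steps in $G\times X$ whose total $d_{S,\Lambda,G\times X}$-length is at most $c$, hence by one step in $P_2(G\times X)$ of $d_{S,\Lambda,P_2(G\times X)}$-cost at most $c$ — except that the integer count $n$ of $a^{-1}b$-blocks is common to both sides but the need to occasionally flip coordinate order in $P_2(G\times X)$ can double the number of blocks, giving the factor $2$. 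Summing over $i$ and letting $\e\to 0$ yields
$$
d_{S,\Lambda,P_2(G\times X)}\bigl(\omega(g,(x:x')),\omega(h,(y:y'))\bigr)
\le 2\cdot d_{S,\Lambda,G\times P_2(X)}\bigl((g,(x:x')),(h,(y:y'))\bigr),
$$
which is the claim. I expect the main obstacle to be precisely the bookkeeping around the unordered-pair identification: one must check carefully that every time a matching $f_i(x_{i-1}:x'_{i-1})=\widetilde f_i(x_i:x'_i)$ forces a transposition of the two coordinates, the transposition can be absorbed into the path in $P_2(G\times X)$ at cost controlled by a \emph{multiplicative} constant $2$ rather than an additive one, and that the group-element word length at most doubles. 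This is a routine but slightly delicate combinatorial argument about how the flip interacts with the concatenation structure defining $d_{S,\Lambda}$.
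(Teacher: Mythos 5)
Your overall plan — unwind Definition~\ref{def:d_S_lambda}, lift a near-optimal chain from $G\times P_2(X)$ back to $G\times X$, and compare costs — is the correct one and matches the paper's. But there are two genuine problems.

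First, the mechanism you give for the factor of~$2$ is wrong. Recall that $d_{S,\Lambda,P_2(G\times X)}$ is \emph{not} a path metric on $P_2(G\times X)$: it is simply the $P_2$-construction of Definition~\ref{def:P_2}~\ref{def:P_2:d} applied to $d_{S,\Lambda,G\times X}$, i.e.\ the minimum of the two pairings of $d_{S,\Lambda,G\times X}$-distances. Your talk of ``building a path in $P_2(G\times X)$ that occasionally takes a single step to interchange the two components'' therefore doesn't correspond to anything in that metric, and the claim that ``the need to occasionally flip coordinate order can double the number of blocks'' is not what happens. In the correct argument one produces \emph{two} chains in $G\times X$ — one for each coordinate of an ordered lift of the $P_2(X)$-chain — and both use exactly the same $n$ blocks $a_i^{-1}b_i$. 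The factor of~$2$ arises because the integer term $n$ is counted once in each of these two $G\times X$-costs, so the sum is bounded by $2n + \Lambda\sum_i\bigl(d(x_i'',z_i'')+d(x_i''',z_i''')\bigr)\le 2\bigl(n+\Lambda\sum_i d_{P_2(X)}\bigr)$; the block count does not increase, it is simply charged twice.

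Second, you wave your hand at the one step that actually carries the content: producing a \emph{single, consistent} ordered lift $(x_i'',x_i'''),(z_i'',z_i''')$ of all the unordered pairs $(x_i:x_i'),(z_i:z_i')$ such that simultaneously (a) $d(x_i'',z_i'')+d(x_i''',z_i''')$ realizes the minimum in $d_{P_2(X)}\bigl((x_i:x_i'),(z_i:z_i')\bigr)$ for every $i$, and (b) the matching condition $f_i(z_{i-1}'')=\widetilde f_i(x_i'')$, $f_i(z_{i-1}''')=\widetilde f_i(x_i''')$ holds for every $i$. Constraints (a) and (b) alternate and each forces a binary choice; showing they can be satisfied together requires the inductive case analysis that the paper carries out explicitly. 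You call this ``routine but slightly delicate'' and defer it, but it is precisely where the proof lives: without constructing this consistent ordered lift, you cannot produce the two legal chains in $G\times X$, and hence cannot invoke the definition of $d_{S,\Lambda,G\times X}$ to bound anything. So as written the proposal identifies the right starting point but does not actually prove the inequality.
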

\begin{proof}
Consider $(g,(x:x'))$ and $(h,(y:y'))$ in $G \times P_2(X)$.
Consider $\epsilon > 0$. 
By definition we find $n \in \IZ$, $n \ge 0$,
elements $x_0,\dots,x_n$, $x_0',\dots,x_n'$, 
$z_0,\dots,z_n$ and $z_0',\dots,z_n'$ in $X$, 
elements $a_1,b_1,\dots,a_n,b_n$ in $S$ and
maps $f_1,\widetilde{f}_1,\dots,f_n,\widetilde{f}_n \colon X \to X$
such that
$$\begin{array}{l}
   (x:x') = (x_0:x_0'), (z_n:z_n') = (y:y');
   \\f_i \in F_{a_i}(\varphi,H),   \widetilde{f}_i \in F_{b_i}(\varphi,H),
   P_2(f_i)(z_{i-1}:z_{i-1}') = P_2(\widetilde{f}_i)(x_i:x_i');\\
   h = g a_1^{-1} b_1 \dots a_n^{-1} b_n;\\
   n + \sum_{i= 0}^n\Lambda \cdot d_{P_2(X)}((x_i:x_i'),(z_i:z_i'))
   \le d_{S,\Lambda,G \times P_2(X)}((g,(x:x')),(h,(y:y'))) + \epsilon.
\end{array}
$$
Next we construct sequences of elements
$x_0'', \ldots , x_n''$,
$z_0'', \ldots , z_n''$, $x_0''', \ldots , x_n'''$,  
and $z_0''', \ldots , z_n'''$ in $X$
such that
\begin{eqnarray*}
&&( x_i'': x_i''')  =  (x_i:x_i'); \quad
(z_i'':z_i''' ) = (z_i:z_i');
\\
&& d(x_i'',z_i'') + d(x_i''',z_i''') = 
         d_{P_2(X)}\left((x_i:x_i'),(z_i:z_i')\right);
\\
&& f_i(z_{i-1}'') =  \widetilde{f_i}(x_i'');
\quad f_i(z_{i-1}''') =  \widetilde{f_i}(x_i''').
\end{eqnarray*}

The construction is done inductively.
Put $x_0'' := x$ and $x_0''' := x_0'$.

Suppose that we have defined $x_0''$, $z_0''$, $\cdots$ ,$z_{i-1}''$, $x_i''$
and $x_0'''$, $z_0'''$, $\cdots$ ,$z_{i-1}'''$, $x_i'''$.
We have to specify $z_i''$ and $z_i'''$. 
By definition
$$d_{P_2(X)}\left((x_i:x_i'),(z_i:z_i')\right) =
\min\{d(x_i,z_i) +  d(x_i',z_i'), d(x_i,z_i') +  d(x_i',z_i)\}.$$
If  $x_i'' = x_i$ and $d_{P_2(X)}\left((x_i:x_i'),(z_i:z_i')\right) = d(x_i,z_i) +  d(x_i',z_i')$
and hold or if  $x_i'' = x_i'$ and 
$d_{P_2(X)}\left((x_i:x_i'),(z_i:z_i')\right) = d(x_i,z_i') +  d(x_i',z_i)$
hold, then put $z_i'' := z_i$ and $z_i''' := z_i'$.
If $x_i'' = x_i$ and $d_{P_2(X)}\left((x_i:x_i'),(z_i:z_i')\right) = d(x_i,z_i') +  d(x_i',z_i)$
hold or if $x_i'' = x_i'$ and 
$d_{P_2(X)}\left((x_i:x_i'),(z_i:z_i')\right) = d(x_i,z_i) +  d(x_i',z_i')$
hold, then put $z_i'' := z_i'$ and $z_i''' := z_i$.

Suppose that we have defined
$x_0''$, $z_0''$, $\cdots$, $x_{i-1}''$, $z_{i-1}''$ and
$x_0'''$, $z_0'''$, $\cdots$, $x_{i-1}'''$, $z_{i-1}'''$.
Then we have to specify $x_i''$ and $x_i'''$.
Since $P_2(f_i)(z_{i-1}:z_{i-1}') = P_2(\widetilde{f}_i)(x_i:x_i')$,
we have $f_i(z_{i-1}) = \widetilde{f}_i(x_i)$ and
$f_i(z_{i-1}') = \widetilde{f}_i(x_i')$ or we have
$f_i(z_{i-1}) = \widetilde{f}_i(x_i')$ and
$f_i(z_{i-1}') = \widetilde{f}_i(x_i)$. In the first case
put $x_i'' := x_i$ and $x_i''' := x_i'$ if $z_{i-1}'' = z_{i-1}$
and put  $x_i'' = x_i'$ and $x_i''' = x_i$ if $z_{i-1}'' = z_{i-1}'$.
In the second case
put $x_i'' := x_i'$ and $x_i''' := x_i$ if $z_{i-1}'' = z_{i-1}$
and put  $x_i'' := x_i$ and $x_i''' := x_i'$ if $z_{i-1}'' = z_{i-1}'$.
This finishes the construction of the elements $x_i''$, $x_i'''$,
$z_i''$ and $z_i'''$. One easily checks that the desired properties hold.

Put $y '' := z_n''$, $y''' := z_n'''$, $x'':= x_0''$ and $x''':=x_0'''$. 
We conclude from Definition~\ref{def:d_S_lambda}
\begin{eqnarray*}
d_{S,\Lambda,G \times X}((g,x''),(h,y''))
& \le & n + \sum_{i=0}^n \Lambda \cdot d(x_i'',z_i'');
\\
d_{S,\Lambda,G \times X}(g,x'''),(h,y'''))
& \le & n + \sum_{i=0}^n \Lambda \cdot d(x_i''',z_i''').
\end{eqnarray*}
This implies
\begin{eqnarray*}
\lefteqn{d_{S,\Lambda,G \times X}((g,x''),(h,y'')) +
d_{S,\Lambda,G \times X}(g,x'''),(h,y'''))}
\\
& \le &
2n +  \sum_{i=0}^n \Lambda \cdot \left(d(x_i'',z_i'') + d(x_i''',z_i''')\right)
\\
& \le &
2\cdot \left(n + \sum_{i=0}^n \Lambda \cdot
\left(d(x_i'',z_i'') + d(x_i''',z_i''')\right)\right).
\\
& \le &
2\cdot \left(n + \sum_{i=0}^n \Lambda \cdot
d_{P_2(X)}\left((x_i:x_i'),(z_i:z_i')\right)\right).
\\
& \le &
2 \cdot \left(d_{S,\Lambda,G \times P_2(X)}((g,(x:x')),(h,(y:y'))) + 
                                                   \epsilon\right).
\end{eqnarray*}
Since $\epsilon > 0$ was arbitrary, we conclude
\begin{multline*}
d_{S,\Lambda,G \times X}((g,x''),(h,y'')) + d_{S,\Lambda,G \times X}(g,x'''),(h,y'''))
\\
\le 2 \cdot d_{S,\Lambda,G \times P_2(X)}((g,(x:x')),(h,(y:y'))).
\end{multline*}
This implies
\begin{eqnarray*}
\lefteqn{d_{S,\Lambda,P_2(G \times X)}
              \left(\omega((g,(x:x'))),\omega((h,(y:y')))\right)}
\\
& = &
d_{S,\Lambda,P_2(G \times X)}\left(((g,x):(g,x')),((h,y):(h,y'))\right)
\\
& = &
\min\{d_{S,\Lambda,G \times X}((g,x),(h,y)) + d_{S,\Lambda,G \times X}((g,x'),(h,y')),
\\
& & \hspace{30mm}
d_{S,\Lambda,G \times X}((g,x),(h,y')) + d_{S,\Lambda,G \times X}((g,x'),(h,y))\}
\\
& \le &
d_{S,\Lambda,G \times X}((g,x''),(h,y'')) + d_{S,\Lambda,G \times X}(g,x'''),(h,y'''))
\\
& \le &
2 \cdot d_{S,\Lambda,G \times P_2(X)}((g,(x:x')),(h,(y:y'))).
\end{eqnarray*}
\end{proof}


\typeout{---------  The transfer in L-theory  ----------------}

\section{The transfer in $L$-theory}
\label{sec:The_transfer_in_L-theory}

\begin{summary*}
  In this section we construct a controlled $L$-theory transfer.
  Its formal properties are similar to the $K$-theory case, see
  Proposition~\ref{prop:L-theory-transfer-lift}, but its 
  construction is more complicated and uses the multiplicative hyperbolic
  Poincar\'e chain complex already mentioned in the
  introduction.
  This yields a suitable controlled
  symmetric form  on the fiber $P_2(X)$ for the transfer,
  see Proposition~\ref{prop:controlled-symmetric-chain-cx}.
  Here we make crucial  use of the flexibility of algebraic $L$-theory:
  There are many more $0$-dimensional Poincar\'e chain complexes,
  then there are $0$-dimensional manifolds.
\end{summary*}

Throughout this section we fix the following convention.

\begin{convention}
Let
\begin{itemize}
\item $G$ be a group;
\item $N \in \IN$;
\item $(X,d)$ be a compact contractible $N$-dominated metric space;
\item $Y$ be a $G$-space;
\item $\cala$ be an additive $G$-category with involution.
\end{itemize}
We equip $X \times X$ with the metric $d_{X \times X}$, defined by
$$d_{X \times X}((x_0,y_0),(x_1,y_1)) =
   d(x_0,y_0) + d(x_1,y_1)\quad 
       \;\text{for }(x_0,y_0), (x_1,y_1) \in X \times X.$$
\end{convention}

Similar to the tensor product constructed in 
Section~\ref{sec:transfer-up-to-homotopy} there is a tensor product
\begin{equation*}
  \calc(X,d;\IZ) \ox \calc(X,d;\IZ) \to \calc(X \x X, d_{X \x X}; \IZ)
\end{equation*}
induced by the canonical tensor product 
$\calf(\IZ) \ox \calf(\IZ) \to \calf(\IZ)$.
This tensor product is strictly compatible with the involution:
we have $\inv(M \ox N) = \inv(M) \ox \inv(N)$ for objects
$M$ and $N$, and similar
$\inv(f \ox g) = \inv(f) \ox \inv(g)$
for morphisms $f$ and $g$.
This tensor product is symmetric in the following sense:
for objects $M$ and $N$ there is a canonical isomorphism
$\flip_{M,N} \colon M \ox N \to N \ox M$.
This tensor product has a canonical extension to the idempotent 
completions.

We fix sign conventions for the induced tensor product of
chain complexes.
If $C$ and $D$ are chain complexes (over $\calc(X,d;\IZ)$) 
with differentials  $d^C$ and $d^D$
then the differential $d^{C \ox D}$ of the chain complex $C \ox D$ 
(over $\calc(X \x X, d_{X \x X}; \IZ)$)
is defined by
$d^{C \ox D}|_{C_p \ox D_q} = d^C \ox \id_{D_q} + (-1)^{p} \id_{C_p} \ox d^D$.
If $f \colon A \to C$ and $g \colon B \to D$ are maps of chain complexes, 
then we define
$f \ox g \colon A \ox B \to C \ox D$ by setting 
$(f \ox g)|_{A_p \ox B_q}  := (-1)^{|g| \cdot p} f|_{A_p} \ox g|_{B_q}$
(where $|g|$ is the degree of $g$).
The following flip will be important.
For chain complexes $C$ and $D$ we define an isomorphism
$\flip_{C,D} \colon C \ox D \to D \ox C$ 
by $(\flip_{C,D})|_{C_p \ox C_q} := (-1)^{pq}\flip_{C_p,C_q}$.

\begin{proposition}
\label{prop:controlled-symmetric-chain-cx}
Let $S$ be a finite subset of $G$ (containing $e$)
such that $S = S^{-1}$, i.e., if $g \in S$, then $g^{-1} \in S$.
Let $(\varphi,H)$ be a homotopy $S$-action on $X$.
For every $\e > 0$ there exists
a homotopy $S$-chain complex $\bfD = (D,\varphi^D,H^D)$
over $\Idem (\calc(P_2(X),d_{P_2(X)};\IZ))$
together with a chain isomorphism $\mu = \mu^D \colon D^{-*} \to D$
over $\Idem (\calc(P_2(X),d_{P_2(X)};\IZ))$
such that
\begin{enumerate}
\item \label{prop:controlled-symmetric-chain-cx:length}
      $D$ is concentrated in degrees $-N,\dots,N$;
\item \label{prop:controlled-symmetric-chain-cx:e-controlled}
      $D$ is $\e$-controlled;
\item \label{prop:controlled-symmetric-chain-cx:contractible}
      there is a homotopy $S$-chain equivalence $f \colon \bfP \to \bfT_{x_0}$
      to the trivial homotopy $S$-chain complex at $x_0 \in X$, such that
      $f \circ \mu^D \circ f^{-*}$ is the canonical identification
      of $T^{-*}$ with $T$;
\item \label{prop:controlled-symmetric-chain-cx:control-varphi}
      if $g \in S$ and $(x,y) \in \supp \varphi^D_a$ then
      $d(x,P_2(\varphi_g)(y)) \leq \e$;
\item \label{prop:controlled-symmetric-chain-cx:control-H}
      if $g,h \in S$ with $gh \in S$, and $(x,y) \in \supp H^P_{a,b}$
      then there is $t \in [0,1]$  such that
      $d(x,P_2(H_g(-,t))(y)) \leq \e$.
\item \label{prop:controlled-symmetric-chain-cx:control-psi}
      $\supp \mu \subseteq \{ (z,z) \mid z \in P_2(X) \}$,
      $\mu^{-*} = \mu$,
      $\mu \circ (\varphi_{g^{-1}}^D)^{-*} = \varphi^D_g \circ \mu$
      for all $g \in S$.
\end{enumerate}
\end{proposition}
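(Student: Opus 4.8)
The plan is to produce $\bfD$ as the \emph{multiplicative hyperbolic Poincar\'e chain complex} on the chain complex $\bfP$ furnished by Proposition~\ref{prop:projective-chain-complex}, but viewed over $P_2(X)$ rather than over $X \times X$. Concretely, first apply Proposition~\ref{prop:projective-chain-complex} with a sufficiently small $\e' > 0$ (to be fixed at the end in terms of $\e$, $N$ and the growth functions below) to obtain a homotopy $S$-chain complex $\bfP = (P,\varphi^P,H^P)$ over $\Idem(\calc(X,d;\IZ))$ which is $\e'$-controlled, concentrated in degrees $0,\dots,N$, together with a homotopy $S$-chain equivalence $f^P \colon \bfP \to \bfT_{x_0}$ satisfying the control properties \ref{prop:projective-chain-complex:control-varphi} and \ref{prop:projective-chain-complex:control-H}. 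Set $D := P^{-*} \ox P$, a chain complex over $\calc(X \x X, d_{X\x X};\IZ)$ concentrated in degrees $-N,\dots,N$; since $P$ is $\e'$-controlled, $D$ is $2\e'$-controlled over $X \x X$. Equip $D$ with the form $\mu = \mu^D \colon D^{-*} \to D$ given by the composite of the canonical isomorphism $(P^{-*}\ox P)^{-*} \cong P \ox P^{-*}$ with the flip $\flip \colon P \ox P^{-*} \to P^{-*} \ox P$; with the sign conventions fixed above one checks directly that $\mu$ is an isomorphism and $\mu^{-*} = \mu$.

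Next one pushes everything forward along the quotient map $\pi \colon X \x X \to P_2(X)$ using the induced functor $\pi_* \colon \calc(X \x X, d_{X \x X};\IZ) \to \calc(P_2(X), d_{P_2(X)};\IZ)$, which is well defined because the fibers of $\pi$ have cardinality at most $2$ and $X$ is compact. Since $d_{P_2(X)}(\pi u, \pi v) \le d_{X\x X}(u,v)$, the complex $\pi_* D$ is again $2\e'$-controlled over $P_2(X)$; this gives \ref{prop:controlled-symmetric-chain-cx:length} and, on choosing $\e' \le \e/2$, also \ref{prop:controlled-symmetric-chain-cx:e-controlled}. The only piece of the construction that fails to be controlled over $X \x X$ is the flip inside $\mu$, whose support lies in $\{((x,y),(y,x)) \mid x,y \in X\}$; but $\pi(x,y) = (x:y) = \pi(y,x)$, so after pushing forward $\supp \mu$ lands in the diagonal of $P_2(X)$, which is the first assertion of \ref{prop:controlled-symmetric-chain-cx:control-psi}. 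This collapsing of the flip is the whole point of working over $P_2(X)$.

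For the homotopy $S$-action one uses $S = S^{-1}$: over $X \x X$ define $\varphi^D_g := (\varphi^P_{g^{-1}})^{-*} \ox \varphi^P_g$ for $g \in S$, with chain homotopies $H^D_{g,h}$ from $\varphi^D_g \circ \varphi^D_h$ to $\varphi^D_{gh}$ assembled in the usual way for a tensor product out of $(H^P_{h^{-1},g^{-1}})^{-*}$ and $H^P_{g,h}$ (here $gh \in S$ forces $(gh)^{-1}\in S$); then $\varphi^D_e = \id$ and $H^D_{e,e} = 0$, and $\bfD := \pi_*(D,\varphi^D,H^D)$ is the desired homotopy $S$-chain complex over $\Idem(\calc(P_2(X),d_{P_2(X)};\IZ))$. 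The remaining part of \ref{prop:controlled-symmetric-chain-cx:control-psi}, namely $\mu \circ (\varphi^D_{g^{-1}})^{-*} = \varphi^D_g \circ \mu$, then holds on the nose: after the canonical identifications $(\varphi^D_{g^{-1}})^{-*} = \varphi^P_g \ox (\varphi^P_{g^{-1}})^{-*}$, and naturality of the flip gives $\flip \circ \bigl(\varphi^P_g \ox (\varphi^P_{g^{-1}})^{-*}\bigr) = \bigl((\varphi^P_{g^{-1}})^{-*} \ox \varphi^P_g\bigr)\circ \flip$. For \ref{prop:controlled-symmetric-chain-cx:contractible}, let $h^P$ be a chain homotopy inverse of $f^P$ and put $f := (h^P)^{-*} \ox f^P \colon D \to T_{x_0}^{-*} \ox T_{x_0}$, followed by the canonical identification of $T_{x_0}^{-*} \ox T_{x_0}$ with the trivial complex $\bfT_{x_0}$ at $(x_0:x_0)$. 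Using that $f^P\circ\varphi^P_g \simeq f^P$ and (hence) $\varphi^P_{g^{-1}}\simeq \id_P$, one checks that $f$ is a homotopy $S$-chain equivalence $\bfD \to \bfT_{x_0}$, and computing $f \circ \mu^D \circ f^{-*}$ and using $f^P \circ h^P \simeq \id$ shows that it equals the canonical identification of $T^{-*}$ with $T$ (after arranging $f^P$ to be a split surjection, or at worst up to chain homotopy, which by \ref{nl:things-are-equal-in-L} is enough for the applications). This last point is the algebraic incarnation of the fact that, unlike the sphere bundle used by Farrell--Jones, the ``fiber'' $P_2(X)$ has signature one, so the $L$-theory transfer is nonzero.

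Finally, \ref{prop:controlled-symmetric-chain-cx:control-varphi} and \ref{prop:controlled-symmetric-chain-cx:control-H} follow from the control properties \ref{prop:projective-chain-complex:control-varphi} and \ref{prop:projective-chain-complex:control-H} of $\bfP$ by the same compactness and growth-function bookkeeping as in the proof of Proposition~\ref{prop:projective-chain-complex}: over $X \x X$ the support of $\varphi^D_g$ is controlled by $\varphi_g$ in one tensor factor and by $\varphi_{g^{-1}}$ in the other, and after pushing forward to $P_2(X)$ — where the two orderings of an unordered pair are identified — these are arranged, with the help of the functions $\alpha,\beta$ of Lemma~\ref{lem:functions-alpha-beta} and the homotopy-$S$-action identities on $X$, to combine into the estimate $d_{P_2(X)}(x, P_2(\varphi_g)(y)) \le \e$ once $\e'$ is small enough, and similarly for $H^D_{g,h}$ and $P_2(H_{g,h}(-,t))$. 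I expect this control step — getting a homotopy $S$-action that is at once \emph{strictly} compatible with $\mu$ (property \ref{prop:controlled-symmetric-chain-cx:control-psi}) and controlled over $P_2(X)$, while keeping $D$ finitely wide — to be the main obstacle; it is precisely here that the passage to $P_2(X)$ and the hypothesis $S = S^{-1}$ are indispensable. Once all constants are matched, fixing $\e'$ accordingly finishes the proof.
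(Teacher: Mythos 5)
Your construction is exactly the one the paper uses: take the $\bfP$ of Proposition~\ref{prop:projective-chain-complex}, form the multiplicative hyperbolic complex $P^{-*}\otimes P$ over $X\times X$ with the flip as $\mu$, twist the factors by $\varphi^D_g=(\varphi^P_{g^{-1}})^{-*}\otimes\varphi^P_g$, and push forward along $X\times X\to P_2(X)$ so that $\supp\mu$ lands on the diagonal. The details you flag as ``bookkeeping'' (assertions \ref{prop:controlled-symmetric-chain-cx:contractible}--\ref{prop:controlled-symmetric-chain-cx:control-H}) are also left to the reader in the paper, which likewise only records the explicit formula for $H^D_{g,h}$ and then asserts that \ref{prop:controlled-symmetric-chain-cx:contractible}--\ref{prop:controlled-symmetric-chain-cx:control-H} follow from Proposition~\ref{prop:projective-chain-complex}; so your treatment matches the paper's both in strategy and in level of detail.
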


The idea of the construction of $(\bfD,\mu)$ is very simple:
we take $\bfP$ from Lemma~\ref{prop:projective-chain-complex}
define $(\bfD,\mu)$ as the multiplicative hyperbolic 
Poincar\'e chain complex on $\bfP$ viewed over $P_2(X)$. 

\begin{proof}
 [Proof of Proposition~\ref{prop:controlled-symmetric-chain-cx}]
Let $\pr \colon  X\times X \to P_2(X)$ be
the obvious projection. Then 
$$d_{P_2(X)}(\pr(x_0,y_0),\pr(x_1,y_1))
  \le d_{X \times X}((x_0,y_0),(x_1,y_1))$$ 
holds for all  $(x_0,y_0),(x_1,y_1) \in X \times X$.

Let $\bfP = (P,\varphi^P,H^P)$ be a homotopy 
$S$-chain complex over $\Idem( \calc(X,d;\IZ))$
fulfilling the assertions of Lemma~\ref{prop:projective-chain-complex}
with respect to $\e' := \e / 2$ in place of $\e$. 
We obtain a chain complex $P^{-*} \otimes P$ over 
$\Idem( \calc(X \times X,d_{X \times X};\IZ))$. 
(At $(x,y) \in X \times X$ we have 
$(P^{-*} \ox P)_{(x,y)} = (P^{-*})_x \otimes P_y$). 
Define the chain complex $D$ over $\Idem (\calc(P_2(X),d_{P_2(X)};\IZ))$ 
to be the image of $(P^{-*}) \otimes P$ under the functor $\pr_*$  
from chain complexes over $\Idem( \calc(X \times X,d_{X \times X};\IZ))$ 
to chain complexes over $\Idem (\calc(P_2(X),d_{P_2(X)};\IZ))$ 
induced by $\pr$. 
Hence we have for $(x:y) \in P_1(X)$
$$D_{(x:y)} = \bigoplus_{\substack{(x',y') \in X \times X,\\ \pr(x',y') = (x:y)}} 
(P^{-*})_{x'} \otimes P_{y'}.$$
One easily checks assertions~\ref{prop:controlled-symmetric-chain-cx:length}
and~\ref{prop:controlled-symmetric-chain-cx:e-controlled} are satisfied.

In the sequel we will define certain chain maps and homotopies on the
level of $X \times X$ and is to be understood that we will apply the
functor $\pr_*$ to it to obtain constructions over $P_2(X)$.
We define the homotopy $S$-action by putting
$\varphi^D_g := (\varphi_{g^{-1}}^P)^{-*} \otimes \varphi^P_g$
and 
$H^D_{g,h} := (H^P_{h^{-1},g^{-1}})^{-*} \otimes (\varphi_g^P \circ \varphi_h^P)
                 + (\varphi_{(gh)^{-1}}^P)^{-*} \otimes H^P_{g,h}$.
We define $\mu \colon D^{-*} \to D$ as
$\flip \colon (P^{-*} \ox P)^{-*} = P \ox P^{-*} \to P^{-*} \ox P$.

One easily checks that because of 
Proposition~\ref{prop:projective-chain-complex}
assertions~\ref{prop:controlled-symmetric-chain-cx:contractible},~%
\ref{prop:controlled-symmetric-chain-cx:control-varphi}
and~\ref{prop:controlled-symmetric-chain-cx:control-H} are satisfied.

Notice that the support of $\mu$ is contained in the subset
$\Xi = \{((x,y), (x',y')) \mid x = y', x' = y\}$ of 
$(X \times X) \times (X \times X)$ and that the image of $\Xi$ under 
$\pr \times \pr \colon (X \times X) \times 
               (X \times X) \to P_2(X) \times P_2(X)$ 
is contained in the diagonal
$\{(z,z) \mid z \in P_2(X)\}$ of $P_2(X) \times P_2(X)$.
Straightforward calculations shows that
$\mu = \mu^{-*}$ and
$\mu \circ (\varphi^D_g)^{-*} = (\varphi^D_g) \circ \mu$.
This implies assertion~\ref{prop:controlled-symmetric-chain-cx:control-psi}.
\end{proof}

\begin{proposition}
\label{prop:L-theory-transfer-lift}
Let $T \subset S$ be finite subsets of $G$ (both containing $e$)
such that for $g,h \in T$, we have $gh \in S$.
Assume $T = T^{-1}$, i.e., if $g \in T$, then  $g^{-1} \in T$.
Let $\alpha \colon M^{-*} \to M$ be a quadratic form
such that $\alpha$ is a $T$-morphism in $\calo^G(Y;\cala)$
and $\alpha + \alpha^*$ is an $T$-isomorphism in $\calo^G(Y;\cala)$.
Let $\Lambda > 0$.
Then there is a $0$-dimensional ultra-quadratic $(S,2)$-Poincar{\'e} complex
$(C,\psi)$ over 
$\Idem(\calo^G(Y, G \times P_2(X),d_{S,\Lambda,G \times P_2(X)};\cala))$
which is concentrated in degrees $N,\dots,-N$,
such that
\begin{equation*}
      [p(C,\psi )] = [(M,\alpha)] \in
                L^{\langle 1 \rangle}_0(\Idem(\calo^G(Y;\cala)));
\end{equation*}
where  
$p \colon \Idem(\calo^G(Y, G \times P_2(X),d_{S,\Lambda,G \times P_2(X)};\cala))
             \to \Idem(\calo^G(Y;\cala))$
is the functor induced by the projection $ G \times P_2(X) \to \pt$.
\end{proposition}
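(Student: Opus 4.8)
The plan is to run the argument of Proposition~\ref{prop:K-theory-transfer-lift} with the homotopy $S$-chain complex $\bfP$ of Proposition~\ref{prop:projective-chain-complex} replaced by the controlled symmetric chain complex $(\bfD,\mu)$ of Proposition~\ref{prop:controlled-symmetric-chain-cx}, which encodes the multiplicative hyperbolic Poincar\'e structure on $\bfP$ viewed over $P_2(X)$. After enlarging $S$ we may assume $S=S^{-1}$. Set $\e:=1/\Lambda$ and let $\bfD=(D,\varphi^D,H^D)$ together with the chain isomorphism $\mu\colon D^{-*}\to D$ be as supplied by Proposition~\ref{prop:controlled-symmetric-chain-cx} for this $\e$; thus $D$ is an $\e$-controlled chain complex over $\Idem(\calc(P_2(X),d_{P_2(X)};\IZ))$ concentrated in degrees $-N,\dots,N$. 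Using the tensor product functor~\eqref{eq:calc-ox-calo-to-calo} I would form $C_0:=M\otimes D$ over $\Idem(\calo^G(Y,P_2(X),d_{P_2(X)};\cala))$, again concentrated in degrees $-N,\dots,N$.

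Since the tensor product is strictly compatible with the involution we have $(M\otimes D)^{-*}=M^{-*}\otimes D^{-*}$, so
\[
\psi_0 \colon\ (M\otimes D)^{-*}\,=\,M^{-*}\otimes D^{-*}
  \xrightarrow{\id_{M^{-*}}\otimes\mu} M^{-*}\otimes D
  \xrightarrow{\tr^{\bfD}(\alpha)} M\otimes D
\]
is a chain map of degree zero; here $\tr^{\bfD}$ is the transfer of Section~\ref{sec:transfer-up-to-homotopy} applied to $\bfD$, which is legitimate because $\alpha=\sum_{a\in T}\alpha_a$ is a $T$-morphism between the degree-zero objects $M^{-*}$ and $M$, so $\tr^{\bfD}(\alpha)=\sum_{a\in T}\alpha_a\otimes\varphi^D_a$. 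The first real task is a sign bookkeeping computation identifying $\psi_0+\psi_0^{-*}$, up to the canonical identification, with $\tr^{\bfD}(\alpha+\alpha^*)\circ(\id_{M^{-*}}\otimes\mu)$; this uses the compatibility of $(-)^{-*}$ with $\otimes$, the relation $(f^{-*})_a=(f_{a^{-1}})^{-*}$ from Subsection~\ref{subsec:controlled-algebraic-Poincare-cx}, and the identities $\mu^{-*}=\mu$ and $\mu\circ(\varphi^D_{g^{-1}})^{-*}=\varphi^D_g\circ\mu$ from Proposition~\ref{prop:controlled-symmetric-chain-cx}~\ref{prop:controlled-symmetric-chain-cx:control-psi} to absorb the twists. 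Granting this, $\psi_0+\psi_0^{-*}$ is a chain homotopy equivalence: $\id_{M^{-*}}\otimes\mu$ is a chain isomorphism, and since $\alpha+\alpha^*$ is a $T$-isomorphism, Lemma~\ref{lem:homotopy-functorial} gives $\tr^{\bfD}((\alpha+\alpha^*)^{-1})\circ\tr^{\bfD}(\alpha+\alpha^*)\simeq\tr^{\bfD}(\id)=\id$ and likewise in the other order, so $\tr^{\bfD}(\alpha+\alpha^*)$ is a chain homotopy equivalence. Hence $(C_0,\psi_0)$ is a $0$-dimensional ultra-quadratic Poincar\'e complex over $\Idem(\calo^G(Y,P_2(X),d_{P_2(X)};\cala))$.

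To compute the $L$-class I would use the homotopy $S$-chain equivalence $f\colon\bfP\to\bfT_{x_0}$ from Proposition~\ref{prop:projective-chain-complex}~\ref{prop:projective-chain-complex:contractible}, which by Proposition~\ref{prop:controlled-symmetric-chain-cx}~\ref{prop:controlled-symmetric-chain-cx:contractible} is the same $f$ occurring there, so that $f\circ\mu^D\circ f^{-*}$ is the canonical identification of the underlying trivial complex with its dual. Let $q\colon\calo^G(Y,P_2(X),d_{P_2(X)};\cala)\to\calo^G(Y;\cala)$ be induced by $P_2(X)\to\pt$. Using that $\tr^{\bfD}(\alpha)$ is natural up to chain homotopy in the homotopy $S$-chain complex argument (Section~\ref{sec:transfer-up-to-homotopy}), one checks that $\id_M\otimes f$ is a chain homotopy equivalence carrying $\psi_0$ to the structure built from the trivial complex $\bfT_{x_0}$, which under $q$ becomes $\alpha$ itself (up to the canonical isomorphism $q(M\otimes\bfT_{x_0})\cong M$). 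Fact~\ref{nl:things-are-equal-in-L} of Subsection~\ref{subsec:controlled-algebraic-Poincare-cx} then gives $[q(C_0,\psi_0)]=[(M,\alpha)]$ in $L^{\langle 1 \rangle}_0(\Idem(\calo^G(Y;\cala)))$. Finally I would push forward along the functor $F$ induced by the diagonal map $(g,z,y,t)\mapsto(g,(g,z),y,t)$ to land in $\calo^G(Y,G\times P_2(X),d_{S,\Lambda,G\times P_2(X)};\cala)$ and set $(C,\psi):=F(C_0,\psi_0)$; since $p\circ F=q$ this yields $[p(C,\psi)]=[(M,\alpha)]$, and $(C,\psi)$ is concentrated in degrees $N,\dots,-N$.

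The step I expect to be the main obstacle is the control estimate showing $(C,\psi)$ is $(S,2)$-controlled; this is where dividing $X\times X$ by the flip is essential, as the flip entering $\mu$ is $\e$-controlled only over $P_2(X)$. Writing $\psi_0=\sum_{a\in T}\alpha_a\otimes(\varphi^D_a\circ\mu)$ and analogously for $\psi_0+\psi_0^{-*}$, I would combine the $T$-control of $\alpha$ and $\alpha^*$ (here $T=T^{-1}$ enters), the estimates $d_{P_2(X)}(z,P_2(\varphi_a)(w))\le\e$ and its analogue for $H^D$ from Proposition~\ref{prop:controlled-symmetric-chain-cx}~\ref{prop:controlled-symmetric-chain-cx:control-varphi} and~\ref{prop:controlled-symmetric-chain-cx:control-H}, and the fact that $\supp\mu$ lies on the diagonal of $P_2(X)\times P_2(X)$ by Proposition~\ref{prop:controlled-symmetric-chain-cx}~\ref{prop:controlled-symmetric-chain-cx:control-psi}. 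Exactly as at the end of the proof of Proposition~\ref{prop:K-theory-transfer-lift}, these feed through Definition~\ref{def:d_S_lambda} to give $d_{S,\Lambda,G\times P_2(X)}((e,z),(a,w))\le 1+\Lambda\e=2$ after applying $F$, so $\psi$ is $(S,2)$-controlled and $\psi+\psi^{-*}$ is an $(S,2)$-chain homotopy equivalence, whence $(C,\psi)$ is a $0$-dimensional ultra-quadratic $(S,2)$-Poincar\'e complex as required. Apart from this estimate the argument is formal and parallels the $K$-theory transfer; the only genuinely new ingredient, Proposition~\ref{prop:controlled-symmetric-chain-cx}, supplying the controlled symmetric form, is already available.
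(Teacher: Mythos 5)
Your proposal matches the paper's proof essentially step for step: you form $M \otimes D$ with $\bfD,\mu$ from Proposition~\ref{prop:controlled-symmetric-chain-cx}, define $\psi_0 = \tr^{\bfD}(\alpha)\circ(\id\otimes\mu)$, identify $\psi_0+\psi_0^{-*}$ with $\tr^{\bfD}(\alpha+\alpha^*)\circ(\id\otimes\mu)$ using the relations $\mu^{-*}=\mu$ and $\mu\circ(\varphi^D_{g^{-1}})^{-*}=\varphi^D_g\circ\mu$, invoke Lemma~\ref{lem:homotopy-functorial} for the Poincar\'e property, push down via $f$ and fact~\eqref{nl:things-are-equal-in-L} to recover $[(M,\alpha)]$, push forward along the diagonal functor $F$ with $p\circ F=q$, and finish with the same $\e=1/\Lambda$ control estimate. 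Your additional remark that $S$ may be enlarged to be symmetric in order to apply Proposition~\ref{prop:controlled-symmetric-chain-cx} is a sound (and useful) clarification of a hypothesis the paper leaves implicit, though it technically weakens the control set from $S$ to $S\cup S^{-1}$, which is harmless for the intended application since there $S$ is already symmetric.
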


Recall from Section~\ref{sec:The_space_P_2(X)}
that we use the metric $d_{P_2(X)}$ on $P_2(X)$, see
Definition~\ref{def:P_2}~\ref{def:P_2:d}, in order to
construct the metric $d_{S,\Lambda, G \x P_2(X)}$ as in
Definition~\ref{def:d_S_lambda}.
The proof will use a controlled version of the classical 
$L$-theory transfer, see~\ref{subsec:classical-L-transfer}.

\begin{proof}
[Proof of Proposition~\ref{prop:L-theory-transfer-lift}]
Let $\e := 1 / \Lambda$.
Let $\bfD = (D,\varphi^D,H^D)$, $\mu^D \colon D^{-*} \to D$
satisfy the assertions of Proposition~\ref{prop:controlled-symmetric-chain-cx}.
We define $\widetilde{C} := M \otimes D$ and
$\widetilde{\psi} := \tr^{\bfD} \alpha \circ (\id_{M^{-*}} \otimes \mu)$.
Using Proposition~\ref{prop:controlled-symmetric-chain-cx}~%
\ref{prop:controlled-symmetric-chain-cx:control-psi}
we compute
\begin{eqnarray*}
\widetilde{\psi} + \widetilde{\psi}^{-*} & = &
\sum_{a \in T} \alpha_a \otimes (\varphi^D_{a} \circ \mu^D)
  + (\alpha^*)_a \otimes ((\mu^D)^{-*} \circ ((\varphi^D)^{-*})_{a})
\\
& = &
\sum_{a \in T} \alpha_a \otimes (\varphi^D_{a} \circ \mu^D)
  + (\alpha^*)_{a} \otimes (\mu^D \circ (\varphi^D_{a^{-1}})^{-*})
\\
& = &
\sum_{a \in T} \alpha \otimes (\varphi^D_{a} \circ \mu^D)
  + (\alpha^*)_{a} \otimes  (\varphi^D_{a} \circ \mu^D)
\\
& = &
\sum_{a \in T} (\alpha + \alpha^*)_a \otimes (\varphi^D_{a} \circ \mu^D)
\\
& = &
\tr^{\bfD} (\alpha + \alpha^{*})_a \circ (\id \otimes \mu^D).
\end{eqnarray*}
Lemma~\ref{lem:homotopy-functorial} implies that
$(\id \otimes (\mu^D)^{-1}) \circ \tr^{\bfD}((\alpha + \alpha^*)^{-1})$
is a chain homotopy inverse for $\widetilde{\psi} + \widetilde{\psi}^{-*}$ 
with homotopies
given by $\sum_{a,b \in T} ((\alpha + \alpha^*)_a
                      \circ ((\alpha + \alpha^*)^{-1})_b) \otimes H^D_{a,b}$
and $\sum_{a,b \in T} (((\alpha + \alpha^*)^{-1})_a
                      \circ (\alpha + \alpha^*)_b) \otimes H^D_{a,b}$.
We conclude from  
Proposition~\ref{prop:controlled-symmetric-chain-cx} that the pair
$(\widetilde{C},\widetilde{\psi})$ is a $0$-dimensional
ultra-quadratic $(S,\e)$-Poincar{\'e} complex over $\Idem
\calo^G(Y,P_2(X),d_{P_2(X)};\cala)$. Let $f \colon \bfP \to
\bfT_{x_0}$ be the weak equivalence from
assertion~\ref{prop:controlled-symmetric-chain-cx:contractible} in
Proposition~\ref{prop:controlled-symmetric-chain-cx}. 
Let $q \colon
\calo^G(Y, P_2(X),d_{P_2(X)};\cala) \to \calo^G(Y;\cala)$ be the
functor induced by $P_2(X) \to \pt$. Then $\id_M \otimes f$ is a chain
homotopy equivalence. 
Using assertion~\ref{prop:controlled-symmetric-chain-cx:contractible} in
Proposition~\ref{prop:controlled-symmetric-chain-cx} it is not hard to
check that $(\id_M \otimes f) \circ \psi \circ (\id_M \otimes f)^{-*}$ is
chain homotopic to $\tr^{\bfT_{x_0}} \alpha$. 
Note that $q(\tr^{\bfT_{x_0}} \alpha) = \alpha$ 
(up to a canonical isomorphism $q( M \otimes T) \cong M$). 
Using \eqref{nl:things-are-equal-in-L} we conclude
$q[(\widetilde{C},\widetilde{\psi})] = [(M,\alpha)]
            \in L^{\langle 1 \rangle}_0(\Idem(\calo^G(Y;\cala)))$.
Let $F \colon \calo^G(Y,P_2(X),d_{P_2(X)};\cala) \to
        \calo^G(Y, G \times P_2(X), d_{S,\Lambda};\cala)$
be the functor induced by the map
$(g,x,e,t) \mapsto (g,g,x,e,t)$ and set $(C, \psi) := f (C,\psi)$.
Since $p \circ F = q$ we have $[p( C, \psi)] = [(M,\alpha)]$.
{}From the Definition~\ref{def:d_S_lambda} of $d_{S,\Lambda}$
and our choice of $\e$ it follows that
$(C, \psi)$ is a $0$-dimensional
ultra-quadratic $(S,2)$-Poincar{\'e} complex
over $\Idem \calo^G(Y,G \times P_2(X),d_{S,\Lambda};\cala)$.
\end{proof}


\typeout{--------- Proof of axiomatic Theorem  ----------}

\section{Proof of Theorem~\ref{the:axiomatic}}
\label{sec:Proof_of_axiomatic_Theorem}

\begin{proof}
[Proof of Theorem~\ref{the:axiomatic}~\ref{thm:axiomatic:K}]
Because of Lemma~\ref{lem:calfj_closed_under}~%
\ref{lem:calfj_closed_under:colim} 
we can assume without loss of generality
that $G$ is finitely generated. 
Let $N$ be the number appearing in 
Definition~\ref{def:transfer-reducible}.
According to Theorem~\ref{thm:obstruction-category}~%
\ref{thm:obstruction-category:K}
it suffices to show $K_1(\calo^G(E_\calf G;\cala)) = 0$
for every additive $G$-category $\cala$.
Fix such an $\cala$.
Consider $a \in K_1(\calo^G(E_\calf G;\cala))$.
Pick an automorphism $\alpha \colon M \to M$ in $\calo^G(E_\calf G;\cala))$
such that $[(M,\alpha)] = a$.
By definition $\alpha$ is an $T$-automorphism for some finite subset
$T$ of $G$ (containing $e$).
We can assume without loss of generality
that $T$ generates $G$, otherwise we enlarge $T$
by a finite set of generators.
Set $S := \{ ab \mid a,b \in T \}$.
Let $\e = \e(N,\cala,G,\calf,S)$ be the number appearing in
Theorem~\ref{thm:stability-element}~\ref{thm:stability-element:K}.
Set $\beta := 2$.
Let $(X,d)$,$(\varphi,H)$, $\Lambda$, $\Sigma$ and $f$ be as in
Proposition~\ref{prop:contracting-maps}.
Consider the following commuting diagram of functors
\begin{equation*}
\xymatrix{\calo^G(E_\calf G, G \times X, d_{S,\Lambda};\cala)
\ar[dr]_{p} \ar[rr]^F & &
\calo^G(E_\calf G, \Sigma, d^1;\cala)
\ar[ld]^{q}
\\
& \calo^G(E_\calf G;\cala)}
\end{equation*}
where $p$ resp.\ $q$
are induced by projecting $G \times X$ resp.\ $\Sigma$
to a point and $F$ is induced by $f$.
By Proposition~\ref{prop:K-theory-transfer-lift}
there is a
$(\beta,S)$-chain homotopy equivalence $[(C,\hat \alpha)]$
over $\Idem(\calo^G(E_\calf,G \times X,d_{S,\Lambda};\cala))$
such that $[p(C,\hat \alpha)] = a$.
Proposition~\ref{prop:contracting-maps}~\ref{prop:contractin-maps:estimate}
implies that $F(\hat \alpha)$ is an $(\e,S)$-chain homotopy equivalence
over $\Idem(\calo^G(E_\calf G, \Sigma, d^1;\cala))$.
By Theorem~\ref{thm:stability-element}~\ref{thm:stability-element:K}
$[F(C,\hat \alpha)] = 0$.
Therefore $a = [q \circ F (C,\hat \alpha)] = 0$.
\end{proof}

We record the following corollary to
Theorem~\ref{the:axiomatic}~\ref{thm:axiomatic:K}.

\begin{corollary}
\label{cor:K-of-calo-vanishes}
Let $G$ be a finitely generated group that is
transfer reducible over $\calf$.
Let $\cala$ be an additive $G$-category.
Then for $i \leq 1$ we have
\begin{equation*}
  K_i(\calo^G(\EGF{G}{\calf_2};\cala)) = 0.
\end{equation*}
\end{corollary}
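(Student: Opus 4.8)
The plan is to bootstrap from the $K_1$--vanishing that is established \emph{inside} the proof of Theorem~\ref{the:axiomatic}~\ref{thm:axiomatic:K} to a vanishing in all degrees $\le 1$, by a Pedersen--Weibel style delooping applied to the coefficient category.

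First I would record that $G$ is also transfer reducible over $\calf_2$: since $\calf \subseteq \calf_2$, every open $\calf$-subset in the sense of Definition~\ref{def:F-cover} is an open $\calf_2$-subset, and the remaining requirements in Definition~\ref{def:transfer-reducible} (dimension $\le N$, $S$-long) do not refer to the family. The proof of Theorem~\ref{the:axiomatic}~\ref{thm:axiomatic:K} uses transfer reducibility of $G$ only over the family in question (through Proposition~\ref{prop:contracting-maps}, which also supplies the isotropy bound on $\Sigma$ needed for Theorem~\ref{thm:stability-element}~\ref{thm:stability-element:K}), and it literally shows that $K_1(\calo^G(\EGF{G}{\calf'}{;}\calb)) = 0$ for a finitely generated group $G$ that is transfer reducible over a family $\calf'$ and for every additive $G$-category $\calb$. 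Since $G$ is finitely generated by hypothesis, applying this with $\calf' = \calf_2$ gives
\begin{equation*}
  K_1\bigl(\calo^G(\EGF{G}{\calf_2};\calb)\bigr) = 0 \qquad \text{for every additive $G$-category }\calb .
\end{equation*}

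Next I would deloop. For $n \ge 0$ let $\calb^{(n)}$ be the $n$-fold Pedersen--Weibel suspension of $\calb$, i.e.\ the category of boundedly controlled objects over $\IR^n$ with coefficients in $\calb$ (see~\cite{Pedersen-Weibel(1985)}); letting $G$ act trivially on $\IR^n$ this is again an additive $G$-category, and $\bfK(\calb^{(n)})$ is an $n$-fold (spectrum) suspension of $\bfK(\calb)$, so $K_j(\calb^{(n)}) \cong K_{j-n}(\calb)$ for all $j \in \IZ$. Because nesting two bounded-control conditions amounts to imposing their product, one has an equivalence of additive categories identifying $\calo^G(\EGF{G}{\calf_2};\cala^{(n)})$ with the $n$-fold Pedersen--Weibel suspension of $\calo^G(\EGF{G}{\calf_2};\cala)$ — the continuous control in the $[1,\infty)$-direction and the $G$-action are untouched — and hence
\begin{equation*}
  K_j\bigl(\calo^G(\EGF{G}{\calf_2};\cala^{(n)})\bigr) \cong K_{j-n}\bigl(\calo^G(\EGF{G}{\calf_2};\cala)\bigr).
\end{equation*}
Taking $j = 1$ and $n = 1-i$ for $i \le 1$, the left-hand side vanishes by the previous step (applied with $\calb = \cala^{(1-i)}$), so $K_i(\calo^G(\EGF{G}{\calf_2};\cala)) = 0$, as claimed.

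The one point that needs care is the last compatibility: that forming the obstruction category commutes, up to an equivalence inducing a $\bfK$-equivalence, with suspension of the coefficient category. This is routine once one unwinds the definition~\eqref{eq:define-calo} of $\calo^G(Y,Z,d;\cala)$ as a category $\calc^G(-;\cale,\calf;\cala)$ and uses that such controlled-category constructions iterate by taking products of the control data; the only formal input is flasqueness of the bounded-over-a-ray subcategory and the Karoubi-filtration fibration sequence that produces the delooping, which are exactly Theorem~\ref{thm:swindle_plus_karoubi-for-K_plus_L}~\ref{thm:swindle_plus_karoubi:swindle} and~\ref{thm:swindle_plus_karoubi:karoubi}. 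Equivalently, the argument may be phrased on the level of the assembly map: $\bfK(\calo^G(\EGF{G}{\calf_2};\cala))$ is a shift of the cofiber of the $\calf_2$-assembly map, both sides of that map commute with suspension of the coefficients, and a spectrum whose $\pi_1$ vanishes for all coefficient suspensions has vanishing $\pi_i$ for all $i \le 1$.
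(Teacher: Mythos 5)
Your argument is correct in spirit but takes a genuinely different, and considerably more labor-intensive, route than the paper's, and the central delooping step is overstated. The paper's own proof is a one-liner: because $\calf \subseteq \calf_2$, $G$ is transfer reducible over $\calf_2$, so Theorem~\ref{the:axiomatic}~\ref{thm:axiomatic:K} applied with $\calf_2$ in place of $\calf$ shows the $\calf_2$-assembly map~\eqref{eq:assembly-map-calf-K} is bijective for $m<1$ and surjective for $m=1$; since $\bfK(\calo^G(\EGF{G}{\calf_2};\cala))$ is (up to shift) the cofiber of that assembly map \cite[Section~3.3]{Bartels-Lueck-Reich(2008hyper)}, the long exact sequence of the cofiber gives $K_i=0$ for all $i\leq 1$ at once. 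No delooping of coefficients is needed, because the statement of Theorem~\ref{the:axiomatic}~\ref{thm:axiomatic:K} already carries the information in all degrees $\le 1$, not just degree $1$; you use only the $K_1$-vanishing proved inside that theorem's proof and then have to buy the lower degrees back.

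The place where your argument is imprecise is the claim of an \emph{equivalence of additive categories} between $\calo^G(\EGF{G}{\calf_2};\cala')$, for $\cala'$ the $n$-fold Pedersen--Weibel suspension of $\cala$, and the $n$-fold Pedersen--Weibel suspension of $\calo^G(\EGF{G}{\calf_2};\cala)$. Unwinding the definition~\eqref{eq:define-calo} shows these do not even have the same objects: in the first category the support condition $\calf(Y,Z,d)$ (a $G$-cocompactness requirement in the $G\times Z\times Y$-coordinates) is imposed uniformly across the $\IR^n$-coordinate coming from the coefficients, whereas in the second it is only imposed slice by slice in $v\in\IR^n$; the local-finiteness requirements are likewise imposed over different product factors, and the obvious comparison functors need not land in the correct category. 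What you gesture at as the \emph{real} argument --- propagating the Pedersen--Weibel flasqueness and Karoubi-filtration data through the functor $\calo^G(Y;-)$ --- is the right idea, but Theorem~\ref{thm:swindle_plus_karoubi-for-K_plus_L} alone does not do it: one must also check that $\calo^G(Y;-)$ preserves flasqueness, Karoubi filtrations and the associated quotients, none of which is a mere unwinding of definitions. And once you are willing, as in your closing sentence, to invoke the cofiber description of $\bfK(\calo^G(\EGF{G}{\calf_2};\cala))$, the delooping becomes superfluous: the theorem's statement plus the long exact sequence of the cofiber gives the corollary directly, which is exactly how the paper does it.
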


\begin{proof}
Clearly $G$ is also transfer reducible over $\calf_2$ and therefore by
Theorem~\ref{the:axiomatic}~\ref{thm:axiomatic:K}
the assembly map \eqref{eq:assembly-map-calf-K} is an isomorphism
for $n < 1$ and surjective for $n=1$.
The result follows because the $K$-theory of $\calo^G(\EGF{G}{\calf_2};\cala)$
is the cofiber of this assembly map,
compare~\cite[Section~3.3]{Bartels-Lueck-Reich(2008hyper)}.
\end{proof}

\begin{proof}
[Proof of Theorem~\ref{the:axiomatic}~\ref{thm:axiomatic:L}]
Because of 
Lemma~\ref{lem:calfj_closed_under}~\ref{lem:calfj_closed_under:colim} 
we can assume without loss of generality
that $G$ is finitely generated. 
Let $N$ be the number appearing in
Definition~\ref{def:transfer-reducible}.
According to 
Theorem~\ref{thm:obstruction-category}~\ref{thm:obstruction-category:L}
it suffices to show $\Li_0(\calo^G(E_{\calf_2} G;\cala)) = 0$
for every additive $G$-category $\cala$ with involution. 
Fix such an $\cala$. 
By~\eqref{nl:K-vanishes-decorations} and
Corollary~\ref{cor:K-of-calo-vanishes}
we know that
$$L^{\langle 1 \rangle}_0 (\calo^G(E_{\calf_2} G;\cala)) \to
 \Li_0 (\calo^G(E_{\calf_2} G;\cala))$$
is an isomorphism. 
It suffices to show that this map is also the zero map.

Consider $a \in L^{\langle 1 \rangle}(\calo^G(E_{\calf_2} G;\cala))$.
Pick a quadratic form $(M,\alpha)$ over the category 
$\calo^G(E_{\calf_2} G;\cala))$
such that $[(M,\alpha)] = a$.
By definition there is a finite subset $T$
of $G$ (containing $e$) such that $\alpha$ is $T$-controlled and
$\alpha + \alpha^*$  is a $T$-isomorphism.
We can assume without loss of generality
that $T = T^{-1}$ and that $T$ generates $G$.
Set $S := \{ ab \mid a,b \in T \}$.
Let $\epsilon = \epsilon(2N,\cala,G,\calf_2,S)$ be the number appearing in
Theorem~\ref{thm:stability-element}~\ref{thm:stability-element:L}.
By Lemma~\ref{lem:P_2-simplicial}~\ref{lem:P_2-simplicial:d_P_2-vs-d-1}
there is $\delta$ such that for every simplicial complex $\Sigma'$
of dimension $\leq N$ we have for $x,y \in P_2(\Sigma')$
\begin{equation*}
d_{P_2(\Sigma',d^1)} (x,y) \leq \delta \quad \implies 
                    \quad d^1_{P_2(\Sigma')}(x,y) \leq \e.
\end{equation*}
Set $\beta := 2$.
Let $(X,d)$,$(\varphi,H)$, $C$, $\Sigma$ and $f$ be as in
Proposition~\ref{prop:contracting-maps}, but with respect to $\delta/2$ 
in place of $\epsilon$ and $2\beta$ instead of $\beta$. 
In particular we have for $(g,x), (h,y) \in G \times X$
$$d_{S,\Lambda, G \times X}((g,x),(h,y) \le 2 \beta
\quad \implies \quad
d^1_{\Sigma}(f(g,x),f(h,y)) \le \delta/2.$$
We conclude from Lemma~\ref{lem:f_versus_p_2(f)_metrically}
for $z,z' \in P_2(G \times X)$
$$
d_{S,\Lambda, P_2(G \times X)}(z,z') \le 2 \beta
\quad \implies \quad
d_{P_2(\Sigma,d^1)}(P_2(f)(z),P_2(f)(z') \le \delta.
$$
Let $\widehat{f} \colon G \times P_2(X) \to P_2(\Sigma)$
be the composite of $P_2(f)$
with the map $\omega \colon G \times P_2(X) \to P_2(G \times X)$
defined in Lemma~\ref{lem:omega_G_timesP_2(X)_to_P_2(G_timesX)}.
Because of Lemma~\ref{lem:omega_G_timesP_2(X)_to_P_2(G_timesX)}
we have for $(g,(x:x')),(h,(y:y')) \in G \times P_2(X)$
\begin{multline*}
d_{S,\Lambda,G \times P_2(X)}((g,(x:x')),(h,(y:y'))) \le \beta
\\
\implies \quad
d_{S,\Lambda,P_2(G \times  X)}(\omega(g,(x:x')),\omega(h,(y:y'))) \le 2\beta.
\end{multline*}
We conclude  for $(g,(x:x'),(h,(y:y') \in G \times P_2(X)$
\begin{multline}
\label{eq:beta-gives-e}
d_{S,\Lambda,G \times P_2(X)}((g,(x:x')),(h,(y:y'))) \leq \beta
\\ \implies  \quad
    d^1_{P_2(\Sigma)}( \widehat{f}(g,(x:x'), \widehat{f}(h,(y:y') ) \leq \epsilon.
  \end{multline}
Consider the following commuting diagram of functors
\begin{equation*}
\xymatrix@!C=8em
{\calo^G(\EGF{G}{\calf_2}, G \times P_2(X), d_{S,\Lambda,G \times  P_2(X)};\cala)
\ar[dr]_{p} \ar[rr]^F & &
\calo^G(\EGF{G}{\calf_2}, P_2(\Sigma), d^1_{P_2(\Sigma)};\cala)
\ar[ld]^{q}
\\
& \calo^G(\EGF{G}{\calf_2};\cala)
}
\end{equation*}
where $p$ resp.\ $q$
are induced by projecting $G \times P_2(X)$ resp.\ $P_2(\Sigma)$
to a point and $F$ is induced by $\widehat{f}$.
By Proposition~\ref{prop:L-theory-transfer-lift}
there is a $0$-dimensional
ultra-quadratic $(S,\beta)$-Poincar{\'e} complex
$(C,\psi)$ over
$\Idem(\calo^G(\EGF{G}{\calf_2}, G \times P_2(X), 
                   d_{S,\Lambda,G \times P_2(X)};\cala))$
concentrated in degrees $-N,\dots,N$ such that
$$[p(C,\psi)] = [(M,\alpha)] \in
     L_0^{\langle 1 \rangle}(\Idem(\calo^G(\EGF{G}{\calf_2};\cala))).$$
We conclude from~\eqref{eq:beta-gives-e}
that $F(C,\psi)$ is an $(S,\e)$-Poincar{\'e} complex.
{}From Theorem~\ref{thm:stability-element}~\ref{thm:stability-element:L}
and Lemma~\ref{lem:calf_2-isotropie-for-P_2} we deduce
\begin{equation*}
  [F(C,\psi)] = 0  \in
  \Li_0(\calo^G(\EGF{G}{\calf_2}, P_2(\Sigma), d^1;\cala)).
\end{equation*}
Therefore
$a = [q \circ F (C,\psi)] = 0$ holds in 
$\Li_0(\calo^G(\EGF{G}{\calf_2};\cala))$.
\end{proof}


\appendix

\typeout{-- Classical transfers and the multiplicative hyperbolic form  --}

\section{Classical transfers and the multiplicative hyperbolic form}
\label{sec:classical-transfer_and_mult-hyperbolic}

\begin{summary*}
  In this appendix we review classical (uncontrolled) transfers.
  We also discuss the multiplicative hyperbolic form in
  an uncontrolled context and show that this construction 
  yields a homomorphism from $K_0(\Lambda)$ to $L^0_p(\Lambda)$. 
\end{summary*}

\subsection{Transfer for the Whitehead group}
\label{subsec:Whitehead-transfer}
We briefly review the transfer for the Whitehead group
for a fibration $F \to E \xrightarrow{p} B$ of 
connected finite $CW$-complexes.
For simplicity we will assume that $\pi_1(p) \colon \pi_1(E) \to
\pi_1(B)$ is bijective and  we will identify in the sequel
$G := \pi_1(E) = \pi_1(B)$.

Recall that the fiber transport gives a homomorphism of monoids 
$G \to [F,F]$. 
Thus we obtain a finite free $\IZ$-chain complex $C = C_*(F)$,
namely, the cellular $\IZ$-chain complex of $F$, together with an
operation of $G$ up to chain homotopy, i.e., a homomorphism of monoids
$\rho \colon G \to [C,C]_{\IZ}$ to the monoid of chain homotopy
classes of $\IZ$-chain maps $C \to C$
(compare~\cite[Section~5]{Lueck(1987)}).  
An algebraic transfer map $p^* \colon \Wh(B) \to \Wh(E)$ in terms of
chain complexes is given in~\cite[Section~4]{Lueck(1986)}.  
We recall its definition in the special case, 
where $\pi_1(p)$ is bijective.

Given an element $a = \sum_{g \in G} \lambda_g g \in \IZ G$, define a
$\IZ G$-chain map of finitely generated free $\IZ G$-chain complexes,
unique up to $\IZ G$-chain homotopy, by
\begin{equation}
\label{eq:a_otimes_tC}
  a \otimes_t  C \colon \IZ G \otimes_{\IZ} C \to \IZ G \otimes_{\IZ} C,
  \quad g' \otimes x \mapsto \sum_{g \in G}
  \lambda_g \cdot g'g^{-1} \otimes r(g)(x),
\end{equation}
where $r(g) \colon C \to C$ is some representative of $\rho(g)$
(compare~\cite[Section~5]{Lueck(1987)}). 
Thus we obtain a ring homomorphism 
$\IZ G \to [\IZ G \otimes_{\IZ} C,\IZ G \otimes_{\IZ} C]_{\IZ G}$ 
to the ring of $\IZ G$-chain homotopy classes of $\IZ G$-chain maps 
$\IZ G \otimes_{\IZ}C \to \IZ G \otimes_{\IZ}C$.  
It extends in the obvious way to matrices over $\IZ G$, namely, for a
matrix $A \in M_{m,n}(\IZ G)$ we obtain a $\IZ G$-chain map, unique up
to $G$-homotopy,
$$A \otimes_t C \colon \IZ G^m \otimes_{\IZ} C \to \IZ G^n \otimes_{\IZ} C.$$
The algebraic transfer $p^* \colon \Wh(G) \to \Wh(G)$ sends the class of an
invertible matrix $A \in GL_n(\IZ G)$ to the Whitehead torsion of the
$\IZ G$-self-chain homotopy equivalence 
$A \otimes_t C \colon \IZ G^n \otimes_{\IZ} C \to \IZ G ^n \otimes_{\IZ} C$.


\subsection{Classical $L$-theory transfer}
\label{subsec:classical-L-transfer}
To obtain an $L$-theory transfer we have additionally to assume that
$F$ is a finite $n$-dimensional Poincar\'e complex. 
For simplicity we assume that $F$ is an oriented $n$-dimensional 
Poincar\'e complex and the fiber transport $G \to [F,F]$ takes values 
in homotopy classes of orientation preserving self-homotopy equivalences 
and \--- as before \--- that $\pi_1(p)$ is bijective. 
We review the algebraically
defined transfer maps $p^* \colon L_m(\IZ G) \to L_{m+n}(\IZ G)$ 
(see~\cite{Lueck-Ranicki(1988)}).
Because $F$ is a Poincar\'e complex, there is a symmetric form
$\varphi \colon C^{-*} \to C$, 
where $C^{-*}$ denotes the dual of the cellular chain complex of $F$,i.e.,
$(C^{-*})_n = (C_{-n})^*$.
If $\psi \colon M^{*} \to M$ is a quadratic form over $\IZ[G]$,
then the composition
\[
\psi \ox_t (C,\varphi) \colon
(M \ox C)^{-*} \cong M^{*} \ox C^{-*} 
\xrightarrow{\id \ox \varphi}
M^{*} \ox C
\xrightarrow{\psi \ox_t C} 
M \ox C
\]
defines an ultra-quadratic form on $M \ox C$.
The $L$-theory transfer sends the class of 
$(M,\psi) \in L_0(\IZ G)$ to the
class of
$(M \ox C,\psi \ox_t (C,\varphi)$.  


\subsection{The multiplicative hyperbolic form}
\label{subsec:multilicative-hyperbolic-form}
Let $\Lambda$ be a commutative ring.
Let $P$ be a finitely generated projective $\Lambda$-module.
Since $\Lambda$ is commutative, the dual $\Lambda$-module
$P^* = \hom_{\Lambda}(P;\Lambda)$ and the tensor product
$P \otimes_{\Lambda} P^*$ are finitely generated projective $\Lambda$-modules.
Define the $\Lambda$-isomorphism
\begin{eqnarray}
& \psi_P \colon P^* \otimes_{\Lambda} P \to (P^* \otimes_{\Lambda} P)^* &
\label{psi_P}
\end{eqnarray}
by sending $\alpha \otimes x  \in  P^* \otimes_{\Lambda} P$ 
to the $\Lambda$-homomorphisms
$P^* \otimes_{\Lambda} P \to \Lambda, \; 
     \beta \otimes y \mapsto \alpha(y) \cdot \beta(x)$.
The composite
$$P^* \otimes_{\Lambda} P \cong
((P^* \otimes_{\Lambda} P)^*)^* \xrightarrow{\psi_P^*}  
(P^* \otimes_{\Lambda} P)^*$$
agrees with $\psi_P$. 
Hence $(P^* \otimes_{\Lambda} P,\psi_P)$ is a non-singular symmetric
$\Lambda$-form. 
We call it the \emph{multiplicative hyperbolic
symmetric $\Lambda$-form} associated
to the finitely generated  projective $\Lambda$-module $P$
and denote it by $H_{\otimes}(P)$. 
Ian Hambleton pointed out that under the 
identification $P^* \otimes_\Lambda P \cong \End_\Lambda(P,P)$
this form corresponds to the trace form $(A,B) \mapsto \tr(AB)$.
The name multiplicative hyperbolic form
comes from the fact that it is the obvious multiplicative
version of the standard hyperbolic symmetric
form $H(P)$ which is given by the $\Lambda$-isomorphism
$P^* \oplus P \to (P^* \oplus P)^*$ sending 
$(\alpha,x) \in  P^* \oplus_{\Lambda} P$
to the $\Lambda$-homomorphisms
$P^* \oplus P \to \Lambda, \; (\beta,y) \mapsto \alpha(y) + \beta(x)$;
just replace $\oplus$ by $\otimes$ and $+$ by $\cdot$.

The hyperbolic symmetric form of a finitely 
generated projective $\Lambda$-module $P$
represents zero in the symmetric $L$-group $L^0_p(\Lambda)$. 
This is not true for the multiplicative version.
Namely, define a  homomorphism
\begin{eqnarray}
& H_{\otimes}^{\Lambda} \colon K_0(\Lambda) \to L^0_p(\Lambda) &
\label{H_otimes_from_K_to_L}
\end{eqnarray}
by sending the class $[P]$ of a finitely generated 
projective $\Lambda$-module $P$
to the class $[H_{\otimes}(P)]$  of the non-singular symmetric $\Lambda$-form
$H_{\otimes}(P)$. 
We have to show that this is well-defined, i.e., we must prove
$$[H_{\otimes}(P \oplus Q)] = [H_{\otimes}(P)] + [H_{\otimes}(P)] 
                                              \in L^0_p(\Lambda)$$
for two finitely generated projective $\Lambda$-modules $P$ and $Q$.
This follows from the fact that we have an isomorphism of
$\Lambda$-modules
\begin{multline*}
(P \oplus Q)^* \otimes_{\Lambda} P \oplus Q \cong
P^* \otimes_{\Lambda} P \oplus Q^* \otimes_{\Lambda} Q \oplus
Q^* \otimes_{\Lambda} P \oplus P^* \otimes_{\Lambda} Q
\\
\cong
P^* \otimes_{\Lambda} P \oplus Q^* \otimes_{\Lambda} Q \oplus
\left(Q^* \otimes_{\Lambda} P \oplus (Q^* \otimes_{\Lambda} P)^*\right)
\end{multline*}
which induces an isomorphism of non-singular symmetric $\Lambda$-forms
$$H_{\otimes}(P \oplus Q)
\cong H_{\otimes}(P) \oplus H_{\otimes}(Q) \oplus H(Q^*
\otimes_{\Lambda} P).$$

Since $\Lambda$ is commutative, the tensor product $\otimes_{\Lambda}$
induces the structure of a commutative ring on $ K_0(\Lambda)$ and
$L^0_p(\Lambda)$. 
One easily checks that the map
$H_{\otimes}^{\Lambda}$ of~\eqref{H_otimes_from_K_to_L} is a ring homomorphism.

\begin{example}[$\Lambda = \IZ$]
If we take for instance $\Lambda = \IZ$, we obtain isomorphisms
\begin{eqnarray*}
\rk \colon K_0(\IZ) & \xrightarrow{\cong} & \IZ;
\\
\sign \colon L^0_p(\IZ) & \xrightarrow{\cong} & \IZ,
\end{eqnarray*}
by taking the rank of a finitely generated free abelian group and the signature
(see~\cite[Proposition~4.3.1 on page~419]{Ranicki(1981)}). 
Obviously
$$H_{\otimes}^{\IZ} \colon K_0(\IZ) \to L^0_p(\IZ)$$
sends $[\IZ]$ to the  class
of the symmetric form $\IZ \to \IZ^*$ sending $1 \in \IZ$ to the identity
$\id_{\IZ} \in \IZ^*$. Hence $H_{\otimes}^{\IZ}$ is a bijection.
\end{example}


\subsection{The chain complex version of $H_\ox$}
\label{subsec:ch-cx-version-of-multi-hyperbolic}
Next we give a chain complex version of this construction.
Let $C$ be a finite projective $\Lambda$-chain complex, i.e., a
$\Lambda$-chain complex such that each $\Lambda$-module $C_i$ is finitely
generated projective and $C_i$ is non-trivial for 
only finitely many $i \in \IZ$.

Given two $\Lambda$-chain complexes $C$ and $D$, define their
\emph{tensor product}
$$(C \otimes_{\Lambda} D, c \otimes_{\Lambda} d)$$
to be the $\Lambda$-chain
complex whose $n$-th-chain module is $\bigoplus_{i,j, i+ j = n} C_i
\otimes_{\Lambda} D_j$. 
The differential is given by the formula
$$(c \otimes d)(x \otimes y) := 
    c(x) \otimes y + (-1)^{|x|} \cdot x \otimes d(y).$$
We need $\Lambda$ to be  commutative to ensure that
$C \otimes_{\Lambda} D$ is indeed a $\Lambda$-chain complex.
If $C$ and $D$ are finite projective $\Lambda$-chain complexes, then
$C \otimes_{\Lambda} D$ is a finite projective $\Lambda$-chain complex.

If $f \colon A \to C$ and $g \colon B \to D$ are maps of chain complexes
of degree $|f|$ and $|g|$, then we define
$f \otimes g \colon A \otimes B \to C \otimes D$ by
$(f \otimes g) (x \otimes y) := (-1)^{|g| \cdot |x|} f(x) \otimes g(y)$. 
The flip isomorphism
$$\flip \colon  C \otimes_{\Lambda} D \xrightarrow{\cong} D \otimes_{\Lambda} C$$
is given by $\flip(x \otimes y) := (-1)^{|x|\cdot |y|} \cdot y \otimes x$.
For chain complexes $C$ and $D$ we define a chain map
$$\mu_{C,D} \colon C^{-*} \otimes D^{-*} \to (C \otimes D)^{-*}$$
by $\mu_{C,D}(\alpha \otimes \beta)(a \otimes b) := 
               (-1)^{|\beta| \cdot |a|}\alpha(a) \beta(b)$.
Then $\otimes$ and $^{-*}$ are compatible in the following sense:
if $f \colon A \to C$ and $g \colon B \to D$ are maps of chain complexes,
then 
$\mu_{A,B} \circ (f^{-*} \otimes g^{-*}) = (f \otimes g)^{-*} \circ \mu_{C,D}$.
If $C$ and $D$ are finite projective $\Lambda$-chain complexes,
then $\mu_{C,D}$ is an isomorphism and yields a canonical
identification. 
Suppressing this identification
the formula reads $f^{-*} \otimes g^{-*} = (f \otimes g)^{-*}$.
Define
$$\mu_C \colon C \otimes_{\Lambda} C^{-*} \xrightarrow{\cong} (C^{-*}
\otimes_{\Lambda} C)^{-*} $$
to be the composite
$$ C \otimes_{\Lambda} C^{-*} \xrightarrow{\cong}
(C^{-*})^{-*} \otimes_{\Lambda} C^{-*} \xrightarrow{\mu_{C^{-*},C}}  
(C^{-*} \otimes_{\Lambda} C)^{-*}.$$
Explicitly $\mu_C$ sends  $x \otimes \alpha \in C_i \otimes_{\Lambda} (C_{-j})^*$
to the $\Lambda$-map $C_i^* \otimes C_{-j}\to \Lambda,\; \beta \otimes y \mapsto
\beta(x) \cdot \alpha(y)$, where we think of $\hom_{\Lambda}(C_i^* \otimes
C_{-j};\Lambda)$ as a submodule of the $(i+j)$-th chain module of
$(C^{-*} \otimes_{\Lambda} C)^{-*}$ in
the obvious way. 
Define an isomorphism of $\Lambda$-chain complexes
\begin{eqnarray}
\psi_C \colon  (C^{-*} \otimes_{\Lambda} C)^{-*} &\xrightarrow{\cong} &
C^{-*} \otimes_{\Lambda} C
\label{chain_iso_psi_C}
\end{eqnarray}
by the composition of the inverse of $\mu_C$
with the flip isomorphism 
$\flip \colon  C \otimes_{\Lambda} C^{-*} \xrightarrow{\cong}
C^{-*} \otimes_{\Lambda} C$. 
It straight forward to check that $(C^{-*} \ox_\Lambda C,\psi_C)$ is a
$0$-dimensional symmetric Poincar\'e $\Lambda$-chain complex.
 We call
it the \emph{multiplicative hyperbolic symmetric Poincar\'e
  $\Lambda$-chain complex} associated to the finite projective
$\Lambda$-chain complex $C$ and denote it by $H_{\otimes}(C)$.

Given a finite projective $\Lambda$-chain complex $C$, define its
\emph{(unreduced) finiteness obstruction} to be
$$o(C) = \sum_{n \in \IZ} (-1)^n \cdot [C_n] \quad \in K_0(R).$$

\begin{lemma}
\label{lem:H_otimes_and_chain_complex}
Let $C$ be a finite projective $\Lambda$-chain complex. Then
the homomorphism defined in~\eqref{H_otimes_from_K_to_L}
$$H_{\otimes}^{\Lambda} \colon K_0(\Lambda) \to L^0_p(\Lambda)$$
sends $o(C)$ to the class $[H_{\otimes}(C)]$ of the symmetric $0$-dimensional
Poincar\'e $\Lambda$-chain complex $H_{\otimes}(C)$.
\end{lemma}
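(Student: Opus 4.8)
The plan is to reduce the chain-complex statement to the module-level statement \eqref{H_otimes_from_K_to_L} by an Euler-characteristic / filtration argument. First I would recall that $o(C) = \sum_n (-1)^n [C_n] \in K_0(\Lambda)$ and that $H_\otimes^\Lambda$ is a homomorphism, so $H_\otimes^\Lambda(o(C)) = \sum_n (-1)^n [H_\otimes(C_n)] \in L^0_p(\Lambda)$, where on the right each $C_n$ is regarded as a chain complex concentrated in degree $0$ and $[H_\otimes(C_n)]$ is the class of the corresponding $0$-dimensional symmetric Poincar\'e complex. Thus it suffices to prove the identity
\[
[H_\otimes(C)] \;=\; \sum_{n \in \IZ} (-1)^n\,[H_\otimes(C_n)] \quad\in L^0_p(\Lambda),
\]
where the left side uses the chain-level construction $(C^{-*}\ox_\Lambda C,\psi_C)$ of Subsection~\ref{subsec:ch-cx-version-of-multi-hyperbolic} and the right side is a finite signed sum of forms over $\Lambda$.

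The key step is to analyze the $0$-dimensional symmetric Poincar\'e chain complex $H_\otimes(C) = (C^{-*}\ox_\Lambda C, \psi_C)$ via the (split) filtration of $C$ by its chain modules. Concretely, choosing (non-canonically, but this doesn't matter in $L$-theory) splittings or simply using that $C$ as a graded module is $\bigoplus_n C_n$, the graded module underlying $C^{-*}\ox_\Lambda C$ decomposes as $\bigoplus_{p,q} (C_p^{-*})\ox C_q = \bigoplus_{p,q} (C_p)^*\ox C_q$ sitting in degree $q-p$. The symmetric form $\psi_C$, built from $\mu_C$ followed by the flip, pairs the summand indexed by $(p,q)$ with the summand indexed by $(q,p)$. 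Hence the ``diagonal'' summands $p=q$ (degree $0$) carry exactly the forms $H_\otimes(C_p) = ((C_p)^*\ox C_p, \psi_{C_p})$, while the off-diagonal pairs $\{(p,q),(q,p)\}$ with $p\ne q$ contribute a hyperbolic form $H\big((C_p)^*\ox C_q\big)$ (compare the module-level computation of $H_\otimes(P\oplus Q)$ already carried out in Subsection~\ref{subsec:multilicative-hyperbolic-form}). A hyperbolic form represents $0$ in $L^0_p$. The remaining point is bookkeeping of signs: the summand $H_\otimes(C_p)$ enters with sign $(-1)^p$, matching the claimed formula; this is exactly the content of the algebraic additivity of the symmetric construction (the algebraic analogue of $[H_\otimes(P\oplus Q)] = [H_\otimes(P)] + [H_\otimes(Q)] + [H(Q^*\ox P)]$, iterated over the graded pieces and with the degree shift producing the alternating sign). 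I would make this rigorous by first treating the case where $C$ is concentrated in two adjacent degrees (or by induction on the length of $C$, splitting off the top module as in the usual finiteness-obstruction arguments), using that both sides of the asserted equation are additive under short exact sequences of finite projective chain complexes — for the left side this uses the standard fact that algebraic cobordism of symmetric Poincar\'e complexes is compatible with such glueings, applied to $H_\otimes$ of a mapping cone.

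The main obstacle I expect is the careful handling of the chain-level signs and of the interaction between the duality $^{-*}$ (which shifts degree and introduces the $(-1)^n$ in the differential, as fixed in Subsection~\ref{subsec:controlled-algebraic-Poincare-cx}) and the flip $\flip$ with its $(-1)^{pq}$, so that the off-diagonal contributions genuinely assemble into honest hyperbolic \emph{Poincar\'e complexes} (not merely hyperbolic up to sign or up to a degree shift that would spoil the vanishing). Once the sign conventions from Subsection~\ref{subsec:ch-cx-version-of-multi-hyperbolic} are pinned down, the verification is a finite and essentially formal computation, and the induction on the length of $C$ then finishes the proof.
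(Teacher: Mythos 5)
Your target identity $[H_\otimes(C)] = \sum_n (-1)^n [H_\otimes(C_n)]$ and the observation that $(C^{-*}\otimes_\Lambda C)_0 = \bigoplus_i C_i^*\otimes_\Lambda C_i$ carries the form $\bigoplus_i (-1)^i\psi_{C_i}$ are exactly right, and this is where the paper's proof also lands. But the step that gets you there has a real gap. Decomposing the \emph{graded module} underlying $C^{-*}\otimes_\Lambda C$ into the bidegree pieces $(C_p)^*\otimes_\Lambda C_q$ does not decompose the \emph{chain complex}: the differential of $C^{-*}\otimes_\Lambda C$ is built from $d_C$ and its dual and moves $(C_p)^*\otimes_\Lambda C_q$ into $(C_{p+1})^*\otimes_\Lambda C_q\oplus (C_p)^*\otimes_\Lambda C_{q-1}$. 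So $H_\otimes(C)$ is not literally a direct sum of the diagonal pieces $H_\otimes(C_p[p])$ and off-diagonal hyperbolics as a symmetric Poincar\'e complex, and the appeal to $[H(P)]=0$ cannot be applied summand by summand without first producing an algebraic cobordism between $H_\otimes(C)$ and the differential-free model; the differential is exactly what you must argue away.

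Your fallback --- induction on the length of $C$ using the short exact sequences $0\to C_{\le N-1}\to C\to C_N[N]\to 0$ and ``additivity of $[H_\otimes(-)]$ under such SES'' --- would indeed finish the argument, but that additivity is precisely the nontrivial content here and is not a quotable off-the-shelf fact; proving it essentially forces you to do an algebraic surgery, and the cobordism data still have to be produced and checked. The paper short-circuits all of this in one step: let $p\colon C^{-*}\otimes_\Lambda C\to (C^{-*}\otimes_\Lambda C)_{\ge 1}$ be the projection onto the part in strictly positive degrees; then $p\circ\psi_C\circ p^{-*}=0$, so one may perform algebraic surgery on $p$. Because $\psi_C$ is a chain \emph{isomorphism} (not merely an equivalence), the resulting Poincar\'e complex is chain homotopy equivalent to the one concentrated in degree $0$ with underlying module $(C^{-*}\otimes_\Lambda C)_0$ and duality the inverse of $\psi_{(C^{-*}\otimes_\Lambda C)_0}$, after which your diagonal computation with the $(-1)^i$ signs applies verbatim. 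If you replace your graded-decomposition/SES-additivity step by this single explicit surgery, the proof is complete; as written, the key reduction is asserted rather than proved.
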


\begin{proof}
  Let $(C^{-*} \otimes_{\Lambda} C)_{\ge 1}$ be the $\Lambda$-chain
  complex which has in dimension $n \ge 1$ the same chain modules as
  $C^{-*} \otimes_{\Lambda} C$, whose differentials in dimensions $n
  \ge 2$ are the same as the one for $C^{-*} \otimes_{\Lambda} C$, and
  whose chain modules in dimensions $\le 0$ are trivial. Let $p \colon
  C^{-*} \otimes_{\Lambda} C \to (C^{-*} \otimes_{\Lambda} C)_{\ge 1}$
  be the obvious surjective $\Lambda$-chain map. Since the composite
  $p \circ \psi_C \circ p^{-*}$ is trivial, we can
  perform algebraic surgery in the sense of Ranicki
  (see~\cite[Section~1.5]{Ranicki(1981)}) on $p$ to obtain a new
  symmetric $0$-dimensional Poincar\'e chain complex $(D,\psi)$ such
  that $H_{\otimes}(C)$ and $(D,\psi)$ are algebraically bordant and
  hence represent the same class in $L^0_p(\Lambda)$.  Since
  $\psi_C$ is a chain isomorphism, one easily checks that
  $(D,\psi)$ is $\Lambda$-chain homotopy equivalent to the
  $0$-dimensional Poincar\'e complex whose underlying $\Lambda$-chain
  complex is concentrated in dimension zero and given there by the
  $\Lambda$-module $(C^{-*} \otimes_{\Lambda} C)_0$ and whose
  Poincar\'e $\Lambda$-chain homotopy equivalence is the inverse of
  the $\Lambda$-isomorphism $\psi_{(C^{-*} \otimes_{\Lambda} C)_0}$.
  Hence the class $[H_{\otimes}(C)] \in L^0_p(\Lambda) $ of
  $H_{\otimes}(C)$ corresponds to the non-singular symmetric
  $\Lambda$-form
$$\psi_{(C^{-*} \otimes_{\Lambda} C)_0} \colon (C^{-*}
\otimes_{\Lambda} C)_0 \xrightarrow{\cong}
\left(C^{-*} \otimes_{\Lambda} C)_0\right)^*.$$
Recall that
$$(C^{-*} \otimes_{\Lambda} C)_0 = 
  \bigoplus_{i \in \IZ} C_i^* \otimes_{\Lambda} C_i.$$
and that under this decomposition $\psi_{(C^{-*} \otimes_{\Lambda} C)_0}$
decomposes as the direct sum over $i \in \IZ$ of the inverses of
the $\Lambda$-isomorphisms (see~\eqref{psi_P})
$$(-1)^i \cdot \psi_{C_i} \colon  C_i^* \otimes_{\Lambda} C_i
\to \left(C_i^* \otimes_{\Lambda} C_i\right)^*.$$
Notice that we pick a sign $(-1)^i$ since in the definition of
the flip isomorphism for chain complexes a sign appears.
This implies in $L^0_p(\Lambda)$
\begin{eqnarray*}
[H_{\otimes}(C)]
& = &
[(C^{-*} \otimes_{\Lambda} C)_0,\psi_{(C^{-*} \otimes_{\Lambda} C)_0}]
\\
& = &
\sum_{i \in \IZ} [(C_i^* \otimes_{\Lambda} C_i,(-1)^i \cdot \psi_{C_i})]
\\
& = &
\sum_{i \in \IZ} (-1)^i \cdot [(C_i^* \otimes_{\Lambda} C_i,\psi_{C_i})]
\\
& = &
\sum_{i \in \IZ} (-1)^i \cdot H_{\otimes}^{\Lambda}([C_i])
\\
& = &
H_{\otimes}^{\Lambda}\bigg(\sum_{i \in \IZ} (-1)^i \cdot  [C_i]\bigg)
\\
& = &
H_{\otimes}^{\Lambda}(o(C)).
\end{eqnarray*}
\end{proof}


\typeout{---------------- References -----------------------------------}


\begin{thebibliography}{10}

\bibitem{Alibegovic+Bestvina(2006)}
E.~Alibegovi{\'c} and M.~Bestvina.
\newblock Limit groups are {$\rm CAT(0)$}.
\newblock {\em J. London Math. Soc. (2)}, 74(1):259--272, 2006.

\bibitem{Arzhantseva-Delzant(2008)}
G.~Arzhantseva and T.~Delzant.
\newblock Examples of random groups.
\newblock Preprint, 2008.

\bibitem{Bartels-Echterhoff-Lueck(2008colim)}
A.~Bartels, S.~Echterhoff, and W.~L\"uck.
\newblock Inheritance of isomorphism conjectures under colimits.
\newblock In Cortinaz, Cuntz, Karoubi, Nest, and Weibel, editors, {\em
  {K}-Theory and noncommutative geometry}, EMS-Series of Congress Reports,
  pages 41--70. European Mathematical Society, 2008.

\bibitem{Bartels-Farrell-Jones-Reich(2004)}
A.~Bartels, T.~Farrell, L.~Jones, and H.~Reich.
\newblock On the isomorphism conjecture in algebraic {$K$}-theory.
\newblock {\em Topology}, 43(1):157--213, 2004.

\bibitem{Bartels-Lueck(2009coeff)}
A.~Bartels and W.~L\"uck.
\newblock On twisted group rings with twisted involutions, their module
  categories and ${L}$-theory.
\newblock In {\em Cohomology of groups and algebraic $K$-theory}, volume~12 of
  {\em Advanced Lectures in Mathematics}, pages 1--55, Somerville, U.S.A.,
  2009. International Press.

\bibitem{Bartels-Lueck(2010CAT(0)flow)}
A.~Bartels and W.~L{\"u}ck.
\newblock Geodesic flow for {C}{A}{T}(0)-groups.
\newblock Preprint, arXiv:1003.4630v1 [math.GT], 2010.

\bibitem{Bartels-Lueck-Reich(2008cover)}
A.~Bartels, W.~L{\"u}ck, and H.~Reich.
\newblock Equivariant covers for hyperbolic groups.
\newblock {\em Geom. Topol.}, 12(3):1799--1882, 2008.

\bibitem{Bartels-Lueck-Reich(2008hyper)}
A.~Bartels, W.~L{\"u}ck, and H.~Reich.
\newblock The {$K$}-theoretic {F}arrell-{J}ones conjecture for hyperbolic
  groups.
\newblock {\em Invent. Math.}, 172(1):29--70, 2008.

\bibitem{Bartels-Lueck-Reich(2008appl)}
A.~Bartels, W.~L\"uck, and H.~Reich.
\newblock On the {F}arrell-{J}ones {C}onjecture and its applications.
\newblock {\em Journal of Topology}, 1:57--86, 2008.

\bibitem{Bartels-Lueck-Weinberger(2009)}
A.~Bartels, W.~L\"uck, and S.~Weinberger.
\newblock On hyperbolic groups with spheres as boundary.
\newblock arXiv:0911.3725v1 [math.GT], 2009.

\bibitem{Bartels-Reich(2007coeff)}
A.~Bartels and H.~Reich.
\newblock Coefficients for the {F}arrell-{J}ones {C}onjecture.
\newblock {\em Adv. Math.}, 209(1):337--362, 2007.

\bibitem{Bartels(2003a)}
A.~C. Bartels.
\newblock Squeezing and higher algebraic {$K$}-theory.
\newblock {\em $K$-Theory}, 28(1):19--37, 2003.

\bibitem{Bestvina-Mess(1991)}
M.~Bestvina and G.~Mess.
\newblock The boundary of negatively curved groups.
\newblock {\em J. Amer. Math. Soc.}, 4(3):469--481, 1991.

\bibitem{Bridson-Haefliger(1999)}
M.~R. Bridson and A.~Haefliger.
\newblock {\em Metric spaces of non-positive curvature}.
\newblock Springer-Verlag, Berlin, 1999.
\newblock Die Grundlehren der mathematischen Wissenschaften, Band 319.

\bibitem{Bryant-Ferry-Mio-Weinberger(1996)}
J.~Bryant, S.~Ferry, W.~Mio, and S.~Weinberger.
\newblock Topology of homology manifolds.
\newblock {\em Ann. of Math. (2)}, 143(3):435--467, 1996.

\bibitem{Cappell(1974c)}
S.~E. Cappell.
\newblock Unitary nilpotent groups and {H}ermitian ${K}$-theory. {I}.
\newblock {\em Bull. Amer. Math. Soc.}, 80:1117--1122, 1974.

\bibitem{Cardenas-Pedersen(1997)}
M.~C{\'a}rdenas and E.~K. Pedersen.
\newblock On the {K}aroubi filtration of a category.
\newblock {\em $K$-Theory}, 12(2):165--191, 1997.

\bibitem{Carlsson-Pedersen(1995a)}
G.~Carlsson and E.~K. Pedersen.
\newblock Controlled algebra and the {N}ovikov conjectures for ${K}$- and
  ${L}$-theory.
\newblock {\em Topology}, 34(3):731--758, 1995.

\bibitem{Champetier+Guirardel(2005)}
C.~Champetier and V.~Guirardel.
\newblock Limit groups as limits of free groups.
\newblock {\em Israel J. Math.}, 146:1--75, 2005.

\bibitem{Charney-Davis(1995)}
R.~M. Charney and M.~W. Davis.
\newblock Strict hyperbolization.
\newblock {\em Topology}, 34(2):329--350, 1995.

\bibitem{Davis-Khan-Ranicki(2008)}
K.~Davis, Q.~Khan, and A.~Ranicki.
\newblock {A}lgebraic {$K$}-theory over the infinite dihedral group.
\newblock Preprint, arXiv:math.KT/0803.1639, 2008.

\bibitem{Davis-Quinn-Reich(2010)}
K.~Davis, F.~Quinn, and H.~Reich.
\newblock {A}lgebraic {$K$}-theory over the infinite dihedral group: a
  controlled topology approach.
\newblock Preprint, arXiv:math.KT/1002.3702v1, 2010.

\bibitem{Davis(1983)}
M.~W. Davis.
\newblock Groups generated by reflections and aspherical manifolds not covered
  by {E}uclidean space.
\newblock {\em Ann. of Math. (2)}, 117(2):293--324, 1983.

\bibitem{Davis(2008book)}
M.~W. Davis.
\newblock {\em The geometry and topology of {C}oxeter groups}, volume~32 of
  {\em London Mathematical Society Monographs Series}.
\newblock Princeton University Press, Princeton, NJ, 2008.

\bibitem{Davis-Januszkiewicz(1991)}
M.~W. Davis and T.~Januszkiewicz.
\newblock Hyperbolization of polyhedra.
\newblock {\em J. Differential Geom.}, 34(2):347--388, 1991.

\bibitem{Farrell-Jones(1989)}
F.~T. Farrell and L.~E. Jones.
\newblock A topological analogue of {M}ostow's rigidity theorem.
\newblock {\em J. Amer. Math. Soc.}, 2(2):257--370, 1989.

\bibitem{Farrell-Jones(1993a)}
F.~T. Farrell and L.~E. Jones.
\newblock Isomorphism conjectures in algebraic ${K}$-theory.
\newblock {\em J. Amer. Math. Soc.}, 6(2):249--297, 1993.

\bibitem{Farrell-Jones(1993c)}
F.~T. Farrell and L.~E. Jones.
\newblock Topological rigidity for compact non-positively curved manifolds.
\newblock In {\em Differential geometry: Riemannian geometry (Los Angeles, CA,
  1990)}, pages 229--274. Amer. Math. Soc., Providence, RI, 1993.

\bibitem{Farrell-Jones(1998)}
F.~T. Farrell and L.~E. Jones.
\newblock Rigidity for aspherical manifolds with $\pi\sb 1\subset {GL}\sb
  m(\mathbb{{R}})$.
\newblock {\em Asian J. Math.}, 2(2):215--262, 1998.

\bibitem{Freedman(1983)}
M.~H. Freedman.
\newblock The disk theorem for four-dimensional manifolds.
\newblock In {\em Proceedings of the International Congress of Mathematicians,
  Vol.\ 1, 2 (Warsaw, 1983)}, pages 647--663, Warsaw, 1984. PWN.

\bibitem{Gersten(1967)}
S.~M. Gersten.
\newblock The torsion of a self equivalence.
\newblock {\em Topology}, 6:411--414, 1967.

\bibitem{Gromov(1987)}
M.~Gromov.
\newblock Hyperbolic groups.
\newblock In {\em Essays in group theory}, pages 75--263. Springer-Verlag, New
  York, 1987.

\bibitem{Higson-Lafforgue-Skandalis(2002)}
N.~Higson, V.~Lafforgue, and G.~Skandalis.
\newblock Counterexamples to the {B}aum-{C}onnes conjecture.
\newblock {\em Geom. Funct. Anal.}, 12(2):330--354, 2002.

\bibitem{Hu(1993)}
B.~Z. Hu.
\newblock Whitehead groups of finite polyhedra with nonpositive curvature.
\newblock {\em J. Differential Geom.}, 38(3):501--517, 1993.

\bibitem{Ji(2007SarithI)}
L.~Ji.
\newblock The integral {N}ovikov conjectures for {$S$}-arithmetic groups. {I}.
\newblock {\em $K$-Theory}, 38(1):35--47, 2007.

\bibitem{Kreck-Lueck(2009nonasph)}
M.~Kreck and W.~L\"uck.
\newblock Topological rigidity for non-aspherical manifolds.
\newblock {\em Pure and Applied Mathematics Quarterly}, 5 (3):873--914, 2009.
\newblock special issue in honor of Friedrich Hirzebruch.

\bibitem{Lueck(1986)}
W.~L{\"u}ck.
\newblock The transfer maps induced in the algebraic ${K}\sb 0$-and ${K}\sb
  1$-groups by a fibration. {I}.
\newblock {\em Math. Scand.}, 59(1):93--121, 1986.

\bibitem{Lueck(1987)}
W.~L{\"u}ck.
\newblock The transfer maps induced in the algebraic ${K}\sb 0$- and ${K}\sb
  1$-groups by a fibration. {I}{I}.
\newblock {\em J. Pure Appl. Algebra}, 45(2):143--169, 1987.

\bibitem{Lueck(1989)}
W.~L{\"u}ck.
\newblock {\em Transformation groups and algebraic ${K}$-theory}, volume 1408
  of {\em Lecture Notes in Mathematics}.
\newblock Springer-Verlag, Berlin, 1989.

\bibitem{Lueck(2005heis)}
W.~L{\"u}ck.
\newblock {$K$}- and {$L$}-theory of the semi-direct product of the discrete
  3-dimensional {H}eisenberg group by {${\Bbb Z}/4$}.
\newblock {\em Geom. Topol.}, 9:1639--1676 (electronic), 2005.

\bibitem{Lueck-Ranicki(1988)}
W.~L{\"u}ck and A.~A. Ranicki.
\newblock Surgery transfer.
\newblock In {\em Algebraic topology and transformation groups (G\"ottingen,
  1987)}, pages 167--246. Springer-Verlag, Berlin, 1988.

\bibitem{Lueck-Reich(2005)}
W.~L{\"u}ck and H.~Reich.
\newblock The {B}aum-{C}onnes and the {F}arrell-{J}ones conjectures in {$K$}-
  and {$L$}-theory.
\newblock In {\em Handbook of $K$-theory. Vol. 1, 2}, pages 703--842. Springer,
  Berlin, 2005.

\bibitem{Mineyev(2005)}
I.~Mineyev.
\newblock Flows and joins of metric spaces.
\newblock {\em Geom. Topol.}, 9:403--482 (electronic), 2005.

\bibitem{Mineyev-Yu(2002)}
I.~Mineyev and G.~Yu.
\newblock The {B}aum-{C}onnes conjecture for hyperbolic groups.
\newblock {\em Invent. Math.}, 149(1):97--122, 2002.

\bibitem{Moussong(1987)}
G.~Moussong.
\newblock {\em Hyperbolic {C}oxeter groups}.
\newblock Ph. d. thesis, The Ohio State University, 1987.

\bibitem{Olshanskii(1979)}
A.~Y. Ol'shanskii.
\newblock An infinite simple torsion-free {N}oetherian group.
\newblock {\em Izv. Akad. Nauk SSSR Ser. Mat.}, 43(6):1328--1393, 1979.

\bibitem{Olshanskii-Osin-Sapir(2009)}
A.~Y. Ol'shanski{\u\i}, D.~V. Osin, and M.~V. Sapir.
\newblock Lacunary hyperbolic groups.
\newblock {\em Geom. Topol.}, 13(4):2051--2140, 2009.
\newblock With an appendix by Michael Kapovich and Bruce Kleiner.

\bibitem{Paulin(2004)}
F.~Paulin.
\newblock Sur la th\'eorie \'el\'ementaire des groupes libres (d'apr\`es
  {S}ela).
\newblock {\em Ast\'erisque}, 294:ix, 363--402, 2004.

\bibitem{Pedersen-Weibel(1985)}
E.~K. Pedersen and C.~A. Weibel.
\newblock A non-connective delooping of algebraic ${K}$-theory.
\newblock In {\em Algebraic and Geometric Topology; proc. conf. Rutgers Uni.,
  New Brunswick 1983}, volume 1126 of {\em Lecture Notes in Mathematics}, pages
  166--181. Springer, 1985.

\bibitem{Quillen(1973)}
D.~Quillen.
\newblock Higher algebraic ${K}$-theory. {I}.
\newblock In {\em Algebraic $K$-theory, I: Higher $K$-theories (Proc. Conf.,
  Battelle Memorial Inst., Seattle, Wash., 1972)}, pages 85--147. Lecture Notes
  in Math., Vol. 341. Springer-Verlag, Berlin, 1973.

\bibitem{Quinn(2005)}
F.~Quinn.
\newblock Hyperelementary assembly for {$K$}-theory of virtually abelian
  groups.
\newblock Preprint, arXiv:math.KT/0509294, 2005.

\bibitem{Ranicki(1981)}
A.~A. Ranicki.
\newblock {\em Exact sequences in the algebraic theory of surgery}.
\newblock Princeton University Press, Princeton, N.J., 1981.

\bibitem{Ranicki(1985)}
A.~A. Ranicki.
\newblock The algebraic theory of finiteness obstruction.
\newblock {\em Math. Scand.}, 57(1):105--126, 1985.

\bibitem{Ranicki(1985_torsI)}
A.~A. Ranicki.
\newblock The algebraic theory of torsion. {I}. {F}oundations.
\newblock In {\em Algebraic and geometric topology (New Brunswick, N.J.,
  1983)}, volume 1126 of {\em Lecture Notes in Math.}, pages 199--237.
  Springer, Berlin, 1985.

\bibitem{Ranicki(1992)}
A.~A. Ranicki.
\newblock {\em Algebraic ${L}$-theory and topological manifolds}.
\newblock Cambridge University Press, Cambridge, 1992.

\bibitem{Ranicki(1992a)}
A.~A. Ranicki.
\newblock {\em Lower ${K}$- and ${L}$-theory}.
\newblock Cambridge University Press, Cambridge, 1992.

\bibitem{Roushon(2008FJJ3)}
S.~K. Roushon.
\newblock The {F}arrell-{J}ones isomorphism conjecture for 3-manifold groups.
\newblock {\em J. K-Theory}, 1(1):49--82, 2008.

\bibitem{Sapir(2007)}
M.~V. Sapir.
\newblock Some group theory problems.
\newblock {\em Internat. J. Algebra Comput.}, 17(5-6):1189--1214, 2007.

\bibitem{Zela(2001)}
Z.~Sela.
\newblock Diophantine geometry over groups. {I}. {M}akanin-{R}azborov diagrams.
\newblock {\em Publ. Math. Inst. Hautes \'Etudes Sci.}, 93:31--105, 2001.

\bibitem{Swenson(1999Cutpoint)}
E.~L. Swenson.
\newblock A cut point theorem for {${\rm CAT}(0)$} groups.
\newblock {\em J. Differential Geom.}, 53(2):327--358, 1999.

\end{thebibliography}


\end{document}